\titlespacing*{\chapter}{0pt}{3.5ex plus 1ex minus .2ex}{2.3ex plus .2ex}
\newsavebox{\toy}
\savebox{\toy}{\framebox[0.65em]{\rule{0cm}{1ex}}}
\newcommand{\QED}{\usebox{\toy}\end{demo}}
\newtheorem{theorem}{Theorem}[section]
\newtheorem*{theorem*}{Theorem}
\newtheorem{lemma}[theorem]{Lemma}
\newtheorem{proposition}[theorem]{Proposition}
\newtheorem{corollary}[theorem]{Corollary}
\newtheorem{assumption}[theorem]{Assumption}
\newtheorem{conjecture}[theorem]{Conjecture}
\newtheorem{definition}[theorem]{Definition}
\newtheorem{remark}[theorem]{Remark}
\newtheorem{exercise}[theorem]{Exercise}
\newtheorem{property}[theorem]{Property}
\newcommand{\Theorem}[1]{\begin{theorem}\label{Thm.#1}}
\newcommand{\Lemma}[1]{\begin{lemma}\label{Lem.#1}}
\newcommand{\Proposition}[1]{\begin{proposition}\label{Prop.#1}}
\newcommand{\Corollary}[1]{\begin{corollary}\label{Cor.#1}}
\newcommand{\Assumption}[1]{\begin{assumption}\label{Ass.#1}\rm}
\newcommand{\Definition}[1]{\begin{definition}\label{Def.#1}\rm}
\newcommand{\Remark}[1]{\begin{remark}\label{Rem.#1}\rm }
\newcommand{\Exercise}[1]{\begin{exercise}\label{Exe.#1}\rm }
\def\qed{\hfill\rule{.2cm}{.2cm}\par\medskip\par\relax}
\newcommand{\bd}{\begin{displaymath}}
\newcommand{\ed}{\end{displaymath}}
\newcommand{\bdn}{\begin{equation}}
\newcommand{\bdnl}{\begin{equation}\label}
\newcommand{\edn}{\end{equation}}
\newcommand{\barray}{\begin{array}}
\newcommand{\earray}{\end{array}}
\newcommand{\bds}{\begin{description}}
\newcommand{\eds}{\end{description}}
\newcommand{\bitemize}{\begin{itemize}}
\newcommand{\eitemize}{\end{itemize}}
\newcommand{\benumerate}{\begin{enumerate}}
\newcommand{\eenumerate}{\end{enumerate}}
\newcommand{\btabbing}{\begin{tabbing}}
\newcommand{\etabbing}{\end{tabbing}}
\newcommand{\bcenter}{\begin{center}}
\newcommand{\ecenter}{\end{center}}
\newcommand{\bflushright}{\begin{flushright}}
\newcommand{\bflushleft}{\begin{flushleft}}
\newcommand{\eflushright}{\end{flushright}}
\newcommand{\eflushleft}{\end{flushleft}}
\newcommand{\bdnn }{\begin{eqnarray*}}
\newcommand{\ednn }{\end{eqnarray*}}
\newcommand{\bdmn}{\begin{eqnarray}}
\newcommand{\edmn}{\end{eqnarray}}
\newcommand{\nn}{\nonumber}
\newcounter{biblio}
\newenvironment{references}%
{\begin{list}{[\arabic{biblio}]}{\usecounter{biblio}%
\setlength{\leftmargin}{2.5em}\setlength{\rightmargin}{0pt}%
\setlength{\labelwidth}{2em}\setlength{\itemsep}{0pt}}}{\end{list}}
\newcommand{\References}%
{\vspace{2.8ex plus .3ex minus .3ex}%
\begin{center}{\bf References}\end{center}\begin{references}}
\newcommand{\Q}{{\mathbb{Q}}}
\newcommand{\R}{{\mathbb{R}}}
\newcommand{\rd}{\R^d}
\newcommand{\lef}{\left}
\newcommand{\rig}{\right}
\newcommand{\ri}{\right}
\newcommand{\8}{\infty}
\newcommand{\6}{\partial}
\newcommand{\E}{{\bf E}}
\def\P{{\mathbb P}}
\renewcommand{\a}{\alpha}
\renewcommand{\b}{\beta}
\newcommand{\rmd}{\mathrm{d}}
\newcommand{\D}{\Delta}
\newcommand{\h}{\eta}
\newcommand{\la}{\lambda}
\newcommand{\n}{\nu}
\newcommand{\om}{\omega}
\newcommand{\cC }{{\cal C}}
\newcommand{\cF }{{\cal F}}
\newcommand{\cG }{{\cal G}}
\newcommand{\cM }{{\cal M}}
\newcommand{\cO }{{\cal O}}
\def\section{\@startsection{section}{1}{\z@}{-3.5ex plus -1ex minus 
 -.2ex}{2.3ex plus .2ex}{\bf}}
\def\subsection{\@startsection{subsection}{2}{\z@}{-3.25ex plus -1ex minus 
 -.2ex}{1.5ex plus .2ex}{\bf}}
\newcommand{\cvlaw}{\stackrel{\rm{ law}}{\longrightarrow}}
\newcommand{\eqlaw}{\stackrel{\rm{ law}}{=}}
\newcommand*\cvLdeux{\overset{L^2}{\longrightarrow}}
\def\1#1{{\bf 1}{\{#1\}}}
\newcommand{\IP}{{\mathbb P}}
\newcommand{\be}{{\beta}}
\newcommand{\eps}{{\varepsilon}}
\newcommand{\bx}{{\bf x}}
\newcommand{\Yt}{{\mathsf Y}^{(t)}}
   \def\MR#1{}  }
\begin{document}

\pagestyle{myheadings}
\markboth{FC-CC}{BPPE}

\author{Francis Comets, Cl\'ement Cosco}

\affil[]{\small{Universit\'e Paris Diderot\\
Laboratoire de Probabilit\'es, Statistique et Mod\'elisation\\ LPSM (UMR 8001 CNRS, SU, UPD)\\
B\^atiment Sophie Germain, 8 place Aur\'elie Nemours, 75013 Paris\\
{\tt comets@lpsm.paris,  ccosco@lpsm.paris}}
}

\title{Brownian Polymers in Poissonian Environment: \\ a survey. }
\maketitle
\begin{abstract}
We consider a space-time continuous directed polymer in random environment. The path is Brownian and the medium is Poissonian.
We review many results obtained in the last decade, and also we present  new ones. In this fundamental setup, we can make use of fine formulas and strong tools 
from stochastic analysis for Gaussian or Poisson measure, together with martingale techniques. 
 These notes cover the matter of a course presented during the Jean-Morlet chair 2017 of CIRM
"Random Structures in Statistical Mechanics and Mathematical Physics" in Marseille.
 \\[.3cm]\textbf{Keywords:} Directed polymers, random environment; weak disorder, intermediate disorder, strong disorder; free energy; Poisson processes, martingales.
\\[.3cm]\textbf{AMS 2010 subject classifications:}
Primary 60K37. Secondary 60Hxx, 82A51, 82D30.
\end{abstract}

{\parskip=0pt {
\tableofcontents}}
\chapter{Introduction}

This survey is based on a course presented by the first author at the Research School in Marseille, March 6-10, 2017. 
The school was organized by the chair holders, Kostya Khanin and Senya Shlosman, of the Jean-Morlet chair 2017 of CIRM,
\begin{quotation}
Random Structures in Statistical Mechanics and Mathematical Physics.
\end{quotation}
The model is a space-time continuous directed polymer in random environment. 
In this regard, it is one of the most basic such model and it plays a fundamental
role. 
Directed polymers are described by random paths, which are influenced by randomly located impurities which may be attractive or repellent.
Such models have been widely considered in statistical physics, disordered systems and stochastic processes. 

\medskip

As an informal definition we model the polymer by a random path $\bx=(\bx(t); t \geq 0)$ taking values in $\rd$ and interacting with time-space Poisson points $(t_i, x_i)$ called environment. The path sees such a point if at time $t_i$ it is located within a fixed distance $r$ from $x_i$. Denoting by $\#_t(\bx)= \sum_{i: t_i\leq t}  {\bf 1}_{|\bx(t_i)-x_i|\leq r}$ the number of  Poisson points seen by the path $\bx$ up to time $t$,
the model with time horizon $t$ at inverse-temperature parameter $\b$ is associated to the Hamiltonian 
$$
-\frac 12 \int_0^t |\dot \bx (s)|^2 ds + \b \#_t(\bx)\;.
$$

In this model where the path is Brownian and the medium is Poissonian,  we benefit from nice formulas and strong tools from stochastic calculus for
Gaussian or Poisson measure and martingale techniques. 
\medskip

The notes are essentially based on references \cite{CY05, CYkokyuroku, CYBMPO2,CoscoIntermediate}, gathering and unifying the matter scattered in these references, and containing novel contributions and perspectives as emphasized below. It also parallels the book \cite{CStFlour} which deals similar models in the  discrete framework, and we warn the reader of the existence of many results available for one 
particular model but  not for the others. We do not reproduce all details or computations, but we rather try to give the general picture and the essential arguments. 
\medskip

Let us mention the main highlights in this survey and also the new results:
 \begin{enumerate}
 \item We establish in section \ref{ch:3} a fine continuity estimate under spatial shifts for the limit of the martingale. This is achieved by a smart use of mirror coupling. 
\item Section \ref{ch:dir} contains a nice original account on directional free energy. We develop a full approach of
disorder strength based on directional free energy.
\item In section \ref{ch:CM} we develop an original approach to diffusivity at weak disorder, based on Camer\'on-Martin transformation (see theorem \ref{prop:diff}).
\item Section \ref{sec:IR} is dedicated to the intermediate disorder regime and KPZ equation. We give a synthetic account with all the central ideas. 
\end{enumerate}

The detailed matter and the organization appear most clearly in the table of contents,
which is a useful source to follow the line all through the notes.

\newcommand{\gibbs}{P_t^{\b,\om}}
\newcommand{\gibbss}{P_s^{\b,\om}}
\newcommand{\gibbsDeux}{P_t^{\b,\om,\otimes 2}}

\newcommand{\gibbsni}{P_{n-i}^{\b,\theta_{i,x}\om}}
\newcommand{\gibbsmn}{P_{m+n}^{\b,\om}}
\newcommand{\gibbsmdn}{P_{m}^{\b,\theta_{n,y}\om}}
\newcommand{\gibbsdn}{P_{2n}^{\b,\om}}
\newcommand{\gibbsbu}{P_n^{\b_1,\om}}
\newcommand{\gibbsbd}{P_n^{\b_2,\om}}
\newcommand{\gibbsbp}{P_n^{\b',\om}}
\newcommand{\gibbsdnpu}{P_{2n+1}^{\b,\om}}
\newcommand{\gibbsmu}{P_{t-1}^{\b,\om}}
\newcommand{\gibbsc}{P_{t}^{\b,\om,2}}
\newcommand{\gibbsmuc}{P_{t-1}^{\b,\om, 2}}
\newcommand{\gibbspc}{[P_{t}^{\b,\om}]^{\otimes 2}}
\newcommand{\gibbsmupc}{[P_{t-1}^{\b,\om}]^{\otimes 2}}
\newcommand{\gibbsp}{P_n^{\b',\om}}

 \chapter{Free energy and phase transition} \label{sec:thermo}


 Notations and conventions: all through the notes, we will use the same symbols $P, \P, \dots$ to denote probability measures and mathematical expectations; e.g., $P[X]$ is the 
 $P$-expectation of the random variable $X$.  
\medskip

In this section, we introduce the model and two central thermodynamic quantities, the quenched and the annealed free energies.

\section{Polymer model}

The model is defined as a Brownian motion in a random potential. 
\medskip

$\bullet$ {\it The free measure :}   
$(B=\{ B_t\}_{t \geq 0}, P_x)$ is a Brownian motion on 
the $d$-dimensional Euclidean space $\rd$ starting from $x \in \rd$.
We will use short notation  $P_0=P$.

$\bullet$ {\it The random environment:}  
$\om = \sum_i \delta_{(T_i,X_i)}$ is a Poisson point process on $\R_+ \times \rd$ with intensity measure $\nu dt dx$, where $\nu$ is a positive parameter. We suppose that $\om$ is defined on some probability space $(\Omega,\mathcal{G},\IP)$, and we define $\mathcal{G}_t$ to be the $\sigma$-field generated by the environment up to time $t$:
\begin{equation}
\om_t=\om_{|(0,t]\times \rd} \;,\quad \mathcal{G}_t = \sigma\left(\om_t(A) ; A \in \mathcal{B}(\mathbb{R}_+ \times \mathbb{R}^d) \right),
\end{equation}
where $\mathcal{B}(\mathbb{R}_+ \times \mathbb{R}^d)$ denotes the Borel sets of $\mathbb{R}_+ \times \mathbb{R}^d$.
\medskip

From these two basic ingredients, we define the object we consider in the notes. Fix $r>0$, and let $U(x)$ denote Euclidean (closed) ball in $\rd$ with radius $\gamma_d^{-1/d} r$,
$$
U(x)= {\mathrm B}  (x , \gamma_d^{-1/d} r).
$$ with $\gamma_d$ the volume of the unit ball, so $U(x)$ has volume $r^d$.  The tube around path $B$ is the following subset of $(0,t]\times\R^d$: 
\begin{equation}
V_t(B)= \left\{ (s,x): s \in (0,t], x \in U(B_s)\right\}\,.
\end{equation}
When the indicator function
 \begin{equation} \label{def:chitx}
\chi_{s,x}= {\mathbf 1} \{x \in U(B_s) \} = {\mathbf 1} \{|x-B_s| \leq \gamma_d^{-1/d} r\}
\end{equation}
has value 1 [resp., 0],  the path $B$ does see [resp. does not see] the point $(s,x)$. For a fixed path $B$, the quantity defined by
\begin{equation} \label{eq:214}
\om (V_t) = \int_{(0,t] \times \rd} \chi_{s,x} \, \om(ds,dx),
\end{equation}
is the number of Poisson points seen by the path $B$ up to time $t$, playing the role of $\#_t$ in the Introduction. Note that under $\P$, the variable $\om (V_t)$ is Poisson distributed with mean $\nu t r^d$.

$\bullet$ {\it The polymer measure:} Fixing a realization $\om$ of the Poisson point process and a value of the time horizon
$t>0$, we define the probability measure $\gibbs$ on the path space 
${\cC}(\R_+; \rd)$ equipped with its Borel field by  
\begin{equation} \label{mnen}
d \gibbs =\frac{1}{Z_{t}(\om,\b,r)}
\exp \{\beta \om(V_t)\} \;dP,
\end{equation}
where $\b \in \R$ is a parameter (the inverse temperature), where 
\begin{equation} \label{Zt}
Z_t=Z_{t}(\om,\b,r)
=P\lef[\exp \lef(\b \om(V_t)
\rig) \rig]
\end{equation}
is the normalizing constant making $\gibbs$ a probability measure on the path space.

 The model has been introduced by Nobuo Yoshida as a polymer model, and first appeared in \cite{CY05} in the literature.  For $\b >0$ the path is attracted by the Poisson points, and repelled otherwise. The Poisson environment represents randomly dispatched impurities. 
For negative $\b$ the model relates to Brownian motion in Poissonian obstacles
\cite{donskervaradhan-wienersaus, Sznitman} which can be traced back to works of Smoluchowski \cite{Smoluchowski}. Here we consider  a directed version, in contrast to crossings \cite{Wuthrich1, Wuthrich2, Wuthrich-geodesics, Wuthrich-bornes} where the path is stretched ballistically.
Our model with $\b \to +\8$ is related to Euclidean first passage percolation \cite{HowardNewman97, Howard00} with exponent $\alpha=2$ therein.

Also, for a  branching Brownian motion in random medium \cite{Shiozawa-clt, Shiozawa-loc}, $Z_t$ is equal to the mean population size in the medium given by $\omega$.

\section{Some key formulas and notations}
We first recall three basic formulas that we will use repeatedly.
\begin{itemize}
\item 
For all non-negative and all non-positive measurable functions $h$ on $\mathbb{R}_+\times \mathbb{R}^d$, the Poisson formula for exponential moments (chapter 3. of \cite{LastPenrose}) writes
\begin{equation} \label{eq:PoissonExpMoment}
\P\left[ e^{\int h(s,x) \om_t(ds dx)}\right] = \exp \int_{]0,t]\times \rd} \nu ds dx \left( e^{h(s,x)}-1 \right)   \;.
\end{equation}
The formula remains true when $h$ is replaced by $i h$, for any real integrable function $h$. 
\item Introducing the notation 
\begin{equation}
\lambda(\b)= e^\b-1\;,
\end{equation}

the linearization formula for Bernoulli writes
\begin{equation}\label{eq:linBer}
e^{\b {\bf 1}_A}-1 = (e^\b-1) {\bf 1}_A =  \lambda(\b) {\bf 1}_A\;.
\end{equation}
\item For all $s\geq 0$, we have 
\begin{equation}\int_{\mathbb{R}^d} \chi_{s,x} \hspace{0.3mm} \rmd x = r^d.
\end{equation}
\end{itemize}

\section{Quenched free energy}
It is defined as the rate of growth of the partition function, and it is a self-averaging property.

\begin{theorem} \label{th:freeenergy}
The quenched free energy
$$ p(\b, \nu) =  \lim_{t \to \8} \frac{1}{t} \ln Z_t (\om, \b, r)
$$
exists a.s. and in $L^p$-norm for all $p \geq 1$, and is deterministic,
$$
 p(\b, \nu) = \sup_{t >0}  \frac{1}{t}\P[\ln Z_t]\;.
$$
\end{theorem}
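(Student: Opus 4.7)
The plan is to combine (i) superadditivity of $t\mapsto\P[\ln Z_t]$ via Markov splitting and Jensen, which, after Fekete's lemma, gives convergence of the expectation to the supremum, with (ii) a Bernstein-type concentration inequality for Poisson functionals to upgrade this to almost-sure and $L^p$ convergence.

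\textbf{Superadditivity.} I would split the interval $(0,s+t]$ at time $s$: the Markov property of $B$ and the independence of $\om$ restricted to $(s,s+t]\times \rd$ from $\mathcal{G}_s$ give
\begin{displaymath}
Z_{s+t}(\om) \;=\; Z_s(\om)\, E^{\gibbss}\!\bigl[Z_t(\tau_{s,B_s}\om)\bigr],
\end{displaymath}
where $\tau_{s,x}$ denotes the joint time--space shift of the environment. Taking logarithms, applying Jensen's inequality to the inner expectation under $\gibbss$, then averaging over $\om$ and using the stationarity of $\P$ under $\tau_{s,x}$ (so that $Z_t(\tau_{s,x}\om)$ has the same law as $Z_t(\om)$ and is independent of $\mathcal{G}_s$), yields
\begin{displaymath}
\P[\ln Z_{s+t}] \;\geq\; \P[\ln Z_s] + \P[\ln Z_t].
\end{displaymath}
Together with the annealed upper bound $\P[\ln Z_t] \leq \ln \P[Z_t] = \nu r^d \lambda(\b)\, t$ coming from \eqref{eq:PoissonExpMoment}, Fekete's lemma gives existence of $p(\b,\nu)\in\R$ and the sup formula in the theorem.

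\textbf{Concentration.} A direct computation via the linearization \eqref{eq:linBer} of the add-one operator yields
\begin{displaymath}
D_{s,x}\ln Z_t \;:=\; \ln Z_t(\om+\delta_{(s,x)}) - \ln Z_t(\om) \;=\; \ln\!\bigl(1 + \lambda(\b)\, E^{\gibbs}[\chi_{s,x}]\bigr),
\end{displaymath}
which is uniformly bounded by $|\b|$ and satisfies $\int (D_{s,x}\ln Z_t)^2\,\nu\,\rmd s\,\rmd x \;\leq\; \nu\,\lambda(\b)^2\, r^d\, t,$ using $\ln(1+y)\leq y$ together with $\int E^{\gibbs}[\chi_{s,x}]\,\rmd s\,\rmd x = r^d t$. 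These are the standard inputs for a Bernstein-type concentration inequality for Poisson functionals (through the modified log-Sobolev inequality, or Wu's martingale method), producing a bound of the form
\begin{displaymath}
\P\bigl(\,|\ln Z_t - \P[\ln Z_t]| > u\,\bigr) \;\leq\; 2\exp\!\left(-\frac{c_1 u^2}{t + c_2 u}\right),
\end{displaymath}
with constants $c_1,c_2$ depending only on $\b,\nu,r$. Taking $u=\e t$ makes the probability summable over $t\in\N$, so Borel--Cantelli yields a.s. convergence of $n^{-1}\ln Z_n$ along integers, while the exponential tails plus uniform integrability promote this to $L^p$ convergence for every $p\geq 1$.

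\textbf{Continuous-time interpolation and main obstacle.} To pass from integer to continuous $t$, one bounds $|\ln Z_t - \ln Z_{\lfloor t\rfloor}|$ after a further Markov split by a Poisson increment on the unit-time slab $V_t\setminus V_{\lfloor t\rfloor}$, which is $O(1)$ in every $L^p$ uniformly in $t$, hence negligible after dividing by $t$. The principal subtlety lies in the concentration step: the difference operator $D_{s,x}\ln Z_t$ has amplitude $O(|\b|)$ that does \emph{not} vanish as $t\to\infty$, so a crude bounded-differences or Efron--Stein estimate only yields diffusive fluctuations with non-summable constants. One genuinely needs the Bernstein-type Poisson inequality, combining the sup-norm bound on $D\ln Z_t$ with the integrated $L^2$-control $O(t)$, to recover summable tails for Borel--Cantelli and uniform integrability for the $L^p$-statement.
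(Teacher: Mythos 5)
Your proposal follows essentially the same route as the paper: superadditivity of $\P[\ln Z_t]$ via the Markov decomposition, Jensen's inequality and shift-invariance of the environment, then a concentration inequality for $\ln Z_t$ whose key input is the add-one cost $\ln\bigl(1+\lambda(\b)\,P_t^{\b,\om}[\chi_{s,x}]\bigr)$, and finally an interpolation bound on the increments $\ln Z_t-\ln Z_{\lfloor t\rfloor}$ by Poisson-type integrals. The only difference is cosmetic: you import the Bernstein-type bound from general Poisson-functional concentration (modified log-Sobolev or Wu's martingale method), while the paper derives the same estimate \eqref{eq:concentrationlogZt} via the Clark--Ocone representation and an exponential martingale, with exactly the same inputs (sup-norm bound on the difference operator and the $O(t)$ bound on its integrated square).
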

\begin{remark} We omit the parameter $r>0$  from the notation for the free energy.
The reason is that,  in contrast to $\b$ and $ \nu$, it is kept fixed most of the time.
\end{remark}

$\Box$ Let $\theta_{t,x}$ the space-time shift operator on the environment space, 
$$\theta_{t,x} \big(\sum_i \delta_{(T_i,X_i)}\big)= \sum_i \delta_{(T_i-t,X_i-x)}\;.$$ By Markov property of the Brownian motion,  we have for $s, t \geq 0$,
\begin{eqnarray} \nn
{Z_{t+s}} &=& P\left[ e^{\b \om(V_t)} e^{\b \om(V_{t+s} \setminus V_t)} \right] \\ \nn
&=& P\left[ e^{\b \om(V_t)} P\left[ e^{\b \om(V_{t+s} \setminus V_t)} \big \vert B_t\right] \right] \\ \label{eq:Markov2}
&=& P\left[ e^{\b \om(V_t)}  Z_s \circ \theta_{t,B_t}
\right] \\ \label{eq:Markov}
&=& {Z_t} \times \gibbs[ Z_s \circ \theta_{t,B_t}] \;,
\end{eqnarray}
a remarkable identity expressing the Markov structure of the model.
Let $u(t)= \P[ \ln Z_t]$. By the independence property of Poisson points, $\om_{|]s,t]}$ is independent of $\cG_s$ for all $0\leq s \leq t$.  Then, denoting by $\P^{\cG_t}$ the conditional expectation and  conditional
probability given $\cG_t$, we have 
\begin{eqnarray*}
u(t+s) 
&=&  \P \big[ \ln \gibbs[ Z_s \circ \theta_{t,B_t)}]  \big]+ \P \ln Z_t 
\\
&\stackrel{\rm Jensen}{\geq} 
& \P  \gibbs[   \ln  Z_s \circ \theta_{t,B_t)} ]+u(t)  \\
&=&  
 \P \P^{\cG_t} \gibbs[   \ln  Z_s \circ \theta_{t,B_t)} ]+u(t)  \\
&\stackrel{\rm Fubini}{=}&  \P  \big[  \gibbs[ \P^{\cG_t}[\ln Z_s \circ \theta_{t,B_t)}] ] \big]+u(t)\\
&=& \P \big[   \gibbs[ u(s) ]\big]+u(t) \qquad\qquad \qquad (\om {\rm \ shift\ invariant})\\
&=& u(s)+u(t)\;.
\end{eqnarray*}
Hence the function $u(t)$ is superadditive. By the superadditive lemma, we get the existence of the limit
\begin{equation}
\nn
\lim_{ t \to \8} \frac{u(t)}{t} = \sup_{t >0} \frac{u(t)}{t} \;.
\end{equation}
Now, anticipating the concentration inequality \eqref{eq:concentrationlogZt} and the continuous time bridging
\eqref{eq:OrderOfCenteredlogZt},
we derive that 
\begin{equation}
\nn
\frac{1}{t} \big( \ln Z_t- \P[\ln Z_t]\big) \longrightarrow 0
\end{equation}
almost surely and in $L^p$ for all $p\geq 1$ finite.  \qed
\section{Annealed free energy and hierarchy of moments}
We compute the expectation of the partition function over the medium using \eqref{eq:214} and Fubini,
\begin{eqnarray}
\P[ Z_t] &=&  P \P\left[ e^{\b \int \chi_{s,x} \om_t(ds,dx)} \right] \nn \qquad \qquad \qquad \qquad
\\&\stackrel{\eqref{eq:PoissonExpMoment}}{=}&
P \exp \int_{]0,t]\times \rd} \left( e^{\b \int \chi_{s,x}}-1 \right) \nu ds dx \nn \qquad 
 \\
&\stackrel{\eqref{eq:linBer}}{=}&P \left[  \exp \la(\b)  \int_{]0,t]\times \rd}  \chi_{s,x}  \nu ds dx \right] \nn  \qquad  \qquad 
 \\&= &\label{eq:annealed}
 \exp\{ t \nu \la r^d\} \;.
\end{eqnarray}
Hence $\P[ Z_t]$ grows in time at exponential rate  $p^{(1)}(\b, \nu)= \nu \lambda(\b)r^d$. More generally, it is natural to consider the rate of growth of the $s$-th moment 
of the partition function,
$$
p^{(s)}(\b, \nu)= \lim_{t \to \8} \frac{1}{st} \ln \P[ Z_t^s]\,,\quad s >0.
$$
By H\"older inequality, $\|Z\|_r \leq \|Z\|_s$ for $r \leq s$, these rates are non-decreasing in $s$, and for integer values, they can be expressed by handy variational formulas
using large deviation theory. By Jensen's inequality, we have for all $t$
\begin{equation} \label{eq: annealedbound}
\frac{1}{t}\P[\ln Z_t] \leq   \frac{1}{t} \;\ln \P[ Z_t] = p^{(1)}(\nu,\b)\;,
\end{equation}
yielding the so-called {\em annealed bound} : $$ p(\b, \nu) \leq  p^{(1)}(\nu,\b)\;.$$  Summarizing the above,  we have a chain of inequalities
\begin{equation}
\nn
  p(\b, \nu) \leq  p^{(1)}(\b, \nu) \leq \ldots \leq  p^{(k)}(\b, \nu) \leq  p^{(k+1)}(\b, \nu) \leq \ldots . 
\end{equation}
It is commun folklore that in a large class of models, the first inequalities in the above chain are equalities, while they become strict from
$k^*= \inf \{k \geq 0: p^{(k)}(\b, \nu) <  p^{(k+1)}(\b, \nu)$ (with the convention $p^{(0)}=p$). Considering the sequence of rates $(p^{(k)}; k \geq 1)$ is
classical approach to intermittency \cite{CarmonaMolchanov, khoshnevisan, Zeldovich} and sect. 2.4 of \cite{BertiniCancrini95}. 

 In the directed case, we focus at $k=0,1$ only, since the latter is explicit.
%
\begin{proposition} \label{prop:p}
Basic properties of the free energy:
\begin{enumerate}
\item \label{eq:p1} For $\b \neq 0, \nu >0$, we have
$ \b \nu r^d < p(\b, \nu) \leq \nu \lambda(\b)r^d$.
\item \label{eq:p2}
$ \b \to p(\b, \nu)$ is convex.
\item \label{eq:p3} The excess free energy 
\begin{equation}
\label{def:excessfreeenergy}   
 \psi(\b,\nu)=\nu \lambda(\b)r^d- p(\b, \nu)
\end{equation}
is non-decreasing in $|\b|$ and in $\nu$. It is jointly continuous. 
\end{enumerate}
\end{proposition}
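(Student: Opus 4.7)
For part (1), the upper bound $p\leq\nu\lambda(\b)r^d$ is exactly the annealed bound \eqref{eq: annealedbound}. For the lower bound, apply Jensen to the path integral: $\ln Z_t=\ln P[e^{\b\om(V_t)}]\geq\b P[\om(V_t)]$. Taking the $\P$-expectation and using Fubini with $\P[\om(V_t(B))]=\nu r^d t$ gives $\P[\ln Z_t]\geq\b\nu r^d t$, whence $p(\b,\nu)\geq\b\nu r^d$. Strict inequality for $\b\neq 0$ follows from strictness of Jensen: on a set of positive $\P$-probability, $\om_t$ has at least one point close enough to the origin that both $\{B:\om(V_t(B))\geq 1\}$ and $\{B:\om(V_t(B))=0\}$ have positive $P$-measure, forcing $B\mapsto\om(V_t(B))$ to be non-constant under $P$; the sup-representation in \Thm{freeenergy} transfers strict inequality to $p$. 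Part (2) follows from convexity of the log-moment generating function $\b\mapsto\ln P[e^{\b\om(V_t)}]$ for each fixed $\om$; convexity is preserved under $\P$-expectation, division by $t$, and $\sup_{t>0}$.

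For the monotonicity of $\psi$ in $\nu$, I use a superposition coupling. For $\nu<\nu'$, write $\om'=\om+\tilde\om$ as a sum of independent Poisson processes with intensities $\nu$ and $\nu'-\nu$. Applying \eqref{eq:PoissonExpMoment} to $\tilde\om$ at each fixed path $B$ gives $\P_{\tilde\om}[Z_t(\om+\tilde\om)]=e^{(\nu'-\nu)\lambda(\b)r^d t}Z_t(\om)$, and Jensen in $\tilde\om$ then yields
\[
\P_{\tilde\om}[\ln Z_t(\om+\tilde\om)]\leq (\nu'-\nu)\lambda(\b)r^d t+\ln Z_t(\om).
\]
Averaging over $\om$, dividing by $t$, and passing to the sup/limit produces $p(\b,\nu')\leq p(\b,\nu)+(\nu'-\nu)\lambda(\b)r^d$, which rearranges to $\psi(\b,\nu')\geq\psi(\b,\nu)$ for any sign of $\b$. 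A complementary lower bound $p(\b,\nu')\geq p(\b,\nu)+\b(\nu'-\nu)r^d$ follows from $\ln Z_t(\om+\tilde\om)-\ln Z_t(\om)=\ln\gibbs[e^{\b\tilde\om(V_t)}]\geq\b\gibbs[\tilde\om(V_t)]$ and averaging; together these give the Lipschitz estimate $|p(\b,\nu')-p(\b,\nu)|\leq(\nu'-\nu)\max(|\b|,|\lambda(\b)|)r^d$. Combined with convexity-in-$\b$ continuity of $p$, this yields joint continuity of $\psi$.

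The main obstacle is the monotonicity of $\psi$ in $|\b|$. I reduce it to showing that the $\b$-derivative of $F_t(\b):=\ln\P[Z_t]-\P[\ln Z_t]$ has the sign of $\b$; since $F_t(0)=F_t'(0)=0$ and $\psi=\lim_{t\to\infty}F_t/t$, this transfers to $\psi$. Direct computation yields $F_t'(\b)=\nu e^\b r^d t-\P[\gibbs[\om(V_t)]]$, and differentiating \eqref{eq:annealed} (equivalently, combining Fubini with \eqref{eq:PoissonExpMoment}) gives the identity $\P[P[\om(V_t)e^{\b\om(V_t)}]]=\nu e^\b r^d t\cdot\P[Z_t]$, so the needed sign condition for $\b\geq 0$ is equivalent to the covariance inequality $\mathrm{Cov}_\P(Z_t,\gibbs[\om(V_t)])\geq 0$. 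The plan is to close this via an FKG-type inequality for Poisson measures, after verifying via the perturbation formula $Z_t(\om+\delta)=Z_t(\om)\bigl(1+\lambda(\b)\gibbs[\chi_\delta]\bigr)$ and its $\b$-derivative that both $Z_t$ and $\gibbs[\om(V_t)]$ are non-decreasing functionals of $\om$ when $\b\geq 0$; the case $\b\leq 0$ is handled by the symmetric argument. The delicate step is this monotonicity check for the quenched polymer energy, since the perturbation mixes the bonus from seeing the new point with a reweighting of the polymer measure that must be controlled against a possibly negative covariance between $\om(V_t)$ and $\chi_\delta$.
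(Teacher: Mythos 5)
Your treatment of items (1) and (2), and of the $\nu$-part of item (3), is correct and close in spirit to the paper: the annealed bound, Jensen over the path measure with strictness coming from non-constancy of $B\mapsto\om(V_t(B))$ (the paper instead gets strictness from strict convexity of $t^{-1}\P[\ln Z_t]$, but your route also works via the sup-representation), and the superposition coupling with Jensen in the added noise, which is exactly the paper's argument and which your two-sided Lipschitz bound upgrades cleanly to joint continuity. Your reduction of the $|\b|$-monotonicity to the covariance inequality $\mathrm{Cov}_\P\big(Z_t,\gibbs[\om(V_t)]\big)\geq 0$ for $\b\geq 0$ is also correct, since $\P\big[P[\om(V_t)e^{\b\om(V_t)}]\big]=\nu e^{\b}r^d t\,\P[Z_t]$.

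The gap is in the step you yourself flag as delicate: the FKG mechanism you propose requires $\om\mapsto\gibbs[\om(V_t)]$ to be non-decreasing under addition of points when $\b\geq 0$, and this is false. Writing $\chi=\chi_{s,x}$ and using $e^{\b\chi}=1+\lambda\chi$, $\chi^2=\chi$, adding one point at $(s,x)$ changes the quenched mean energy by
\begin{equation*}
\gibbs^{\,\om+\delta_{s,x}}\big[(\om+\delta_{s,x})(V_t)\big]-\gibbs\big[\om(V_t)\big]
=\frac{\lambda\,\mathrm{Cov}_{\gibbs}\big(\om(V_t),\chi\big)+e^{\b}\,\gibbs[\chi]}{1+\lambda\,\gibbs[\chi]}\;,
\end{equation*}
and the covariance term can dominate: if the environment contains a cluster of $N$ points which the polymer collects with overwhelming Gibbs weight, and $(s,x)$ lies in a region essentially only visited by paths that miss the cluster, then $\mathrm{Cov}_{\gibbs}(\om(V_t),\chi)\approx -N\,\gibbs[\chi]$ and the increment is negative as soon as $N\lambda>e^{\b}$ (a two-strategy caricature with weights $p_Re^{\b N}$ and $p_L$, e.g. $\b=\ln 2$, $N=10$, already shows the mean energy strictly decreases). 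The intuition is the one you mention: the tilt $e^{\b\chi}$ shifts mass toward low-energy paths, and the one-point bonus cannot compensate. So the monotonicity check cannot be "controlled"; it genuinely fails, and plain Harris--FKG cannot close your covariance inequality (contrast with the FKG application in the proof of Theorem \ref{th:313}, where the second functional $W_t^{\theta-1}$ really is monotone because adding a point multiplies every path weight by a factor $\geq 1$). The covariance inequality itself is true, but the paper proves the equivalent statement by a different device: it differentiates $\P[\ln Z_t]$ using the Slivnyak--Mecke formula \eqref{eq:IPP}, obtaining \eqref{eq:meanenergy} and then \eqref{eq:psi'}, in which the integrand $\gibbs[\chi_{s,x}]^2/(1+\lambda\gibbs[\chi_{s,x}])$ is manifestly nonnegative, so $\partial_\b\psi_t$ has the sign of $\b$ without any global monotonicity in $\om$. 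You should either adopt that integration-by-parts route or supply a different argument for the covariance bound; as it stands, the central claim of item (3) is not proved.
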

\noindent
$\Box$ The second inequality in item \ref{eq:p1} is the annealed bound. The first one follows from an inifinite-dimensional version of Jensen's
inequality; this version being curiously overlooked in the literature, we recall the full statement:
\begin{lemma}[Lemma A.1 in \cite{RASaihp11}]\label{lem:RAS-J}
Let $g$ be a bounded measurable function on a product space ${\cal X} \times  {\cal Y}$, $\mu$ a probability measure on $\cal X$ and $\rho$ a probability measure on $\cal Y$. Then
$$
\ln \int_{\cal X} e^{\int_{\cal Y}  g(x,y) d\rho (y) } d \mu(x) \leq \int_{\cal Y}  \left[ \ln \int_{\cal X}  e^{g(x,y)} d\mu (x) \right] d \rho(y) .
$$
\end{lemma}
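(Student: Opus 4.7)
The inequality is precisely Jensen's inequality for the convex functional
$$ F(f) := \ln \int_{\cal X} e^{f(x)}\, d\mu(x) $$
applied along the probability measure $\rho$: the right-hand side reads $\int_{\cal Y} F(g(\cdot, y))\, d\rho(y)$, while the left-hand side is $F\bigl( \int_{\cal Y} g(\cdot, y)\, d\rho(y) \bigr)$. My plan is to establish convexity of $F$ on bounded measurable functions via Hölder, deduce the finitely supported case of the lemma, and then extend to arbitrary $\rho$.

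For convexity, given bounded measurable $f_1, f_2$ and $\alpha \in [0,1]$, Hölder's inequality with conjugate exponents $1/\alpha$ and $1/(1-\alpha)$ gives $\int (e^{f_1})^\alpha (e^{f_2})^{1-\alpha} d\mu \leq ( \int e^{f_1} d\mu )^\alpha ( \int e^{f_2} d\mu )^{1-\alpha}$, and taking logarithms yields convexity of $F$. Iterating (or invoking generalized Hölder directly), for any $\rho = \sum_{i=1}^n \rho_i \delta_{y_i}$ with weights $\rho_i \geq 0$ summing to $1$, the lemma is immediate upon setting $f_i = g(\cdot, y_i)$. For a general probability measure $\rho$ on $\cal Y$, one approximates by a refining sequence of discrete measures $\rho_n$ obtained by partitioning $\cal Y$ into finer measurable cells and choosing a representative in each cell; boundedness of $g$ furnishes a uniform dominating constant, so dominated convergence should carry the discrete inequality through, provided one controls $\int g(x, y)\, d\rho_n(y) \to \int g(x, y)\, d\rho(y)$ for $\mu$-a.e.\ $x$.

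The main obstacle is precisely this last step: since $g$ is only assumed measurable, the Riemann-type discrete approximations need not converge pointwise without further care in the choice of representatives. A clean way to bypass it altogether is the Donsker--Varadhan variational formula
$$ F(f) = \sup_{\pi} \Bigl\{ \int_{\cal X} f\, d\pi - H(\pi \mid \mu) \Bigr\}, $$
the supremum taken over probability measures $\pi$ on $\cal X$. For any such $\pi$, the fact that $\rho$ is a probability measure combined with Fubini gives
$$ \int_{\cal X} \Bigl( \int_{\cal Y} g(x,y)\, d\rho(y) \Bigr) d\pi(x) - H(\pi \mid \mu) = \int_{\cal Y} \Bigl( \int_{\cal X} g(x,y)\, d\pi(x) - H(\pi \mid \mu) \Bigr) d\rho(y), $$
and bounding the bracketed integrand pointwise in $y$ by $F(g(\cdot, y))$ before taking the supremum in $\pi$ yields the claimed inequality with no explicit discretization. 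I would favor this variational route for the final write-up, and keep the Hölder/approximation argument as a more elementary alternative.
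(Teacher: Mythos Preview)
The paper does not prove this lemma; it is quoted from \cite{RASaihp11} and used as a black box. The only commentary the paper offers is the footnote explaining the name ``infinite-dimensional Jensen'': the functional $f \mapsto \ln \int_{\cal X} e^{f}\,d\mu$ is convex, and one is applying Jensen with the random function $f = g(\cdot, y)$, $y \sim \rho$. Your proposal makes precisely this idea rigorous, so there is nothing to compare against beyond noting that your framing matches the paper's.

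Your Donsker--Varadhan route is correct and is the cleaner of the two you offer. Writing $F(f) = \sup_\pi \{ \int f\,d\pi - H(\pi\mid\mu) \}$ exhibits $F$ as a supremum of affine functionals in $f$; Fubini (legitimate since $g$ is bounded) moves the $\rho$-integral inside, the pointwise-in-$y$ bound $\int g(\cdot,y)\,d\pi - H(\pi\mid\mu) \le F(g(\cdot,y))$ follows from the variational formula itself, and taking the supremum in $\pi$ finishes. No approximation is needed. Your first route via H\"older and discretization is sound for finitely supported $\rho$, but, as you yourself flag, passing to general $\rho$ with only measurable $g$ is not immediate from a naive choice of cell representatives; one would need e.g.\ a martingale-convergence argument with $\rho_n$ the conditional expectation of $g(x,\cdot)$ on a refining partition. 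The variational argument sidesteps this entirely and is the one to keep.
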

We apply it with $\rho=\P, \mu=P, g(x,y)=\b \om(V_t)$ to get the desired bound\footnote{We explain in this note 
why the Lemma is an infinite-dimensional version of Jensen's inequality:  
the functional $\psi(f)= \ln \int_{\cal X} e^{f(x)} d\mu(x)$ is convex, and the function $f(\cdot)=g(\cdot, y)$ is randomly chosen with $\rho(dy)$.}.
However this bound is not so great here, since the simple one $p(\b,\nu) \geq t^{-1} \P[ \ln Z_t]$ for a fixed $t$ (which comes from superadditivity of $u(t)$)
is not linear, but strictly convex in $\b$ and then already better.
%
%

Item \ref{eq:p2} is the standard convexity of free energy, 
$$
\frac{\partial^2}{\partial \b^2} \ln Z_t = {\rm Var}_{\gibbs} \big( \om(V_t)) \big) > 0\;,
$$
where ${\rm Var}_{\gibbs}$ denotes the variance under the polymer measure in a fixed environment $\om$.

We now turn towards item \ref{eq:p3}, in the case $\b \geq 0$ (the other case being similar). We use specific properties of the medium, infinite divisibility: 
for $\nu, \D>0$, we note that the superposition 
$\om+\hat \om$ of two independent PPP with intensities $\nu$ and $\D$ is  a PPP with intensity $\nu+\D$. Writing $\P$ the expectation 
over both variables ${\om,\hat \om}$, we compute by conditioning
\begin{eqnarray} \nn
\P \ln Z_t(\om) & \stackrel{\b \geq 0}{\leq} & 
\P \ln Z_t(\om + \hat \om) \\ &=& \P \P\left[ \ln Z_t(\om + \hat \om) \vert \om\right] \nn \\
&\stackrel{\rm Jensen}{\leq} & \P \ln \P\left[ Z_t(\om + \hat \om)  \vert \om\right] \nn \\
&=& \P \ln Z_t (\om) + t \D  \lambda(\b) r^d. \nn
\end{eqnarray}
This proves  monotonicity of $\psi$  in $\nu$. This proves  at the same time  continuity in $\nu$ (locally uniformly in $\b$) and the joint continuity in $(\b, \nu)$.

\medskip

The plain identity $\P [Yf(Y)]= \theta \P[f(Y+1)]$ for a r.v. $Y$ distributed as a Poisson law with mean $\theta$ has a counterpart for PPP,
an integration by parts formula known as Slivnyak-Mecke formula (e.g., p.50 in \cite{KendallStoyanMecke} or th.~4.1~in \cite{LastPenrose}):  Let $\cM$ be the space of point measures on $(0,t] \times \rd$ and $h: 
\R_+ \times \rd \times {\cM} \to \R_+$ measurable, then
\begin{equation}
\label{eq:IPP}
\P \left[ \int h(s,x; \om_t) \om_t(ds,dx) \right]= \int_{(0,t]\times \rd}  \P\big[ h(s,x; \om_t + \delta_{s,x}) \big]\; \nu ds dx  \;.
\end{equation}
With this in hand, we can show monotonicity of $\psi$ in $\b$: 
\begin{eqnarray} 
\frac{\partial }{\partial \b} \P \big[ \ln Z_t \big]
 \nn &=& \P \gibbs [\om(V_t)]   \\
 \nn &=& \P \int \om_t(ds dx)  \frac{ P\left[ \chi_{s,x} e^{\b \om(V_t)} \right]}{Z_t}\\
\nn &\stackrel{\eqref{eq:IPP}}{=}& \P \int_{(0,t]\times \rd} \nu ds dx  \frac{ P\left[ \chi_{s,x} e^{\b (\om(V_t)+\delta_{s,x})} \right]}{P\left[ 
e^{\b (\om(V_t)+\delta_{s,s})} \right]}  \\
\nn &\stackrel{\eqref{eq:linBer}}{=}& \P \int_{(0,t]\times \rd} \nu ds dx  \frac{ P\left[ e^\b \chi_{s,x} e^{\b \om(V_t)} \right]}{P\left[ \big(\lambda(\b)\chi_{s,x}
+1\big) e^{\b \om(V_t)} \right]} \\
\label{eq:meanenergy} 
&{=}& \P \int_{(0,t]\times \rd} \nu ds dx  \frac{ e^\b \gibbs[\chi_{s,x}]}{1+ \lambda(\b) \gibbs[\chi_{s,x}]} \;.
\end{eqnarray}
Define 
\begin{equation}
\label{def:psit}
\psi_t(\b,\nu) = t^{-1} \P \left[ \nu \lambda r^d -\ln Z_t \right] .
\end{equation}
With the identity
$$ \frac{\partial }{\partial \b} \nu \lambda(\b)r^dt = \nu e^{\b} r^d t =  e^{\b} \int_{(0,t]\times \rd} \nu ds dx \gibbs[ \chi_{s,x}]$$
we obtain
\begin{equation}
\label{eq:psi'}
\frac{\partial }{\partial \b}  \psi_t(\b,\nu) = \frac{1}{t}e^\b \lambda(\b) \nu 
 \int_{(0,t]\times \rd}  ds dx \; \P \frac{ \gibbs[  \chi_{s,x}]^2}{1+ \lambda(\b) \gibbs[\chi_{s,x}]} \;,
 \end{equation}
which has the sign of $\b$. So the limit $\psi$ of $\psi_t$ is increasing in $|\b |$. \qed
\section{Phase transition}
An important  consequence of  monotonicity and continuity of $\psi$ in $|\b|$ in Proposition \ref{prop:p} is the existence and uniqueness of 
the critical temperatures introduced in the next statement, which is a direct consequence of the above.
\begin{theorem}\label{defdef..}
There exist $\b_c^+(\nu), \b_c^-(\nu)$ with  $-\8 \leq \b_c^- \leq 0 \leq \b_c^+ \leq + \8$ such that 
%
\begin{equation} \label{eq:highlow}
\left\{ 
\begin{array}{ccc}
\psi(\b,\nu)=0 \quad & {\rm if}  & \b \in [\b_c^-,\b_c^+]  \\
\psi(\b,\nu)>0 \quad  & {\rm if}  & \b < \b_c^- \; {\rm or} \; \b > \b_c^+.  
\end{array}
\right.
\end{equation}
Moreover,  $|\b_c^\pm(\nu)|$ is non-increasing in $\nu$.
\end{theorem}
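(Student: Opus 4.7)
The statement is a direct distillation of the properties of $\psi$ proved in Proposition \ref{prop:p}, so the plan is essentially to exploit monotonicity in $|\beta|$, non-negativity (the annealed bound), joint continuity, and monotonicity in $\nu$, in sequence.

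First, I would observe the starting point $\psi(0,\nu)=0$. Indeed, at $\beta=0$ one has $\lambda(0)=0$ and $Z_t(\omega,0,r)\equiv 1$, hence $p(0,\nu)=0$. Together with the annealed bound from item \ref{eq:p1} of Proposition \ref{prop:p}, namely $p(\beta,\nu)\leq \nu\lambda(\beta)r^d$, this gives $\psi\geq 0$ everywhere, and $\psi=0$ at $\beta=0$. So the nontrivial structure to extract is really where $\psi$ leaves $0$.

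Next I would set
\[
\beta_c^+(\nu)=\sup\{\beta\geq 0:\psi(\beta,\nu)=0\},\qquad \beta_c^-(\nu)=\inf\{\beta\leq 0:\psi(\beta,\nu)=0\}.
\]
By item \ref{eq:p3} of Proposition \ref{prop:p}, $\beta\mapsto\psi(\beta,\nu)$ is non-decreasing in $|\beta|$, hence non-decreasing on $[0,\infty)$ and non-increasing on $(-\infty,0]$. Combined with $\psi(0,\nu)=0$ and $\psi\geq 0$, this shows that on $[0,\infty)$ the zero set of $\psi(\cdot,\nu)$ is exactly an interval $[0,\beta_c^+]$ (possibly $[0,\infty)$ if $\beta_c^+=\infty$), and similarly on the negative side. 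To include the endpoint $\beta_c^+$ itself in the zero set when it is finite, I would use joint continuity from item \ref{eq:p3}: take $\beta_n\nearrow\beta_c^+$ with $\psi(\beta_n,\nu)=0$, and pass to the limit. This proves \eqref{eq:highlow}.

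Finally, for the monotonicity of $|\beta_c^\pm|$ in $\nu$, I would again invoke item \ref{eq:p3}: $\psi$ is non-decreasing in $\nu$. Therefore, if $\nu_1<\nu_2$ and $\psi(\beta,\nu_2)=0$, then $0\leq\psi(\beta,\nu_1)\leq\psi(\beta,\nu_2)=0$, so the zero set at level $\nu_1$ contains the one at level $\nu_2$. This inclusion of zero sets translates directly to $|\beta_c^\pm(\nu_1)|\geq|\beta_c^\pm(\nu_2)|$, which is the claimed monotonicity. There is no real obstacle; the only mild subtlety is checking that the endpoints belong to the zero set, which is handled by continuity, as above.
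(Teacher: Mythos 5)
Your proof is correct and follows essentially the same route as the paper, which states the theorem as a direct consequence of Proposition \ref{prop:p}: you use $\psi\geq 0$ (annealed bound), $\psi(0,\nu)=0$, monotonicity of $\psi$ in $|\b|$ to identify the zero set as an interval around $0$, continuity to include the finite endpoints, and monotonicity in $\nu$ for the last claim. The details you fill in (definition of $\b_c^\pm$ as sup/inf of the zero set, the limiting argument at the endpoint) are exactly the intended ones.
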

These values $\b_c^+(\nu), \b_c^-(\nu)$ are called \emph{ critical} (inverse)  \emph{ temperatures} at density $\nu$ (they depend on $r$ as well).
The domains in the $(\b,\nu)$-half-plane defined by the first and second line in \eqref{eq:highlow} are called {\em high and low temperature region}
respectively. The boundary between the two regions is called the {\em critical line}, and a {\em phase transition} in the statistical mechanics sense occurs:
the quenched free energy $p(\b,\nu)$ is equal to the annealed free energy $p^{(1)}(\b,\nu)=\nu \lambda r^d$ 
-- an analytic function -- but analyticity of  $p(\b,\nu)$ breaks down when crossing the critical line.  
\medskip

To summarize our finding, we define the  high temperature region and the  low temperature region
\begin{equation} \nn
{\cal D} = \{ (\b,\nu): \psi(\b, \nu)=0 \}, \qquad {\cal L}  = \{ (\b,\nu): \psi(\b, \nu)>0\}.
\end{equation}
They are are delimited by the critical lines $\b_c^-(\nu)$ and $\b_c^+(\nu)$ from Definition \ref{defdef..}.
In the next sections we will discuss non-triviality of the critical lines, as well as fine properties. In section \ref{ch_overlap} we will understand that they correspond to delocalized  or localized
behavior respectively.

\chapter{Weak Disorder, Strong Disorder } \label{ch:3}
\section{The normalized partition function} In this section, we introduce a natural martingale that will play an important  role in many results concerning the asymptotic behavior of the polymer. 

For any fixed path of the brownian motion, $\{\om(V_t)\}_{t\geq 0}$ is a Poisson process of intensity $\nu r^d$ and has associated exponential martingales $\{\exp \lef(\b \om(V_t)-\lambda(\b)\nu r^d t\rig)\}_{t\geq 0}$. Hence, for	$t\geq 0$, the \emph{normalized partition function}
\begin{equation} \label{eq:Wt}
W_t = e^{-\lambda (\b)\nu r^d t} Z_t,
\end{equation}
defines a positive, mean $1$, c\`adl\`ag martingale with respect to $\{\mathcal{G}_t\}_{t\geq 0}$.

By Doob's martingale convergence theorem \cite[Chapter 2, Corollary 2.11]{revuzYor}, we get the existence of a random variable $W_\infty$ such that
\begin{equation}
W_\infty = \lim_{t\to\infty} W_t \  \text{ a.s.}
\end{equation} 

\begin{theorem}
There is a dichotomy: either the limit $W_\infty$ is almost-surely positive, or it is almost-surely zero. Otherwise stated, we have either
\begin{equation}  \label{eq:WD}
\IP \{W_\infty > 0\} = 1,
\end{equation}
or
\begin{equation} \label{eq:SD}
\IP \{W_\infty = 0\} = 1.
\end{equation}
\end{theorem}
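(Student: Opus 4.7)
The plan is to identify $\{W_\infty = 0\}$ with a tail event for the independent slices of the Poisson environment, and then apply Kolmogorov's 0-1 law. Decompose $\om = \sum_{k \geq 1} \om_k$ with $\om_k = \om|_{(k-1, k] \times \R^d}$; by the independence property of the PPP, the $\om_k$ are mutually independent. Setting $\cT_N = \sigma(\om|_{(N, \infty) \times \R^d})$, it suffices to show $\{W_\infty = 0\} \in \cT_N$ up to $\P$-null sets for every $N \geq 0$: the event will then lie in the tail $\bigcap_N \cT_N$ and Kolmogorov's 0-1 law delivers the claimed dichotomy.

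The bridge to $\cT_N$ is the Markov identity \eqref{eq:Markov}. Applying it at time $N$ and dividing by $W_N$, which is positive $\P$-a.s. since $Z_N>0$, then passing to the a.s.\ limit as $s \to \infty$ yields
\[
\frac{W_\infty}{W_N} \;=\; \lim_{s \to \infty} P_N^{\b, \om}\bigl[W_s \circ \theta_{N, B_N}\bigr] \quad \text{a.s.,}
\]
so that $\{W_\infty = 0\}$ coincides with the zero set of this limit. The integrand $W_s \circ \theta_{N, B_N}$ depends on the environment only through $\om|_{(N, N+s]}$, but the Gibbs measure $P_N^{\b, \om}$ carries the pre-$N$ environment; the work is to strip off that dependence. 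To do so I would use that $P_N^{\b, \om}$ has strictly positive density $e^{\b \om(V_N)}/Z_N$ with respect to the Wiener measure $P$, hence is mutually absolutely continuous with $P$ on path space. Therefore, for any nonnegative $\varphi$, $P_N^{\b, \om}[\varphi] = 0$ iff $P[\varphi] = 0$. Combined with Fatou's lemma applied to the a.s.\ convergence $W_s \circ \theta_{N, B_N} \to W_\infty \circ \theta_{N, B_N}$, I would aim at the identification
\[
\{W_\infty = 0\} \;=\; \bigl\{P\bigl[W_\infty \circ \theta_{N, B_N}\bigr] = 0\bigr\} \quad \P\text{-a.s.,}
\]
the right-hand side being manifestly $\cT_N$-measurable.

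The main obstacle is the reverse inclusion in the last identification. Fatou's lemma gives immediately that $W_\infty = 0$ forces $P_N^{\b, \om}[W_\infty \circ \theta_{N, B_N}] = 0$ and hence, via mutual absolute continuity, $P[W_\infty \circ \theta_{N, B_N}] = 0$. The converse is delicate: one must show that whenever $W_\infty \circ \theta_{N, x} = 0$ for Lebesgue-a.e.\ $x$, the full limit $\lim_s P_N^{\b, \om}[W_s \circ \theta_{N, B_N}]$ is also zero. This is a genuine uniform integrability issue for the family $(W_s \circ \theta_{N, \cdot})_{s \geq 0}$ under $P_N^{\b, \om}$, as the martingale $s \mapsto W_s \circ \theta_{N, x}$ need not be UI in strong disorder. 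I would handle it by exploiting the mean-one normalization of $W_s$ at each $x$, together with a Fubini-type argument swapping the Brownian integration and the limit in $s$, supported by the mutual absolute continuity of $P_N^{\b, \om}$ and $P$. Once the identification is established, Kolmogorov's 0-1 law closes the proof.
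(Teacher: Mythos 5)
Your architecture is the same as the paper's: turn the Markov identity \eqref{eq:Markov2} into the self-consistency statement, identify $\{W_\infty=0\}$ with an event measurable with respect to the post-$N$ environment, and conclude with Kolmogorov's 0--1 law. The problem is that you stop exactly at the step that carries all the content. The reverse inclusion you flag as ``the main obstacle'' --- that $W_\infty\circ\theta_{N,x}=0$ for Lebesgue-a.e.\ $x$ forces $W_\infty=0$, equivalently the identity \eqref{eq:self_consistent_equation} --- is never proved, and the tools you propose do not assemble into a proof: mutual absolute continuity of $P_N^{\b,\om}$ and $P$ only transfers null sets and gives no help whatsoever with interchanging $\lim_{s}$ and the Brownian expectation, while the uniform-integrability framing is a dead end, since in strong disorder $(W_s\circ\theta_{N,\cdot})_{s\ge 0}$ is exactly the kind of mean-one family that loses its mass in the limit. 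The paper closes this step not by an interchange of limits but by a short expectation argument after Fatou: once you have the a.s.\ inequality $W_\infty \ge P\bigl[e_N\, W_\infty\circ\theta_{N,B_N}\bigr]$, note that by Fubini, the independence under $\IP$ (for each fixed path) of $e_N$ and $W_\infty\circ\theta_{N,x}$, the normalization $\IP[e_N]=1$ and shift invariance, both sides have the same finite $\IP$-expectation $\IP[W_\infty]\le 1$; hence they are a.s.\ equal, and since $e_N>0$ and $B_N$ has an everywhere positive density, $\{W_\infty=0\}$ coincides a.s.\ with the $\cT_N$-measurable event $\{W_\infty\circ\theta_{N,x}=0\ \text{for a.e.\ }x\}$. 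Without this (or an equivalent) ingredient your proof does not establish the dichotomy.

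A second, smaller gap sits in your Fatou step: you invoke ``the a.s.\ convergence $W_s\circ\theta_{N,B_N}\to W_\infty\circ\theta_{N,B_N}$,'' but almost-sure convergence is only available for each fixed shift $x$; you need it simultaneously for Lebesgue-a.e.\ $x$ on a single full-$\IP$-measure set before integrating over $B_N$. This is repairable, either by a Fubini argument on the product of the environment space with $\R^d$, or, as the paper does, by discretizing space into cubes and controlling the replacement error with the mirror-coupling continuity estimate of Lemma \ref{lm:distanceWtxWt0} together with Doob's inequality; but as written it is asserted rather than justified.
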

\begin{proof}
Denote by $e_t$ the renormalized weight 
\begin{equation}
e_t = \exp(\b \om(V_t(B)) - \lambda(\b)\nu r^d t)
\end{equation}
By the Markovian property \eqref{eq:Markov2}, we get that for all positive times $t$ and $s$,
\begin{equation} \label{eq:WtIntegralEquation}
W_{s+t} = P [e_{t} \, W_s \circ \theta_{t,B_t} ].
\end{equation}
In Section \ref{sec:proofSelfconsistent}, we will justify that one can take the limit as $s\to\infty$ in this equality, in order to get that
\begin{equation} \label{eq:WinfinityIntegralEquation2}
W_\infty = P [e_{t} \, W_\infty \circ \theta_{t,B_t} ].
\end{equation}
Then, notice that \eqref{eq:WinfinityIntegralEquation2} also writes 
\begin{equation} \label{eq:WinfinityIntegralEquation}
W_\infty = W_t \, \int_{\mathbb{R}^d} P_t^{\b,\om} (B_t \in \mathrm{d} x)\, W_\infty \circ \theta_{t,x}.
\end{equation}
Since $W_t > 0$ $\IP$-a.s and since $P_t^{\b,\om}$ has positive density with respect to Lebesgue's measure, we obtain by \eqref{eq:WinfinityIntegralEquation} that
\[ \forall t>0, \quad \{W_\infty = 0\} = \{W_\infty \circ \theta_{t,x} = 0, \ x\text{-a.e.}\}, \]
or, equivalently,
\[  \{W_\infty = 0\} = \left\{ \int_{\mathbb{R}^d} P (B_t \in \mathrm{d} x)\, W_\infty \circ \theta_{t,x} =0 \right\}.
 \]
The event of the right-hand side belong 
to the $\sigma$-field $\mathcal{G}_{[t,\infty)} = \sigma\left(\om(A) ; A \in \mathcal{B}([t,\infty) \times \mathbb{R}^d) \right)$ completed by null sets, so
\[\{W_\infty = 0\} \in \bigcap_{t>0}^\infty \mathcal{G}_{[t,\infty)}.\]
The theorem now follows from Komogorov's 0-1 law.
\end{proof}

This dichotomy calls for a definition.

\begin{definition} We say that the polymer is in the \textbf{weak disorder} phase when $W_\infty > 0$ almost surely. We say it is in the \textbf{strong disorder} phase when $W_\infty = 0$ almost surely.
\end{definition}

The phase diagram is connected in the $\beta$-parameter space. 

\begin{theorem}\label{th:313} There exist two critical parameters ${\bar{\b}_c}^- \in [-\infty,0]$ and ${\bar{\b}_c}^+ \in [0,\infty]$, depending only on $\nu,r$ and $d$, such that
\begin{itemize}
\item For all $\b \in ({\bar{\b}_c}^-,{\bar{\b}_c}^+) \,\cup \{0\}$, the polymer belongs to the weak disorder phase.
\item For all $\b \in \mathbb{R}\setminus [{\bar{\b}_c}^-,{\bar{\b}_c}^+]$, the polymer belongs to the strong disorder phase.
\end{itemize}
\end{theorem}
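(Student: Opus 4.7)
The claim is structural: the weak-disorder set $\mathcal{W}:=\{\beta:\mathbb{P}(W_\infty(\beta)>0)=1\}$ should be, up to possibly its boundary, an interval around $0$. Since $W_t(0)\equiv 1$ we have $0\in\mathcal{W}$, and my starting point would be to set
$$\bar{\beta}_c^+:=\sup\{\beta\geq 0:\beta\in\mathcal{W}\}\in[0,+\infty],\qquad \bar{\beta}_c^-:=\inf\{\beta\leq 0:\beta\in\mathcal{W}\}\in[-\infty,0].$$
The second bullet of the theorem is then immediate: for $\beta\notin[\bar\beta_c^-,\bar\beta_c^+]$, we have $\beta\notin\mathcal{W}$ by the definition of sup/inf, whence by the dichotomy just proven $W_\infty(\beta)=0$ almost surely.

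The first bullet reduces, via the definition of supremum and infimum, to the following monotonicity statement: if $0<\beta_1<\beta_2$ and $W_\infty(\beta_2)>0$ almost surely, then $W_\infty(\beta_1)>0$ almost surely (with a symmetric statement for negative $\beta$). My approach would be to characterize weak disorder as the uniform integrability of the non-negative martingale $W_t(\beta)$ (equivalently, $\mathbb{E}[W_\infty(\beta)]=1$, by Scheffé combined with Fatou), and then to transfer uniform integrability from $\beta_2$ down to $\beta_1$.

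To effect the transfer, I would exploit the size-biased probability measure $d\mathbb{Q}^{\beta_2}=W_\infty(\beta_2)\,d\mathbb{P}$, which is a bona fide probability law precisely because weak disorder holds at $\beta_2$. Under $\mathbb{Q}^{\beta_2}$ the law of the environment can be described via Slivnyak--Mecke \eqref{eq:IPP}: it is enriched by an extra point-process component of intensity $\lambda(\beta_2)\nu$ along a tube around a polymer path sampled according to the limiting polymer measure. Combined with the association (FKG) properties of the Poisson point process, this tilt should allow one to control the $L^1(\mathbb{P})$-norm of $W_t(\beta_1)$ via an explicit coupling between the two temperatures on the same environment.

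The main obstacle is precisely this monotonicity step. A naive Jensen/Hölder comparison gives $Z_t(\beta_1)\leq Z_t(\beta_2)^{\beta_1/\beta_2}$, but after normalizing to $W_t$ one is left with an exponentially growing prefactor $\exp\{[(\beta_1/\beta_2)\lambda(\beta_2)-\lambda(\beta_1)]\nu r^d t\}$, which is strictly positive by the strict convexity of $\lambda$. This factor rules out a direct transfer of uniform integrability, so a genuinely probabilistic argument along the size-biasing/FKG lines above is essential; carrying out this comparison cleanly, without destroying integrability by introducing the extra Poissonian layer, is the technical heart of the proof.
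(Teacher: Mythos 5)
Your reduction of the theorem to a monotonicity statement (weak disorder at $\beta_2$ implies weak disorder at $\beta_1$ whenever $0<\beta_1<\beta_2$, plus the symmetric statement for $\beta<0$) is correct, and your observation that the naive H\"older comparison fails because of the exponential prefactor $\exp\{[(\beta_1/\beta_2)\lambda(\beta_2)-\lambda(\beta_1)]\nu r^d t\}$ is accurate. But the proposal stops exactly where the proof has to begin: the monotonicity itself is never established. The size-biasing by $W_\infty(\beta_2)$ combined with Slivnyak--Mecke and FKG is offered only as a plan, with the explicit admission that carrying it out is ``the technical heart of the proof''; no coupling is constructed, no estimate is proved, and it is not shown that the proposed tilt preserves the integrability needed to compare the two temperatures. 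Moreover, the characterization of weak disorder by uniform integrability that you take as a starting point is itself a nontrivial result (Proposition \ref{prop:UI}, whose proof relies on the self-consistency equation and Lemma \ref{lm:distanceWtxWt0}), so even the reduction rests on machinery not supplied in the proposal.

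The paper's proof sidesteps any two-temperature comparison. Fix $\theta\in(0,1)$; since $\sup_t\mathbb{P}[(W_t^\theta)^{1/\theta}]=\sup_t\mathbb{P}[W_t]=1$, the family $(W_t^\theta)_t$ is uniformly integrable, hence $\mathbb{P}[W_t^\theta]\to\mathbb{P}[W_\infty^\theta]$, and by the $0$--$1$ dichotomy this limit is strictly positive exactly in weak disorder and zero exactly in strong disorder. It therefore suffices to show that, for each fixed $t$, the scalar function $\beta\mapsto\mathbb{P}[W_t^\theta]$ is non-increasing in $|\beta|$, and to define $\bar{\beta}_c^\pm$ from the limiting quantity. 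Differentiating in $\beta$ and, for a fixed Brownian path, tilting the environment law by $e^{\beta\omega(V_t)-\lambda\nu r^d t}$ --- which by Proposition \ref{prop:PoissonGirsanov} is again a Poisson point process --- one writes the derivative as $\theta\, P\,\mathbb{P}^\beta\big[W_t^{\theta-1}(\omega(V_t)-\lambda'\nu r^d t)\big]$; since $\omega(V_t)$ is an increasing functional of the point process while $W_t^{\theta-1}$ is decreasing in $\omega$ for $\beta\ge 0$ (increasing for $\beta<0$), the Harris--FKG inequality bounds this by a product whose second factor vanishes. This single-scalar monotonicity of a fractional moment, rather than a transfer of uniform integrability between two temperatures, is precisely the ingredient missing from your proposal.
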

\begin{proof}
Let $\theta$ be a real number in $(0,1)$ and denote $Y_t = W_t^{\theta}$ for all $t\geq 0$. The family $(Y_t)_{t\geq 0}$ is a collection of positive random variables verifying  \[ \sup_{t\geq 0} \IP \big[ Y_t^{1/\theta} \big] = \sup_{t\geq 0} \IP [W_t] = 1 < \infty.\] As $1/\theta$ is strictly greater than $1$, this relation implies the uniform integrablity of $(Y_t)_{t\geq 0}$. Since the process $(W_t^\theta)_{t\geq 0}$ converges almost surely to $W_\infty^\theta$, we get from uniform integrability that
\begin{equation} \label{eq:CV_Wtheta}
\lim_{t\to \infty} \IP \left[W_t^{\theta} \right] = \IP \left[ W_\infty^\theta \right].
\end{equation}
Now, one can observe that the right hand side term is positive if and only if \eqref{eq:WD} holds and that it is zero if and only if \eqref{eq:SD} holds. To prove the theorem, it is then enough to prove that $\b \mapsto \IP \left[ W_\infty^\theta \right]$ is a non-increasing function of $|\b|$ and choose for example 
\begin{equation}
\label{equation:barbetac}
{\bar{\b}_c}^+ = \inf\{\b \geq 0 : \IP \left[ W_\infty^\theta \right] = 0\} \;,
\end{equation}
which does not depend on $\theta \in (0,1)$.
Using \eqref{eq:CV_Wtheta}, we now just have to show that $\b \mapsto \IP \left[ W_t^\theta \right]$ is an non-increasing function of $|\beta|$ for all positive $t$.
By standard arguments, we get that
\begin{align*} \label{eq:derivee_PWt} \frac{\partial}{\partial \b} \IP \left[ W_t^\theta \right] 
& = \IP \left[ \theta W_t^{\theta-1} \frac{\partial}{\partial \b} W_t  \right]\\
& =  \IP \left[ \theta W_t^{\theta-1} P \left[ \left(\om(V_t) - \lambda'(\b) \nu r^d t \right) e^{\b \om(V_t) -\lambda (\b)\nu r^d t}\right] \right]\\
& = \theta \, P \, \IP \left[ W_t^{\theta-1} \left(\om(V_t) - \lambda'(\b) \nu r^d t \right) e^{\b \om(V_t) -\lambda (\b)\nu r^d t} \right].
\end{align*}
Introducing the probability measure $\IP^\b$ on point measures, given by
\[\mathrm d\IP^\b( \om) = e^{\b \om(V_t) - \lambda r^d \nu t} \mathrm d\IP(\om),\]
the derivative of $\IP[ W_t^\theta ]$ is now given by
\begin{equation} \label{eq:derivee_PWt}
\frac{\partial}{\partial \b} \IP \left[ W_t^\theta \right] = \theta \, P \  \IP^\b \left[W_t^{\theta - 1} (\om(V_t) - r^d \nu \lambda' t ) \right].
\end{equation}
In Proposition \ref{prop:PoissonGirsanov} just below, we will see that under the probability measure $\IP^\b$, $\om$ is a Poisson point process on $\mathbb{R}_+ \times \mathbb{R}$.

We can then use the \emph{Harris-FKG inequality} for Poisson processes \cite[th. 11 p. 31]{Last2016} in order to bound the above expectation. Indeed, the variable $\om(V_t) - r^d \nu \lambda' t$ is an increasing function of the point process and by definition, the process $W_t^{\theta - 1}$ is then a decreasing function of $\om$ when $\b \geq 0$ (resp. increasing when $\b < 0$). Applying the FKG inequality, we find that for positive $\b$
\begin{equation} \label{eq:FKG-application}
\IP^\b \left[W_t^{\theta - 1} (\om(V_t) - \lambda' r^d \nu t ) \right ] \leq  \  \IP^\b \left[W_t^{\theta -1} \right] \IP^\b \left[(\om(V_t) - r^d \nu \lambda' t )\right] = 0,
\end{equation}
where the last equality is a result of the relation
\[\IP [\om(V_t) e^{\b \om(V_t)}] =  \lambda'(\b) r^d \nu t.\]
The same result with opposite inequality comes when $\b < 0$.
Thus, we get from \eqref{eq:derivee_PWt} and \eqref{eq:FKG-application} that $\IP[W_t^\theta]$ is a non-increasing function of $|\b|$.
\end{proof}
We recall at this point that Poisson processes with mutually absolutely continuous intensity measures are themselves mutually absolutely continuous.
\begin{proposition} \label{prop:PoissonGirsanov} Let $\eta$ be a Poisson point process on a measurable space $E$, of intensity measure $\mu$. Let $f$ be a function such that $e^f-1 \in L^1(\mu)$. Then, under the probability measure $\mathbb{Q}$ defined by 
\[ \frac{\rmd \mathbb{Q}}{\rmd \mathbb{P}} = \exp\left(\int_E f(x) \, \eta(\rmd x) - \int_E (e^{f(x)}-1) \, \rmd \mu(x)\right),\]
the process $\eta$ is a Poisson point process of intensity measure $e^f \, \rmd \mu$.
\end{proposition}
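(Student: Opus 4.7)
The plan is to prove this by computing Laplace functionals: a point process is Poisson with intensity $\tilde\mu$ if and only if for every measurable test function $g$ with $e^g-1 \in L^1(\tilde\mu)$ one has
\[
\mathbb{E}\bigl[\exp\bigl(\textstyle\int g\,d\eta\bigr)\bigr]
=\exp\bigl(\textstyle\int(e^g-1)\,d\tilde\mu\bigr),
\]
so it suffices to compute $\mathbb{E}_{\mathbb Q}[\exp(\int g\,d\eta)]$ and match it to $\exp(\int(e^g-1)e^f\,d\mu)$.

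As a preliminary, I would check that $\mathbb Q$ is a bona fide probability measure. Writing $L$ for the proposed density and applying the Poisson exponential moment formula \eqref{eq:PoissonExpMoment} (in the general form, which is what is being invoked here as a black box), one gets
\[
\mathbb{E}_{\mathbb P}[L]=e^{-\int(e^f-1)d\mu}\,\mathbb{E}_{\mathbb P}\bigl[\exp\bigl(\textstyle\int f\,d\eta\bigr)\bigr]
=e^{-\int(e^f-1)d\mu}\,e^{\int(e^f-1)d\mu}=1.
\]
The integrability assumption $e^f-1\in L^1(\mu)$ is exactly what makes \eqref{eq:PoissonExpMoment} applicable.

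Next, for a generic test function $g$ with $e^g-1\in L^1(e^f d\mu)$ (equivalently, $e^{f+g}-e^f\in L^1(\mu)$), I would compute
\begin{align*}
\mathbb{E}_{\mathbb Q}\bigl[\exp\bigl(\textstyle\int g\,d\eta\bigr)\bigr]
&=\mathbb{E}_{\mathbb P}\bigl[\exp\bigl(\textstyle\int(f+g)\,d\eta-\int(e^f-1)\,d\mu\bigr)\bigr]\\
&=\exp\bigl(-\textstyle\int(e^f-1)\,d\mu\bigr)\cdot\exp\bigl(\textstyle\int(e^{f+g}-1)\,d\mu\bigr),
\end{align*}
again by \eqref{eq:PoissonExpMoment} applied to $f+g$. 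The exponents combine into $\int(e^{f+g}-e^f)\,d\mu=\int(e^g-1)\,e^f\,d\mu$, which is exactly the Laplace functional of a Poisson point process with intensity $e^f\,d\mu$. Since Laplace functionals characterize the law of a point process, this identifies the law of $\eta$ under $\mathbb Q$.

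The main technical point, and the only potential obstacle, is the applicability of \eqref{eq:PoissonExpMoment} to functions that are not necessarily of fixed sign: the version stated earlier requires $h$ non-negative or non-positive, but here $f+g$ may change sign. I would handle this by approximation, first restricting to bounded $f,g$ with support in a set of finite $\mu$-measure (where the formula holds by splitting into positive and negative parts of the associated Poisson integrals, which are independent by the standard Poisson independence-on-disjoint-sets property), and then passing to the limit using the $L^1(\mu)$ assumption on $e^f-1$ and dominated convergence. Once the Laplace functional identity above is established for a sufficiently rich class of $g$, the characterization of Poisson point processes by Laplace functionals yields the conclusion.
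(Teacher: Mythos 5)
Your proposal is correct and follows essentially the same route as the paper: identify the law of $\eta$ under $\mathbb{Q}$ by computing its Laplace functional via the Poisson exponential moment formula \eqref{eq:PoissonExpMoment} and matching it with that of a Poisson process of intensity $e^f\,\rmd\mu$. The only cosmetic difference is that the paper works with $\mathbb{Q}\exp\{-\int g\,\rmd\eta\}$ for non-negative $g$ (so the test function enters with a minus sign), which largely sidesteps the sign/approximation issue you discuss, while your extra checks (normalization of $\mathbb{Q}$, truncation argument) are sound refinements rather than a different method.
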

\begin{proof}
Let $g$ be any non-negative measurable function. As the Laplace functional characterizes Poisson processes (theorem 3.9 in \cite{LastPenrose}), we compute it for the point process $\eta$ under the measure $\mathbb{Q}$:
\begin{align*}
\mathbb{Q} \exp\left\{-\int_E g(s) \, \eta(\rmd x)\right\} & = \mathbb{P} \exp \left\{ \int_{E} f(x)-g(x) \, \eta(\rmd x) \right\} e^{ -\int (e^{f(x)}-1) \, \rmd \mu(x) } \\
& =  \exp \left\{ \int_{E} (e^{ f(x)-g(x)}-1) \, \rmd \mu(x) \right\} e^{ -\int (e^{f(x)}-1) \, \rmd \mu(x) }\\
& =  \exp \left\{ \int_{E} (e^{-g(x)}-1) e^{f(x)} \, \rmd \mu(x) \right\},\\
\end{align*}
where the second equality is an application of \eqref{eq:PoissonExpMoment}.
The expression we obtain corresponds, as claimed, to a Poisson point process of intensity measure $e^f \, \rmd \mu$.
\end{proof}

\section{The self-consistency equation and UI properties in the weak disorder}
\subsection{Proof of the self-consistency equation on $W_\infty$} \label{sec:proofSelfconsistent}
In this section, we prove that one can take the limit in the identity $W_{s+t} = P [e_{t} \, W_s \circ \theta_{t,B_t}]$ and obtain the  equation of self-consistency:
\begin{equation}
\label{eq:self_consistent_equation}
W_\infty = P [e_{t} \, W_\infty \circ \theta_{t,B_t} ].
\end{equation}
A part of the problem is that we only have almost sure convergence of the $W_s \circ \theta_{t,x}$ for countable number of $x$'s. 
To deal with this issue, we show that the quantity
\begin{equation}W_t(x) := e^{-\lambda (\b)\nu r^d t} P_x\lef[\exp \lef(\b \om(V_t) \rig) \rig],
\end{equation} 
does not vary too much with $x$, in the sense of the following lemma:

\begin{lemma} \label{lm:distanceWtxWt0}
There exists a constant $C=C(\b,\nu,r)$, such that, for all $t\in [0,\infty]$ and $x,y\in\mathbb{R}^d$,
\begin{equation} \label{eq:distanceWtxWt0}
\IP\left[\left|W_t(x)-W_t(y)\right|\right] \leq C |x-y|.
\end{equation}
\end{lemma}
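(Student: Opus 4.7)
The strategy is to combine a mirror coupling of two Brownian motions with the martingale structure of the exponential weight so as to localise the tube mismatch to the interval before the coupling time. Let $H$ be the perpendicular bisector hyperplane of $x,y$, write $u=(x-y)/|x-y|$, and couple $B^x,B^y$ by setting $B^y_s=B^x_s-2\langle B^x_s-(x+y)/2,u\rangle u$ for $s\le \tau$ (with $\tau=\inf\{s:B^x_s\in H\}$) and $B^y_s=B^x_s$ for $s>\tau$. Since the trivial bound $\IP[|W_t(x)-W_t(y)|]\le\IP[W_t(x)]+\IP[W_t(y)]=2$ already gives the desired estimate when $|x-y|\ge 2/C$, we may assume $|x-y|$ is small.

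Introduce $e_t(B):=\exp(\beta\omega(V_t(B))-\lambda(\beta)\nu r^d t)$, so $W_t(x)=P[e_t(B^x)]$. By Fubini and the coupling, $\IP[|W_t(x)-W_t(y)|]\le P\,\IP[|e_t(B^x)-e_t(B^y)|]$. For $s>\tau$ the paths coincide, and both $e_t(B^x)$, $e_t(B^y)$ share the same post-$\tau$ multiplicative factor
\[
F:=\exp\!\bigl(\beta\omega(V_{(\tau\wedge t,\,t]}(B^x))-\lambda(\beta)\nu r^d(t-\tau\wedge t)\bigr),
\]
which, conditionally on the paths and on $\mathcal G_{\tau\wedge t}$, is a positive random variable of mean one, independent of the pre-coupling environment. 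Factoring $F$ out of $|e_t(B^x)-e_t(B^y)|$ and taking $\IP$-expectation collapses the inner expectation to $\IP[|e_{\tau\wedge t}(B^x)-e_{\tau\wedge t}(B^y)|]$. Decomposing the pre-coupling tubes into their intersection (volume $v$) and private pieces $D_x,D_y$ (both of volume $\delta:=r^d(\tau\wedge t)-v$), the counts $K_x:=\omega(D_x)$, $K_y:=\omega(D_y)$ are i.i.d.\ $\mathrm{Poisson}(\nu\delta)$ and independent of $\omega$ on the intersection, so
\[
\IP\bigl[|e_{\tau\wedge t}(B^x)-e_{\tau\wedge t}(B^y)|\bigr]=e^{-\lambda(\beta)\nu\delta}\,\IP\bigl[|e^{\beta K_x}-e^{\beta K_y}|\bigr].
\]
Cauchy--Schwarz, combined with the Poisson Laplace identity $\lambda(2\beta)-2\lambda(\beta)=\lambda(\beta)^2$, yields $\IP[|e^{\beta K_x}-e^{\beta K_y}|]\le\sqrt{2}\,e^{\lambda(\beta)\nu\delta}\sqrt{e^{\lambda(\beta)^2\nu\delta}-1}$, hence $\IP[|e_{\tau\wedge t}(B^x)-e_{\tau\wedge t}(B^y)|]\le\sqrt{2(e^{\lambda(\beta)^2\nu\delta}-1)}$.

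It remains to take the expectation over the coupled paths and produce a bound linear in $|x-y|$, uniform in $t$. Since $|U(B^x_s)\setminus U(B^y_s)|\le C r^{d-1}\min(|B^x_s-B^y_s|,\,2\gamma_d^{-1/d}r)$, one has $\delta\le C r^{d-1}\int_0^{\tau\wedge t}\min(\phi_s,\,2\gamma_d^{-1/d}r)\,ds$, where $\phi_s=|B^x_s-B^y_s|$ equals, under the mirror coupling, twice the distance of $B^x_s$ to $H$, i.e.\ twice a one-dimensional Brownian motion from $|x-y|/2$ stopped at the origin. Jensen's inequality (concavity of $\sqrt{\cdot}$) and the elementary $e^u-1\le u e^u$ reduce the target bound to $P[\delta\,e^{\lambda(\beta)^2\nu\delta}]\le C|x-y|^2$, uniformly in~$t$. \textbf{The main obstacle} is precisely this $t$-uniform estimate: the optional-stopping identity $P[\phi_s\mathbf 1_{s\le\tau}]=|x-y|$ only yields the growing bound $P[\delta]\le C r^{d-1}t\,|x-y|$. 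The crucial point is to exploit the truncation $\phi_s\wedge 2\gamma_d^{-1/d}r$ together with classical exit-time tail estimates for one-dimensional Brownian motion, so that the contribution of the atypical event $\{\tau\gg|x-y|^2\}$ remains controlled and both $P[\delta]$ and the exponential moment of $\delta$ stay bounded by $C|x-y|^2$ uniformly in $t\in[0,\infty]$.
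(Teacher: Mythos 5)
Your opening---the mirror coupling and the factorization of the post-coalescence weight of $\IP$-mean one, reducing everything to $P\,\IP\bigl[|e_{\tau\wedge t}(B^x)-e_{\tau\wedge t}(B^y)|\bigr]$---is exactly how the paper's proof begins. The gap comes immediately after, and it is not the deferred technicality you describe but a step that cannot be completed. By Cauchy--Schwarz you replace the inner expectation, which is trivially at most $2$, by $\sqrt{2(e^{\lambda(\beta)^2\nu\delta}-1)}$, where $\delta$ is of order $r^d(\tau\wedge t)$ on the event that the two paths stay separated by more than $2\gamma_d^{-1/d}r$ up to time $t$. Since the coupling time has only a polynomial tail, $P(\tau>z)\asymp|x-y|\,z^{-1/2}$, that event has probability of order $|x-y|/\sqrt{t}$, so $P\bigl[\sqrt{2(e^{\lambda(\beta)^2\nu\delta}-1)}\bigr]$ is at least of order $|x-y|\,e^{\lambda(\beta)^2\nu r^d t/2}/\sqrt{t}$ for large $t$, and is $+\infty$ at $t=\infty$: no constant uniform in $t$ survives the Cauchy--Schwarz step, because positive exponential moments of $\tau$ (hence of $\delta$) do not exist. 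The subsequent Jensen reduction aims at ``$P[\delta e^{\lambda(\beta)^2\nu\delta}]\le C|x-y|^2$ uniformly in $t$'', which is false already at first order: the expected occupation time of $\{|B^x_s-B^y_s|\ge 2\gamma_d^{-1/d}r\}$ before coupling is of order $|x-y|\sqrt{t}$ (integrate the survival probability $\asymp|x-y|/\sqrt{s}$), so $P[\delta]$ itself grows like $|x-y|\sqrt{t}$ and is infinite at $t=\infty$. No exit-time tail estimate can rescue this; the loss is intrinsic to squaring the weights.

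The paper never squares the weights. The two pre-coalescence weights coincide (both equal $e^{-\lambda(\beta)\nu r^d(\tau\wedge t)}$) unless at least one tube contains a Poisson point before $\tau\wedge t$, so after recombining one gets $\IP[|W_t(x)-W_t(y)|]\le 4\,P\,\IP\bigl[e_{\tau\wedge t}\mathbf{1}_{\{\om(V_{\tau\wedge t}(B))\ge 1\}}\bigr]$; since the weight has $\IP$-mean one, $\IP\bigl[e_{\tau\wedge t}\mathbf{1}_{\{\om(V_{\tau\wedge t}(B))\ge 1\}}\bigr]=1-\exp\bigl(-e^{\beta}\nu r^d(\tau\wedge t)\bigr)\le\min\{1,\,e^{\beta}\nu r^d\tau\}$, which stays bounded on late-coupling events instead of exploding. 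Integrating this against the explicit density $\propto z^{-3/2}e^{-1/8z}$ of $\tau/|x-y|^2$ gives $C(|x-y|+|x-y|^2)$, hence the lemma for finite $t$ (the trivial bound $2$ covers large $|x-y|$), and $t=\infty$ follows by Fatou. If you want to keep your tube decomposition into intersection and private pieces, the same fix works there: estimate $\IP\bigl[|e^{\beta K_x}-e^{\beta K_y}|\bigr]$ in $L^1$ by restricting to $\{K_x+K_y\ge 1\}$, which after the $e^{-\lambda(\beta)\nu\delta}$ normalization gives a bound of order $1-e^{-(1+e^{\beta})\nu\delta}\le C\min\{1,\nu r^d\tau\}$, and then the same $\tau$-density computation closes the proof---but this is, in substance, the paper's ``at least one point seen'' argument rather than your second-moment route.
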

\begin{proof} To simplify the notations, we only consider the case where $y=0$. To recover the lemma, it is enough to argue that the Poisson environment is invariant in law under a translation in space of vector $y$.

To prove \eqref{eq:distanceWtxWt0} for $y=0$, we write the difference of the two martingales as expectations over two coupled Brownian motions, in the same environment.
The coupling we consider is the mirror coupling, which is defined as follows (see \cite{Hsu2013} for more details).

At time $0$, one of the Brownian motions is starting at $0$ and one is starting at $x \in \mathbb{R}^d$. Denote by $H$ be the hyperplane bisecting the segment $[0,x]$, which is the hyperplane passing by $x/2$ and orthogonal to the vector $x$. Let also
\[\tau = \inf\{t\geq 0 | B_t \in H\},\]
be the first hitting time of $H$ by $B$. Then, define $\widetilde{B}$ as the path that coincides with the reflection of path of $B$ with respect to $H$ for times before $\tau$, and that coincides with $B$ after $\tau$. 

\begin{figure} \centering
\includegraphics[scale=0.45]{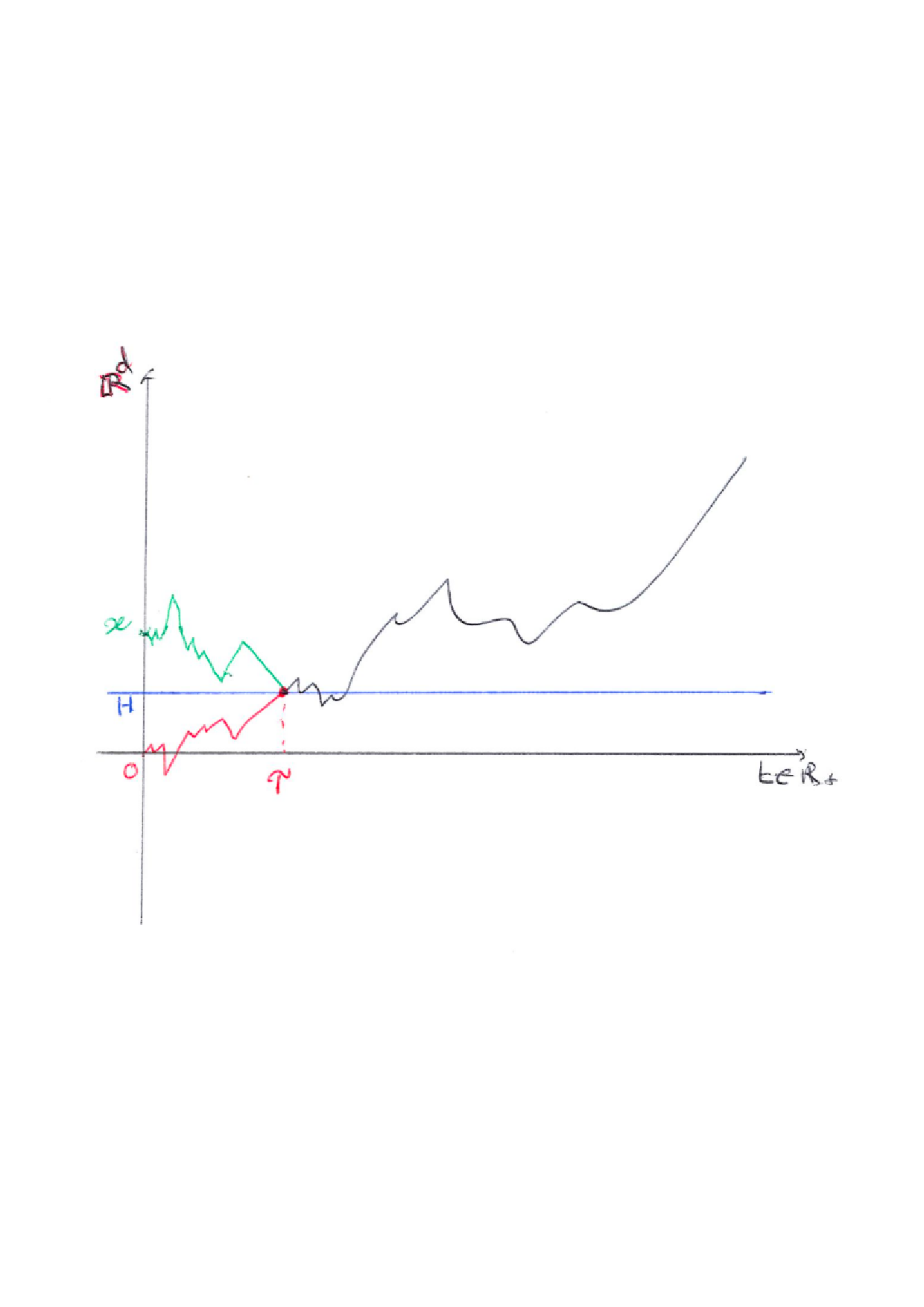}
\vskip -3cm
\caption{Mirror coupling in $d=1$.} 
 \label{fig:mirror}
\end{figure}
The process $\widetilde B$ has the law of a Brownian motion starting from $x$. Moreover, the time $\tau$ is the first time $B$ and $\widetilde B$ meet. After $\tau$, the processes coincide. The variable $\tau$ has the following cumulative distribution function:
\begin{equation} \label{eq:cumulativeOfTau}
P(\tau \geq z)=\phi_z(|x|),
\end{equation}
where, for positive $z$, \[\phi_z(|x|)=\frac{2}{\sqrt{2\pi z}}\int_{0}^{|x|/2} e^ {-u^2/2z}\rmd u,\] and where $|\cdot|$ is the Euclidean distance. In the litterature, a coupling that satisfies this relation is said to be maximal (see \cite{Hsu2013}).
Let $$e_t = \exp(\b \om(V_t(B)) - \lambda(\b)\nu r^d t), \quad \tilde{e}_t = \exp(\b \om(V_t(\widetilde B)) - \lambda(\b)\nu r^d t),$$ which we factorize in the contributions before and after $B$ and $\widetilde B$ coalesce, so that, for $t\in [0,\infty)$,
\begin{align*}
\IP[|W_t(x)-W_t(0)|] &\leq \IP P[|\tilde{e}_t - e_t|] \\
& = P\IP [|(\tilde{e}_{t\wedge \tau}-{e}_{t\wedge \tau})e_{(t-t\wedge \tau)^+}\circ \theta_{t\wedge \tau,B_{t\wedge \tau}}|]\\
& = P\IP [|\tilde{e}_{t\wedge \tau}-{e}_{t\wedge \tau}|],
\end{align*}
where the last equality is a result of the independance of the Poisson environment before and strictly after time $t\wedge \tau$. 

Then, we distinguish the cases where $B$ or $\widetilde{B}$ encounter a point of the environment before $t\wedge \tau$, and the cases where they don't. We get that $P\IP [|\tilde{e}_{t\wedge \tau}-{e}_{t\wedge \tau}|]$ writes:
\begin{align*}
& P\IP \left[|e_{t\wedge \tau} - \tilde{e}_ {t\wedge \tau}| \mathbf{1}_{\{\om(V_{t\wedge \tau}(B))>1,\,\om(V_{t\wedge \tau}( \widetilde B))>1\}}\right] + P\IP\left[(e_{t\wedge \tau} - e^{-\lambda \nu r^d t\wedge \tau}) \mathbf{1}_{\{\om(V_{t\wedge \tau}(B))>1,\,\om(V_{t\wedge \tau}( \widetilde B))=0\}}\right]\\
& \qquad + P\IP\left[(\tilde{e}_{t\wedge \tau} - e^{-\lambda \nu r^d t\wedge \tau}) \mathbf{1}_{\{\om(V_{t\wedge \tau}(B)=0,\,\om(V_{t\wedge \tau}( \widetilde B)>1\}}\right].
\end{align*}
We first use the triangle inequality in the first expectation of the sum, and neglect the negative terms in two other expectations. Then, recombining the terms, one obtains that

\begin{align}
\IP[|W_t(x)-W_t(0)|] & \leq 2 P\IP \left[e_{t\wedge \tau} \mathbf{1}_{\{\om(V_{t\wedge \tau}(B))>1\}}\right] + 2 P\IP \left[\tilde{e}_{t\wedge \tau} \mathbf{1}_{\{\om(V_{t\wedge \tau}(\widetilde{B}))>1\}}\right] \nonumber \\ 
& = 4 P\IP \left[e_{t\wedge \tau} \mathbf{1}_{\{\om(V_{t\wedge \tau}(B))>1\}}\right]  \label{eq:ineqInLemmaWtx}
\end{align}
where the equality is a consequence of invariance in law of the Poisson environment under the translation by $x$.

For any $t$, the variable $\om(V_t)$ is a Poisson r.v. of parameter $ \nu r^d t$, so that
\begin{equation*}
P\IP \left[e_{t\wedge\tau} \mathbf{1}_{\{\om(V_{t\wedge\tau}(B)>1\}}\right]  = \sum_{k=1}^\infty P \IP [e_{t\wedge\tau} \mathbf{1}_{\{\om(V_{t\wedge\tau})=k\}}]
 = \sum_{k=1}^\infty P \left[e^{\b k-\lambda\nu r^d (t\wedge\tau)} \frac{(\nu r^d (t\wedge\tau))^k}{k!}e^{-\nu r^d (t\wedge\tau)} \right],
\end{equation*}
from which we get by standard computations that
\begin{equation} \label{eq:expecInLemmaWtx}
P\IP \left[e_{t\wedge\tau} \mathbf{1}_{\{\om(V_{(t\wedge\tau)}(B)>1\}}\right] = P[1-\exp(-e^\b \nu r^d t\wedge\tau)] \leq P[1-\exp(-e^\b \nu r^d \tau)].\end{equation}

To control this last expectation, we will first notice that $\tau / |x|^2$ is independent of $|x|$, and we will compute its density. By equation \eqref{eq:cumulativeOfTau} and the change of variable $u=|x|\sqrt(z) v$, we get that
\begin{align*}
\P(\tau/|x|^2 > z) = \frac{1}{\sqrt{2\pi}} \int_0^{1/2\sqrt{z}} e^{-v^2/2} \rmd v.
\end{align*}
This function of $z$ is continuous and everywhere differentiable on $[0, \8)$. 
The variable $\tau/|x|^2$ hence admits a density with respect to the Lebesgue measure, given by
\[f(z) = \frac{1}{4\sqrt{2\pi}} \frac{e^{-1/8z}}{z^{3/2}}.\]
Therefore, one gets that the right-hand side of \eqref{eq:expecInLemmaWtx} can be written as
\begin{align*}
&P\left[\left(1-\exp(-e^\b \nu r^d {\tau}{|x|^{-2}} |x|^2)\right)\mathbf{1}_{\{\tau/|x|^2 \leq 1\}}\right] + P\left[\left(1-\exp(-e^\b \nu r^d {\tau}{|x|^{-2}} |x|^2)\right)\mathbf{1}_{\{\tau/|x|^2 > 1\}}\right]\\
&\qquad \qquad \qquad \qquad 
\leq e^\b \nu r^d |x|^2 + \int_1^\infty \left(1-\exp(-e^\b \nu r^d z |x|^2)\right) f(z) \rmd z.
\end{align*}
As there is some constant $C>0$ such that $f(z) \leq C z^{-3/2}$, after the change of variables $z|x|^2=u$, the integral above can be bounded by
\[|x| \int_{0}^\infty C \left(1-\exp\left(-e^\b \nu r^d u \right)\right) v^{-3/2} \rmd v,\]
where one can check that the integral converges. Since its value only depends on $\b,\nu$ and $r$, we finally get that there exists some constant $C'=C(\b,\nu,r)$, such that
\[\IP[|W_t(x)-W_t(0)|] \leq 4P[1-\exp(-e^\b \nu r^d \tau)] \leq C' (|x|^2 + |x|),\]
where the first inequality comes from combining \eqref{eq:ineqInLemmaWtx} and \eqref{eq:expecInLemmaWtx}. Since the $W_t(x)$ variables have bounded expectations, this proves the lemma in the case where $t<\infty$. The $t=\infty$ case is then a consequence of Fatou's lemma.
\end{proof}

\vspace{0.4cm}
We can now show that the self-consistency equation holds. Let $\delta >0$ be a parameter that will go to $0$. For all $q \in \mathbb{Z}^d$, define $\Delta( q)$ to be the cube of length $\delta$ centered at $\delta q$, so that all the cubes form a partition of the space $\mathbb{R}^d$.
We get that the right-hand side of \eqref{eq:WtIntegralEquation} satisfies
\begin{align}
P [e_{t} \, W_s \circ \theta_{t,B_t} ] & = \sum_{q\in\mathbb{Z}^d} P [e_{t} \, W_s \circ \theta_{t,B_t} ; B_t\in \Delta( q) ] \nonumber \\
& = \sum_{q\in\mathbb{Z}^d} P [e_{t} \, W_s \circ \theta_{t,\delta q} ; B_t\in \Delta( q) ] + A_s^\delta, \label{eq:approxInL1normForWSD}
\end{align}
where
$A_s^\delta = \sum_{q\in\mathbb{Z}^d} P [e_{t} \, (W_s \circ \theta_{t,B_t}-W_s \circ \theta_{t,\delta q}) ; B_t\in \Delta( q) ].$

First observe that $\IP$-almost surely, $W_s \circ \theta_{t,{\delta q}}$ converges  to $W_\infty \circ \theta_{t,{\delta q}}$ for all $q\in\mathbf{Z}^d$, so Fatou's lemma entails
\[\IP\text{-a.s.}, \quad \liminf_{s\to\infty} \sum_{q\in\mathbb{Z}^d} P [e_{t} \, W_s \circ \theta_{t,\delta q} ; B_t\in \Delta( q) ] \geq \sum_{q\in\mathbb{Z}^d} P [e_{t} \, W_\infty \circ \theta_{t,\delta q} ; B_t\in \Delta( q) ],\] so that, by \eqref{eq:approxInL1normForWSD} and letting $s\to\infty$ in \eqref{eq:WtIntegralEquation},
\begin{equation} \label{eq:ineqOnWinftyAfterFatou}
\IP\text{-a.s.}, \quad W_\infty \geq \sum_{q\in\mathbb{Z}^d} P [e_{t} \, W_\infty \circ \theta_{t,\delta q} ; B_t\in \Delta( q) ] + \liminf_{s\to\infty} A_s^\delta.
\end{equation}

Furthermore, using the fact that $W$ is a martingale, one can check that $s\mapsto A_s^\delta$ is also a martingale with respect to the filtration $\{\mathcal{G}_{t+s}\}_{s\geq 0}$.
For any time $S\geq 0$, Lemma \ref{lm:distanceWtxWt0} implies that it satisfies
\begin{align*}
\IP[|A_S^\delta|] & \leq \sum_{q\in\mathbb{Z}^d} P \left[\IP[e_{t}\,|W_S \circ \theta_{t,B_t}- W_S \circ \theta_{t,\delta q}|] \mathbf{1}_{B_t\in \Delta( q)} \right]\\
&\leq C(\delta^2 + \delta) \sum_{q\in\mathbb{Z}^d} P\left[ \mathbf{1}_{B_t\in \Delta( q)}\right]\\ &= C(\delta^2 + \delta),
\end{align*}
where, in the second inequality, we have factorized by $\IP[e_{t}] = 1$, using the independence under $\IP$ of the environment before and strictly after time $t$. Thus, by Doob's inequality \cite[Th. 3.8 (i), Ch.~1]{KS91}, we have for all $u>0$,
\[\IP\left[\sup_{0\leq s\leq S} |A_s^\delta| \geq u \right] \leq \frac{C(\delta^2 + \delta)}{u},\]
where we can let $S\to\infty$ by monotone convergence. This implies that $\sup_{s\geq 0} |A_s^\delta|$ converges in probability to $0$, when $\delta \to 0$, which in turn implies that
\[\liminf_{s\to\infty} |A_s^\delta| \overset{\IP}{\longrightarrow} 0.\]

Then, using Lemma \ref{lm:distanceWtxWt0} in the case where $t=\infty$, the same computation as above would show that
\[\IP\left[ \left| \sum_{q\in\mathbb{Z}^d}P \left[e_{t} \, W_\infty \circ \theta_{t,B_t} ; B_t\in \Delta( q)\right]- \sum_{q\in\mathbb{Z}^d} P \left[e_t \, W_\infty \circ \theta_{t,\delta q} ; B_t\in \Delta( q)\right] \right| \right] \leq C(\delta^2 + \delta),\]
and hence,
\[\sum_{q\in\mathbb{Z}^d} P \left[e_t \, W_\infty \circ \theta_{t,\delta q} ; B_t\in \Delta( q)\right]   \overset{L^1}{\longrightarrow} P [e_{t} \, W_\infty \circ \theta_{t,B_t}].\]
In particular, we have shown that the right-hand side of \eqref{eq:ineqOnWinftyAfterFatou} converges in probability to $P [e_{t} \, W_\infty \circ \theta_{t,B_t}]$, so that almost surely
\[ W_\infty \geq  P [e_{t} \, W_\infty \circ \theta_{t,B_t}].\]
Observe that the two quantities have the same expectations to conclude that \eqref{eq:self_consistent_equation} holds.

\subsection{Uniform integrability in the weak disorder}
\begin{proposition} \label{prop:UI}
The martingale $W_t$ is uniformly integrable if and only if the polymer is in the weak disorder phase, i.e. $W_\infty > 0,\, \IP$-almost surely.
\end{proposition}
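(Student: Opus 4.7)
The plan is to reduce everything to an explicit computation of $\IP[W_\infty\mid \mathcal{G}_t]$ via the self-consistency equation \eqref{eq:self_consistent_equation}. For a non-negative mean-one martingale, UI is equivalent to $\IP[W_\infty]=1$, so setting $m := \IP[W_\infty]\in[0,1]$ (Fatou), the whole proposition reduces to showing that $m=1$ precisely in the weak disorder phase.

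The easy direction is immediate. If $W_t$ is UI, then $W_t\to W_\infty$ in $L^1$, so $m = \lim_{t}\IP[W_t] = 1$; in particular $W_\infty$ is not a.s.~zero, and the dichotomy forces $\IP(W_\infty>0)=1$.

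For the harder converse, suppose $W_\infty>0$ a.s. Starting from
\[
W_\infty = P\big[e_t\, W_\infty\circ\theta_{t,B_t}\big],
\]
I would condition on $\mathcal{G}_t$ and use Fubini to interchange $P$ with $\IP[\cdot\mid\mathcal{G}_t]$. For each fixed Brownian path $B$, the factor $e_t(\om,B)$ depends only on $\om$ restricted to $(0,t]\times\R^d$ and is therefore $\mathcal{G}_t$-measurable, while $W_\infty\circ\theta_{t,B_t}$ is a functional of $\om$ restricted to $(t,\infty)\times\R^d$ (translated in space by $B_t$). Hence, for fixed $B$, the variable $W_\infty\circ\theta_{t,B_t}$ is independent of $\mathcal{G}_t$ and, by space-translation invariance of the PPP, has the same law as $W_\infty$. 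This should yield
\[
\IP[W_\infty\mid\mathcal{G}_t] \;=\; P[e_t\cdot m] \;=\; m\, W_t,
\]
which is the crux of the argument.

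Letting $t\to\infty$ in this identity, the left-hand side converges a.s.\ to $\IP[W_\infty\mid\mathcal{G}_\infty]=W_\infty$ (since $W_\infty$ is $\mathcal{G}_\infty$-measurable as the a.s.~limit of $\mathcal{G}_t$-measurable variables), while the right-hand side converges a.s.\ to $mW_\infty$. Thus $(1-m)W_\infty=0$ a.s., and the hypothesis $W_\infty>0$ a.s.~forces $m=1$. Combining $\IP[W_t]=1=\IP[W_\infty]$ with a.s.~convergence of the non-negative $W_t$ to $W_\infty$, Scheff\'e's lemma delivers $L^1$-convergence, equivalently uniform integrability. The only delicate point is the exchange of $P$ and conditional expectation together with the conditional independence/translation argument underlying the identity $\IP[W_\infty\mid\mathcal{G}_t] = m W_t$; everything else is standard closed-martingale theory.
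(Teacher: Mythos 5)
Your proof is correct and takes essentially the same route as the paper: both hinge on conditioning the self-consistency equation \eqref{eq:self_consistent_equation} on $\mathcal{G}_t$, using independence of $W_\infty\circ\theta_{t,x}$ from $\mathcal{G}_t$ together with translation invariance of the Poisson environment to identify $W_t$, up to the factor $m=\IP[W_\infty]$, as the conditional expectation of $W_\infty$ given $\mathcal{G}_t$ (the paper does exactly this after normalizing via $X_{t,x}=W_\infty\circ\theta_{t,x}/\IP[W_\infty]$). Your final steps, deducing $m=1$ by letting $t\to\infty$ via L\'evy's upward theorem and then invoking Scheff\'e, are a cosmetic variation on the paper's direct appeal to uniform integrability of the closed martingale $\IP[X_{0,0}\mid\mathcal{G}_t]$.
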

\begin{proof}
If $W_t$ is UI, then $W_t$ converges in $L^1$ to $W_\infty$, so that $\IP[W_\infty] = 1 > 0$, therefore weak disorder must hold by the dichotomy.

Suppose now that the polymer is in weak disorder and set
\[X_{t,x} = \frac{W_\infty\circ \theta_{t,x}}{\IP [W_\infty]}.\]
The self-consistency equation \eqref{eq:self_consistent_equation} writes
$X_{0,0} = P[e_t X_{t,B_t}]$,
so that, as $X_{t,x}$ is independent of $\mathcal{G}_t$ for all $x$,
\[\IP[ X_{0,0} | \mathcal{G}_t] = P\left[e_t \IP[X_{t,B_t}]\right] = P[e_t].\]
This shows that for all $t\geq 0$, 
\[W_t = \IP[X_{0,0} | \mathcal{G}_t] \quad \text{a.s.}\]
Hence, $(W_t)_{t\geq 0}$ is uniformly integrable since the family of the right-hand side is a uniformly integrable martingale.
\end{proof}

\section{The $\bf{L^2}$-region}
\begin{theorem} \label{th:LTwoRegion} (i) There exist two critical parameters ${\b_2}^- \in [-\infty,0]$ and ${\b_2}^+ \in [0,\infty]$, depending only on $\nu,r$ and $d$, such that, if $\b \in ({\b_2}^-,{\b_2}^+) \, \cup \{0\}$ then
\begin{equation}\label{eq:supWt2} \sup_{t\in\mathbb{R}} \IP [W_t^2] < \infty,
\end{equation} and such that the supremum is infinite if $\b \in \mathbb{R}\setminus [{\b}_2^-,\b_2^+]$.

(ii) Furthermore, if $d\geq 3$, there exists a constant $c(d)\in (0,\infty)$, such that \eqref{eq:supWt2} holds whenever
\begin{equation} \label{eq:L2_condition}
\lambda(\b)^2 \nu r^{d+2} < c(d),
\end{equation}

(iii) In particular, ${\b_2}^- < 0$ and ${\b_2}^+ > 0$ whenever $d\geq 3$.

(iv) Also for  $d\geq 3$, when $\nu r^{d+2} < c(d)$ the constant in \eqref{eq:L2_condition}, we have ${\b_2}^- =-\8$.
\end{theorem}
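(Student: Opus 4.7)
The plan is to reduce to a second-moment computation that produces a clean Feynman--Kac functional of the difference of two independent Brownian paths, then to invoke a Khasminskii-type bound together with Green function estimates in dimension $d \geq 3$.

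First, I would compute $\IP[W_t^2]$ explicitly. Writing $W_t^2$ as an expectation under $P^{\otimes 2}$ over two independent Brownian paths $B^1,B^2$ with indicator functions $\chi^i_{s,x}=\mathbf{1}\{x\in U(B^i_s)\}$ as in \eqref{def:chitx}, Fubini and \eqref{eq:PoissonExpMoment} yield
\begin{equation*}
\IP[Z_t^2] = P^{\otimes 2}\Bigl[\exp\!\int (e^{\b(\chi^1+\chi^2)}-1)\,\nu\,ds\,dx\Bigr].
\end{equation*}
The key algebraic step is the Bernoulli-type identity $e^{\b(\chi^1+\chi^2)}-1 = \lambda(\b)(\chi^1+\chi^2) + \lambda(\b)^2\chi^1\chi^2$, obtained by case analysis on $(\chi^1,\chi^2)\in\{0,1\}^2$ using $e^{2\b}-2e^{\b}+1=\lambda(\b)^2$. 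The linear terms integrate to $2\lambda\nu r^d t$ and cancel the normalization, leaving
\begin{equation*}
\IP[W_t^2] = P^{\otimes 2}\Bigl[\exp\!\Bigl(\lambda(\b)^2\nu \int_0^t V(B^1_s-B^2_s)\,ds\Bigr)\Bigr], \quad V(x):=|U(0)\cap U(x)|.
\end{equation*}

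Part (i) is then immediate from monotonicity: the integrand is nonnegative, so the right-hand side is non-decreasing in $\lambda(\b)^2$, and $\b\mapsto\lambda(\b)^2$ is continuous, vanishes only at $0$, and is non-decreasing in $|\b|$. I define $\b_2^+ = \inf\{\b\geq 0 : \sup_t \IP[W_t^2]=\infty\}$ and symmetrically $\b_2^-$; by monotonicity these thresholds characterize the $L^2$ region.

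For parts (ii)--(iv), I would exploit that $B^1-B^2$ is equidistributed with $\sqrt{2}\,\tilde B$ for a standard Brownian motion $\tilde B$, then apply Khasminskii's lemma: if $\kappa := \sup_x \tilde E_x\bigl[\int_0^\infty \lambda^2\nu\,V(\sqrt 2 \tilde B_s)\,ds\bigr] < 1$, then $\sup_{t,x} \tilde E_x\bigl[\exp(\lambda^2\nu\int_0^t V(\sqrt 2 \tilde B_s)\,ds)\bigr]\leq 1/(1-\kappa)$. Since $V(x)\leq r^d\,\mathbf{1}_{|x|\leq 2\gamma_d^{-1/d}r}$, the integral against the Green function in $d\geq 3$, where $G(x,y)\asymp|x-y|^{2-d}$, is maximized at $x=0$ by Riesz rearrangement and a direct spherical-coordinate computation gives
\begin{equation*}
\sup_x \tilde E_x\Bigl[\int_0^\infty V(\sqrt 2\,\tilde B_s)\,ds\Bigr]\leq C(d)\,r^{d+2}.
\end{equation*}
This proves (ii) with $c(d):=1/C(d)$. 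Part (iii) follows because $\lambda(\b)^2\to 0$ as $\b\to 0$, so \eqref{eq:L2_condition} holds in a neighborhood of $0$. For (iv) I would use that $\lambda(\b)^2=(1-e^\b)^2<1$ for every $\b\leq 0$, so the assumption $\nu r^{d+2}<c(d)$ forces $\lambda(\b)^2\nu r^{d+2}<c(d)$ uniformly in $\b\leq 0$, hence $\b_2^-=-\infty$.

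The main obstacle is the Khasminskii/Green function estimate: the probabilistic content (a pair-replica representation of the second moment in terms of a single Brownian motion in a compactly supported potential) is standard, but producing a genuinely dimensional constant $c(d)$ — which is what gives the quantitative shape of the $L^2$ region and the asymptotic $\b_2^-=-\infty$ — requires carefully exploiting transience and the explicit polynomial decay of the Green function rather than an abstract finiteness argument.
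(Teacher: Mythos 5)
Your proposal is correct and follows essentially the same route as the paper: both reduce $\IP[W_t^2]$ to the replica formula $P^{\otimes 2}\bigl[\exp\bigl(\lambda(\b)^2\nu\,|V_t\cap\tilde V_t|\bigr)\bigr]$ (your pointwise identity $e^{\b(\chi^1+\chi^2)}-1=\lambda(\chi^1+\chi^2)+\lambda^2\chi^1\chi^2$ is just the paper's intersection/symmetric-difference decomposition in different clothing), deduce (i) from monotonicity in $\lambda(\b)^2$, and obtain (ii)--(iv) from the indicator bound on $|U(0)\cap U(x)|$, Brownian scaling, transience in $d\geq 3$, and Khas'minskii's lemma, with (iii) from $\lambda(\b)\to 0$ and (iv) from $\lambda(\b)^2<1$ for $\b\leq 0$. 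No gaps.
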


\begin{definition}We call the $\,\textbf{$\bf{L^2}$-region}$ the set of parameters $\beta, \nu, r$ for which \eqref{eq:supWt2} holds.\end{definition}

\begin{proof}[Proof of Theorem \ref{th:LTwoRegion}]
We introduce the product measure $P^{\otimes 2}$ of two independent Brownian motions $B_t$ and $\tilde{B}_t$ starting from $0$ with respective tubes $V_t$ and $\tilde{V}_t$. The main idea is to write
\[W_t^2 = P^{\otimes 2} [e^{\b \om(V_t)} e^{\b \om(\tilde{V}_t)}]e^{-2\lambda \nu r^d t},\]
so that, using Fubini's theorem,
\begin{equation} \IP[W_t^2] = P^{\otimes 2} \, \IP [e^{\b (\om(V_t)+ \om(\tilde{V}_t)}]e^{-2\lambda \nu r^d t}.
\end{equation}
One can see that \[\om(V_t) + \om(\tilde{V}_t) = 2\om(V_t \cap \tilde{V}_t) + \om(V_t \Delta \tilde{V}t),\]
which is the sum of two independent Poisson random variables; computing their Laplace transforms leads us to
\begin{eqnarray}
 \IP[W_t^2]
\nonumber 
&=& \exp \left( \lambda(2\b) \nu |V_t \cap \tilde{V}_t| + \lambda \nu |V_t \Delta \tilde{V}_t| - 2\lambda \nu r^d t \right) \nonumber \\
&=& \exp\left(\lambda^2 \nu |V_t \cap \tilde{V}_t| \right) \nonumber \\
\label{eq:cvWt2} &\underset{t\to\infty}{\longrightarrow}& \exp\left(\lambda^2 \nu |V_\infty \cap \tilde{V}_\infty| \right), 
\end{eqnarray}
where the second equality is obtained using $|V_t|=|\tilde{V}_t|=tr^d$ and $\lambda(\b)^2=\lambda(2\b)-2\lambda(\b)$, while the limit is justified by monotone convergence.
Now,
\begin{align*} |V_\infty\cap \tilde{V}_\infty| &= \int_0^\infty |U(B_t)\cap U(\tilde{B}_t)|\rmd t \\
&= \int_0^\infty |U(0)\cap U(\tilde{B}_t-B_t)|\rmd t\\
& \eqlaw \int_0^\infty |U(0)\cap U(B_{2t})|\rmd t,
\end{align*}
since $(B_t + \tilde{B}_t)_{t\geq 0} \eqlaw (B_{2t})_{t\geq 0}$. Hence, using monotone convergence and \eqref{eq:cvWt2}, we get that
\begin{equation} \label{eq:supESPWt2}
\sup_{t\in\mathbb{R}} \IP [W_t^2] = \lim_{t\to\infty} \IP [W_t^2] = P\left[\exp\left(\frac{\lambda(\b)^2}{2} \nu \int_0^\infty |U(0)\cap U(B_{t})|  \rmd t\right) \right],
\end{equation}
where the first equality is a consequence of $(W_t^2)_{t\geq 0}$ being a submartingale. Equation \eqref{eq:supESPWt2} shows that $\sup_{t} \IP[W_t^2]$ is an increasing function of $|\b|$, which proves part (i) of the theorem.

To prove the second part, we will bound the right hand side of \eqref{eq:supESPWt2}. First observe that \[|U(0)\cap U(B_t)| \leq |U(0)| \, \mathbf{1}_{|B_t| \leq 2\gamma_d^{-1/d}r} \eqlaw r^d \mathbf{1} \left\{\left|B\left(\frac{t\gamma_d^{2/d}}{4r^{2}}\right)\right| \leq 1 \right\},\]
so that, by a change of variables,
\begin{equation} \label{eq:bound_supWt2} \sup_{t\in\mathbb{R}} \IP [W_t^2] \leq P\left[\exp\left(2\lambda(\b)^2 \nu r^{2+d}\gamma_d^{-2/d} \int_0^\infty \mathbf{1}_{|B_t| \leq 1} \, \rmd t\right) \right].
\end{equation}
When $d\geq 3$, the Brownian motion is transient and has the following property:
\[\alpha(d) = \sup_{x\in \mathbb{R}^d} P_x \left[ \int_0^\infty \mathbf{1}_{|B_t| \leq 1}\,\rmd t \right] < \infty.\]
By Khas'minskii's lemma \cite[p. 8, Lemma 2.1]{Sznitman}, this implies that
\[P \left[ \exp\left(u \int_0^\infty \mathbf{1}_{|B_t| \leq 1} \, \rmd t\right) \right] < (1-u\alpha)^{-1},\]
whenever $u\alpha < 1$. Looking back at \eqref{eq:bound_supWt2}, this condition finally leads to \eqref{eq:L2_condition}.

Part (iii) is obtained by observing that $\lambda(\beta) \to 0$ as $\beta \to 0$, so that condition \eqref{eq:L2_condition} is fulfilled for small enough $\b$.

To prove (iv), just note that the right-hand side of  \eqref{eq:L2_condition} tends to $\nu r^{d+2}$ as $\b \to -\8$.
\end{proof}

\section{Relations between the different critical temperatures}

The critical values $\b_2^\pm, {\bar{\b}_c}^\pm$ and $  \b_c^\pm$ defined in Theorems \ref{defdef..}, \ref{th:313} and
\ref{th:LTwoRegion}
are ordered. 

\begin{proposition} The following properties hold:
\begin{enumerate}[label=(\roman*)]
\item For all $d\geq 1$,
\begin{equation}
\b_2^+ \leq {\bar{\b}_c}^+ \leq \b_c^+ < \infty \quad \text{and} \quad \b_c^- \leq {\bar{\b}_c}^- \leq \b_2^-.
\end{equation}
\item When $d\geq 3$, these parameters are all non-zero.
\item When $d\geq 3$ and $\nu r^{d+2}$ is small enough, $\b_c^- = {\bar{\b}_c}^- = \b_2^- = -\infty$.
\end{enumerate} 
\end{proposition}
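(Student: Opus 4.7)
The plan is to deduce all three items from the chain of implications
\[
L^2\text{-boundedness}\;\Longrightarrow\;\text{uniform integrability}\;\Longrightarrow\;\text{weak disorder}\;\Longrightarrow\;\text{vanishing excess free energy},
\]
supplemented by a separate argument showing $\b_c^+ < \infty$. For $\b_2^+ \leq {\bar{\b}_c}^+$: fix $\b \in [0, \b_2^+)$; then $\sup_t \IP[W_t^2] < \infty$ by Theorem \ref{th:LTwoRegion}(i), so the nonnegative mean-one c\`adl\`ag martingale $(W_t)$ is bounded in $L^2$ and hence uniformly integrable. Proposition \ref{prop:UI} then yields $W_\infty > 0$ a.s., so Theorem \ref{th:313} forces $\b \leq {\bar{\b}_c}^+$; letting $\b \uparrow \b_2^+$ gives the inequality. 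For ${\bar{\b}_c}^+ \leq \b_c^+$: fix $\b \in [0, {\bar{\b}_c}^+)$; then $W_\infty > 0$ a.s., so $\ln W_t \to \ln W_\infty$ is finite almost surely, and \eqref{eq:Wt} gives
\[
\frac{1}{t}\ln Z_t \;=\; \nu\lambda(\b)r^d \,+\, \frac{1}{t}\ln W_t \;\xrightarrow[t\to\infty]{\text{a.s.}}\; \nu\lambda(\b)r^d.
\]
By Theorem \ref{th:freeenergy} the left side also converges a.s.\ to $p(\b,\nu)$, so $p(\b,\nu) = \nu\lambda(\b)r^d$ and thus $\psi(\b,\nu) = 0$; Theorem \ref{defdef..} then forces $\b \leq \b_c^+$. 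The mirror arguments on $(\b_2^-,0]$ and $({\bar{\b}_c}^-,0]$ handle the negative critical values.

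The main obstacle is $\b_c^+ < \infty$: monotonicity of $\psi$ only propagates its vanishing, not its strict positivity, so a genuinely quantitative step is needed. Heuristically, $\nu\lambda(\b)r^d = \nu(e^\b-1)r^d$ grows exponentially in $\b$, whereas $p(\b,\nu)$ should grow only linearly since a typical Brownian path collects Poisson points at rate $\nu r^d$ and any enhancement must be paid for by an entropy or rare-event cost. The standard way to implement this is a fractional-moment argument: for a fixed $\theta \in (0,1)$, a size-biasing / truncated-second-moment estimate (in the spirit of the proof of Theorem \ref{th:LTwoRegion}) yields exponential decay $\IP[W_t^\theta] \leq e^{-c(\b,\theta)\,t}$ for some $c(\b,\theta) > 0$ once $\b$ is sufficiently large. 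Since Jensen's inequality gives
\[
\theta\,\IP[\ln Z_t] \;\leq\; \ln\IP[Z_t^\theta] \;=\; \theta\nu\lambda(\b)r^d\,t \,+\, \ln \IP[W_t^\theta],
\]
exponential decay of $\IP[W_t^\theta]$ translates into a strict gap $\psi(\b,\nu) \geq c(\b,\theta)/\theta > 0$, whence $\b_c^+ < \infty$.

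Parts (ii) and (iii) then follow immediately from (i). For (ii), Theorem \ref{th:LTwoRegion}(iii) gives $\b_2^- < 0 < \b_2^+$ in dimension $d \geq 3$, and the chain $\b_2^+ \leq {\bar{\b}_c}^+ \leq \b_c^+$ together with its mirror propagates non-vanishing to all six critical parameters. For (iii), Theorem \ref{th:LTwoRegion}(iv) gives $\b_2^- = -\infty$ under the smallness assumption $\nu r^{d+2} < c(d)$, and the mirror chain $\b_c^- \leq {\bar{\b}_c}^- \leq \b_2^-$ from (i) forces all three to equal $-\infty$.
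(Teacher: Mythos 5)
Your chain of inequalities is proved exactly as in the paper: $L^2$-boundedness on $[0,\b_2^+)$ gives $L^1$-convergence (you route this through Proposition \ref{prop:UI}, the paper argues directly that $\IP[W_\infty]=1$, which is the same content), hence weak disorder and $\b_2^+\leq \bar{\b}_c^+$; and on $[0,\bar{\b}_c^+)$ the a.s.\ finiteness of $\ln W_\infty$ forces $t^{-1}\ln Z_t\to \nu\lambda(\b)r^d$, hence $\psi(\b,\nu)=0$ and $\bar{\b}_c^+\leq\b_c^+$, with the mirror argument for negative $\b$. Points (ii) and (iii) are, as in the paper, immediate from Theorem \ref{th:LTwoRegion} (iii) and (iv) combined with (i). Up to here the proposal is correct and essentially identical to the paper's proof.

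The one genuine gap is the finiteness $\b_c^+<\infty$. You rightly flag that it needs a separate quantitative input, but the input you invoke --- exponential decay $\IP[W_t^\theta]\leq e^{-c(\b,\theta)t}$ of a fractional moment for large $\b$ --- is asserted, not proved, and it does not come ``in the spirit of the proof of Theorem \ref{th:LTwoRegion}'': that proof is a second-moment computation giving an \emph{upper} bound on $\IP[W_t^2]$ in the $L^2$-region, and nothing in it produces exponential decay of fractional moments at large $\b$ (a genuine fractional-moment/change-of-measure or coarse-graining argument is a nontrivial piece of work in its own right). Your Jensen step $\theta\,\IP[\ln Z_t]\leq \theta\nu\lambda(\b)r^d t+\ln\IP[W_t^\theta]$ is fine, so the reduction is correct, but the key estimate is missing. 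To be fair, the paper's own proof of this proposition is silent on $\b_c^+<\infty$ as well; within the survey the finiteness comes from the free-energy bound $p(\b,\nu)\leq C_1\lambda(\b)^{1/2}$ for large $\b$ (Th.~2.2.2(b) of \cite{CY05}, used later to show $\mathcal{L}'$ is nonempty), which gives directly $\psi(\b,\nu)\geq \nu\lambda(\b) r^d-C_1\lambda(\b)^{1/2}>0$ for $\b$ large. Citing that bound (or proving an analogous sublinear-in-$\lambda$ upper bound on $p$) is the simplest way to close your gap; as written, the fractional-moment decay is an unproven claim carrying the whole weight of the finiteness statement.
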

\begin{remark} Point (iii) tells us that if the intensity of the Poisson point process or the radius are sufficiently small, the polymer will not really be impacted by the environment.
\end{remark}
\begin{remark}\label{rk:criticd=12}
For $d=1,2$, it holds that
that $\b_2^\pm = {\bar{\b}_c}^\pm = \b_c^\pm = 0$. The reader is referred to the proofs in \cite{BeLa15, Bertin, CY05, Lacoin10} for other similar models, and can be convinced that the arguments go through in our case \cite{BertinUnp}.
\end{remark}
\begin{remark}
A long-standing conjecture  is that $\b_c^\pm = {\bar{\b}_c}^\pm$, i.e., that the weak/strong disorder transition coincide with the high/low temperature one. 
\end{remark}

\begin{proof}
We first show that $\b_2^+ \leq {\bar{\b}_c}^+$.
Suppose $\b_2^+>0$ and let $0\leq \b < \b_2^+$. By definition, we have
\[\sup_{t\geq 0} \IP [W_t^2] < \infty,\]
so that $(W_t)_{t\geq 0}$ is a martingale bounded in $L^2$. Thus, $(W_t)_{t\geq 0}$ converges in $L^2$ norm, which implies $L^1$ convergence.

Since $\IP [W_t] = 1$ for all $t$, we get that $\IP [W_\infty] = 1$, so \eqref{eq:WD} must hold and hence $\b \leq {\bar{\b}_c}^+$. As it is true for all $\b<\b_2^+$, the desired inequality follows directly.

We now turn to the proof of ${\bar{\b}_c}^+ \leq \b_c^+$. Again, suppose that ${\bar{\b}_c}^+>0$ and let $0\leq \b < {\bar{\b}_c}^+$. We have $W_\infty >0$ almost surely, so $\ln W_t \to \ln W_\infty$ almost surely, that is to say
\[\ln Z_t - \lambda \nu r^d t \underset{t\to\infty}{\longrightarrow} \ln W_\infty.\]
Dividing by $t$, we get that
\[\frac{1}{t}\ln Z_t - \lambda \nu r^d \underset{t\to\infty}{\longrightarrow} 0,\]
thus $p(\b, \nu, r) = \lambda \nu r^d$, i.e. $\b \leq \b_c$.
The same argument goes for the negative critical values. This ends the proof of (i).

Points (ii) and (iii) are repeated from Theorem \ref{th:LTwoRegion}. 
\end{proof}

 \chapter{Directional free energy} \label{ch:dir}
 
 In this section we make use of the Brownian nature of the polymer and the invariance of the medium under shear transformations, which induces a lot of symmetries in the model, culminating with quadratic shape function and the equality
 \eqref{eq:p0=}. 
 
 \section{Point-to-point partition function}

With $P_{s,x}^{t,y}$  the Brownian bridge  in $\R^d$ joining $(s,x)$ to $(t,y)$, we introduce the point-to-point partition (P2P) function
\begin{equation} \label{PtoPdefinition}
Z_t(\om, \b; x) = P_{0,0}^{t,x}\left[ \exp \{ \b \om(V_t)\}\right]\;,
\end{equation}
from which we can recover the point-to-level (P2L) partition function
 \begin{equation}
\label{eq:intZ}
Z_t(\om,\b)= \int_{\rd} Z_t(\om, \b; x) \rho(t,x) dx \; 
\end{equation}
by conditioning on $B_t$.
We use the standard notation 
$
\rho(t,x) = (2 \pi t)^{-d/2} \exp - |x|^2/2t
$ for the heat kernel in $\R^d$. 
For $\xi \in \R^d$, define the shear transformation $\tau_\xi: \R_+\times \R^d \to \R_+\times \R^d$ by
\begin{equation}\nn
\tau_\xi(s,x)=(s,x+s\xi)\;,
\end{equation}
which is one to one with $\tau_\xi^{-1}=\tau_{-\xi}$.
Since $\tau_\xi$ acts on the graph of functions $f: \R_+ \to \R^d$, we denote its action on functions
by
\begin{equation} \label{eq:tauxif}
\tau_\xi f : s \mapsto f(s)+s\xi \;,
\end{equation}
so that $\tau_\xi(s,f(s))=(s, \tau_\xi(f)(s)$.
The pushed forward of  a point measure by $\tau_\xi$  is defined by
\begin{equation}\label{eq:tau}
\tau_\xi \circ \big(\sum_i \delta_{(t_i, x_i)}\big) = \sum_i \delta_{(t_i, x_i + \xi t_i)}=  \sum_i \delta_{\tau_\xi (t_i, x_i)}
 \;,
\end{equation}
where it is clear that $\tau_\xi \circ \om$ is again a Poisson point process with intensity $\nu ds dx$,
i.e., $\tau_\xi \circ \om=\om$ in law. With $B$ the canonical process, under the measure $P_{0,0}^{t,t\xi}$
the process 
$W=\tau_{-\xi}(B)$
is a Brownian bridge $(0,0) \to (0,0)$. Therefore, for all $\om$,
\begin{eqnarray} \nn
Z_t(\om, \b; t\xi) &=&  P_{0,0}^{t,t\xi}\left[ \exp \{ \b \om(V_t (B) )\}\right] \\ \nn
&=&  P_{0,0}^{t,t\xi}\left[ \exp \{ \b \om(V_t (\tau_{\xi} (W)) )\}\right] \\ \nn
&=&  P_{0,0}^{0,0}\left[ \exp \{ \b \om(\tau_{\xi} (V_t (B))) \}\right] \\ \nn
&=&  P_{0,0}^{0,0}\left[ \exp \{ \b (\tau_{\xi} \circ \om)(V_t (B)) \}\right] \\ \label{eq:CMBB}
&=& Z_t(\tau_{\xi} \circ \om, \b; 0)\;.
\end{eqnarray}
This implies that $Z_t(\om, \b; x)$ has same law as $Z_t(\om, \b; 0)$.
%
%
We can prove that the {\em directional free energy},  in the direction $\xi \in \rd$, 
$$ p^{\rm dir}(\b, \nu; \xi ) =  \lim_{t \to \8} \frac{1}{t} \ln Z_t (\om, \b; t\xi)
$$
exists a.s. and in $L^p$-norm for all $p \geq 1$, and is equal to
$
\lim_{t \to \8}  t^{-1}\P[\ln Z_t(\om, \b; t \xi)]
$. The route is quite different from  Theorem \ref{th:freeenergy}, it follows the lines of chapter 5 in \cite{Sznitman} for the undirected case, that we briefly sketch now:
define the tube around the Brownian path between times $s \leq t$,  $V_{s,t}=V_{s,t}(B)=\cup_{u \in [s,t]} \{u\}\times U(B_u)$, and also
\begin{equation}\nn
{\mathfrak e}_{s,t}(x,y;\om) := P\left[ e^{\b \om(V_{s,t})} {\bf 1}_{B_t \in U(y)} \big\vert B_s=x\right] = P_{x} \left[ e^{\b \theta_{s,x} \!\circ \om(V_{t-s})} {\bf 1}_{B_{t-s} \in U(y)} \right],
\end{equation}
which is the integral of the P2P partition function over a ball of radius $r$. Then, the quantity 
\begin{equation} \nn
a_{s,t}(x,y;\om)= \inf_{z \in U(x)} \ln {\mathfrak e}_{s,t}(z, y; \om)
\end{equation}
is superadditive, in particular we have
$$a_{0,s+t}(0, (s+t)\xi; \om) \geq  a_{0,s}(0, s\xi; \om) + a_{s,s+t}(s\xi, (s+t)\xi; \om), $$
and the subadditive ergodic theorem shows the existence of the limit $t^{-1} a_{0,t}(0, t\xi; \om)$ as $t \to \8$, say $ p^{\rm dir}(\b, \nu; \xi )$, a.s. and in $L^1$. 
($L^p$-convergence will follow from the concentration inequality, which remains unchanged).
Then, one can show that the infimum over $z \in U(x) $ in the definition of $a_{0,t}(0,y;\om)$ can be dropped in the limit $t \to \8$, as well as the integration 
 in the definition of  $e_{0,t}(0,y;\om)$ on the fixed domain $ U(y)$. Proving these claims requires some work with quite a few technical estimates;
 we do not write the details here, the reader is referred to section 5.1 in \cite{Sznitman}.

By \eqref{eq:CMBB}, $Z_t(\om, \b; x)\eqlaw Z_t(\om, \b; x')$ and thus, for all $\xi, \xi' \in \rd$,
\begin{equation}\label{eq:samepdir}
p^{\rm dir}(\b, \nu; \xi ) = p^{\rm dir}(\b, \nu; \xi' ) \;.
\end{equation}
\section{Free energy does not depend on direction}
Let $P^h$ be the Wiener measure with drift $h \in \rd$, i.e., the probability measure on the path space ${\cal C}(\R_+, \rd)$ such that for all $t$,
\begin{equation} \nn
\left(\frac{ d P^h}{d P}\right)_{| \cF_t}= \exp \left\{ h \cdot B_t - \frac{t|h|^2}{2}\right\}\;.
\end{equation}
By Cameron-Martin formula, under $P^h$, the canonical process $B$ is a  Brownian motion  with drift $h$, i.e., $W=\tau_{-h}(B)$ is a standard Brownian motion under $P^h$
and has the same law as $B$ under $P$. Thus, the partition function for the drifted Brownian polymer
\begin{eqnarray}
Z_t^h(\om,\b) &\stackrel{\rm def.}{=}& P^h\left[ \exp \{ \b \om(V_t(B))\}\right] \nn \\
\nn &=& P^h\left[ \exp \{ \b \om(V_t(\tau_{h}(W)))\}\right] \\
\nn &=& P\left[ \exp \{ \b \om(V_t(\tau_{h}(B)))\}\right] \\
&=&   Z_t(\tau_{-h} \circ \om,\b) \label{eq:CMh}
\end{eqnarray}
as in \eqref{eq:CMBB}. It would be routine, and this time exactly as in the proof of  Theorem \ref{th:freeenergy}, to show the existence of  free energy for the drifted Brownian polymer 
\begin{equation} \nn
p^h(\b, \nu) := \lim_{t \to \8} t^{-1} \ln Z_t^h(\om,\b)
\end{equation}
a.s. and in $L^p$; But in fact, this is even unnecessary since \eqref{eq:CMh} yields the existence of the limit. 
Now, the previous display together with invariance of $\om$ under shear shifts imply that
\begin{equation}
p^h(\b, \nu) = p(\b, \nu) \;.
\end{equation}
On the other hand, similar to  \eqref{eq:intZ} we have
 \begin{equation}
\label{eq:intZh}
Z_t^h(\om,\b)= \int_{\rd} e^{h \cdot x - t|h|^2/2} Z_t(\om, \b; x) \rho(t,x) dx\;.
\end{equation}
By Laplace method, 
it follows from standard work that 
\begin{eqnarray} \nn
p^h(\b, \nu) 
\nn &=& \sup_{\xi \in \rd} \left\{ h \cdot \xi - |h|^2/2 - |\xi|^2/2  + p^{\rm dir}(\b, \nu; \xi )\right\}
\\ \label{eq:ph=} &\stackrel{\eqref{eq:samepdir}}{=}& p^{\rm dir}(\b, \nu; 0 ) - |h|^2/2 + \sup_{\xi \in \rd} \left\{ h \cdot \xi - |\xi|^2/2 \right\}\\
\nn &=& p^{\rm dir}(\b, \nu; 0 ).
\end{eqnarray}
Finally, all the above  notions of free energy coincide:
\begin{equation}
\label{eq:p0=}
\boxed{\quad p(\b, \nu) = p^{\rm dir}(\b, \nu; \xi ) = p^h(\b,\nu)\;.\quad }
\end{equation}
{\bf Conclusion:}  The critical values  $\beta_c^{\pm}$  for  equality of quenched and annealed free energy are the same for all free energies (P2P in all directions, P2L
with all drifts).

 \section{Local limit theorem}

In the $L^2$-region, a local limit theorem was discovered by Sina\"i  \cite{Si95} in the discrete case, and extended to our continuous model by Vargas \cite{Va04}. 
 \medskip

Define the time-space reversal  operator on the environment $\theta_{t,x}^\leftarrow $,
acting on point measures as 
\begin{equation}\label{eq:thetainv}
\theta_{t,x}^\leftarrow (\sum_i \delta_{(t_i, x_i)}\big) = \sum_i \delta_{(t-t_i, x_i-x)}
\end{equation}

\begin{theorem}[Local limit theorem; \cite{Va04}, Th. 2.9] \label{prop:LLT}
Assume $\b \in (-\b_2^-, \b_2^+)$. Then, for any constant $A>0$ and any positive function $\ell_t$ tending to $\8$ with $\ell_t = o(t^a)$ for some $a<1/2$,
\begin{equation} \label{eq:llt1}
Z_t(\om, \b; x) = W_\8 \times W_\8 \circ \theta_{t,x}^\leftarrow + \eps_t(x) \;,
\end{equation}
 and
 \begin{equation}\nn
Z_t(\om, \b; x) = W_{\ell_t} \times W_{\ell_t} \circ \theta_{t,x}^\leftarrow + \delta_t(x) \;,
\end{equation}
 with error terms vanishing as $t \to \8$,
 $$
 \sup_{|x| \leq A \sqrt{t}} \P[ |\eps_t(x)|] \to 0 \;, \qquad  \sup_{|x| \leq A \sqrt{t}} \P[ |\delta_t(x)|^2] \to 0 \;.
 $$
\end{theorem}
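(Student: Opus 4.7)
The plan is to establish the $L^2$ identity involving $W_{\ell_t}$ first, then upgrade to the $L^1$ identity involving $W_\infty$ via the $L^2$-convergence $W_{\ell_t}\to W_\infty$ available throughout the $L^2$-region by Theorem~\ref{th:LTwoRegion}. The idea, going back to Sina\"i and adapted by Vargas to the continuous setting, is to split $[0,t]$ into three blocks $[0,\ell_t]$, $[\ell_t, t-\ell_t]$, $[t-\ell_t, t]$, and show that the two extremal blocks carry all the fluctuations -- reproducing the forward and backward martingale limits -- while the middle block is asymptotically a deterministic constant (equal to $1$ after normalization). By the Markov property of the Brownian bridge applied at times $\ell_t$ and $t-\ell_t$,
\[
Z_t(\om,\b;x) = \iint \rmd y_1 \rmd y_2\, K_t(y_1,y_2;x)\, Z^{(1)}(y_1)\, Z^{(2)}(y_1,y_2)\, Z^{(3)}(y_2,x),
\]
where $K_t(y_1,y_2;x)=\rho(\ell_t,y_1)\rho(t-2\ell_t,y_2-y_1)\rho(\ell_t,x-y_2)/\rho(t,x)$ and the three P2P partition functions $Z^{(i)}$ depend on disjoint, hence independent, slices of $\om$. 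Normalize each by its annealed mean, $\tilde W^{(i)}=e^{-\lm(\b)\nu r^d\cdot}Z^{(i)}$, so that $\ESP[\tilde W^{(i)}]=1$.

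The technical heart is to replace $\tilde W^{(2)}(y_1,y_2)$ by $1$ in the integrand, with $L^2$ error uniform in $|x|\leq A\sqrt t$. Independence of the three slices reduces the $L^2$ norm of the resulting error to a quadruple integral of $K_tK_t'$ against the product of three covariances. The middle covariance can be written, by the bridge version of the intersection-exponential formula of Theorem~\ref{th:LTwoRegion}, as $\ESP[(\tilde W^{(2)}-1)(\tilde W^{(2)\prime}-1)] = \ESP[\,e^{\lm(\b)^2\nu |V\cap V'|}-1\,]$ averaged over two independent bridges sharing their endpoints; together with the Gaussian concentration of $K_t$ on scale $\sqrt{\ell_t}$ near the endpoints, this produces an error vanishing as $\ell_t/t\to 0$ uniformly in $x$.

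Once the middle block is frozen to $1$, the Gaussian ratio $\rho(t-2\ell_t,y_2-y_1)/\rho(t,x)$ tends to $1$ on the mass-carrying region $|y_1|,|y_2-x|\lesssim\sqrt{\ell_t}$ (using $\ell_t=o(t^a)$ with $a<1/2$ and Taylor expansion), and the integral factorizes as
\[
\Big(\int \rho(\ell_t,y_1)\tilde W^{(1)}(y_1)\,\rmd y_1\Big)\Big(\int \rho(\ell_t,x-y_2)\tilde W^{(3)}(y_2,x)\,\rmd y_2\Big) = W_{\ell_t}\cdot(W_{\ell_t}\circ\theta_{t,x}^\leftarrow),
\]
where the identification of the backward factor uses time-reversal invariance of Brownian motion and the spherical symmetry of the tube $U$. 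This proves the $\delta_t$ version. The $\eps_t$ version follows by further invoking the $L^2$-convergence $W_{\ell_t}\to W_\infty$ and using that $W_{\ell_t}$ and $W_{\ell_t}\circ\theta_{t,x}^\leftarrow$ are built from disjoint pieces of the environment (so the Cauchy--Schwarz control of $\|W_{\ell_t}W_{\ell_t}^\leftarrow - W_\infty W_\infty^\leftarrow\|_{L^1}$ is uniform in $x$ by translation invariance of $\IP$). The main obstacle is the $L^2$ control of the middle-block replacement: one has to verify that the $(y_1,y_1')$--$(y_2,y_2')$ decoupling of the middle covariance actually beats the $K_t$-concentration, which crucially relies on the strict $L^2$ hypothesis keeping all bridge intersection-exponentials uniformly bounded.
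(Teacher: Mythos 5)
The survey does not actually prove Theorem \ref{prop:LLT}: it is quoted from Vargas \cite{Va04} (extending Sina\"i's discrete argument), so there is no in-paper proof to compare with; your sketch follows precisely the strategy of that reference, namely the three-block decomposition of the bridge at times $\ell_t$ and $t-\ell_t$, the $L^2$ freezing of the middle block to $1$ using independence of the three slices of the environment, the identification of the boundary blocks with $W_{\ell_t}$ and $W_{\ell_t}\circ\theta^{\leftarrow}_{t,x}$ (the backward one via time reversal), and the upgrade from $W_{\ell_t}$ to $W_\infty$ by Cauchy--Schwarz and $L^2$ convergence of the martingale in the $L^2$-region.

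Two points are thinner than they should be. First, the uniform boundedness of second moments of the \emph{bridge} point-to-point blocks (over all endpoints with $|x|\leq A\sqrt t$ and all time horizons) is where the real work lies: Theorem \ref{th:LTwoRegion} gives the $L^2$ condition for free paths, and transferring it to bridges (e.g.\ by controlling the bridge-versus-free density on half the time window) is exactly the step you defer; you name the obstacle but do not resolve it, and without it your quadruple-integral bound for the middle-block replacement is not established. Second, the mechanism that kills the middle covariance is not ``$\ell_t/t\to 0$'' and there is nothing to ``beat'': since $K_t(\cdot,\cdot;x)$ is a probability density in $(y_1,y_2)$, what is needed is that $\P\big[(\tilde W^{(2)}-1)(\tilde W^{(2)\prime}-1)\big]$ vanishes outside a set of vanishing $K_t\otimes K_t$-mass, and this comes from the separations $|y_1-y_1'|$, $|y_2-y_2'|$ being typically of order $\sqrt{\ell_t}\to\infty$ combined with transience (here $d\geq 3$ is implicit, the only case where the $L^2$-region is nontrivial, cf.\ Remark \ref{rk:criticd=12}); the hypothesis $\ell_t=o(t^a)$ with $a<1/2$ serves instead to make the Gaussian ratio $\rho(t-2\ell_t,y_2-y_1)/\rho(t,x)$ uniformly close to $1$ on the mass-carrying region for $|x|\leq A\sqrt t$, and this replacement step also needs a (routine but unwritten) moment-weighted tail estimate. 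With these points filled in, and reading $Z_t(\om,\b;x)$ in the statement as normalized by $e^{-\lambda(\b)\nu r^d t}$ as you implicitly do, your outline is the correct one.
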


Intuitively, the local limit theorem states that, the polymer ending at $x$ at time $t$ only "feels" the environment at times $s$ close to 0  and locations close to 0  or close t
at "large" times $s$ close to $t$ and locations close to $x$. (See Figure \ref{fig:LLT}.) In between, it behaves like a Brownian bridge.
\medskip


\begin{conjecture}
We formulate two conjectures:

$\bullet$ It is natural to define another pair of critical inverse temperature, analogue to the weak/strong disorder transition:
\begin{eqnarray} \label{eq:betacdir}
\overline{\b}_c^{+,{dir}}&=& \sup\{ \b \geq 0: \lim_t \P[ (W_t^{{dir}, \xi})^{1/2}] >0\}, \\ \nn   \overline{\b}_c^{-,{dir}}&=& \inf\{ \b \leq 0: \lim_t \P[ (W_t^{{dir}, \xi})^{1/2}] >0\}.
\end{eqnarray}
Using Jensen inequality  in \eqref{eq:intZ}, it is not difficult to get $\overline{\b}_c^{+,{dir}} \in [\b_2^+,  \b_c^+ ]$, $\overline{\b}_c^{-,{dir}} \in [\b_c^{-}, \b_2^-]$.
We conjecture that 
the equality holds, i.e., 
$$\overline{\b}_c^\pm = \overline{\b}_c^{\pm,{dir}}. $$

$\bullet$ A long standing conjecture is that the local limit theorem  \eqref{eq:llt1} holds the way all through the weak disorder region. Note that the latter conjecture would imply that the former one holds.
\end{conjecture}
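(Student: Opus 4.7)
The statement is a two-part conjecture. As the author notes, the second part (LLT throughout weak disorder) would imply the first, so I plan to focus primarily on establishing the LLT in the full weak disorder region and to derive the equality of critical inverse temperatures as a consequence.

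\emph{Reduction of the first conjecture to the second.} By \eqref{eq:intZ} with the change of variables $x=t\xi$, one has $W_t = \int_{\rd} W_t^{\mathrm{dir},\xi}\,\rho(t,t\xi)\,t^d\,\rmd\xi$, an integral against a probability measure on $\rd$. Concavity of $\sqrt{\cdot}$ together with the shear invariance \eqref{eq:CMBB} (which makes the law of $W_t^{\mathrm{dir},\xi}$ independent of $\xi$) gives $\P[W_t^{1/2}] \geq \P[(W_t^{\mathrm{dir},0})^{1/2}]$, so directional weak disorder forces weak disorder, i.e.\ $\overline{\b}_c^{+,\mathrm{dir}} \leq \overline{\b}_c^+$. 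For the converse, assume \eqref{eq:llt1} holds throughout $(\overline{\b}_c^-,\overline{\b}_c^+)$. For fixed $\xi$,
\begin{equation*}
W_t^{\mathrm{dir},\xi} = W_\infty \cdot W_\infty \circ \theta_{t,t\xi}^\leftarrow + \eps_t(t\xi),
\end{equation*}
and the two factors become asymptotically independent because they are essentially determined by $\om$ on two disjoint time slabs near $0$ and near $t$. Combined with the $L^2$ boundedness of $(W_t^{\mathrm{dir},\xi})^{1/2}$ (since $\P[W_t^{\mathrm{dir},\xi}]=1$) and $\P[|\eps_t|]\to 0$, this yields $\P[(W_t^{\mathrm{dir},\xi})^{1/2}] \to (\P[W_\infty^{1/2}])^2 > 0$ under weak disorder, hence $\overline{\b}_c^{+,\mathrm{dir}} \geq \overline{\b}_c^+$.

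\emph{Plan for the LLT in the whole weak disorder region.} I would retain the three-piece splitting of the P2P bridge at times $\ell_t$ and $t-\ell_t$: by the Brownian bridge Markov property,
\begin{equation*}
W_t^{\mathrm{dir},x/t} = P_{0,0}^{t,x}\!\left[\,\widehat W^{(1)}(B_{\ell_t})\cdot R_t(B_{\ell_t},B_{t-\ell_t})\cdot \widehat W^{(2)}(x;B_{t-\ell_t})\,\right],
\end{equation*}
with $\widehat W^{(1)}$, $\widehat W^{(2)}$ the normalized ``end'' partition functions on $[0,\ell_t]$ and $[t-\ell_t,t]$, and $R_t$ the normalized middle bridge partition function. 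Three steps are then needed: (i) $\widehat W^{(1)}(B_{\ell_t}) \to W_\infty$ in probability, using UI of $(W_s)$ in weak disorder (Proposition~\ref{prop:UI}) together with a Lipschitz-in-endpoint estimate in the spirit of Lemma~\ref{lm:distanceWtxWt0}; (ii) symmetrically for the right end, after time reversal; (iii) the middle factor $R_t \to 1$ in probability, uniformly in endpoints with $|y_i|\leq A\sqrt t$. Dominated convergence then yields \eqref{eq:llt1}.

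\emph{Main obstacle.} Step (iii) is the real difficulty. In the $L^2$ region it is immediate from $\P[R_t^2]\to 1$; outside $L^2$ but still inside weak disorder, second-moment control is lost. My preferred route is to reduce P2P to P2L by integration: the integral of $R_t$ against the bridge Gaussian density is a normalized P2L partition function on $[\ell_t,t-\ell_t]$, which converges in $L^1$ to $W_\infty$ by UI, and Lemma~\ref{lm:distanceWtxWt0} transfers this convergence to the P2P setting in a region of diameter $O(1)$. The delicate step is to cover the full Gaussian support of diameter $\sqrt t$, well beyond the useful range of the Lipschitz bound. I would attempt a coarse-graining scheme: partition the endpoint region into cubes of side $t^a$ with $a<1/2$, apply Lemma~\ref{lm:distanceWtxWt0} within each cube, and combine across cubes via an asymptotic-independence-at-distance estimate for the Poisson medium conditioned on the middle slab. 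Establishing such independence estimates in the full weak disorder phase, without any $L^2$ assumption, is, I believe, the genuinely hard open question; its resolution would likely require new probabilistic input -- Poisson chaos expansions, iterated mirror coupling on a hierarchical partition, or a refined self-consistency identity adapted to the bridge.
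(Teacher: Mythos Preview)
The statement you are addressing is a \emph{conjecture}, not a theorem: the paper explicitly presents both bullets as open problems and offers no proof. There is therefore nothing in the paper to compare your argument against beyond the one-line remark that the second conjecture implies the first, together with the assertion (left to the reader) that $\overline{\b}_c^{+,\mathrm{dir}}\in[\b_2^+,\b_c^+]$ follows from Jensen applied to \eqref{eq:intZ}.

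Your reduction of the first conjecture to the second is sound and matches what the paper has in mind. The inequality $\overline{\b}_c^{+,\mathrm{dir}}\le\overline{\b}_c^+$ via concavity of $\sqrt{\cdot}$ and shear invariance is correct, and your derivation of the reverse inequality from the LLT (product of two asymptotically independent copies of $W_\infty$) is exactly the intended mechanism.

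Where your proposal is not a proof is precisely where you say it is not: step~(iii), controlling the middle bridge factor $R_t$ outside the $L^2$ region. You correctly note that the $L^2$ argument of Sina\"i--Vargas breaks down, and your coarse-graining sketch does not close the gap --- the ``asymptotic-independence-at-distance'' estimate you invoke for the Poisson medium conditioned on the middle slab is itself an unproved statement of roughly the same strength as what you are trying to establish. In short, your write-up is an honest and accurate outline of a strategy together with a clear identification of the genuine obstruction, but it is not a proof of the conjecture, and the paper does not claim one either.
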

\begin{figure}[h!!!] \centering
\includegraphics[scale=0.4]{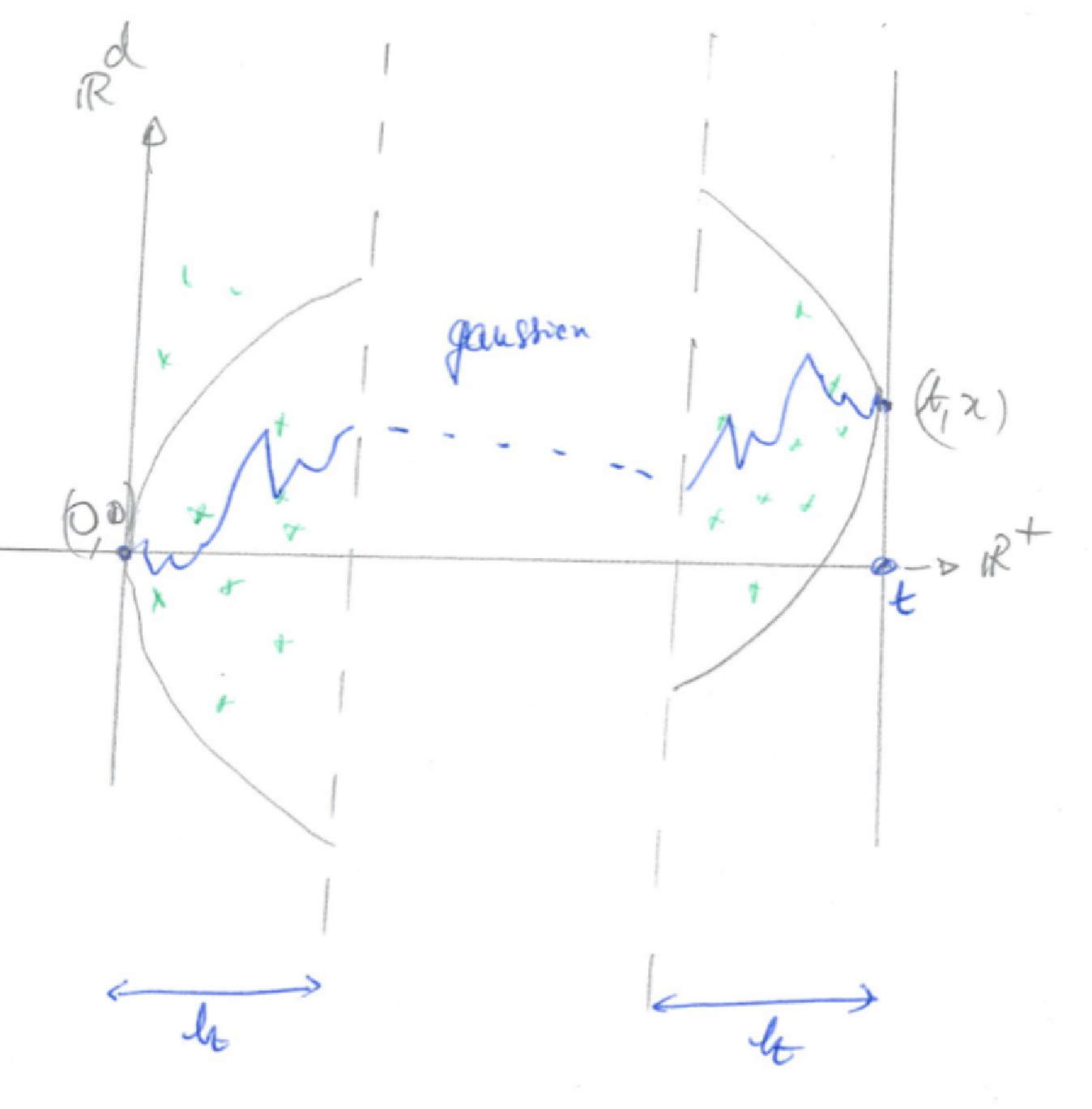}
\caption{The local limit theorem. The P2P partition function only feels details of the environment close to the space-time endpoints $(0,0)$ and $(t,x)$. In between, it behaves like the Gaussian propagator.} 
 \label{fig:LLT}
\end{figure}

\chapter{The replica overlap and localization} \label{ch_overlap}

The utlimate goal of this section is to show that the thermodynamic phase transition of section \ref{sec:thermo} is a  localization transition for the polymer.

To do that, we need fine tools from stochastic analysis. The starting point is Doob-Meyer decomposition, a natural and strong tool to study  stochastic processes in which the process is written as  the sum of a local martingale (the impredictable part) and a bounded variation predictable process (the tamed part).
We start by recalling some martingale properties of the Poisson environment that will prove usefull throughout the following chapters.

\section{The compensated Poisson measure and some associated martingales}
Given the Poisson point process $\om$, we introduce the compensated measure $\bar \om$,
\begin{equation}
\bar \om(\rmd s \rmd x)= \om(\rmd s \rmd x)-\nu\hspace{0.3mm} \rmd s \rmd x,
\end{equation}
and we abreviate its restriction to $(0,t]\times \rd$ by $\bar \om_t$. By definition,  for all function $f(s,x,\om)$ that verifies
\begin{equation} \label{eq:integrabiltyPoissonIntegrand}
\int_{[0,t]\times \mathbb{R}^d} \IP[|f(s,x,\cdot)|] \rmd s \rmd x < \infty,
\end{equation}
the compensated integral of $f$ is given by
\begin{equation}
\int f(s,x,\om) \bar \om_t(\rmd s \rmd x) = \int f(s,x,\om) \om_t(\rmd s \rmd x) - \int_{[0,t]\times\mathbb{R}^d} f(s,x,\om) \nu \hspace{0.3mm} \rmd s \rmd x.
\end{equation}

Furthermore, we say that a function $f(t,x,\om)$ is \emph{predictable}, if it belongs to the sigma-field generated by all the functions $g(t,x,\om)$ that satisfy the following properties:
\begin{enumerate}[label=(\roman*)]
\item for all $t>0$, $(x,\om)\to g(t,x,\om)$ is $\mathcal{B}(\mathbb{R}^d) \times \mathcal{G}_t$-measurable;
\item for all $(x,\om)$, $t\to g(t,x,\om)$ is left continuous.
\end{enumerate}
Then, if the function $f(t,x,\om)$ is predictable, provided that \eqref{eq:integrabiltyPoissonIntegrand} holds and that
\[\int_{[0,t]\times\mathbb{R}^d} \IP[f(s,x,\cdot)^2]\rmd s \rmd x < \infty,\]
the process $t\to \int f \hspace{0.3mm} \rmd \bar \om_t$ is a square-integrable martingale associated to $(\mathcal{G}_t)_{t\geq 0}$, of previsible bracket \cite[Section II.3.]{IkedaWatanabe}
\begin{equation} \label{eq:bracket}
\left\langle \int f \rmd \bar \om \right\rangle_t = \int_{[0,t]\times \mathbb{R}^d} f^2\, \nu \hspace{0.3mm} \rmd s \rmd x.
\end{equation}
The previsible bracket has the property that $ (\int f \hspace{0.3mm} \rmd \bar \om_t)^2 - \langle \int f \hspace{0.3mm} \rmd \bar \om \rangle_t$ is a martingale. In particular,
\begin{equation} \label{eq:MartingaleVarianceFormula}
\mathrm{Var} \left(\int f(s,x,\cdot) \om_t(\rmd s \rmd x)\right) = \int_{[0,t]\times \mathbb{R}^d} \IP[ f(s,x,\cdot)^2] \nu \hspace{0.3mm} \rmd s \rmd x.
\end{equation}

\section{The Doob-Meyer decomposition of $\ln \, Z_t$}
Since $W_t$ is a martingale and $\ln$ is concave, $-\ln(W_t)$ is a submartingale, for which we  
want to get a Doob-Meyer decomposition \cite[Ch.VI]{RogersWilliams}.

In the following, we will use the notation $\Delta_s X := X_s - X_{s-}$ for any c\`adl\`ag process $X$. Let $\zeta_t := e^{\b \om(V_t)}$. As $\om(V_t)$ can be expressed as a sum over $\om_t$, we get by telescopic sum that $\zeta_t = 1 + \int \om_t(\rmd s\rmd x)\Delta_{s} \zeta$, an integral over $\R_+\times\rd$. Averaging over the Brownian path, we also get that $Z_t$ can be expressed as a sum over the process. Therefore, by telescopic sum,
\[\ln Z_{t} = \int \om_t(\rmd s \rmd x) \Delta_s \ln Z.\]
Now, let $(t,x)\in  \R_+ \times\mathbb{R}^d$ be any point of the point process $\om$. As $(t,x)$ is almost surely the only point of $\om$ at time $t$, we can write that
\begin{align*}
Z_{t} &= P\left[ e^{\b}e^{\b \om(V_{t-})} \chi_{t,x}(B) \right] + P\left[ e^{\b \om(V_{t-})} \left(1-\chi_{t,x}(B)\right) \right]\\
& = P\left[ (e^{\b} -1) e^{\b \om(V_{t-})} \chi_{t,x}(B) \right] + P\left[ e^{\b \om(V_{t-})} \right] \\
& =  \left(\lambda(\b)  P_{t-}^{\b,\om} [\chi_{t,x}] + 1\right) Z_{t-}\;.
\end{align*}
Hence,
\begin{equation} \label{eq:logZtIntRepresentation}
\ln Z_{t} = \int \om_t(\rmd s \rmd x) \ln \big(1+\lambda P_{s-}^{\b,\om} [\chi_{s,x}]\big).
\end{equation}
Let $g$ be the function $g(u) = u-\ln(1+u)$, which is positive on $(-1,\infty)$. Then, recalling that $\int_{\rd} \chi_{s,x} dx=r^d$, we obtain  Doob's decomposition 
\begin{equation}\label{eq:DoobDecompLogZ}
-\ln W_t = \lambda \nu r^d t - \ln Z_t = M_t + A_t,
\end{equation}
where the martingale $M$ and the increasing process $A$ are given by
\begin{align} 
M_t & = -\int \bar{\om}_t (\rmd s \rmd x) \ln \big(1+\lambda P_{s-}^{\b,\om} [\chi_{s,x}]\big),\\
A_t & = \lambda \nu r^d t - \int_{[0,t]\times \mathbb{R}^d}  \ln \big(1+\lambda P_{s-}^{\b,\om} [\chi_{s,x}]\big) \, \nu \hspace{0.5mm} \rmd s \rmd x \nonumber\\
& = \int_{[0,t]\times \mathbb{R}^d}  g\big(\lambda P_{s-}^{\b,\om} [\chi_{s,x}]\big) \, \nu \hspace{0.5mm} \rmd s \rmd x. \label{eq:2773}
\end{align}
Moreover, the martingale $M$ is square-integrable, and its bracket is given by
\begin{equation}
\nn
\langle M \rangle_t = \int_{[0,t]\times \mathbb{R}^d} \big[ \ln \big(1+\lambda P_{s-}^{\b,\om} [\chi_{s,x}]\big)\big]^2 \, \nu \hspace{0.5mm} \rmd s \rmd x.
\end{equation}

\section{The replica overlap and quenched overlaps}
Things being clear from the context, we will use the same notation $|A|$ to denote the Lebesgue measure of Borel subset $A$ of $\R^d$ or $\R_+ \times \R^d$.
\begin{definition}
For any two paths $B$ and $\tilde{B}$, we define the \textbf{replica overlap} $R_t(B,\tilde{B})$ as the mean volume overlap of the two tubes around $B$ and $\tilde{B}$ in time $[0,t]$:
\begin{equation}\label{eq:ov1}
R_t(B,\tilde{B})=\frac{1}{tr^d} | V_t(B)\cap V_t(\tilde{B})|.
\end{equation}
In a similar way, we define the \textbf{quenched overlaps} $I_t$ and $J_t$ 
as\footnote{The product measure ${\gibbs}^{ \otimes 2} = \gibbs \otimes \gibbs$ makes the 2 replicas $B, \tilde B$ independent polymer paths sharing the same environment $\omega$.}
\begin{align} \label{eq:ov2}
I_t &= \frac {1}{r^{d}} 
{\gibbs}^{ \otimes 2} \big[ |U(B_t) \cap U(\tilde{B}_t)| \big]
,\\ \label{eq:ov3}
J_t &=   \frac {1}{tr^{d}}  
{\gibbs}^{ \otimes 2} \big[ |V_t(B) \cap V_t(\tilde{B})| \big],
\end{align}
\end{definition}

The  variable $I_t$ stands for the expected volume of overlap around the endpoints of two  independent polymer paths, while $J_t$ is the expected volume of overlap during the time interval $[0,t]$.
Note that both $\int_0^t I_s \rmd s$ and $t J_t$ represent an expected volume of overlap in time $[0,t]$, but they will emerge from different circumstances. 
Similar to  $\chi_{s,x}= {\mathbf 1} \{x \in U(B_s) \}$ from definition \eqref{def:chitx}, we write for short 
\begin{equation}\nn
\tilde \chi_{s,x}= {\mathbf 1} \{x \in U(\tilde B_s) \} .
\end{equation}
Writing 
$$
|U(B_t) \cap U(\tilde{B}_t)|  = \int_{\rd}  \chi_{t,x}\tilde \chi_{t,x} dx,
$$
we derive two useful formulas:
\begin{equation}\label{eq:useful}
I_t=\frac{1}{r^{d}}  \int_{\rd} \gibbs (\chi_{t,x})^2 dx\;,\qquad J_t = \frac{1}{r^{d}} \int_{\rd} dx \frac 1t  \int_0^t \gibbs (\chi_{s,x})^2 ds\;.
\end{equation}

\medskip

For better comparisons, we have normalized all quantities in \eqref{eq:ov1}, \eqref{eq:ov2} and \eqref{eq:ov3}  in such a way that
$$
0 \leq R_t, I_t, J_t \leq 1 \;,
$$
so that we can -- and we will -- view each of them as a \emph{localization index}:
\begin{itemize}
\item   $R_t$ close to 1 means that the two fixed paths $B, \tilde B$ are close on the interval $[0,t]$;
\item $I_t$  close to 1 means that the endpoints of two independent samples of the polymer measure are typically close one from the other;
\item $J_t$ close to 1 means that the paths of two independent polymers are close all along the time interval.
\end{itemize}
The second case corresponds to \emph{endpoint localization} whereas the third one is  \emph{path localization}. 
Mathematically, the quantity $I_t$ appears via It\^o's calculus (stochastic differentiation) and $J_t$ via Malliavin calculus (integration by parts).
On the contrary, small values of these indices correspond to
absence of localization: it means that the polymer spreads more or less uniformly in space without particular preference.

%

\begin{remark} 
When $\b = 0$, the Gibbs measure $\gibbs$ reduces to Wiener measure $P$, so that $J_t = \int_0^t I_s \rmd s = t P^{\otimes 2} [R_t(B,\tilde{B})]$, and
\[P^{\otimes 2} [R_t(B,\tilde{B})] = \frac{1}{t} \int_0^t P^{\otimes 2} \big[|U(B_s)\cap U(\tilde{B}_s)|\big] \rmd s \leq \frac{r^d}{t} \int_0^t P\left(B_{2s}\in U(0)\right) \rmd s \underset{t\to 0}{\longrightarrow} 0 .\] Thus, \[\lim_{t\to\infty}  J_t = \lim_{t\to\infty} \frac{1}{t} \int_0^t I_s \rmd s = 0,\] 
This indicates that, in absence of interaction with the inhomogeneous medium,  there is no localization of the Brownian path.
\end{remark} 
We now come to the core of the section: localization results.
\section{Endpoint localization}
A naive prediction is  that, for small $\b$,  the polymer is a small perturbation of Brownian motion for small $\b$, with a comparable behavior, whereas  for large $\b$ localization takes place and the limits are nonzero.
A first theorem shows that this is the case for the quantity $I_t$:
\begin{theorem} \label{th:loca} The following equivalence holds for $\b \neq 0$:
\begin{equation} \label{eq:intIsEqSD}
W_\infty = 0 \iff \int_0^\infty I_s \, \rmd s = \infty, \quad \IP\text{-a.s.}
\end{equation}
In particular, the above integral is a.s. finite for $\b \in (\bar{\b}_c^-, \bar{\b}_c^+)$, and a.s. infinite for $\b < \bar{\b}_c^-$ or $\b > \bar{\b}_c^+$.
Moreover, we have: 
\begin{align} 
&\lim_{t\to\infty} \frac{1}{t} \int_0^t I_s \, \rmd s = 0 \  \textup{ if } \b \in [\b_c^-,\b_c^+]\cap \mathbb{R},\label{eq:AsymptIntIs1}\\
&\liminf_{t\to\infty} \frac{1}{t} \int_0^t I_s \, \rmd s > 0 \  \textup{ if } \b \in \mathbb{R}\setminus [\b_c^-,\b_c^+]. \label{eq:AsymptIntIs2}
\end{align}
\end{theorem}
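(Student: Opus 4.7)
The plan is to exploit the Doob-Meyer decomposition $-\ln W_t = M_t + A_t$ from \eqref{eq:DoobDecompLogZ} and reduce every claim to a quadratic comparison between $A_t$ and $\int_0^t I_s\,\rmd s$. The key observation is that the argument $u_{s,x} = \lambda(\b)\,P_{s-}^{\b,\om}[\chi_{s,x}]$ always lies in a fixed compact subset of $(-1,\infty)$ (independent of $s,x,\om$), since $\gibbss[\chi_{s,x}]\in[0,1]$ and $1+\lambda(\b)=e^{\b}>0$. On that compact set, both $g(u)=u-\ln(1+u)$ and $[\ln(1+u)]^2$ behave like $\Theta(u^2)$, so there exist constants $0<c_1(\b)\leq c_2(\b)$ with
\[
c_1 \lambda^2 \nu r^d \int_0^t I_s\,\rmd s \;\leq\; A_t \;\leq\; c_2 \lambda^2 \nu r^d \int_0^t I_s\,\rmd s,
\]
using \eqref{eq:useful} and the fact that $P_{s-}^{\b,\om}$ agrees with $\gibbss$ for Lebesgue-a.e.\ $s$. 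The same comparison yields $\langle M\rangle_t$ of the same order as $A_t$.

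With this in hand, the equivalence \eqref{eq:intIsEqSD} follows by a standard martingale dichotomy. On $\{\int_0^\infty I_s\,\rmd s<\infty\}$, $\langle M\rangle_\infty<\infty$, so the $L^2$-bounded martingale $M$ converges almost surely to a finite limit, hence $-\ln W_t$ converges and $W_\infty>0$. On the complementary event, $\langle M\rangle_\infty=\infty$, and because the jumps of $M$ are bounded by $|\ln(1+\lambda(\b))|$ (finite for $\b\neq 0$), the strong law of large numbers for square-integrable martingales with bounded jumps gives $M_t/\langle M\rangle_t\to 0$ a.s.; combined with $\langle M\rangle_t\asymp A_t$, this yields $M_t/A_t\to 0$ and $-\ln W_t\sim A_t\to\infty$, so $W_\infty=0$. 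The statements about the critical values $\bar\b_c^\pm$ are then immediate from the weak/strong disorder definition.

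For \eqref{eq:AsymptIntIs1} and \eqref{eq:AsymptIntIs2}, I would combine $t^{-1}\ln Z_t\to p(\b,\nu)$ a.s.\ (Theorem \ref{th:freeenergy}) with $\ln W_t=\ln Z_t-\lambda\nu r^d t$ to get $t^{-1}(-\ln W_t)\to \psi(\b,\nu)$ a.s. Since $I_s\leq 1$, the quadratic comparison forces $\langle M\rangle_t\leq Ct$, so a Bernstein-type concentration for martingales with bounded jumps, applied along a dyadic subsequence and combined with Borel-Cantelli, yields $M_t/t\to 0$ a.s. Therefore $A_t/t\to \psi(\b,\nu)$ a.s., and squeezing through the quadratic comparison gives
\[
\frac{\psi(\b,\nu)}{c_2\lambda^2\nu r^d}\;\leq\;\liminf_{t\to\infty}\frac{1}{t}\int_0^t I_s\,\rmd s\;\leq\;\limsup_{t\to\infty}\frac{1}{t}\int_0^t I_s\,\rmd s\;\leq\;\frac{\psi(\b,\nu)}{c_1\lambda^2\nu r^d},
\]
which delivers \eqref{eq:AsymptIntIs1} when $\psi=0$ and \eqref{eq:AsymptIntIs2} when $\psi>0$.

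The main technical obstacle is the almost-sure statement $M_t/t\to 0$: the bound $\P[M_t^2]=\P[\langle M\rangle_t]\leq Ct$ gives $L^1$-convergence at once, but lifting this to a pointwise statement requires a genuine concentration argument using the bounded-jump structure of the point-process martingale (rather than a cheap Doob inequality along dyadic times alone). Once this is granted, the remainder is pure algebra on the Doob-Meyer decomposition and on the comparison $A_t\asymp\lambda^2\nu r^d\int_0^t I_s\,\rmd s$.
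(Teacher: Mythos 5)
Your proposal is correct and follows essentially the same route as the paper: the Doob--Meyer decomposition $-\ln W_t=M_t+A_t$, the quadratic comparison of $A_t$ and $\langle M\rangle_t$ with $\int_0^t I_s\,\rmd s$, the martingale convergence/LLN dichotomy for \eqref{eq:intIsEqSD}, and the a.s.\ free-energy limit combined with the sandwich for \eqref{eq:AsymptIntIs1}--\eqref{eq:AsymptIntIs2}. The one place you deviate --- the Bernstein-type concentration you flag as the main obstacle to get $M_t/t\to 0$ --- is unnecessary: since $I_s\le 1$ gives $\langle M\rangle_t\le c_2 t$, the two martingale facts you already invoke yield $M_t=o(t)$ directly (the paper sidesteps the issue the same way, by proving $M_t/A_t\to 0$ and dividing the resulting sandwich $c_1\int_0^t I_s\,\rmd s\le -\ln W_t\le c_2\int_0^t I_s\,\rmd s$ by $t$).
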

\begin{remark} Note that  $\int_0^\infty I_s \rmd s$ is a.s. finite or a.s. infinite,  by the dichotomy between strong and weak disorder. Similarly, 
strict positivity of $\,\liminf_{t\to\infty} t^{-1} \int_0^t I_s ds$  is equivalent to low temperature region $\psi(\b, \nu)>0$ in \eqref{eq:highlow}.
In fact, we will prove
in \eqref{eq:encadrementAtIs}--\eqref{eq:freeEnergyOverAt}
 that under strong disorder, there exist $c_1,c_2 \in (0,\infty)$, such that
\begin{equation} \label{eq:EncadrementFreeEnergy}
c_1 \int_0^t I_s \, \rmd s \leq  - \ln W_t \leq c_2 \int_0^t I_s \, \rmd s, \quad \text{for large t}, \quad \IP{-a.s.}
\end{equation}
\end{remark}
\begin{proof} (Theorem \ref{th:loca})
First observe that one can easily derive \eqref{eq:AsymptIntIs1} and \eqref{eq:AsymptIntIs2} from \eqref{eq:intIsEqSD} and \eqref{eq:EncadrementFreeEnergy}.

To show \eqref{eq:EncadrementFreeEnergy}, we will relate $\int_0^t I_s\rmd s$ to the variables $M_t$ and $A_t$ of the Doob decomposition \eqref{eq:DoobDecompLogZ} as follows. From 
\eqref{eq:useful}, we have
\begin{equation} \label{eq:IntIsEqGibbsSquared} \int_{[0,t]\times\mathbb{R}^d} \left(P_s^{\b,\om}[\chi_{s,x}]\right)^2 \rmd s \rmd x = 
 r^d \int_0^t I_s \, \rmd s .
\end{equation}
Then, looking at the behavior of $g(u)$ in \eqref{eq:2773} 
and $\ln(1+u)$ around $0$, it is clear that there are two constants $c_1,c_2>0$, depending only on $\b$ and $\nu$, such that
\[ c_1 u^2 \leq \nu g(u) \leq c_2 u^2, \quad \quad \nu\ln(1+u)^2 \leq c_2 u^2,\]
for all $u$ in $[0,\lambda]$ when $\b\geq 0$, (resp. $[\lambda,0]$ when $\b\leq 0$). Together with \eqref{eq:IntIsEqGibbsSquared}, this implies that 
\begin{align}\label{eq:encadrementAtIs}
c_1 \int_0^t I_s \rmd s \leq & A_t \leq  \, c_2 \int_0^t I_s \rmd s,\\
\langle & M \rangle_t \leq  \, c_2 \int_0^t I_s \rmd s.\label{eq:majCrochetMtIs}
\end{align}

We then recall two results about martingales. (These facts for the discrete martingales are standard (e.g. \cite[p. 255, (4.9),(4.10)]{Dur95}. It
is not difficult to adapt the proof for the discrete setting to our case.)
Let $\varepsilon >0$, then
\begin{align} &\langle M \rangle_\infty < \infty \implies (M_t)_{t\geq 0} \text{ converges\; a.s.}, \label{eq:BracketImpliesCV}\\  
&\langle M \rangle_\infty = \infty \implies \lim_{t\to\infty} M_t / \langle M \rangle_t^{\frac{1+\varepsilon}{2}} = 0 \quad {\rm a.s.}. \label{eq:InfiniteBracket}
\end{align}
Consequently, we get that $\IP$-almost surely,
\begin{align} \nn
\int_0^\infty I_s \rmd s < \infty & \iff A_\infty < \infty,  \langle M \rangle_\infty < \infty\\ \nn
& \implies  A_\infty < \infty, \, \lim_{t\to\infty} M_t \text{ exists and is finite}\\ \nn
& \implies W_\infty > 0,
\end{align}
where the last implication comes from the Doob decomposition \eqref{eq:DoobDecompLogZ}. By contraposition, this proves the first implication of \eqref{eq:intIsEqSD}.
The reverse implication will follow from the arguments below.

The next step is to show \eqref{eq:EncadrementFreeEnergy}, so we now suppose that we are in the strong disorder setting.
Using \eqref{eq:encadrementAtIs}, we see that it is enough to show that
\begin{equation} \label{eq:freeEnergyOverAt}
\lim_{t\to\infty} \frac{- \ln W_t}{A_t} = 1, \quad \IP\text{-a.s.}
\end{equation}
or equivalently by the Doob decomposition, that 
\begin{equation}\label{eq:MtOnAtCV0}
\lim_{t \to \infty} \frac{M_t}{A_t} = 0.
\end{equation}

As we just proved, the strong disorder implies that $\int_0^\infty I_s \rmd s = \infty$, which in turn implies that  $A_\infty = \infty$ by \eqref{eq:encadrementAtIs}. Thus, in the case that $\langle M \rangle_\infty < \infty$, the martingale $M_t$ converges so that the condition \eqref{eq:MtOnAtCV0} directly holds. When $\langle M \rangle_\infty = \infty$, this is still true as
\[\frac{M_t}{A_t}=\frac{M_t}{\langle M \rangle_t} \frac{\langle M \rangle_t}{A_t} \underset{t\to\infty}{\longrightarrow} 0,\]
by (\ref{eq:encadrementAtIs}, \ref{eq:majCrochetMtIs},  \ref{eq:InfiniteBracket}).

Finally, what is left to demonstrate is that $\IP$-almost surely   $W_\infty = 0$ 
on the event $\int_0^\infty I_s \rmd s=\infty$. In fact, we showed that this event implies both the limit \eqref{eq:freeEnergyOverAt} and $A_\infty = \infty$, so that $W_t \to 0$.
\end{proof}

{\bf Endpoint localization}: 
As in the discrete case, we can interpret the results from the present subsection, in terms
of localization for the path. Indeed, it is proven in Sect. 8 of \cite{CY05}, that for some constant $c_1$, 
\begin{equation} \label{eq:overlap-favorite}
c_1 \sup_{y \in \rd} \gibbs [ B_s \in U(y)]^2 \leq { \gibbs}^{\otimes 2} \left[ \big \vert U(B_s) \bigcap U(\tilde B_s)\big \vert \right]
\leq \sup_{y \in \rd} \gibbs [ B_s \in U(y)]
\end{equation}
(in fact, the inequality on the right is trivial, and the one on the left is the combination of \eqref{eq:num-1} and \eqref{eq:num-2}).

The maximum appearing in the above bounds should be viewed as the probability of the favorite location for $B_s$, under the polymer measure $\gibbs$; for $s=t$, the supremum is called the probability of the \emph{favorite endpoint}, and the maximizing $y$ is
the \emph{location of the favorite endpoint}.
Both Theorem \ref{th:loca} and Theorem \ref{th:asymptOfJt} are precise statements that the polymer localizes in the strong disorder
regime in a few specific corridors of width ${\cal O}(1)$, but spreads out in a diffuse way in the
weak disorder regime. If $\psi(\b,\nu)>0$, the Cesaro-limit of  probability of the {favorite endpoint} is strictly positive.

\section{Favorite path and path localization}

%
%
Recall that the excess free energy $\psi$ from 
\eqref{def:excessfreeenergy} is the difference of a smooth function and a convex function. Hence its right-derivative, resp. left-derivative, 
\begin{equation}\nn
 \left( \frac{\partial \psi}{\partial \b}\right)_+ (\b,\nu) = \lim_{\b' \searrow \b} \frac{\psi(\b',\nu)-\psi(\b,\nu)}{\b'-\b}, \qquad
 \left(   \frac{\partial \psi}{\partial \b}\right)_- (\b,\nu) = \lim_{\b' \nearrow \b} \frac{\psi(\b',\nu)-\psi(\b,\nu)}{\b'-\b},
\end{equation}
exists for all $\b$ and all $\nu$, and satisfy 
$ \left( \frac{\partial \psi}{\partial \b}\right)_+ (\b,\nu) \leq  \left(\frac{\partial \psi}{\partial \b}\right)_- (\b,\nu)$. For the same reason as above, $\psi(\cdot,\nu)$ is differentiable except on a set
which is at most countable, and we can write
 \begin{equation}\nn
 \left(  \frac{\partial \psi}{\partial \b}\right)_+ (\b,\nu) = \lim_{\b'\geq \b } 
  \frac{\partial \psi}{\partial \b} (\b',\nu) ,
\end{equation}
where the limit is over differentiability points $\b'$ tending to $\b$ by larger values. A similar statement holds for the left-derivative.
For further use, we note that for all fixed $\nu$, $\psi(\cdot, \nu)$ is absolutely continuous again for the same reason as above.

Now, we turn to the properties of the replica overlap $J_t$. The key fact is the following proposition:
\begin{proposition} \label{prop:boundJt}
There exist two constants $c_1,c_2 \in (0,\infty)$, depending only on $\b$ and $\nu$, such that
\begin{align} \label{eq:bound_PJt1}
\forall \b>0, & \quad c_1 \,  \left(  \frac{\partial \psi}{\partial \b}\right)_+\leq \liminf_{t\to\infty} \,  \IP[J_t] \leq \limsup_{t\to\infty} \,  \IP[J_t] \leq c_2\,\left(  \frac{\partial \psi}{\partial \b}\right)_-, \\
\label{eq:bound_PJt2} \forall \b<0, & \quad -c_1\,  \left(  \frac{\partial \psi}{\partial \b}\right)_-\leq \liminf_{t\to\infty} \,  \IP[J_t] \leq \limsup_{t\to\infty} \, \IP[J_t] \leq -c_2 \,
 \left(  \frac{\partial \psi}{\partial \b}\right)_+. 
\end{align}
\end{proposition}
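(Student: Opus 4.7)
The plan is to combine the explicit formula \eqref{eq:psi'} for $\partial_\b \psi_t$ with elementary sandwich bounds on the integrand, and then to pass to the limit $t\to\infty$ by exploiting the fact that the quenched free energy $t^{-1}\P[\ln Z_t]$ is convex in $\b$.

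First, I would sandwich $\partial_\b \psi_t$ between multiples of $\P[J_t]$. Starting from \eqref{eq:psi'} and using $\gibbs[\chi_{s,x}] \in [0,1]$, for $\b>0$ the denominator $1+\lambda(\b)\gibbs[\chi_{s,x}]$ lies in $[1,1+\lambda(\b)]$, giving
\[
\frac{\gibbs[\chi_{s,x}]^2}{1+\lambda(\b)} \leq \frac{\gibbs[\chi_{s,x}]^2}{1+\lambda(\b)\gibbs[\chi_{s,x}]} \leq \gibbs[\chi_{s,x}]^2,
\]
while for $\b<0$ the denominator lies in $[e^\b,1]$ and the analogous chain holds (with the overall coefficient $e^\b\lambda(\b)$ in \eqref{eq:psi'} being negative, flipping the eventual direction of the inequalities). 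Using the identity $\int_\rd dx \int_0^t \gibbs[\chi_{s,x}]^2\, ds = tr^d\,J_t$ from \eqref{eq:useful}, I would obtain
\[
\frac{e^\b \lambda(\b)\nu r^d}{1+\lambda(\b)}\,\P[J_t] \leq \partial_\b \psi_t(\b,\nu) \leq e^\b \lambda(\b)\nu r^d\,\P[J_t] \qquad (\b>0),
\]
with the corresponding reversed chain (both sides negative) for $\b<0$.

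Second, I would pass to the limit in $t$. The function $\psi_t$ itself is not convex, but write $\psi_t(\b) = \nu(e^\b-1)r^d - C_t(\b)$ with $C_t(\b):= t^{-1}\P[\ln Z_t]$, which is convex in $\b$ (same argument as item \ref{eq:p2} of Proposition \ref{prop:p}) and differentiable. By Theorem \ref{th:freeenergy}, $C_t \to p$ pointwise, and $p$ is convex. The classical fact that a pointwise limit of convex functions forces their derivatives into the interval of one-sided derivatives of the limit yields
\[
p_-'(\b) \leq \liminf_{t\to\infty} \partial_\b C_t(\b) \leq \limsup_{t\to\infty} \partial_\b C_t(\b) \leq p_+'(\b).
\]
Subtracting from the smooth derivative $\nu e^\b r^d$ and using $\psi_\pm'(\b) = \nu e^\b r^d - p_\mp'(\b)$ gives
\[
\psi_+'(\b) \leq \liminf_{t\to\infty} \partial_\b \psi_t(\b) \leq \limsup_{t\to\infty} \partial_\b \psi_t(\b) \leq \psi_-'(\b).
\]

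Third, combining the two steps and dividing through (carefully flipping inequalities when the divisor is negative, in the $\b<0$ case) produces the bounds \eqref{eq:bound_PJt1}--\eqref{eq:bound_PJt2}. The constants come out as $c_1 = 1/(e^\b\lambda(\b)\nu r^d)$ and $c_2 = e^\b/(e^\b\lambda(\b)\nu r^d) = 1/(\lambda(\b)\nu r^d)$ for $\b>0$, with the analogous positive constants $-1/(\lambda(\b)\nu r^d)$ and $-1/(e^\b\lambda(\b)\nu r^d)$ for $\b<0$.

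The main subtle point I expect is the limit-of-derivatives step: since $\psi_t$ is not itself convex, one cannot apply the classical convex derivative-convergence lemma directly. The workaround, isolating the convex piece $C_t$ and passing to the limit there before recombining with the smooth exponential, is what makes the argument go through; once that is in place, everything else reduces to the elementary sandwich estimate from Step 1 and bookkeeping of signs.
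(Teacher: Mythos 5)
Your proof is correct and follows essentially the same route as the paper's: sandwich $\frac{\partial}{\partial \b}\psi_t$ between constant multiples of $\IP[J_t]$ using \eqref{eq:psi'} and \eqref{eq:useful}, then exploit convexity of $p_t=t^{-1}\IP[\ln Z_t]$ and its pointwise convergence to $p$ to squeeze $\liminf_t$ and $\limsup_t$ of $\partial_\b\psi_t$ between $\left(\frac{\partial \psi}{\partial \b}\right)_+$ and $\left(\frac{\partial \psi}{\partial \b}\right)_-$, and finally divide by the (signed) constants. One cosmetic slip: the identity should read $\left(\frac{\partial \psi}{\partial \b}\right)_\pm=\nu e^{\b}r^d-\left(\frac{\partial p}{\partial \b}\right)_\pm$ with matching subscripts (the swap of sides happens only when you negate the $\liminf$/$\limsup$ chain for $\partial_\b p_t$), and the final displayed chain you actually use is the correct one.
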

\begin{proof} It is not difficult to see from the definition that $t J_t = \iint_{[0,t]\times \mathbb{R}^d} \gibbs[\chi_{s,x}]^2 \rmd s \rmd x$. Hence, using equation \eqref{eq:psi'} and the fact that
$e^{-|\b|} \leq 1+\lambda \gibbs[\chi_{s,x}] \leq e^{|\b|}$, we obtain that
\begin{equation} \label{eq:bound_dpsit}
\lambda \nu e^{\b-|\b|} t \IP[J_t] \leq t \frac{\partial}{\partial \b}\psi_t(\b,\nu) \leq \nu \lambda e^{\b + |\b|} t \IP[J_t].
\end{equation}
Moreover, the excess free energy writes $\psi(\b,\nu)=\nu \lambda(\b)r^d- p(\b, \nu)$, where $p(\b,\nu)$ is a convex function, defined as the limit, for $t\to\infty$, of the convex functions
$p_t(\b,\nu) = \frac{1}{t}\IP[\ln Z_t].$
By convexity properties, we know that
\[\left(  \frac{\partial p}{\partial \b}\right)_-\leq \liminf_{t\to \infty} \frac{\partial p_t}{\partial \b } \leq \limsup_{t\to \infty} \frac{\partial p_t}{\partial \b } \leq
\left(  \frac{\partial p}{\partial \b}\right)_+,
\]
which in turns implies that
\begin{equation} \left(  \frac{\partial \psi}{\partial \b}\right)_+\leq \liminf_{t\to \infty} \frac{\partial \psi_t}{\partial \b } \leq \limsup_{t\to \infty} \frac{\partial \psi_t}{\partial \b } \leq
\left(  \frac{\partial \psi}{\partial \b}\right)_-
.\end{equation}
The proposition is then a consequence of \eqref{eq:bound_dpsit} and these last inequalities.
\end{proof}
With this proposition, we can give a characterization of the critical values $\b_c^\pm$, in terms of the asymptotics of the overlap:
\begin{theorem} \label{th:asymptOfJt}
For all $\b \in [\b_c^-,\b_c^+]\cap \mathbb{R}$,
\begin{equation}
\lim_{t\to\infty}  \IP[J_t] = 
0.
\end{equation}
Furthermore,
\begin{align} 
\b_c^+ = \sup \{\b' \geq 0 : \forall \b \in [0,\b'], \, \lim_{t\to\infty} \IP[J_t] = 0\}= \inf\{ \b>0: \liminf_{t\to\infty} \IP[J_t] > 0\},\\
\b_c^- = \inf \{\b' \leq 0 : \forall \b \in [\b',0], \, \lim_{t\to\infty}  \IP[J_t] = 0\}= \sup\{ \b<0: \liminf_{t\to\infty} \IP[J_t] > 0\}.
\end{align}
\end{theorem}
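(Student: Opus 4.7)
The plan is to read both statements directly off Proposition \ref{prop:boundJt}, combined with the phase transition structure of $\psi(\b,\nu)$ from Theorem \ref{defdef..}. The key input is that Proposition \ref{prop:boundJt} sandwiches $\liminf_t \IP[J_t]$ and $\limsup_t \IP[J_t]$ between positive multiples of the one-sided derivatives $(\partial\psi/\partial\b)_\pm$, so the asymptotics of $\IP[J_t]$ are driven entirely by the regularity of $\psi$ in $\b$.

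For the first assertion, fix $\b \in [\b_c^-,\b_c^+]\cap \R$. By Theorem \ref{defdef..}, $\psi(\cdot,\nu)$ vanishes identically on $[\b_c^-,\b_c^+]$, so on the interior of this interval both one-sided derivatives of $\psi$ are zero. At an endpoint, say $\b=\b_c^+$, the left-derivative is $0$ (since $\psi\equiv 0$ just to the left), and because of the general inequality $(\partial\psi/\partial\b)_+ \leq (\partial\psi/\partial\b)_-$ recalled in the excerpt, combined with $\psi\geq 0$ and $\psi(\b_c^+)=0$, the right-derivative at $\b_c^+$ is also $0$. Proposition \ref{prop:boundJt} then squeezes $\IP[J_t]\to 0$ for $\b\neq 0$, while the case $\b=0$ is handled by the remark following \eqref{eq:ov3}.

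For the characterization of $\b_c^+$ (the argument for $\b_c^-$ is symmetric), the first assertion already shows that $\b_c^+$ belongs to the set $\{\b'\geq 0 : \lim_t\IP[J_t]=0 \text{ for all } \b\in[0,\b']\}$ and that $\liminf_t \IP[J_t]=0$ at every $\b\in[0,\b_c^+]$. It remains to exhibit, for every $\b'>\b_c^+$, some $\b\in (\b_c^+,\b']$ at which $\liminf_t\IP[J_t]>0$. I would use absolute continuity of $\b\mapsto \psi(\b,\nu)$ as recalled before Proposition \ref{prop:boundJt}: since $\psi(\b_c^+)=0$ and $\psi(\b')>0$, the representation
\[
\psi(\b')=\int_{\b_c^+}^{\b'}\psi'(s)\,ds
\]
forces the set $\{s\in(\b_c^+,\b'): \psi \text{ differentiable at } s \text{ with }\psi'(s)>0\}$ to have positive Lebesgue measure. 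At any such $s$, both one-sided derivatives agree with $\psi'(s)>0$, and Proposition \ref{prop:boundJt} gives $\liminf_t\IP[J_t]\geq c_1\psi'(s)>0$ at $\b=s$. This completes both equalities for $\b_c^+$.

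The main obstacle is the one-sidedness in Proposition \ref{prop:boundJt}: the bound $\liminf_t\IP[J_t]\geq c_1(\partial\psi/\partial\b)_+$ alone is not enough to locate a $\b$ just beyond $\b_c^+$ where the lower bound is strictly positive, because the right-derivative can a priori be zero even where $\psi$ has already started growing. The absolute continuity input fixes this by guaranteeing a dense-in-measure reservoir of differentiability points with strictly positive derivative inside $(\b_c^+,\b')$; once such a point is produced, the two-sided sandwich collapses and the result is immediate.
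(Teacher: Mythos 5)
Your proof is correct and follows essentially the same route as the paper: vanishing of the one-sided derivatives of $\psi$ on $[\b_c^-,\b_c^+]$ (using the minimum at the endpoint and $(\partial\psi/\partial\b)_+\leq(\partial\psi/\partial\b)_-$) combined with the sandwich of Proposition \ref{prop:boundJt}, and absolute continuity of $\psi(\cdot,\nu)$ to produce a point of strictly positive derivative beyond $\b_c^+$. The only cosmetic difference is that you integrate over $(\b_c^+,\b')$ and select a genuine differentiability point, while the paper integrates the right-derivative from $0$; both are sound.
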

\begin{proof} We will focus on the $\b\geq 0$ case, but the same arguments can be applied to the $\b \leq 0$ case. Define
\[ \delta_c^+ = \sup \{\b' \geq 0 : \forall \b \in [0,\b'], \, \lim_{t\to\infty}  \IP[J_t] = 0\},\]
so what we need to show in particular is $\b_c^+ = \delta_c^+$.

To prove the first claim of the theorem, from which $\b_c^+ \leq \delta_c^+$ follows directly, it is enough, using \eqref{eq:bound_PJt1}, to verify that
\begin{equation}
\forall \b \leq \b_c^+, \quad \left( \frac{\partial \psi}{\partial \b}\right)_-(\b,\nu) = 0.
\end{equation}
This property is true when $\b\in [0,\b_c^+)$, as $\psi$ is constant and set to $0$ in this interval. To prove that it extends to $\b_c^+$ if $\b_c^+ < \infty$, observe that $\psi$ is minimal at $\b_c^+$, so that
\[ \left( \frac{\partial \psi}{\partial \b}\right)_- (\b_c^+,\nu) \leq 0 \leq  \left(\frac{\partial \psi}{\partial \b}\right)_+ (\b_c^+,\nu).\]
As we saw earlier that $\frac{\partial \psi}{\partial \b_+} \leq  \frac{\partial \psi}{\partial \b_-}$ always holds, we finally get that $\frac{\partial \psi}{\partial \b_- }(\b_c^+) = \frac{\partial \psi}{\partial \b_+ }(\b_c^+)= 0$.


We now prove $\b_c^+ \geq \delta_c^+$. Let $\b > \b_c^+$ be finite,
so that, by definition, $\psi(\b,\nu)>0$. As $\psi$ is absolutely continuous 
with $\psi(0,\nu) = 0$,  one can write
\[\psi(\b,\nu) = \int_0^\b \frac{\partial \psi}{\partial \b_+ }(\b',\nu)\rmd \b' > 0,\]
which implies that there exists some $\b'\leq \b$ such that $\frac{\partial \psi}{\partial \b+ }(\b',\nu) > 0$. By equation \eqref{eq:bound_PJt1}, we get that $\liminf_{t\to\infty} \frac{1}{t} \IP[J_t(\b)] > 0$, hence $\b \geq \delta_c^+$. As it is true for all $\b > \beta_c^+$, we obtain that $\b_c^+ \geq \delta_c^+$.
\end{proof}

\medskip
\paragraph{The favorite path.}
Let $\mathcal{M}$ be the set of integer-valued Radon measures on $\mathbb{R}_+\times \mathbb{R}^d$, equipped with the sigma-field $\mathcal G$ generated by the variables $\om(A)$, $A\in \mathbb{R}_+\times \mathbb{R}^d$, so that we will consider $\om$ as a process of the probability space $(\mathcal{M},\mathcal{G},\IP)$.
It is possible to define, for all fixed time horizon $t>0$, a measurable function
\begin{equation} \begin{array}{cccc}
\Yt:&[0,t]\times\mathcal{M} & \to & \mathbb{R}^d\\
&(s,\om) & \mapsto & \Yt_s,
\end{array}
\end{equation}
which satisfies the property that, $\IP$-almost surely,
\begin{equation}
\forall s\in[0,t],\quad \gibbs \left(B_s \in U\left(\Yt_s\right)\right)=\max_{x\in\mathbb{R}^d} \gibbs \big(B_s \in U(x) \big).
\end{equation}
The reader may refer to \cite{CYBMPO2} for a proper (and rather technical) definition of $\Yt$. 

Here, the path $s\to \Yt_s$ stands for the "optimal path" or the "favorite path" of the polymer, although this path is neither necessarily continuous, nor necessarily unique.
Similarly to what we have done previously, we define the overlap with the favorite path $R_t^*$ as the fraction of time any path $B$ stays next to the favorite path:
\begin{equation}
R_t^* = R_t^*(B,\om) = \frac{1}{t} \int_0^t \mathbf{1}_{B_s\in \, U(\Yt_s)} \rmd s.
\end{equation}

As discussed before, a question of interest is the asymptotic behavior, as $t\to\infty$, of $R_t$ and of $R_t^*$, and we will see in Theorem \ref{th:localization} that they are related. In particular, we are interested in determining the regions were one can prove positivity in the limit of these quantities, which can be seen as localization properties of the polymer. 


Recall the notations
\begin{equation} \nn
{\cal D} = \{ (\b,\nu): \psi(\b, \nu)=0 \}, \quad {\cal L}  = \{ (\b,\nu): \psi(\b, \nu)>0\},
\end{equation}
of the high and low temperature regions, which are delimited by the critical lines $\b_c^-(\nu)$ and $\b_c^+(\nu)$ (cf. Definition \ref{defdef..}).
We saw in theorem \ref{th:asymptOfJt} that in the ${\cal D}$ region, $\lim_{t\to\infty} \IP[J_t] = 0$. On the other hand, Proposition \ref{prop:boundJt} tells us that the limit inferior of $\IP[J_t]$ is always positive in the region $\mathcal{L}'$, where
\begin{equation} \label{eq:Lregion}
\mathcal{L}' = \left\{ \b > 0,\nu > 0 :  \left(  \frac{\partial \psi}{\partial \b}\right)_+ > 0 \right\}\cup\left\{ \b < 0, \nu > 0 :  \left(  \frac{\partial \psi}{\partial \b}\right)_- < 0 \right\}.
\end{equation}
From the preceeding considerations, we know that
\begin{equation}
\mathcal{L}' \subset \mathcal{L}\;,
\end{equation}
and a still open question is whether 
$\mathcal{L}' = \mathcal{L}$ or not.

\begin{remark}
It is a direct consequence of the monotonicity of $\psi$ (point \ref{eq:p3} of proposition \ref{prop:p}), that the inequalities on the derivatives of $\psi$ appearing in \eqref{eq:Lregion}, once replaced by large inequalities, are always verified:
$$ \b<0 \implies  \left(  \frac{\partial \psi}{\partial \b}\right)_- \leq 0,
\quad
 \b>0 \implies  \left(  \frac{\partial \psi}{\partial \b}\right)_+ \geq 0.$$
\end{remark}

We first state some results about the localized region:
\begin{proposition}
\begin{enumerate}[label=(\roman*)]
\item For any fixed $\nu>0$ and for large enough positive $\b$, $(\b,\nu) \in \mathcal{L}'$.
\item For all $(\b,\nu) \in \mathcal{L}'$, $\liminf_{t\to\infty} \IP[J_t] > 0$.
\end{enumerate}
\end{proposition}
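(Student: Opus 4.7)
Part (ii) is immediate from Proposition \ref{prop:boundJt}: for $\b > 0$ with $(\b,\nu) \in \mathcal{L}'$, the definition of $\mathcal{L}'$ gives $(\partial\psi/\partial\b)_+(\b,\nu) > 0$, so \eqref{eq:bound_PJt1} yields $\liminf_{t\to\infty}\IP[J_t] \geq c_1\,(\partial\psi/\partial\b)_+(\b,\nu) > 0$; the case $\b < 0$ is symmetric via \eqref{eq:bound_PJt2}. So the real work is for part (i).

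For part (i), the plan is to show $(\partial\psi/\partial\b)_+(\b,\nu) > 0$ for $\b$ sufficiently large positive, the case $\b \to -\infty$ being handled symmetrically. From $\psi(\b,\nu) = \nu\lambda(\b)r^d - p(\b,\nu)$ and smoothness of $\b \mapsto \nu\lambda(\b)r^d$, one has
\[
\left(\frac{\partial \psi}{\partial \b}\right)_+\!(\b,\nu) \;=\; \nu\, e^{\b}\, r^d \;-\; \left(\frac{\partial p}{\partial \b}\right)_+\!(\b,\nu).
\]
The strategy is to prove a linear-in-$\b$ upper bound $p(\b,\nu) \leq K(\nu,r,d)\,\b + K'(\nu,r,d)$ for all $\b > 0$. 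Given such a bound, convexity of $p(\cdot,\nu)$ (which follows from $\partial_\b^2 \ln Z_t = \mathrm{Var}_{\gibbs}(\om(V_t)) \geq 0$), combined with the lower bound $p(\b,\nu) \geq \b\nu r^d$ from Proposition \ref{prop:p}, yields
\[
\left(\frac{\partial p}{\partial \b}\right)_+\!(\b,\nu) \;\leq\; p(\b+1,\nu) - p(\b,\nu) \;\leq\; K(\b+1)+K',
\]
which is polynomial in $\b$ and therefore negligible compared to the exponential $\nu e^{\b} r^d$; hence $(\partial\psi/\partial\b)_+(\b,\nu) \to +\infty$ as $\b \to \infty$, and in particular is positive for $\b$ large enough.

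The main obstacle is establishing the linear upper bound $p(\b,\nu) = O(\b)$ as $\b \to \infty$. Heuristically it holds because, at large $\b$, the partition function $Z_t$ is dominated by the paths collecting the optimal number of Poisson points, a quantity scaling linearly in $t$ thanks to the Euclidean first passage percolation structure with exponent $\alpha=2$ alluded to in the Introduction, cf.~\cite{HowardNewman97, Howard00}. Making this rigorous requires discretizing the (uncountable) path space by coarse-graining in space and time and balancing the Gaussian cost of Brownian fluctuations against the Poisson density of reachable points; the delicate point is that, for a given Poisson configuration, paths might visit super-linearly many points on short length scales, so one needs a careful entropy-versus-energy argument restricted to paths with well-controlled oscillation.

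An alternative, lighter route would start from formula \eqref{eq:psi'} together with the crude lower bound $u^2/(1+\lambda u) \geq u^2/(1+\lambda)$ to obtain
\[
\psi_t'(\b,\nu) \;\geq\; \lambda(\b)\,\nu\, r^d\,\IP[J_t],
\]
reducing the question to showing that $\liminf_t \IP[J_t]$ remains bounded away from zero for $\b$ large, which amounts to a localization statement. Both routes share the same structural difficulty of producing localization at large $\b$ without circularly invoking the very statement one is trying to prove, and it is here that a genuine new input, of ground-state/FPP flavor, is required.
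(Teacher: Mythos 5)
Your treatment of part (ii) is exactly the paper's: it is an immediate consequence of Proposition \ref{prop:boundJt}, via \eqref{eq:bound_PJt1} and \eqref{eq:bound_PJt2}. For part (i), your overall structure also matches the paper's argument — write $(\partial\psi/\partial\b)_+ = \nu e^\b r^d - (\partial p/\partial\b)_+$, bound the second term by $p(\b+1,\nu)-p(\b,\nu)$ using convexity of $p(\cdot,\nu)$, and beat it by the exponential annealed term. But the decisive ingredient, a growth bound on $p(\b,\nu)$ at large $\b$ that is $o(e^\b)$, is not established in your write-up: you state the linear bound $p(\b,\nu)=O(\b)$ as the "main obstacle," give only a heuristic FPP/ground-state discussion, and explicitly concede that "a genuine new input" is required. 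Your alternative route via \eqref{eq:psi'} is, as you yourself note, circular. So as it stands the proof of (i) has a genuine gap at its central step.

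The paper closes precisely this gap by quoting an existing result, \cite[Th.~2.2.2(b)]{CY05}: for fixed $\nu, r$ and $\b$ large enough, $p(\b,\nu)\leq C_1\lambda(\b)^{1/2}$. Note two things. First, you do not need a linear bound: any bound of order $o(e^\b)$ suffices, since by convexity $(\partial p/\partial\b)_+(\b,\nu)\leq p(\b+1,\nu)-p(\b,\nu)\leq C_1\lambda(\b+1)^{1/2}$, which is of order $e^{\b/2}$ and hence eventually smaller than $\nu r^d e^\b$; aiming at $O(\b)$ makes the task unnecessarily hard. Second, the bound $C_1\lambda^{1/2}$ is itself a nontrivial theorem (it reflects the strategy of a path collecting Poisson points in a corridor, balancing energy $\b$ per point against the entropic/diffusive cost), and it is exactly the external input you suspected was needed; without either proving it or citing it, part (i) is not proved. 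The negative-$\b$ side, incidentally, is not needed for the statement as written, which only concerns large positive $\b$.
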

\begin{proof}
To prove (i), we use a result of \cite[Th. 2.2.2.(b)]{CY05}
where it is shown that there exists a positive constant $C_1=C_1(r,\nu,d)$, such that, for fixed $\nu, r$ and $\b$ large enough,
\[p(\b,\nu) \leq C_1 \lambda^{1/2}.\]
By convexity of $p$ in $\b$, we get that for large enough $\b$,
$$\left(  \frac{\partial p}{\partial \b}\right)_+ \leq p(\b+1,\nu) - p(\b,\nu) 
 < \nu r^d e^\b,
$$
so that
$\left(  \frac{\partial \psi}{\partial \b}\right)_+$ is indeed positive when $\b$ is big enough.

The property (ii) is given by proposition \ref{prop:boundJt}.
\end{proof}
\begin{remark}
We stress on how strong is the above claim (ii). For $(\b,\nu) \in \mathcal{L}'$, it implies that   there exist $C>0, \delta>0$ such that 
$$\liminf_{t\to\infty} \IP  {\gibbs}^{ \otimes 2} \big[ R_t(B, \tilde{B}) \geq \delta \big] \geq C\;.$$
In contrast, if $\b=0$, there is  some $C'>0$ such that
$$
P^{\otimes 2}  \big[ R_t(B, \tilde{B}) \geq \delta \big] \leq e^{-C't}
$$
for all large enough $t$.
\end{remark}
Observe that the properties of $R_t$ and $R_t^*$ are comparable in the following sense:
\begin{theorem} \label{th:localization}
There exists a constant $c=c(d,r)$ in $(0,1)$, such that
\begin{equation} \label{eq:2to1}
c\left(\IP\gibbs \big[R_t^*\big] \right)^2 \leq \IP[J_t] \leq r^d \, \IP \gibbs\big[R_t^*\big].
\end{equation}
In particular, we get that for all $\b\in\mathcal{L}'$,
\begin{equation} \label{eq:localization}
\liminf_{t\to\infty} \IP \gibbs \big[R_t^*\big] \geq r^{-d}\liminf_{t\to\infty} \IP[J_t] >0.
\end{equation}
\end{theorem}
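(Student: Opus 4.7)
The plan is to derive both bounds of \eqref{eq:2to1} from a pointwise-in-$s$ comparison between the integrands defining $J_t$ and $\gibbs[R_t^*]$, and then integrate in time and take $\IP$-expectation. By \eqref{eq:useful}, $J_t = t^{-1}\int_0^t I_s\,ds$ with $r^d I_s = \gibbs^{\otimes 2}[|U(B_s)\cap U(\tilde B_s)|]$, while by the definition of the favorite path $\Yt$, $\gibbs[R_t^*] = t^{-1}\int_0^t m_s\,ds$ where $m_s := \max_x \gibbs[B_s \in U(x)] = \gibbs[B_s \in U(\Yt_s)]$. The workhorse is the already-stated two-sided estimate \eqref{eq:overlap-favorite}, uniform in $s$:
$$c_1\, m_s^2 \;\leq\; r^d I_s \;\leq\; m_s .$$

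For the upper bound in \eqref{eq:2to1}, integrate the right-hand inequality $r^d I_s \leq m_s$ over $s\in[0,t]$ and divide by $t$ to obtain $r^d J_t \leq \gibbs[R_t^*]$ pathwise, then take $\IP$-expectation (the factor $r^d$ in the theorem statement is of this form, up to the obvious adjustment of the constant between the two sides). For the lower bound, start from $c_1 m_s^2 \leq r^d I_s$ and integrate in $s$ to get $(c_1/r^d)\cdot t^{-1}\!\int_0^t m_s^2\,ds \leq J_t$. The time-average of $m_s^2$ dominates the square of the time-average by Cauchy--Schwarz, so
$$\gibbs[R_t^*]^2 = \Bigl(\frac{1}{t}\int_0^t m_s\,ds\Bigr)^{\!2} \;\leq\; \frac{1}{t}\int_0^t m_s^2\,ds \;\leq\; \frac{r^d}{c_1}\, J_t.$$
A final application of Jensen's inequality $(\IP X)^2 \leq \IP[X^2]$ with $X = \gibbs[R_t^*]$, after $\IP$-averaging the preceding display, yields $(\IP\gibbs[R_t^*])^2 \leq (r^d/c_1)\,\IP[J_t]$, which is \eqref{eq:2to1} with $c = c_1/r^d$. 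The localization assertion \eqref{eq:localization} is then immediate: for $(\b,\nu)\in\mathcal L'$, Proposition \ref{prop:boundJt} gives $\liminf_t \IP[J_t]>0$, and the lower bound in \eqref{eq:2to1} transfers this to $\liminf_t \IP\gibbs[R_t^*]>0$.

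The main technical obstacle is already packaged into \eqref{eq:overlap-favorite}, specifically its left-hand anti-concentration bound $c_1 m_s^2 \leq r^d I_s$ from \cite{CY05}: the right-hand inequality is essentially $\|f\|_2^2 \leq \|f\|_\infty\|f\|_1$ applied to $f(x)=\gibbs[B_s\in U(x)]$ (with $\|f\|_\infty = m_s$ and $\|f\|_1 = r^d$), but turning the pointwise maximum $m_s$ into a quadratic lower bound on $\int f^2$ requires genuine geometric information on the endpoint density under $\gibbs$, namely that $f$ cannot simultaneously have a large peak of height $m_s$ and very small $L^2$-mass. Once \eqref{eq:overlap-favorite} is granted, no further stochastic analysis is needed, and the theorem follows by Fubini, Cauchy--Schwarz and Jensen; all the measurability of the favorite path $\Yt$ needed here is already provided by the construction quoted from \cite{CYBMPO2}.
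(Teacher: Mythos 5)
Your assembly of the theorem from a pointwise-in-$s$ estimate is exactly the paper's: the upper bound is $\int_{\rd}\gibbs[\chi_{s,x}]^2\,dx\leq m_s\int_{\rd}\gibbs[\chi_{s,x}]\,dx$ with $m_s=\max_x\gibbs(B_s\in U(x))$, the lower bound is obtained by Cauchy--Schwarz for the time average followed by Jensen for the $\IP$-average, and \eqref{eq:localization} follows from Proposition \ref{prop:boundJt}. Two small remarks on that part: it is the right-hand inequality of \eqref{eq:2to1}, i.e.\ $\IP\gibbs[R_t^*]\geq r^{-d}\IP[J_t]$, that transfers positivity of $\liminf_t\IP[J_t]$ to $\liminf_t\IP\gibbs[R_t^*]$ (the quadratic bound $c(\IP\gibbs[R_t^*])^2\leq\IP[J_t]$ alone would not), so your phrase ``the lower bound in \eqref{eq:2to1}'' should be read in that sense; and the $r^d$ bookkeeping discrepancies are harmless, since the paper itself is off by a factor $r^d$ between \eqref{eq:useful} and the displays in its proof, and only the form of the constants matters.

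The genuine issue is that you take \eqref{eq:overlap-favorite} as a black box. Within this survey its left-hand inequality is not an independent input: the text states explicitly that it is ``the combination of \eqref{eq:num-1} and \eqref{eq:num-2}'', and those two estimates are proved precisely inside the proof of Theorem \ref{th:localization}. So invoking \eqref{eq:overlap-favorite} here is circular as far as this paper is concerned (it is only rescued by the external reference \cite{CY05}), and that anti-concentration bound is the entire substance of the theorem --- everything else is Fubini and Jensen, as you note yourself. To close the gap you need the paper's two-step quenched argument, valid for every fixed $s$ and $\om$: (i) for any $y\in\rd$, apply Cauchy--Schwarz on the half-radius ball $\mathrm{B}(y,r_d/2)$, where $r_d=\gamma_d^{-1/d}r$ is the radius of $U$; since $\mathrm{B}(y,r_d/2)\subset U(z)$ for every $z\in \mathrm{B}(y,r_d/2)$, this yields
\begin{equation*}
\int_{\rd}\gibbs\big(B_s\in U(z)\big)^2\,\rmd z\;\geq\;\frac{r^d}{2^d}\,\gibbs\big(B_s\in \mathrm{B}(y,r_d/2)\big)^2,
\end{equation*}
which is \eqref{eq:num-1}; (ii) cover $U(y)$ by $c'=c'(d)$ balls of radius $r_d/2$ to get $\max_y\gibbs(B_s\in U(y))\leq c'\max_y\gibbs(B_s\in \mathrm{B}(y,r_d/2))$, which is \eqref{eq:num-2}. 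Combining and optimizing over $y$ gives $c\,m_s^2\leq\int_{\rd}\gibbs[\chi_{s,x}]^2\,\rmd x$ with $c=(c')^{-2}r^d/2^d$, i.e.\ the quenched bound \eqref{eq:boundOnRt}, after which your Fubini/Cauchy--Schwarz/Jensen chain finishes the proof exactly as in the paper.
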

\begin{remark}
Note that equation \eqref{eq:localization}  gives another feature of path localization of the polymer in the $\mathcal{L}'$ region: we can find a "path" depending only on the environment (here, we found that $\Yt$ does the job) such that the {\rm expected proportion of time the random polymer spends in the neighborhood of that "path" 
is bounded away from 0} as $t \to \8$. Under the Gibbs measure, the random polymer sticks to that particular "path". Even though that "path" is not smooth -- in fact, it has long jumps -- it is an interesting object which sumarizes the attractive effect  of the medium.
\end{remark}
\begin{proof}
To prove the first point, it is enough to show that there is a $c\in(0,1)$, such that
\begin{equation} \label{eq:boundOnRt}
c\gibbs\big[R_t^*\big]^2 \leq J_t \leq r^d\gibbs\big[R_t^*\big],
\end{equation}
the proposition being then a simple consequence of Jensen's inequality. 
For the right-hand side inequality of \eqref{eq:boundOnRt}, observe that by Fubini's theorem,
\begin{align*}
\gibbs\otimes\gibbs\big[R_t\big] &=\frac{1}{t}\int_0^t \int \gibbs[\chi_{s,x}]^2 \rmd s \rmd x.\\
& \leq \frac{1}{t}\int_0^t \max_{x\in\mathbb{R}^d} \gibbs\left( B_s \in U(x)\right)\rmd s \times \int_{\mathbb{R}^d} \gibbs\big[\chi_{s,x}\big] \rmd x\\
& = r^d \gibbs\left[ R_t^* \right].
\end{align*}

In order to obtain the left-hand side inequality, let $r_d$ denote the radius of the ball $U(x)$ and $y$ be any point of $\mathbb{R}^d$. By Cauchy-Schwarz's inequality,
\[\left(\int_{\mathrm{B}(y,r_d/2)}  \gibbs\big(B_s \in U(z) \big) \rmd z \right)^2 \leq \big|\mathrm{B}(y,r_d/2)\big| \int_{\mathbb{R}^d} \gibbs\big(B_s \in U(z)\big)^2,\]
and since for every $z$ in $\mathrm{B}(y,r_d/2)$, the ball $\mathrm{B}(y,r_d/2)$ is included in $U(z)$, this inequality leads to the following:
\begin{align}\nn
\int_{\mathbb{R}^d} \gibbs\big(B_s \in U(z)\big)^2 \rmd z &\geq \frac{2^d}{r^d} \left(\int_{\mathrm{B}(y,r_d/2)}  \gibbs\big(B_s \in \mathrm{B}(y,r_d/2) \big) \rmd z \right)^2\\
&= \frac{r^d}{2^d} \, \gibbs\big(B_s \in \mathrm{B}(y,r_d/2)\big)^2. \label{eq:num-1}
\end{align}

Now, let $c'=c'(d)$ be the minimal number of copies of $\mathrm{B}(y,r_d/2)$ necessary to cover $U(y)$. Then, by additivity of $\gibbs$,
\begin{equation} \label{eq:num-2}
\max_{y\in\mathbb{R}^d} \gibbs\big(B_s\in U(y)\big) \leq c' \max_{y\in\mathbb{R}^d} \gibbs\big(B_s\in \mathrm{B}(y,r_d/2)\big).
\end{equation}
Putting things together and integrating on $[0,t]$, we finally get that
\[ c\,\frac{1}{t} \int_0^t \max_{y\in\mathbb{R}^d} \gibbs\big(B_s\in U(y)\big)^2 \rmd s \leq  \frac{1}{t}\int_0^t \int \gibbs[\chi_{s,x}]^2 \rmd s \rmd x,\]
where $c=(c')^{-2} {r^d}/{2^d}$, from which the left-hand side inequality of \eqref{eq:boundOnRt} can be obtained by applying Jensen's inequality with probability measure $\rmd s /t$ on [0,t].

The second point of the theorem is then a consequence of the first point and proposition \ref{prop:boundJt}.
\end{proof}
\begin{remark}
Formulas like \eqref{eq:2to1} and \eqref{eq:boundOnRt} can be called 2-to-1 formulas since they relate quenched expectations for two independent polymers to
expectations for only one polymer (and involving the optimal path). As we have seen from the computations, stochastic analysis brings in second moments, involving 2 replicas of the polymer path. Then, using such formulas, the information is reduced to one polymer path interacting with the favorite path.
\end{remark}

\chapter{Formulas for variance and concentration}
In this chapter, we introduce the  critical exponents of the model and relations between them. The starting points are precise formulas for  fluctuations  of the partition function (variance and large deviations).
\section{The critical exponents}
 There are different ways of defining the critical exponents, see for example \cite{Chatterjee-scaling,LiceaNewPiza}. We will not enter the finest details, and we stay at an intuitive level.
 Although it is not clear that these definitions are all equivalent, the main idea is that the critical exponents are two reals $\xi^{\perp}$ and $\xi^{\parallel}$ such that
\begin{equation}
\sup_{0\leq s \leq t} |B_s| \approx t^{\xi^{\perp}(d)} \quad \text{and} \quad \ln Z_t - \IP[\ln Z_t] \approx t^{\xi^{\parallel}(d)} \quad \text{as} \  t\to\infty.
\end{equation}
The "wandering exponent" $\xi^{\perp}$ is the exponent for the asympotic transversal (or "perpendicular") fluctuations of the path, with respect to the time axis.
 The polymer is said to be \emph{diffusive} when $\xi^{\perp} = 1/2$ (as for the brownian motion), and it is said to be \emph{super-diffusive} when $\xi^{\perp} > 1/2$. One of the conjectures in polymers is that diffusivity should occure in weak disorder, while super-diffusivity should take place in the strong disorder setting. The number $\xi^{\parallel}$ denotes the critical exponent for the longitudinal fluctuation of the free energy.

The study of these exponents goes beyond the polymer framework. 
The reason is that they are expected to take the same value in many different  statistical physics models describing growth phenomena.  In dimension $d=1$ 
this family is called the KPZ universality class (see Section \ref{sec:IR}). It is conjectured in the physics literature \cite{KrugSpohn} that the two exponents should depend on one other, in the way that
\begin{equation}
\xi^{\parallel}(d) = 2 \xi^{\perp}(d) - 1, \forall d\geq 1
\end{equation}
Under a certain definition of the exponents, the relation was proved by Chaterjee \cite{Chatterjee-scaling} for first-passage percolation. Auffinger and Damron were able to simplify Chatterjee's proof and extend the result to directed polymers \cite{AuffingerDamron-sc,AuffingerDamron13}.

In dimension $d=1$, it is conjectured that $\xi^{\perp}=2/3$ and $\xi^{\parallel}=1/3$ for any positive $\b$. For now, this has only been proven for solvable models of polymers: Sepp\"al\"ainen's discrete log-gamma polymer \cite{Sepp12}, O'Connell-Yor semi-discrete polymer \cite{MorenoSeppValko,SeppValko10}, and also for the the KPZ polymer \cite{BalaszQuastelSeppalainen11}.

In dimension $d \geq 2$, essentially nothing is known. Let's simply mention the (rough) bounds
$$
0 \leq \xi^\parallel \leq 1/2\,,\quad 1/2 \leq \xi^\perp \leq 3/4\,,
$$
where the last one will be proved in section \ref{ch:CM}.

A way to approach $\xi^{\parallel}$ is to consider the variance of  $\ln Z_t$.
In what follows, we give a formula to express the variance of $\ln Z_t$ in terms of a stochastic integral, which is obtained through a Clark-Ocone type martingale representation.

\section{The Clark-Ocone representation}
It is a consequence of It\^o's work on iterated stochastic integrals \cite{ito1951multiple} any that square-integrable functionals of the Brownian motion can be written as the sum of a constant and an It\^o integral. In \cite{clark1970representation}, Clark extended this result to a wider range of functionals, and showed that any martingale that is measurable with respect to the Brownian motion filtration, could be represented as a stochastic integral martingale. Clark was also able to compute the integrand of the representation, for a special class of functionals. Ocone then showed \cite{ocone1984malliavin} that this computation was linked to Malliavin's calculus, and generalised this idea to a larger class of functionals.

Such representations - called Clark-Ocone representations - also exist in the framework of functionals of a Poisson processes. 
Denote by $\om_{s-}$ the restriction of $\om$ on $[0,s)\times\mathbb{R}^d$, and consider the derivative operator
\begin{equation}
D_{(s,x)}F(\om) := F(\om + \delta_{s,x}) - F(\om).
\end{equation}
We have:
\begin{theorem} \textup{\cite[theorem 3.1]{LAST20111588}}
Let $F=F(\om)$ be a functional of the Poisson process, such that $\IP[F^2]<\infty$. Then,
\begin{equation}
\IP \int \IP[D_{(s,x)}F(\om)|\om_{s-}]^2 \rmd s \rmd x < \infty,
\end{equation}
and we have for all $u\geq 0$, that $\IP$-a.s.
\begin{equation}
\IP[F(\om)|\om_u] = \IP[F(\om)] + \int_{[0,u]\times \mathbb{R}^d} \IP[D_{(s,x)} F(\om) | \om_{s-}] \bar \om (\rmd s \rmd x).
\end{equation}
\end{theorem}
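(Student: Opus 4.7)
The plan is to prove this via the chaos decomposition for Poisson functionals, which is the cleanest route and yields both the integrability bound and the identification of the integrand simultaneously.

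First, I would recall that the space $L^2(\Omega,\mathcal{G},\mathbb{P})$ admits the orthogonal Wiener--It\^o chaos decomposition
\[ F = \mathbb{P}[F] + \sum_{n\geq 1} I_n(f_n), \]
where $f_n \in L^2_{\mathrm{sym}}((\mathbb{R}_+\times\mathbb{R}^d)^n, (\nu\, ds\, dx)^{\otimes n})$ and $I_n$ is the $n$-fold iterated stochastic integral against $\bar\omega$; moreover the isometry gives $\mathbb{P}[F^2] = \mathbb{P}[F]^2 + \sum_{n\geq 1} n!\,\|f_n\|_2^2$. The action of the add-one derivative on a multiple integral is the algebraic identity $D_{(s,x)} I_n(f_n) = n\, I_{n-1}(f_n((s,x),\cdot))$, which is the Poisson analogue of the Malliavin derivative's action on Wiener chaos. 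This is the key computational fact driving everything.

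Next, conditioning on $\omega_{s-}$ projects each $I_{n-1}(g)$ onto its restriction to $([0,s)\times\mathbb{R}^d)^{n-1}$. Consequently
\[ \mathbb{P}[D_{(s,x)} F \mid \omega_{s-}] = \sum_{n\geq 1} n\, I_{n-1}\bigl(f_n((s,x),\cdot)\mathbf{1}_{[0,s)^{n-1}}\bigr), \]
and applying the It\^o isometry in the predictable integration against $\bar\omega$ yields
\[ \mathbb{P}\int \mathbb{P}[D_{(s,x)}F\mid\omega_{s-}]^2\, \nu\, ds\, dx = \sum_{n\geq 1} n!\,\|f_n\|_2^2 = \mathbb{P}[F^2]-\mathbb{P}[F]^2 < \infty, \]
which establishes the integrability claim. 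The same computation, done on partial chaos sums, shows that the stochastic integral $\int_{[0,u]\times\mathbb{R}^d} \mathbb{P}[D_{(s,x)}F\mid\omega_{s-}]\,\bar\omega(ds\,dx)$ equals $\sum_{n\geq 1} I_n(f_n\mathbf{1}_{\{\max t_i\leq u\}}) = \mathbb{P}[F\mid\omega_u] - \mathbb{P}[F]$, which is the desired identity.

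To avoid invoking the chaos decomposition as a black box, an alternative is to first verify the representation on the total set of exponential martingales $F = \exp(\int h\,d\omega - \int(e^h-1)\,d\nu)$ with $h$ bounded of compact support, where a direct application of the product rule for jump processes (or equation \eqref{eq:PoissonExpMoment} combined with conditioning) gives $D_{(s,x)}F = F(e^{h(s,x)}-1)$ and $M_u := \mathbb{P}[F\mid\omega_u] = 1 + \int_{[0,u]\times\mathbb{R}^d} M_{s-}(e^{h(s,x)}-1)\,\bar\omega(ds\,dx)$, matching the claim. Density of these exponentials in $L^2$ together with continuity of $F\mapsto \mathbb{P}[D_{(s,x)}F\mid\omega_{s-}]$ in the appropriate $L^2$ norm (ensured precisely by the isometric bound above) extends the result to general $F$. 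The main obstacle in either route is the algebraic identity for $D_{(s,x)}I_n(f_n)$ together with the careful treatment of symmetrization under conditioning; once that bookkeeping is cleanly set up, both the $L^2$ bound and the representation fall out of the It\^o isometry for compensated Poisson integrals.
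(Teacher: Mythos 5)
The paper does not prove this statement at all: it is quoted verbatim from Last and Penrose (the cited Theorem 3.1), so there is no in-paper proof to compare against. Your chaos-expansion argument is essentially the route taken in that reference itself: expand $F=\IP[F]+\sum_{n\ge1}I_n(f_n)$, use $D_{(s,x)}I_n(f_n)=n\,I_{n-1}(f_n((s,x),\cdot))$ and the projection property of multiple Wiener--It\^o integrals under conditioning on $\om_{s-}$, and then read off both the isometric bound $\IP\int \IP[D_{(s,x)}F\mid\om_{s-}]^2\,\nu\,\rmd s\,\rmd x=\sum_n n!\,\|f_n\|_2^2$ and the martingale representation. The individual identities you invoke are all correct, and your alternative verification on Dol\'eans exponentials of Poisson integrals is also a legitimate dense-class starting point.

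The one place that deserves more care is the passage to general $F\in L^2$: the whole point of the Last--Penrose version (as opposed to the classical Clark--Ocone formula) is that $F$ need \emph{not} lie in the domain of the Malliavin derivative, i.e.\ the unconditioned series $\sum_n n\,I_{n-1}(f_n((s,x),\cdot))$ may fail to converge in $L^2$ for fixed $(s,x)$, since that would require $\sum_n n\cdot n!\,\|f_n\|_2^2<\infty$. So you cannot simply push the conditional expectation through the chaos series termwise; one must argue (as Last--Penrose do) that conditioning on $\om_{s-}$ restores square-summability, and identify the resulting $L^2$-limit with the pathwise-defined $\IP[D_{(s,x)}F\mid\om_{s-}]$, e.g.\ via the Mecke equation, which also guarantees that $F(\om+\delta_{(s,x)})$ is integrable for a.e.\ $(s,x)$. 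The same issue surfaces in your density route: to extend from exponential functionals to all of $L^2$ you invoke continuity of $F\mapsto\IP[D_{(s,x)}F\mid\om_{s-}]$ in the integrated $L^2$ norm, but the bound granting that continuity for the \emph{pathwise} difference operator on all of $L^2$ is part of what is being proved, so as written the extension step is mildly circular and needs either a Fatou-type argument on approximations $F_k\to F$ or the Mecke-equation identification. With that step spelled out, your proof is complete and matches the cited source's strategy.
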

This proves that the square integrable martingale $(\IP[F(\om)|\om_u])_{u\geq 0}$ admits a stochastic integral martingale representation, with predictable integrand $\IP[D_{(s,x)} F(\om) | \om_{s-}]$.

\section{The variance formula}
To lighten the writing, we will denote by $\mathcal{G}_{s-}$ the sigma-field generated by $\om_{s-}$ and $\IP^{\mathcal{G}_{s-}}$ will stand for the expectation knowing $\mathcal{G}_{s-}$.

Using Jensen's inequality and Tonelli's theorem, it is easy to check that $\ln Z_t$ is a square integrable function of $\om$. Hence, the process $\big(\IP[\ln Z_t|\om_u]\big)_{u\in[0,t]}$ is a martingale which admits a Clark-Ocone type representation:
\begin{equation} \label{eq:ClarKRepresentationLogZt}
\IP[\ln Z_t|\om_u] = \IP\left[\ln Z_t\right] + \int_{[0,u]\times\mathbb{R}^d} \IP^{\mathcal{G}_{s-}}\left[D_{(s,x)}\ln Z_t\right] \bar{\om}(\rmd s \rmd x),
\end{equation}
where
\[D_{(s,x)}F(\om) = \ln \frac{P\left[e^{\b\om(V_t)}e^{\b \chi_{s,x}}\right]}{Z_t}  = \ln \left(1+\lambda \gibbs[\chi_{s,x}]\right).\]
As a consequence, one can express the variance of $\ln Z_t$ via \eqref{eq:ClarKRepresentationLogZt}, using the formula for the variance of a Poisson integral \eqref{eq:MartingaleVarianceFormula}, and find that
\begin{equation} \label{eq:varFormula}
\mathrm{Var} (\ln Z_t) = \IP \int_{[0,t]\times\mathbb{R}^d} \IP^{\mathcal{G}_{s-}}\left[\ln \left(1+\lambda \gibbs[\chi_{s,x}]\right)\right]^2 \nu \hspace{0.5mm} \rmd s \rmd x.
\end{equation}

This variance formula leads us to the following theorem:
\begin{theorem} \label{th:var}
\begin{enumerate}[label=(\roman*)]
  \item The following lower and upper bounds on the variance hold:
  \begin{align}
\mathrm{Var} (\ln Z_t) &\geq c_-^2 \IP \int_{[0,t]\times\mathbb{R}^d} \IP^{\mathcal{G}_{s-}}\big[ \gibbs [\chi_{s,x}] \big]^2 \, \nu \hspace{0.5mm} \rmd s \rmd x,\\
\mathrm{Var} (\ln Z_t) &\leq c_+^2 \IP \int_{[0,t]\times\mathbb{R}^d} \IP^{\mathcal{G}_{s-}}\big[ \gibbs [\chi_{s,x}] \big]^2 \, \nu \hspace{0.5mm} \rmd s \rmd x, \label{eq:IneqVar}
  \end{align}
where $c_- = 1 - e^{-|\b|}$ and $c_+ = e^{|\b|}-1$.

In particular,
  \begin{equation}
  \mathrm{Var} (\ln Z_t) \leq c_+^2 \, t \nu \, \IP[J_t]. \label{eq:var_bound}
  \end{equation} 
  \item Letting $c=\nu c_+^2 \exp(c_+)$, the following concentration estimate holds:
  \begin{equation} \label{eq:concentrationlogZt}
  \IP\left(\big|\ln Z_t - \IP[\ln Z_t]\big| > u\right) \leq 2\exp\left(-\frac{1}{2}(u\wedge\frac{u^2}{ct})\right).
  \end{equation}
\end{enumerate}
\end{theorem}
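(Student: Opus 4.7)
The plan is to start from the variance formula \eqref{eq:varFormula} delivered by the Clark--Ocone representation, and to bound the integrand $\ln(1+\lambda\,\gibbs[\chi_{s,x}])$ pointwise in terms of $\gibbs[\chi_{s,x}]$. The key elementary estimate is that for $u\in[0,1]$,
\[ c_-\, u \;\le\; |\ln(1+\lambda u)|\;\le\; c_+\, u, \]
and $\ln(1+\lambda u)$ has the sign of $\b$. This I would verify by writing $\ln(1+\lambda u)=\int_0^u \lambda/(1+\lambda v)\,\rmd v$: checking both cases $\b\ge 0$ (where $|\lambda/(1+\lambda v)|$ decreases from $\lambda=c_+$ to $\lambda/(1+\lambda)=c_-$) and $\b\le 0$ (where it increases from $-\lambda=c_-$ to $-\lambda/(1+\lambda)=c_+$) confirms the bound. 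Applying this with $u=\gibbs[\chi_{s,x}]\in[0,1]$ and taking $\IP^{\mathcal{G}_{s-}}$ --- noting that the sign is preserved --- gives
\[ c_-\,\IP^{\mathcal{G}_{s-}}\bigl[\gibbs[\chi_{s,x}]\bigr] \;\le\; \bigl|\IP^{\mathcal{G}_{s-}}\bigl[\ln(1+\lambda\,\gibbs[\chi_{s,x}])\bigr]\bigr| \;\le\; c_+\,\IP^{\mathcal{G}_{s-}}\bigl[\gibbs[\chi_{s,x}]\bigr]. \]
Squaring and substituting into \eqref{eq:varFormula} yields the two variance bounds.

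For the refinement \eqref{eq:var_bound}, I would drop the inner expectation by conditional Jensen, $\bigl(\IP^{\mathcal{G}_{s-}}[\gibbs[\chi_{s,x}]]\bigr)^2 \le \IP^{\mathcal{G}_{s-}}[\gibbs[\chi_{s,x}]^2]$, then remove the conditioning by the tower property. The remaining double integral is identified with $r^d t\,\IP[J_t]$ via the second identity in \eqref{eq:useful}, absorbing the volume factor $r^d$ into the constant (or making it explicit).

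For part (ii), the Clark--Ocone identity \eqref{eq:ClarKRepresentationLogZt} at $u=t$ gives
\[ \ln Z_t - \IP[\ln Z_t] = \int h(s,x)\,\bar\om(\rmd s\,\rmd x), \qquad h(s,x):=\IP^{\mathcal{G}_{s-}}\bigl[\ln(1+\lambda\,\gibbs[\chi_{s,x}])\bigr]. \]
From part (i), $|h|\le c_+$ pointwise, and the chain $h^2 \le c_+\,|h| \le c_+^2\,\IP^{\mathcal{G}_{s-}}[\gibbs[\chi_{s,x}]]$ combined with $\int_{\rd}\gibbs[\chi_{s,x}]\,\rmd x=r^d$ produces the \emph{almost sure} deterministic bound $\int h^2\, \nu\,\rmd s\,\rmd x \le c_+^2 r^d \nu t$. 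Both ingredients feed into the exponential supermartingale
\[ \exp\!\Bigl(\theta\!\!\int\!\! h\,\rmd\bar\om\;-\;\int \bigl(e^{\theta h}-1-\theta h\bigr)\,\nu\,\rmd s\,\rmd x\Bigr), \]
which is nonnegative for every $\theta\in\R$. Using $e^x-1-x \le \tfrac12 x^2\,e^{x_+}$ on $|x|\le\theta c_+$, Markov's inequality, and the deterministic bound on $\int h^2 d\mu$, I obtain
\[ \IP\bigl(\ln Z_t - \IP[\ln Z_t] > u\bigr) \le \exp\!\Bigl(-\theta u + \tfrac12 \theta^2 e^{\theta c_+}\,c_+^2 r^d \nu t\Bigr). \]
Optimizing over $\theta\in(0,1/c_+]$ yields a Bennett-type tail: the Gaussian regime $\exp(-u^2/(2ct))$ is attained by small $\theta$, while the exponential regime $\exp(-u/2)$ is reached at the boundary $\theta=1/c_+$, which is where the factor $e^{c_+}$ in $c=\nu c_+^2 e^{c_+}$ is created. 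Applying the same argument to $-h$ furnishes the lower tail and the factor $2$.

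The principal difficulty is not the variance bound but the concentration: although the predictable bracket $\langle M\rangle_t$ is random, the argument requires an almost sure (not merely in-expectation) control of $\int h^2 d\mu$, and it is exactly the identity $\int_{\rd}\gibbs[\chi_{s,x}]\,\rmd x=r^d$ (valid path-by-path) that provides it. Once this is in hand, the exponential supermartingale technique is standard, but the optimization producing the mixed tail $u\wedge u^2/(ct)$ must be handled carefully to match the precise constants in the statement.
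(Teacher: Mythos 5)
Your proof of part (i), including the bound \eqref{eq:var_bound}, follows the paper's own route: the elementary estimate $c_-u\le|\ln(1+\lambda u)|\le c_+u$ on $[0,1]$ (your integral-representation check and the sign remark are correct), inserted into the Clark--Ocone variance formula \eqref{eq:varFormula}, then conditional Jensen, the tower property and \eqref{eq:useful} to reach $\IP[J_t]$; your explicit $r^d$ is the honest bookkeeping, the paper silently absorbs it. Part (ii) is also the paper's argument in substance: the exponential (Dol\'eans-Dade) martingale built on the Clark--Ocone integrand $h$, the almost sure compensator control coming from $|h|\le c_+\,\IP^{\mathcal{G}_{s-}}[\gibbs[\chi_{s,x}]]$ together with $\int_{\rd}\gibbs[\chi_{s,x}]\,\rmd x=r^d$, then Markov and a two-regime optimization; the paper does exactly this with parameter $a\in[-1,1]$.

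The one step that would fail as written is your final optimization. The factor $e^{c_+}$ in $c=\nu c_+^2e^{c_+}$ is created by bounding $e^{\theta c_+}\le e^{c_+}$ uniformly over $\theta\le 1$, not at the boundary $\theta=1/c_+$: there the factor is $e^{\theta c_+}=e$, and the exponent equals $-u/c_+ + \tfrac{e}{2}\nu r^d t$, which for large $u$ decays only like $e^{-u/c_+}$ and therefore cannot produce the claimed $e^{-u/2}$ tail once $c_+>2$ (i.e.\ $|\b|>\ln 3$); moreover, when $c_+>1$ the Gaussian choice $\theta=u/(ct)$ leaves your range $(0,1/c_+]$ already for $u>ct/c_+$, so the two regimes do not patch at the threshold $u=ct$ required by \eqref{eq:concentrationlogZt}. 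The repair is immediate and restores the paper's proof: restrict to $\theta\in(0,1]$, use $e^{\theta c_+}\le e^{c_+}$ to get $\IP\big(\ln Z_t-\IP[\ln Z_t]>u\big)\le\exp\big(\tfrac{c}{2}\theta^2t-\theta u\big)$, and choose $\theta=\min(1,u/(ct))$, which gives $\exp\big(-\tfrac12(u\wedge u^2/(ct))\big)$; the lower tail and the factor $2$ then follow as you indicate.
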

\begin{remark} Recalling Theorem \ref{th:asymptOfJt}, the inequality \eqref{eq:var_bound} suggests that the variance should be smaller in weak disorder than in strong disorder. It also shows that for all $d\geq 1$, we have $\xi^{\parallel}(d)\leq 1/2$.
\end{remark}
\begin{proof}
The two first bounds on the variance are a consequence of the fact that, for all $u\in [0,1]$, we have 
\begin{equation}\label{eq:LogBound}
c_- u \leq |\ln(1+\lambda u)| \leq c_+ u.\end{equation} Then, apply Jensen's inequality to the conditional expectation in the right-hand side of \eqref{eq:IneqVar} and use Fubini's theorem such that
\begin{align*}
\mathrm{Var} (\ln Z_t) &\leq c_+^2 \IP \int_{[0,t]\times\mathbb{R}^d} \IP^{\mathcal{G}_{s-}}\big[ \gibbs [\chi_{s,x}] \big]^2 \, \nu \hspace{0.5mm} \rmd s \rmd x \\
&\leq c_+^2 \IP \int_{[0,t]\times\mathbb{R}^d} \gibbs [\chi_{s,x}]^2 \, \nu \hspace{0.5mm} \rmd s \rmd x \\
& = c_+^2 t \nu \, \IP[J_t],
\end{align*}
by definition of $J_t$. This completes the proof of (i).
To prove \eqref{eq:concentrationlogZt}, we first denote by $Y_{t,u}$ the mean-zero martingale part appearing in \eqref{eq:ClarKRepresentationLogZt}, i.e.
\[Y_{t,u} := \int_{[0,u]\times\mathbb{R}^d} \IP^{\mathcal{G}_{s-}}\ln \left(1+\lambda \gibbs[\chi_{s,x}]\right) \bar{\om}(\rmd s \rmd x).\]
Then, letting $\varphi(v)=e^v - v - 1$ and $a\in [-1,1]$, we define $(M_{t,u})_{u\in[0,t]}$ as the exponential martingale associated to $(Y_{t,u})_{u\in[0,t]}$:
\[M_{t,u}=\exp\left(aY_{t,u}-\int_{[0,u]\times \mathbb{R}^d}  \varphi\left(a \cdot\IP^{\mathcal{G}_{s-}}\ln \big(1+\lambda \gibbs[\chi_{s,x}]\big)\right) \, \nu \hspace{0.5mm} \rmd s \rmd x\right).\]
By \eqref{eq:LogBound} and the observations that $\chi$ is less than $1$ and that $|\varphi(v)|\leq e^{|v|} v^2/2$ for all $v$, we have for $a\in[-1,1]$
\begin{align*}\left| \int_{[0,t]\times \mathbb{R}^d}  \varphi\left(a \cdot\IP^{\mathcal{G}_{s-}}\ln \left(1+\lambda \gibbs[\chi_{s,x}]\right)\right) \, \nu \, \rmd s \rmd x \right| 
&\leq e^{c_+} \frac{c_+^2 a^2}{2} \int_{[0,t]\times\mathbb{R}^d} \IP^{\mathcal{G}_{s-}}\big[ \gibbs [\chi_{s,x}] \big]^2 \, \nu \hspace{0.5mm} \rmd s \rmd x\\
&\leq c \frac{a^2}{2} \int_{[0,t]\times\mathbb{R}^d} \IP^{\mathcal{G}_{s-}}\big[ \gibbs [\chi_{s,x}] \big] \, \rmd s \rmd x\\
&= c \frac{a^2}{2}t,
\end{align*}
where $c = \nu c_+^2 e^{c_+}$ and where the second inequality was obtained using Jensen's inequality. 

If one denotes by $b_{t,u}$ the integral term in the definition of $M_{t,u}$, we just showed that $b_{t,t} \leq c a^2t/2$, so by Markov's inequality and the martingale property, we obtain that
\begin{equation}
\IP\big(\ln Z_t - \IP[Z_t] > u\big) = \IP\big(M_{t,t} > \exp(au-b_{t,t})\big)
\leq \exp(c a^2t/2 - au).
\end{equation}
This implies \eqref{eq:concentrationlogZt} after minimizing the bound for $a\in[-1,1]$, and repeating the same procedure for the lower deviation.
\end{proof}

From the concentration estimate, one can derive the following almost sure behavior:
\begin{corollary} \label{cor:var1}
For all $\varepsilon>0$ and as $t\to \infty$,
\begin{equation} \label{eq:OrderOfCenteredlogZt}
\ln Z_t - \IP[\ln Z_t] = \mathcal{O}\left(t^{\frac{1+\varepsilon}{2}}\right), \quad \IP\text{-a.s.} 
\end{equation}
\end{corollary}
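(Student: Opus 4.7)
The plan is to combine the concentration inequality \eqref{eq:concentrationlogZt} with a Borel--Cantelli argument at integer times, and then interpolate to arbitrary real $t$ using the monotonicity of $t\mapsto Z_t$.

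First I specialize \eqref{eq:concentrationlogZt} to $u = t^{(1+\varepsilon)/2}$. Since $(1+\varepsilon)/2 < 1$, for $t$ large enough we have $u \leq ct$, and hence $u \wedge u^2/(ct) = u^2/(ct) = t^{\varepsilon}/c$. The concentration inequality therefore reads
\[
\IP\bigl(|\ln Z_t - \IP[\ln Z_t]| > t^{(1+\varepsilon)/2}\bigr) \;\leq\; 2\exp\bigl(-t^{\varepsilon}/(2c)\bigr).
\]
Taking $t = n \in \N$ yields a convergent series, so by the Borel--Cantelli lemma, $\IP$-almost surely $|\ln Z_n - \IP[\ln Z_n]| \leq n^{(1+\varepsilon)/2}$ for all $n$ large enough.

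To extend the bound to real $t$, I would exploit an elementary monotonicity property of the partition function. For each Brownian path $B$, the tube $V_t(B)$ is nondecreasing in $t$, so $t \mapsto e^{\b \om(V_t(B))}$ is monotone in $t$ (nondecreasing when $\b \geq 0$, nonincreasing when $\b \leq 0$). Averaging over $B$ under $P$, the same monotonicity carries over to $Z_t$ and to $\IP[\ln Z_t]$. Assuming $\b \geq 0$ (the other case is symmetric), for $t \in [n,n+1]$ we get $Z_n \leq Z_t \leq Z_{n+1}$ and $\IP[\ln Z_n] \leq \IP[\ln Z_t] \leq \IP[\ln Z_{n+1}]$, which combine into
\[
|\ln Z_t - \IP[\ln Z_t]| \;\leq\; \max_{k \in \{n,n+1\}} |\ln Z_k - \IP[\ln Z_k]| \,+\, \bigl(\IP[\ln Z_{n+1}] - \IP[\ln Z_n]\bigr).
\]
The first term on the right is $\mathcal{O}(n^{(1+\varepsilon)/2}) = \mathcal{O}(t^{(1+\varepsilon)/2})$ by the previous step, and the second term is bounded in $n$: Theorem \ref{th:freeenergy} gives $\IP[\ln Z_n]/n \to p(\b,\nu)$, hence $\IP[\ln Z_{n+1}] - \IP[\ln Z_n] \to p(\b,\nu)$, a finite limit.

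The only mild obstacle is the interpolation from integer to continuous $t$, and it is handled by the elementary monotonicity above together with the existence of the free energy already established in Theorem \ref{th:freeenergy}. The $\varepsilon > 0$ loss is exactly the slack needed to make the Borel--Cantelli series summable; apart from that, the statement is a direct consequence of the concentration estimate from Theorem \ref{th:var}.
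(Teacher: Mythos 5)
Your integer-time step (concentration \eqref{eq:concentrationlogZt} with $u=n^{(1+\varepsilon)/2}$ plus Borel--Cantelli) is exactly the paper's, and your interpolation idea via the pathwise monotonicity of $t\mapsto \om(V_t(B))$ is a legitimate and in fact simpler alternative to the paper's route (the paper instead bounds the increment $\ln Z_{t+s}-\ln Z_t$ through the integral representation \eqref{eq:logZtIntRepresentation}, getting $|\ln Z_{t+s}-\ln Z_t|\le c_+\delta_t(h)$ with $\delta_t(h)$ controlled by a martingale-bracket argument). However, the last step of your interpolation is not justified as written: from Theorem \ref{th:freeenergy} you only know that $u(n)/n\to p(\b,\nu)$ where $u(n)=\IP[\ln Z_n]$, and convergence of $u(n)/n$ does \emph{not} imply convergence, or even boundedness, of the increments $u(n+1)-u(n)$. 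Even invoking superadditivity of $u$ only yields $u(1)\le u(n+1)-u(n)\le p+\varepsilon' n$ for every $\varepsilon'>0$, i.e.\ an $o(n)$ bound, which is weaker than the $\mathcal{O}(n^{(1+\varepsilon)/2})$ control you need, so the deduction ``hence $\IP[\ln Z_{n+1}]-\IP[\ln Z_n]\to p(\b,\nu)$'' is a genuine gap.

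The gap is easy to repair, and then your proof closes. By the Markov identity \eqref{eq:Markov}, $\ln Z_{n+1}-\ln Z_n=\ln P_n^{\b,\om}\big[Z_1\circ\theta_{n,B_n}\big]$. Conditioning on $\cG_n$ (the measure $P_n^{\b,\om}$ is $\cG_n$-measurable, while $Z_1\circ\theta_{n,y}$ is independent of $\cG_n$ with $\IP$-mean $e^{\lambda(\b)\nu r^d}$ for every $y$) and applying Jensen's inequality to the concave $\ln$ gives $\IP[\ln Z_{n+1}]-\IP[\ln Z_n]\le \lambda(\b)\nu r^d$; conversely, Jensen inside the $P_n^{\b,\om}$-average together with $\IP[\ln Z_1]\ge \b\nu r^d$ (Proposition \ref{prop:p}, item 1) gives $\IP[\ln Z_{n+1}]-\IP[\ln Z_n]\ge \b\nu r^d$. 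Hence $\big|\IP[\ln Z_{n+1}]-\IP[\ln Z_n]\big|\le \nu r^d\max\big(\lambda(\b),|\b|\big)$, uniformly in $n$, which is all your sandwich requires (for $\b<0$ the sandwich is reversed but the same bound applies). With this one-line supplement your argument is complete, and it is genuinely more elementary than the paper's lemma on $\delta_t(h)$: you trade the martingale estimate for monotonicity plus an annealed/Jensen bound on the mean increments.
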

\begin{proof}
Equation \eqref{eq:concentrationlogZt} implies that for large enough $k\in\mathbb{N}$,
\begin{equation}
\IP\left[|\ln Z_k - \IP \ln Z_k|> k^{\frac{1+\varepsilon}{2}}\right] \leq 2\exp\big(-\frac{t^{\varepsilon}}{2c}\big),
\end{equation}
which is summable. By Borel-Cantelli lemma, we obtain that,  $\IP$-almost surely,
$$|\ln Z_k - \IP[\ln Z_k]| \leq k^{\frac{1+\varepsilon}{2}}\qquad {\rm for\ } k {\rm \ large\ enough.}
$$ 
To extend this to any $t\geq 0$ and prove \eqref{eq:OrderOfCenteredlogZt}, it suffices to apply the next lemma.
\end{proof}
\begin{lemma} Let $h>0$. For all $0\leq s \leq h$,
\begin{equation} \label{eq:BoundOnZt+s-Zt}
-c_+ \delta_t(h) \leq \ln Z_{t+s} - \ln Z_t \leq c_+\delta_t(h),
\end{equation}
where
\[\delta_t(h)=\int_{[t,t+h]\times\mathbb{R}^d} P_{s-}^{\b,\om}[\chi_{s,x}] \om(\rmd s \rmd x),\]
is such that for any $\varepsilon > 0$,
\begin{equation}\label{eq:deltah}\delta_t(h)=\mathcal{O}\big(t^{\frac{1+\varepsilon}{2}}\big), \quad \IP\text{-a.s.}\end{equation}
\end{lemma}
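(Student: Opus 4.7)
The two-sided inequality \eqref{eq:BoundOnZt+s-Zt} will follow directly from the integral representation \eqref{eq:logZtIntRepresentation}. Taking the difference between $\ln Z_{t+s}$ and $\ln Z_t$ gives
$$
\ln Z_{t+s} - \ln Z_t = \int_{(t,t+s]\times\mathbb{R}^d} \ln\bigl(1+\lambda P_{u-}^{\b,\om}[\chi_{u,x}]\bigr)\, \om(\rmd u \rmd x).
$$
I then apply to the integrand the pointwise estimate $|\ln(1+\lambda v)| \leq c_+ v$ for $v\in[0,1]$ (this is exactly \eqref{eq:LogBound}), noting that $P_{u-}^{\b,\om}[\chi_{u,x}] \in [0,1]$. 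Since $\om$ is a positive measure, one obtains both sides of \eqref{eq:BoundOnZt+s-Zt} after enlarging the domain of integration from $(t,t+s]$ to $[t,t+h]$.

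For the growth estimate \eqref{eq:deltah}, the plan is to bound the first two moments of $\delta_t(h)$ \emph{uniformly} in $t$. Since $P_{u-}^{\b,\om}[\chi_{u,x}]$ is $\cG_{u-}$-predictable, the Slivnyak--Mecke formula \eqref{eq:IPP} together with Fubini and the identity $\int_{\mathbb{R}^d}\chi_{u,x}\rmd x = r^d$ gives
$$
\IP[\delta_t(h)] = \nu\int_{[t,t+h]\times\mathbb{R}^d}\IP\bigl[P_{u-}^{\b,\om}[\chi_{u,x}]\bigr]\rmd u \rmd x = \nu h r^d.
$$
Writing $\delta_t(h) = \nu h r^d + M_t(h)$ with $M_t(h)$ the compensated Poisson integral, the bracket formula \eqref{eq:bracket} combined with the trivial bound $(P_{u-}^{\b,\om}[\chi_{u,x}])^2 \leq P_{u-}^{\b,\om}[\chi_{u,x}]$ yields
$$
\mathrm{Var}(\delta_t(h)) = \nu\int_{[t,t+h]\times\mathbb{R}^d}\IP\bigl[P_{u-}^{\b,\om}[\chi_{u,x}]^2\bigr]\rmd u\rmd x \leq \nu h r^d.
$$
In particular $\IP[\delta_t(h)^2]$ is bounded by some constant $K = K(\b,\nu,r,h)$ that does not depend on $t$.

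With this uniform second-moment bound in hand, Chebyshev's inequality applied at integer times gives $\IP(\delta_n(h) > n^{(1+\varepsilon)/2}) \leq K n^{-(1+\varepsilon)}$, a summable sequence, so Borel--Cantelli ensures that $\delta_n(h) \leq n^{(1+\varepsilon)/2}$ for all sufficiently large integers $n$, $\IP$-a.s. To extend to arbitrary real $t$, I will use the obvious monotonicity: the integrand of $\delta_t(h)$ depends on $u$ and $\om$ only (not on the endpoints of the time window), and $[t,t+h]\subset[\lfloor t\rfloor,\lfloor t\rfloor+h+1]$, so $\delta_t(h) \leq \delta_{\lfloor t\rfloor}(h+1)$. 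Applying the Borel--Cantelli estimate to $\delta_n(h+1)$ finally gives \eqref{eq:deltah}.

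The only genuine subtlety will be verifying that the integrand $P_{u-}^{\b,\om}[\chi_{u,x}]$ is truly predictable at time $u$ (so that both Slivnyak--Mecke and the martingale variance formula apply as stated), and that adding the extra mass $\delta_{(u,x)}$ in the Slivnyak--Mecke formula does not alter $P_{u-}^{\b,\om}$ because the extra point occurs at time $u$ rather than strictly before. Everything else is a routine chain of Chebyshev plus Borel--Cantelli along the integers.
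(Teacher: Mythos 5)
Your proof is correct, and the first half (the two-sided bound) is identical in spirit to the paper's: difference of the integral representation \eqref{eq:logZtIntRepresentation} plus the elementary estimate \eqref{eq:LogBound}, with absolute values taken before enlarging the time window, which is legitimate since the integrand is then nonnegative and $\om$ is a positive measure. For the almost sure estimate \eqref{eq:deltah}, however, you take a genuinely different route. The paper observes that $\delta_t(h)=M_{t+h}-M_t+\nu r^d h$ for the single compensated martingale $M_t=\int \om(\rmd s\,\rmd x)\,P_{s-}^{\b,\om}[\chi_{s,x}]-\nu r^d t$, whose bracket is bounded by $\nu r^d t$, and then simply invokes the martingale facts \eqref{eq:BracketImpliesCV}--\eqref{eq:InfiniteBracket} already used earlier in the chapter; this makes \eqref{eq:deltah} a two-line corollary of machinery in place, and in fact yields the slightly stronger $o\big(t^{\frac{1+\varepsilon}{2}}\big)$ in the divergent-bracket case. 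You instead prove a second-moment bound on $\delta_t(h)$ that is uniform in $t$ (mean $\nu h r^d$ via Mecke/compensation, variance at most $\nu h r^d$ via \eqref{eq:bracket} or \eqref{eq:MartingaleVarianceFormula} and $u^2\le u$), then run Chebyshev plus Borel--Cantelli along integer times and interpolate by the monotonicity $\delta_t(h)\le\delta_{\lfloor t\rfloor}(h+1)$. This is more self-contained -- it needs only moment identities for Poisson integrals, not the a.s. martingale convergence/growth results -- at the price of being a bit longer; it gives the same order $t^{\frac{1+\varepsilon}{2}}$. The two technical points you flag are real but harmless and are implicitly assumed in the paper as well: $P_{s-}^{\b,\om}[\chi_{s,x}]$ is $\mathcal{G}_{s-}$-measurable and predictable in the paper's sense, and adding the extra atom $\delta_{(s,x)}$ in \eqref{eq:IPP} does not affect $P_{s-}^{\b,\om}$ because it sits at time $s$, not strictly before.
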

\begin{proof} We get from the integral writing of $\ln Z_t$ \eqref{eq:logZtIntRepresentation} that
\[\ln Z_{t+s} - \ln Z_t = \int_{[t,t+h]\times\mathbb{R}^d} \om(\rmd s \rmd x) \ln \big(1+\lambda P_{s-}^{\b,\om} [\chi_{s,x}]\big).\]
Hence, \eqref{eq:BoundOnZt+s-Zt} is simply obtained with \eqref{eq:LogBound}.

Now, introduce the martingale
\[M_t = \int_0^t \om(\rmd s \rmd x) P_{s-}^{\b,\om}[\chi_{s,x}] - \nu r^dt,\]
which has bracket ${\langle M \rangle_t}=\nu\int_0^t I_s \rmd s \leq \nu r^d t$. Note that $\delta_t(h) = M_{t+h}-M_t + h$, so that \eqref{eq:deltah} is thus a consequence of the martingale properties
\eqref{eq:BracketImpliesCV} and
\eqref{eq:InfiniteBracket}, since in the case where $\langle M \rangle_\infty$ is infinite, then $|M_t| = o\big(\langle M \rangle_t^{\frac{1+\varepsilon}{2}}\big)$ as $t\to\infty$.
\end{proof}
In order to illustrate the general strategy, we now mention a consequence of theorem \ref{th:var} (i).

\begin{corollary} \label{cor:var2} Let $\b\neq 0$, $\xi$ and $C>0$. There exists a constant $c_1 = c_1(d,C)\in(0,\infty)$, such that 
\begin{align*} \liminf_{t\to\infty}t^{-(1-d\xi)}\mathrm{Var}(\ln Z_t) &\geq c_1 \liminf_{t\to\infty} \inf_{0\leq s \leq t} \left(\IP \gibbs\left(|B_s|\leq C + Ct^\xi\right)\right)^2\\
&\geq c_1 \liminf_{t\to\infty} \left(\IP \gibbs\left(\sup_{0\leq s \leq t} |B_s| \leq C + Ct^\xi\right) \right)^2.
\end{align*}
\end{corollary}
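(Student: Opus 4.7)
The plan is to chain the variance lower bound from Theorem~\ref{th:var}(i) with a Cauchy--Schwarz inequality on a well-chosen ball, so as to convert the $L^2$-integral of $\gibbs[\chi_{s,x}]$ into the squared probability that appears on the right-hand side.

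First I would invoke
\[\mathrm{Var}(\ln Z_t) \geq c_-^2\, \nu\, \IP \int_{[0,t]\times\rd} \IP^{\mathcal{G}_{s-}}\!\big[\gibbs[\chi_{s,x}]\big]^2 \,\rmd s\, \rmd x,\]
swap $\IP$ with the integral by Fubini (the integrand is non-negative), and apply Jensen's inequality to the conditional expectation $\IP^{\mathcal{G}_{s-}}$ for the convex map $u\mapsto u^2$ to pull the square outside; this gives
\[\mathrm{Var}(\ln Z_t) \geq c_-^2 \nu \int_0^t \int_{\rd} \big(\IP\gibbs[\chi_{s,x}]\big)^2 \rmd x\, \rmd s.\]

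Next I would set $R_t := C + Ct^\xi + \gamma_d^{-1/d} r$, restrict the inner integral to $\mathrm{B}(0, R_t)$, and apply Cauchy--Schwarz in the form $\int_D f^2 \geq |D|^{-1}\bigl(\int_D f\bigr)^2$:
\[\int_{\mathrm{B}(0, R_t)} \big(\IP\gibbs[\chi_{s,x}]\big)^2 \rmd x \;\geq\; \frac{1}{\gamma_d R_t^d}\Big(\int_{\mathrm{B}(0,R_t)} \IP\gibbs[\chi_{s,x}]\,\rmd x\Big)^2.\]
The key geometric observation is that whenever $|B_s| \leq R_t - \gamma_d^{-1/d} r = C + Ct^\xi$, the ball $U(B_s)$ sits entirely inside $\mathrm{B}(0,R_t)$, so $\int_{\mathrm{B}(0,R_t)} \chi_{s,x}\,\rmd x = |U(B_s)| = r^d$; by non-negativity of $\chi$ this yields
\[\int_{\mathrm{B}(0,R_t)} \IP\gibbs[\chi_{s,x}]\,\rmd x \;\geq\; r^d\, \IP\gibbs\big(|B_s|\leq C+Ct^\xi\big).\]

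Combining everything gives
\[\mathrm{Var}(\ln Z_t) \;\geq\; \frac{c_-^2 \nu r^{2d}\, t}{\gamma_d R_t^d}\,\inf_{0\leq s\leq t}\big(\IP\gibbs(|B_s|\leq C+Ct^\xi)\big)^2.\]
For large $t$ one has $R_t^d \leq (2C)^d t^{d\xi}$, so dividing by $t^{1-d\xi}$ and passing to $\liminf$ yields the first stated inequality with $c_1 := c_-^2 \nu r^{2d}/[\gamma_d (2C)^d]$. The second inequality is immediate: for each fixed $s\in[0,t]$ we have $\{\sup_{0\leq u\leq t}|B_u|\leq C+Ct^\xi\} \subset \{|B_s|\leq C+Ct^\xi\}$, so taking $\gibbs$-probabilities, then $\IP$-expectations, then squaring produces a quantity independent of $s$, and the infimum over $s$ drops. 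I expect no serious obstacle here; the only mildly delicate point is to choose $R_t$ so that the radius appearing in the probability on the right matches $C+Ct^\xi$ exactly, and to carry the geometric constants cleanly through the Cauchy--Schwarz step.
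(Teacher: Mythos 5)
Your proof is correct and follows essentially the route the paper intends: it only cites Corollary 2.4.3 of \cite{CY05}, but the derivation there is the same chain you use — the lower bound of Theorem \ref{th:var}(i), Jensen for the outer expectation $\IP$ combined with the tower property to reduce to $\big(\IP\gibbs[\chi_{s,x}]\big)^2$, and Cauchy--Schwarz over the ball $\mathrm{B}(0,R_t)$ with $R_t=C+Ct^\xi+\gamma_d^{-1/d}r$ together with the inclusion $U(B_s)\subset\mathrm{B}(0,R_t)$ on the event $\{|B_s|\le C+Ct^\xi\}$. Two cosmetic remarks: your constant also depends on the fixed model parameters $\b,\nu,r$ (not only on $d,C$), which is consistent with the paper's conventions, and the bound $R_t^d\le(2C)^d t^{d\xi}$ for large $t$ uses $\xi>0$, as intended in the statement.
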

The result suggests that 
$$
\chi(d) \geq \frac{1-d\xi(d)}{2}.
$$
For more details on the result and the proof, we refer the reader to Corollary 2.4.3 in \cite{CY05}.
%

\newcommand{\vpat}{\varphi_{a,t}}

 \chapter{Cameron-Martin transform and applications} \label{ch:CM}
 
 In this section we extensively use the property that the a-priori measure for the polymer path is Wiener measure. A tilt on the polymer path reflects into a shift on the environment. 
 
 \section{Tilting the polymer}
 
 We extend the shear transformation (\ref{eq:tauxif} -- \ref{eq:tau}) to non-linear shifts $\varphi: \R_+ \to \rd$ by defining 
 \begin{equation} \label{eq:htau}
\hat \tau_\varphi f : s \mapsto f(s)+\varphi(s) \;, \quad \hat \tau_\varphi \circ \big(\sum_i \delta_{(t_i, x_i)}\big) = \sum_i \delta_{(t_i, x_i+\varphi( t_i))} \;,
\end{equation}
so that $\hat \tau_\varphi=\tau_\xi$ when $\varphi(t)=t \xi$ and 
that $\hat \tau_\varphi \circ \om$ has same law as $\om$.
\medskip

Let $\varphi \in H_{0, {\rm loc}}^1:=\left\{ \varphi \in {\cal C}(\R_+,\rd); \varphi(0)=0, \dot \varphi \in L^2_{\rm loc} \right\}$  where the dot denotes time derivative.  
Introduce the probability measure on the path space $P^{\varphi}$ which restriction to ${\cal F}_t$ has density relative to $P$ given by
$$
\big(\frac{d P^{\varphi}}{d P}\big)_{| {\cal F}_t}= \exp \left\{ \int_0^t \dot \varphi (s) dB(s) - \frac{1}{2} \int_0^t |\dot \varphi(s)|^2 ds\right\}
$$
for all $t >0$,
with $B$ the canonical process. Then, by Cameron-Martin theorem, under the measure $P^{\varphi}$
the process $W(t)=B(t)-\varphi(t)$ is a standard Brownian motion, and as in \eqref{eq:CMh} we write
 \begin{eqnarray}
 P\left[ \exp \{ \b \om(V_t(B))\} e^{ \int_0^t \dot \varphi (s) dB(s) - \frac{1}{2} \int_0^t |\dot \varphi(s)|^2 ds}
  \right] 
\nn &=& P^\varphi \left[ \exp \{ \b \om(V_t(\hat \tau_{\varphi}(W))\}\right] \\
\nn &=& P\left[ \exp \{ \b \om(V_t(\hat \tau_{\varphi}(B)))\}\right] \\
\nn &=& P\left[ \exp \{ \b \om(\hat \tau_{\varphi} (V_t (B)))\}\right] \\
\nn &=& P\left[ \exp \{ \b (\hat \tau_{-\varphi} \circ \om)(V_t(B))\}\right] \\
&=&   Z_t(\hat \tau_{-\varphi} \circ \om,\b) \nn \;,
\end{eqnarray}
yielding
\begin{equation} \label{eq:CMphi}
\gibbs \left[ \exp \{  \int_0^t \dot \varphi (s) dB(s) \}
  \right] = \exp\{ \frac{1}{2} \int_0^t |\dot \varphi(s)|^2 ds\} \times \frac{ W_t(\hat \tau_{-\varphi} \circ \om,\b)}{ W_t( \om,\b)} \;.
\end{equation}
\section{Consequences for weak disorder regime}
In this section we assume that $\b \in (\bar \b_c^-, \bar \b_c^+)$, more precisely that $ W_\8( \om,\b)=\lim_{t } W_t( \om,\b)  >0$ a.s. 
In particular, for a fixed $\varphi \in H_0^1$ (i.e., $\dot \varphi \in L^2$), we derive from \eqref{eq:CMphi} that 
\begin{equation} \label{eq:phiH1}
\gibbs \left[ \exp \{  \int_0^t \dot \varphi (s) dB(s) \}
  \right]  \longrightarrow 
\exp\{ \frac{1}{2} \|\dot \varphi\|_2^2 \} \times \frac{ W_\8(\hat \tau_{-\varphi} \circ \om,\b)}{ W_\8( \om,\b)} 
\end{equation}
a.s. as $t \to \8$.

In view of \eqref{eq:CMphi}, a natural question is continuity 
of $W_\8$ in the $\om$ variable: how does the limit depend on the environment ?
\begin{lemma} \label{lem:cW8}
Assume that $W_\8(\om, \b)>0$. Let $\varphi_T \in {\cal C}(\R_+, \rd)$ be a family indexed 
by $ T>0$ with $\varphi_T(0)=0$ which vanishes locally uniformly, i.e.,
$$ \forall t>0, \quad  \|\varphi_T\|_{\8,t} := \sup\{ |\varphi_T(s)|; s \in [0,t]\}  \to 0 \quad {\rm as \ } T \to \8. $$
Then we have, as $T \to \8$, 
\begin{eqnarray} \label{eq:l721-1}
{ W_\8(\hat \tau_{\varphi_T} \circ \om,\b)} &\longrightarrow& { W_\8( \om,\b)} \quad {\rm in\ } L^1{\rm -norm,}\\
 \label{eq:l721-2}
{ W_T(\hat \tau_{\varphi_T} \circ \om,\b)} &\longrightarrow& { W_\8( \om,\b)} \quad {\rm in\ } L^1{\rm -norm.}
\end{eqnarray}
\end{lemma}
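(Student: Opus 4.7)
The plan is to establish \eqref{eq:l721-1} first, then reduce \eqref{eq:l721-2} to it by a short triangle inequality. The overall idea is to combine uniform integrability of $(W_t)_t$ under weak disorder (which, by Proposition \ref{prop:UI}, gives $W_t \to W_\8$ in $L^1$) with a finite-horizon stability estimate for $\om \mapsto W_s(\om)$ under small spatial shifts of the Poisson points.

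For \eqref{eq:l721-1}, I would insert
\begin{equation*}
\|W_\8(\om) - W_\8(\hat\tau_{\varphi_T}\circ\om)\|_1 \leq \|W_\8(\om) - W_s(\om)\|_1 + \|W_s(\om) - W_s(\hat\tau_{\varphi_T}\circ\om)\|_1 + \|W_s(\hat\tau_{\varphi_T}\circ\om) - W_\8(\hat\tau_{\varphi_T}\circ\om)\|_1.
\end{equation*}
The first term tends to $0$ as $s\to\8$ by the $L^1$-convergence of the martingale. The third term is equal to the first: since $\hat\tau_{\varphi_T}\circ\om \stackrel{\rm law}{=} \om$ by \eqref{eq:htau}, the pair $(W_s,W_\8)$ evaluated at $\hat\tau_{\varphi_T}\circ\om$ has the same joint law as at $\om$. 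Hence, for any $\e>0$, we can fix $s=s(\e)$ large enough that these two terms are each bounded by $\e$, uniformly in $T$.

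The key point is the middle term. From \eqref{eq:htau} one checks $(\hat\tau_{\varphi_T}\circ\om)(V_s(B)) = \om(V_s(B-\varphi_T))$, where $B-\varphi_T$ is the shifted path, so
\begin{equation*}
\P\,|W_s(\om) - W_s(\hat\tau_{\varphi_T}\circ\om)| \leq e^{-\la(\b)\nu r^d s}\, P\,\P\,\bigl|e^{\b\om(V_s(B))} - e^{\b\om(V_s(B-\varphi_T))}\bigr|.
\end{equation*}
For a fixed Brownian trajectory $B$, decompose $V_s(B) = U \sqcup A_1$ and $V_s(B-\varphi_T) = U \sqcup A_2$ with $U = V_s(B)\cap V_s(B-\varphi_T)$; the three Poisson counts on these disjoint sets are independent, so
\begin{equation*}
\P\,\bigl|e^{\b\om(V_s(B))} - e^{\b\om(V_s(B-\varphi_T))}\bigr| = \P[e^{\b\om(U)}]\,\P\,|e^{\b\om(A_1)} - e^{\b\om(A_2)}| \leq e^{\la(\b)\nu|U|}\bigl(|e^{\la(\b)\nu|A_1|}-1| + |e^{\la(\b)\nu|A_2|}-1|\bigr).
\end{equation*}
Now $|U|\leq sr^d$, while the symmetric difference of two balls of radius $\gamma_d^{-1/d}r$ whose centres are at distance at most $\|\varphi_T\|_{\8,s}$ has volume of order $r^{d-1}\|\varphi_T\|_{\8,s}$, so $|A_1|+|A_2| \leq C_d\, r^{d-1} s\,\|\varphi_T\|_{\8,s}$ \emph{uniformly in $B$}. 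Sending $T\to\8$ for fixed $s$ makes the middle term vanish, which completes \eqref{eq:l721-1}.

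Claim \eqref{eq:l721-2} then follows at once from
\[
\|W_T(\hat\tau_{\varphi_T}\circ\om) - W_\8(\om)\|_1 \leq \|W_T(\hat\tau_{\varphi_T}\circ\om) - W_\8(\hat\tau_{\varphi_T}\circ\om)\|_1 + \|W_\8(\hat\tau_{\varphi_T}\circ\om) - W_\8(\om)\|_1;
\]
the first term equals $\|W_T(\om) - W_\8(\om)\|_1$ by invariance in law under $\hat\tau_{\varphi_T}$ and tends to $0$ by martingale convergence, while the second vanishes by \eqref{eq:l721-1}. The main obstacle is the middle term in \eqref{eq:l721-1}: the bound on $|W_s(\om) - W_s(\hat\tau_{\varphi_T}\circ\om)|$ has to be uniform in the Brownian trajectory $B$, which is precisely what the Poisson-independence factorization on $U, A_1, A_2$, combined with the purely geometric estimate on the symmetric difference of two close balls, delivers; the sign of $\b$ plays no role since the bound involves only $|\la(\b)|$.
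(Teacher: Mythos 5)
Your proof is correct, and its skeleton is the same as the paper's: the same three-term decomposition of $W_\8(\hat\tau_{\varphi_T}\circ\om)-W_\8(\om)$, with the outer terms controlled by $L^1$ martingale convergence (uniform integrability from Proposition \ref{prop:UI} under weak disorder) together with the invariance in law of $\om$ under $\hat\tau_{\varphi_T}$, and \eqref{eq:l721-2} deduced from \eqref{eq:l721-1} by exactly the two-term split the paper uses. The only genuine difference is in the middle, fixed-horizon term: the paper argues softly that, for fixed $t$, $W_t(\hat\tau_{\varphi_T}\circ\om)\to W_t(\om)$ a.s.\ as $T\to\8$ and that the family is uniformly integrable (being identically distributed and integrable), whereas you prove a quantitative $L^1$ estimate by writing $(\hat\tau_{\varphi_T}\circ\om)(V_s(B))=\om(V_s(B-\varphi_T))$, factorizing the Poisson expectation over the disjoint pieces $U$, $A_1$, $A_2$, and bounding $|A_1|+|A_2|$ by a symmetric-difference-of-balls volume of order $s\,r^{d-1}\|\varphi_T\|_{\8,s}$, uniformly in the Brownian path. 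Your version buys an explicit modulus of continuity in $\|\varphi_T\|_{\8,s}$ (in the spirit of Lemma \ref{lm:distanceWtxWt0}, but cruder and sufficient here) and sidesteps the need to justify the almost sure convergence of the shifted partition function at fixed horizon; the paper's version is shorter but leaves that a.s.\ continuity step implicit. Both treatments of the sign of $\b$ are fine, since your bounds only involve $|\la(\b)|$ and, for fixed $s$, the prefactors $e^{-\la\nu r^d s}\,e^{\la\nu|U|}$ are uniformly bounded.
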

The result can be compared to lemma \ref{lm:distanceWtxWt0}, where 
we have already considered the effect of a shift on the environment. In the notation of \eqref{eq:htau}, that lemma deals with constant shifts 
$\varphi=\varphi^{(x)}$ such that
$\varphi^{(x)}(t) \equiv x$ for all $t$, and implies that 
$$
x \mapsto W_\8 ( \hat \tau_{\varphi^{(x)}} \circ \omega) \quad {\rm is\  Lipschitz\ continuous\ from\ } \rd {\rm \ to\ } L^1.
$$
In the above lemma \ref{lem:cW8}, the shift is not anymore constant.

\begin{proof} Fix $t>0$ and decompose the difference $ W_\8(\hat \tau_{\varphi_T}\! \circ \om,\b) \!-\! { W_\8( \om,\b)}$ as
$$ 
 \{ W_\8(\hat \tau_{\varphi_T} \!\circ  \om,\b) \!-\! W_t(\hat \tau_{\varphi_T}\! \circ  \om,\b)\} \!+\! \{W_t(\hat \tau_{\varphi_T}\! \circ \om,\b) \!-\!  W_t( \om,\b)\} \!+\! \{ W_t( \om,\b)\! - \! W_\8( \om,\b)\}.
$$
Now, using triangular inequality and invariance in law of $\om$ under the shear transformation, we get
\begin{eqnarray}
\| { W_\8(\hat \tau_{\varphi_T}\! \circ \om,\b)} \!-\! { W_\8( \om,\b)}\|_1\! &\!\leq  \!&\! 2 
\| W_\8( \om,\b) \!- \! W_t( \om,\b)\|_1 \!+\! 
\| W_t(\hat \tau_{\varphi_T} \!\circ \om,\b) \!-\!  W_t( \om,\b)\|_1  \nn \\
\label{eq:contW8}
&=: & 2 \eps(t)+ \eps_t ( \varphi_T)
\end{eqnarray}
with $\eps(t)= \|W_\8-W_t\|_\8$ and 
$\eps_t(\cdot)$ defined by the above formula.
By assumption on $\b$ and proposition \ref{prop:UI},
we have $\lim_{t \to \8} \eps(t)=0$. On the other hand,  for fixed $t$, $W_t(\hat \tau_{\varphi_T} \circ \om,\b) \to  W_t( \om,\b)$ a.s. as $T \to \8$, 
and the variables $(W_t(\hat \tau_{\varphi_T} \circ \om,\b); T >0)$, $W_t( \om,\b)$ are uniformly integrable. Thus, the above convergence holds in $L^1$, which, combined with \eqref{eq:contW8}, completes the proof
of \eqref{eq:l721-1}. 

The proof of \eqref{eq:l721-2} is quite similar, writing this time the difference
$$
W_T(\hat \tau_{\varphi_T} \circ \om,\b)-  { W_\8( \om,\b)} = \{ W_T(\hat \tau_{\varphi_T} \circ \om,\b)- W_\8( \hat \tau_{\varphi_T} \circ \om,\b)\} +
\{ W_\8( \hat \tau_{\varphi_T} \circ \om,\b)- W_\8( \om,\b)\},
$$
observing that the first term in the right-hand side has $L^1$-norm equal to $\|W_T-W_\8\|_1$, that the second one vanishes a.s. and is uniformly integrable. This completes the proof.
\end{proof} 

For $a \in \rd, t>0$ define the function
\begin{equation} \nn
\vpat(s)=\frac{s \wedge t}{\sqrt t} a \;.
\end{equation}
From \eqref{eq:CMphi} with $\varphi=\vpat$, we have
\begin{eqnarray}\label{eq:valdor}
\gibbs \left[ e^{a \cdot \frac{B(t)}{\sqrt t}} \right] &=& e^{|a|^2/2} 
\times \frac{ W_t(\hat \tau_{-\vpat} \circ \om,\b)}{ W_t( \om,\b)} 
\;.
\end{eqnarray}
\begin{theorem} \label{prop:diff}
Assume weak disorder, i.e., that $W_\8>0$. Then, as $t \to \8$,
$$
\gibbs \left[ e^{a \cdot \frac{B(t)}{\sqrt t}} \right] \stackrel{\P}{\longrightarrow}  e^{|a|^2/2}.
$$
\end{theorem}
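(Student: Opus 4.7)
The starting point is identity \eqref{eq:valdor}, which already rewrites the quantity of interest as
\[
\gibbs\!\lef[e^{a\cdot B(t)/\sqrt t}\rig]=e^{|a|^2/2}\cdot\frac{W_t(\hat\tau_{-\vpat}\circ\om,\b)}{W_t(\om,\b)}.
\]
Hence the whole problem reduces to showing that the ratio on the right converges to $1$ in $\P$-probability. Under weak disorder we have $W_t(\om,\b)\to W_\8(\om,\b)$ a.s.\ and in $L^1$ by Proposition \ref{prop:UI}, with $W_\8>0$ a.s. So it suffices to show that the numerator converges in probability to the same limit $W_\8(\om,\b)$, and the ratio then tends to $1$ in probability because the denominator is a.s.\ positive.

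The plan for the numerator is to invoke Lemma \ref{lem:cW8} with $T=t$ and $\varphi_T:=-\vpat$. The hypothesis to check is that for every fixed $s_0>0$,
\[
\|\varphi_T\|_{\8,s_0}=\sup_{s\in[0,s_0]}\lef|\frac{s\wedge T}{\sqrt T}\,a\rig|\;\underset{T\to\8}{\longrightarrow}\;0,
\]
which holds because for $T\geq s_0$ this supremum equals $s_0|a|/\sqrt T$. Lemma \ref{lem:cW8}, statement \eqref{eq:l721-2}, then yields
\[
W_t(\hat\tau_{-\vpat}\circ\om,\b)\;\xrightarrow[t\to\8]{L^1}\;W_\8(\om,\b),
\]
and in particular in probability.

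Putting the two convergences together, $W_t(\hat\tau_{-\vpat}\circ\om,\b)/W_t(\om,\b)\to 1$ in probability (ratio of two $L^1$-convergent sequences with a.s.\ positive limit), and multiplying by the deterministic factor $e^{|a|^2/2}$ gives the claim. The only delicate point is the application of Lemma \ref{lem:cW8}: one must notice that although $\vpat$ has macroscopic slope $a/\sqrt t$ and reaches $\sqrt t\,a$ at time $t$, what Lemma \ref{lem:cW8} needs is only local-uniform smallness of the shift, which is satisfied because the Lipschitz constant $|a|/\sqrt t$ goes to $0$. No further quantitative control over the rate is needed since the conclusion is merely convergence in probability.
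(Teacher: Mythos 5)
Your proposal is correct and follows essentially the same route as the paper: identity \eqref{eq:valdor}, Lemma \ref{lem:cW8} (statement \eqref{eq:l721-2}) applied to the family $(-\vpat)$ after checking local-uniform smallness of the shift, and a Slutsky-type argument using the a.s.\ convergence of the denominator $W_t(\om,\b)$ to the a.s.\ positive limit $W_\8$. The only difference is that you spell out the verification of the hypothesis of Lemma \ref{lem:cW8}, which the paper leaves implicit.
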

\begin{proof}
 Note that the family $(-\vpat, t >0)$ satisfies the assumptions of lemma \ref{lem:cW8}. Writing  formula \eqref{eq:valdor} as 
 $$
 \gibbs \left[ e^{a \cdot \frac{B(t)}{\sqrt t}} \right] - e^{|a|^2/2} =  e^{|a|^2/2} 
\times \frac{ W_t(\hat \tau_{-\vpat} \circ \om,\b) -  W_t( \om,\b)}{ W_t( \om,\b)} \;,
 $$
we see from Slutsky's lemma that this quantity vanishes as $t \to \8$.
\end{proof}
\begin{remark}
In a suitable sense, this result shows that the polymer is diffusive if $W_\8>0$. Its interest is that it covers  the {\em full} weak disorder region, in contrast to \cite[Th. 2.1.1]{CYkokyuroku} which only applies to the $L^2$ region. It can be viewed as a step to prove  diffusivity at weak disorder region. 
\end{remark}
\section{Moderate and large deviations at all temperature}
 
 In this section, $\b \in \R$ is arbitrary.
 
 \begin{theorem} \label{th:LD}
 For $\P$-a.e. realization of the environment $\om$, the following holds.
 \medskip
 
 (i) For all Borel $A \subset \rd$, 
 \begin{eqnarray}\nn
 - \inf \{  \frac{|\xi|^2}{2}; \xi \in \mathring A\} & \leq \liminf_{t \to \8} \frac{1}{t} \ln \gibbs \left[ \frac{B(t)}{t} \in A \right] & \\ \nn
 & \leq \limsup_{t \to \8} \frac{1}{t} \ln \gibbs \left[ \frac{B(t)}{t} \in A \right]  & \leq - \inf \{  \frac{|\xi|^2}{2}; \xi \in \overline A\} . 
\end{eqnarray}
 \medskip

 (ii)  Let $t_n$ be a positive sequence increasing to $\8$, let $\chi >0$ such that
 \begin{equation} \label{eq:hypMD}
\sum_{n \geq 1} \P( | \ln Z_{t_n}(\om, \b) - \P[\ln Z_{t_n}(\om, \b)] | > t_n^\chi) < \8, 
\end{equation}
and let $\xi > (1+\chi)/2$. Then, for all Borel $A \subset \rd$, 
 \begin{eqnarray}\nn
 - \inf \{  \frac{|a|^2}{2}; a \in \mathring A\} & \leq \liminf_{t \to \8} t_n^{-(2\xi-1)} \ln P_{t_n}^{\b,\om} \left[ \frac{B({t_n})}{t_n^\xi} \in A \right] & \\ \nn
 & \leq \limsup_{t \to \8}  t_n^{-(2\xi-1)} \ln P_{t_n}^{\b,\om} \left[ \frac{B({t_n})}{t_n^\xi} \in A \right]  & \leq - \inf \{  \frac{|a|^2}{2}; a \in \overline A\} . 
\end{eqnarray}
 \end{theorem}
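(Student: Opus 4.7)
My plan is to reduce the analysis of the exponential moments of $B(t)$ under $\gibbs$ to a ratio of partition functions at the original and at a sheared environment via Cameron--Martin \eqref{eq:CMphi}, and then to invoke the Gärtner--Ellis theorem. The striking feature of the statement is that the rate function agrees with the Brownian one, $|\xi|^2/2$, at every temperature; this is a direct consequence of the directional invariance of the free energy \eqref{eq:p0=}.

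\textbf{Part (i).} I take the linear tilt $\varphi(s)=sa$, $a\in\rd$. Then $\dot\varphi\equiv a$ and $\int_0^t|\dot\varphi|^2\,ds=t|a|^2$, so \eqref{eq:CMphi} yields
\begin{equation*}
\gibbs\!\bigl[e^{a\cdot B(t)}\bigr]=e^{t|a|^2/2}\,\frac{W_t(\hat\tau_{-\varphi}\circ\om,\b)}{W_t(\om,\b)}\,.
\end{equation*}
Since $\hat\tau_{-\varphi}\circ\om\stackrel{\mathrm{law}}{=}\om$ by shear invariance of the Poisson law, Theorem~\ref{th:freeenergy} applied both to $\om$ and to the sheared environment gives $t^{-1}\ln W_t(\om,\b)\to-\psi(\b,\nu)$ and $t^{-1}\ln W_t(\hat\tau_{-\varphi}\circ\om,\b)\to-\psi(\b,\nu)$ almost surely, hence for each fixed $a$
\[
\Lambda(a):=\lim_{t\to\infty}\frac{1}{t}\ln\gibbs\!\bigl[e^{a\cdot B(t)}\bigr]=\frac{|a|^2}{2}\,,\qquad \P\text{-a.s.}
\]
Intersecting the exceptional null sets over $a\in\Q^d$ gives a single full-measure event on which the convergence holds for every rational $a$; by convexity of the log-MGF in $a$, pointwise convergence on a dense subset extends to every $a\in\rd$. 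Since $\Lambda(a)=|a|^2/2$ is finite, strictly convex, everywhere differentiable and steep at infinity, the Gärtner--Ellis theorem delivers the LDP with rate function $\Lambda^*(\xi)=|\xi|^2/2$.

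\textbf{Part (ii).} Along the subsequence $(t_n)$ I take the scaled tilt $\varphi(s)=sa/t_n^{1-\xi}$, whose $L^2$-norm squared is $|a|^2\,t_n^{2\xi-1}$. Formula \eqref{eq:CMphi} gives
\begin{equation*}
\frac{1}{t_n^{2\xi-1}}\ln\gibbs\!\Bigl[e^{t_n^{2\xi-1}a\cdot B(t_n)/t_n^{\xi}}\Bigr]=\frac{|a|^2}{2}+\frac{\ln W_{t_n}(\hat\tau_{-\varphi}\circ\om,\b)-\ln W_{t_n}(\om,\b)}{t_n^{2\xi-1}}\,.
\end{equation*}
Shear invariance makes the two $\ln W_{t_n}$'s share the common expectation $\P[\ln W_{t_n}(\om,\b)]$, so the error above equals $(F_n-\tilde F_n)/t_n^{2\xi-1}$ where $F_n,\tilde F_n$ are the centered log-partition functions at the original and at the sheared environment respectively. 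The hypothesis \eqref{eq:hypMD}, applied via Borel--Cantelli to both environments (the sheared one has the same distribution as $\om$, so the tail bound transfers), forces $|F_n|\vee|\tilde F_n|=O(t_n^{\chi})$ a.s. The assumption $\xi>(1+\chi)/2$ is precisely $2\xi-1>\chi$, so the error is $o(1)$ and a second application of Gärtner--Ellis at speed $t_n^{2\xi-1}$ yields the moderate deviation principle.

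\textbf{Main obstacle.} The delicate bookkeeping is the $\P$-null-set control in part~(ii), because the shift depends on $n$ through $t_n^{1-\xi}$ and on $a$, so the Borel--Cantelli step only provides an a.s. event that depends on $a$. I handle this as in part (i), by first working with $a\in\Q^d$ and then extending to all $a\in\rd$ through convexity of the log-MGF. The Gärtner--Ellis hypotheses (essential smoothness, strict convexity) are trivial for the Gaussian $\Lambda$, so both upper bound on closed sets and lower bound on open sets follow without any additional tilted change-of-measure argument.
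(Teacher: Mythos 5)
Your proposal is correct and follows essentially the same route as the paper: the Cameron--Martin/shear identity \eqref{eq:CMphi} with a linear (resp.\ $t_n^{\xi-1}$-scaled) tilt reduces the scaled log-moment generating function to $|a|^2/2$ plus a difference of log-partition functions at the original and sheared environments, which is killed by Theorem \ref{th:freeenergy} in part (i) and by Borel--Cantelli under \eqref{eq:hypMD} in part (ii), after which restriction to $a\in\Q^d$, extension by convexity, and G\"artner--Ellis give the (moderate) deviation principles exactly as in the paper. (The sign in your $(F_n-\tilde F_n)$ bookkeeping is reversed, but this is immaterial since only absolute values are used.)
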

 Part (i) is the almost sure large deviation principle for the polymer endpoint. The rate function $a \mapsto |a|^2/2$ is called the shape function in the polymer framework.
 In view of (\ref{eq:ph=}--\ref{eq:p0=}) it is no surprise. 
 
 Part (ii) is an almost sure moderate deviation principle. The rate function is the same as before. Since \eqref{eq:hypMD} is expected to hold for all $\chi > \xi^\parallel$, we derive that the polymer endpoint lies 
 at time $t_n$ within a distance $t_n^{ \xi^\parallel + \eps}$  with overwhelming probability for all positive $\eps$. Hence we get a relation between characteristic exponents
 \begin{equation} \label{eq:exprelation}
 \xi^\perp  \leq \frac{1+  \xi^\parallel}{2} \;.
\end{equation}
Since $ \xi^\parallel \leq 1/2$ from theorem \ref{th:var} and corollary \ref{cor:var1}, we derive a bound for all values of $d$:
 \begin{equation} \label{eq:exprelation3/4}
 \xi^\perp  \leq 3/4  \;.
\end{equation}
 \begin{proof} We start with (i). 
 Using \eqref{eq:CMphi} with $\varphi=\varphi_t$ given by $\varphi_t(s)= (s \wedge t )a$, we get
\begin{eqnarray}
\ln \gibbs[ e^{a \cdot B(t)}] &=& \frac{t |a|^2}{2} + \ln W_t(\tau_a \circ \om, \b) -  \ln W_t( \om, \b) \nn \\
 &=& \frac{t |a|^2}{2} + \big(\ln W_t(\tau_a \!\circ \om, \b) - \P[\ln W_t(\tau_a\! \circ \om, \b)]\big)+ \big( \ln W_t( \om, \b) - \P[\ln W_t(\om, \b)]\big),\nn 
 \end{eqnarray}
and by theorem \ref{th:freeenergy} the set $\Omega_a$ of environments such that 
\begin{equation}\label{eq:huawei}
\lim_{t\to \8}  \frac{1}{t}\ln \gibbs[ e^{a \cdot B(t)}] = \frac{ |a|^2}{2} 
\end{equation}
has full $\P$-measure. We now claim that on the event $\overline \Omega= \bigcap_{a \in \Q^d} \Omega_a$ the above limit holds for all $a \in \rd$.  
Indeed, the maps $a \mapsto \frac{1}{t}\ln \gibbs [ e^{a \cdot B(t)}] $ are convex, and the convergence is locally uniform on the closure $\rd$ of $\Q^d$, see Th. 10.8 in \cite{Roc70}.
Then, the large deviation principle  (i) follows from the G\"artner-Ellis-Baldi theorem (\cite{DeZe98}, p.44, Th.2.3.6), with the rate function 
$$|\xi|^2/2 = \sup \{ |a|^2/2; a \in \rd\}.$$

For the proof of (ii) we proceed as above. By \eqref{eq:CMphi} with $\varphi_t(s)= (s \wedge t )t^{\xi-1}a$, we first write
\begin{eqnarray}\nn
t^{-(2\xi-1)}\ln \gibbs[ e^{t^{\xi-1}a \cdot B(t)}] &=& \frac{|a|^2}{2} + t^{-(2\xi-1)} \big(\ln W_t(\hat \tau_{-\varphi_t} \!\circ \om, \b) - \P[\ln W_t(\om, \b)]\big)\\
&& \qquad \qquad +  t^{-(2\xi-1)} \big( \ln W_t( \om, \b) - \P[\ln W_t(\om, \b)]\big).\nn 
 \end{eqnarray}
 Now, in order to take the limit we restrict to the sequence $t=t_n$: using Borel-Cantelli lemma, we indeed get the a.s. limit \eqref{eq:huawei} along the sequence $t_n$. 
 From then on, the other arguments go through without any change.
 \end{proof}

 \chapter{Phase diagram in the $(\b, \nu)$-plane, $d\geq 3$} \label{ch:PD}
 
 In this section, $r>0$ is kept fixed, and we discuss the phase diagram in the remaining parameters $\b, \nu$. 
 Recall $\psi$ form \eqref{def:excessfreeenergy} and $\psi_t(\b,\nu)= \nu \lambda(\b) r^d - t^{-1} \P[ \ln Z_t(\om, \b)]$ from \eqref{def:psit}. Recall the notations
 \begin{equation} \nn
 {\cal D} = \{ (\b,\nu): \psi(\b, \nu)=0 \}, \quad {\cal L}  = \{ (\b,\nu): \psi(\b, \nu)>0\}, \quad {\tt Crit} = {\cal D} \bigcap \overline{\cal L}.
\end{equation}
$\cal D$ is is the delocalized phase, $\cal L$ the localized phase.
They are also high temperature/low density and low temperature/high density phases respectively.
They are separated by the critical curve ${\tt Crit} =\{(\beta_c^+(\nu), \nu); \nu >0\} \cup \{(\beta_c^-(\nu), \nu); \nu > \nu_c\}$. 
 We have seen that, in dimension $d=1$ or $ 2$,  ${\cal D}$ reduces to the axis $\b=0$, so  we assume $d \geq 3$ in this section.
 
 We introduce  
  \begin{equation}\label{def:nuc}
 \nu_c = \sup\{ \nu >0: \b_c^- = -\8\}\;.
\end{equation}
Then, $ \nu_c \in (0, \8)$, and 
$$
\nu > \nu_c \iff \b_c^- > -\8 \;.
$$

 A central question in polymer models is to estimate the critical curve \cite{dHollanderStFl, Giacomin}. 
 
 \section{Strategy for critical curve estimates}
 
 We follow the idea of \cite{CYBMPO2}, that is to
 \begin{quotation}
find curves $\nu(\b)$ along which $\psi$ is monotone.
\end{quotation}
We now sketch  the strategy. By computing the derivative  with the chain rule
\begin{equation}\nn
\frac{d}{d \b}  \psi_t(\b,\nu(\b)) = \nu' \frac{\partial}{\partial \nu}  \psi_t + \frac{\partial}{\partial \b}  \psi_t,
\end{equation}
one sees that,  along the smooth curve ${\cal C}_a^\a$
\begin{equation}\label{eq:curve}
\nu(\b)= a | \lambda (\b)|^{-\alpha}
\end{equation}
for positive constants $a, \alpha$, it takes the amenable form
\begin{equation} \label{eq:curved}
t \frac{\partial}{\partial \b}  \psi_t(\b,\nu(\b)) = \nu' \times \P \left[ \int_{(0,t] \times \rd} h_\alpha ( \lambda \gibbs[ \chi_{s,x}])dsdx\right] \;,
\end{equation}
where
\begin{equation} \label{eq:halpha}
h_\alpha ( u) = u -\frac{u^2}{\alpha(1+u)} - \ln(1+u)
\end{equation}
for $u >-1$. Now, the questions boils down to controling the sign of the function on the relevant interval with endpoints 0 and $\lambda(\b)$:
\begin{equation}\nn
h_\alpha ( u) \quad
\left\{
\begin{array}{ccc}
 \geq 0 & {\rm if \ } \alpha=2 {\rm \ and }  & u \geq 0,  \\
 \leq 0 &  {\rm if \ }  \alpha=2 {\rm \ and }  & u \in (-1,0],   \\
 \leq 0 & {\rm if \ }   \b>0, \alpha\leq \alpha(\b) {\rm \ and }  & u \in [0, \lambda],     \\
 \geq 0 &  {\rm if \ }   \b<0, \alpha\geq \alpha(\b) {\rm \ and }  & u \in [\lambda,0],
\end{array}
\right.
\end{equation}
with 
$\alpha(\b) = \frac{ (e^\b-1)^2}{e^\b(e^\b-1-\b)}, \quad \alpha(0)=2$. With this at hand, we can bound the critical curve from above and below with 
curves of the form \eqref{eq:curve} and specific $\alpha$'s, as indicated on Figure \ref{f-diag2}.
\begin{figure} \centering
\includegraphics[scale=0.6]{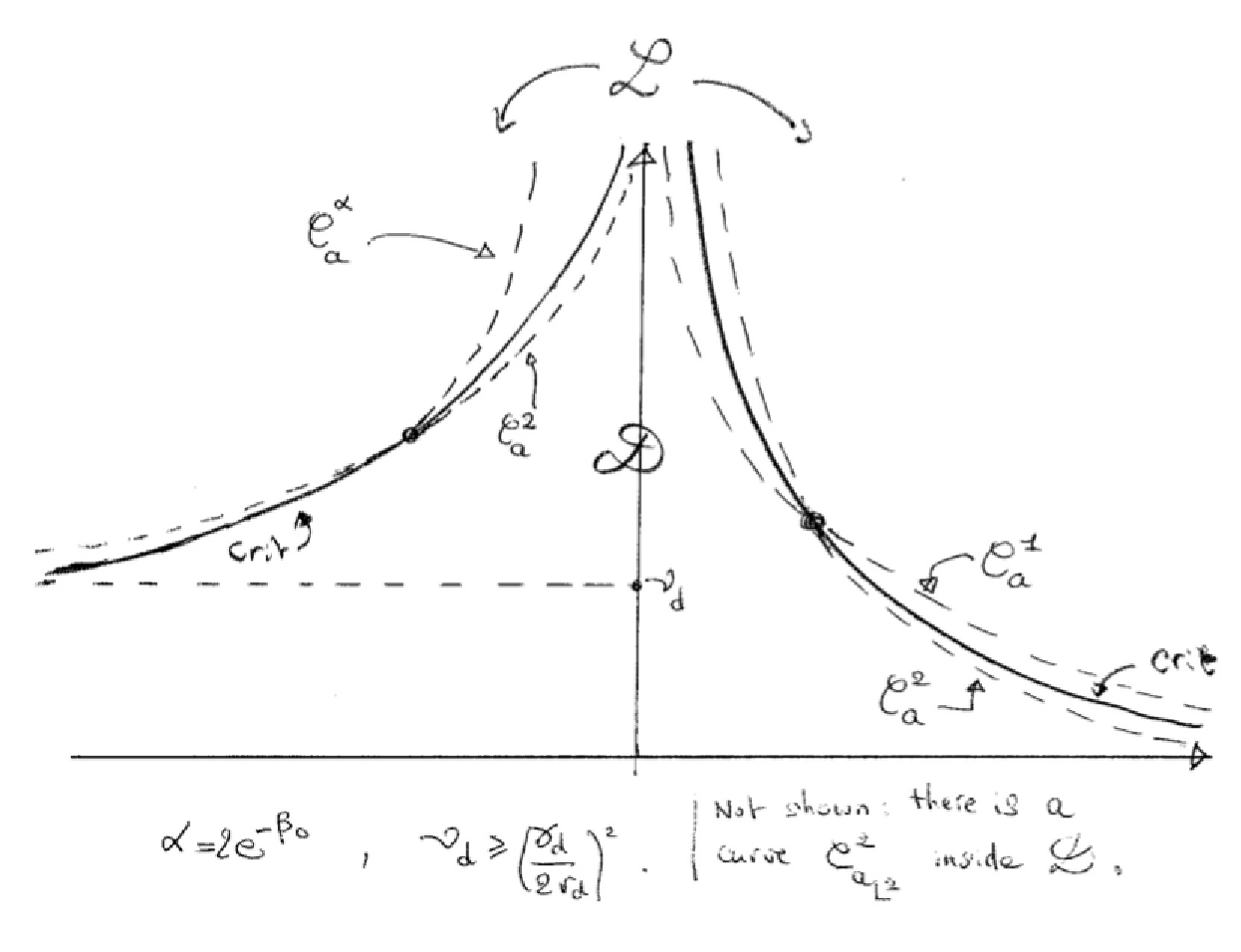}
\caption{Estimating the critical curve. ${\cal C}^\alpha_a$ denotes the curve \eqref{eq:curve}. } 
\label{f-diag2}
\end{figure}

\section{Main results}

We start with an estimate of the free energy.
In \cite[Th. 5.3.1]{CYBMPO2}, the asymptotics of the free energy is determined as $\nu  \b^2 $ diverges and $\b$ remains bounded. We state it below, it is key for our results.
\begin{lemma}\label{lem:boundfe}
Let $\b_0 \in (0,\8)$ arbitrary. Then, as $\nu  \b^2 \to \8$ and $|\b| \leq \b_0$, we have
$$ p(\nu,\b) = \b \nu r^d + {\cal O}((\nu  \b^2 r^d)^{5/6}\;.$$
\end{lemma}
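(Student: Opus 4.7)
The lower bound $p(\beta,\nu) \geq \beta \nu r^d$ is Proposition \ref{Prop.p}(1), via the infinite-dimensional Jensen inequality of Lemma \ref{lem:RAS-J}, so the content of the lemma is the matching upper bound, which must improve on the annealed bound $p \leq \nu\lambda(\beta)r^d = \beta\nu r^d + O(\nu\beta^2 r^d)$ by a factor $(\nu\beta^2 r^d)^{-1/6}$. A preliminary remark: under the hypothesis $\nu\beta^2 r^d \to \infty$, $|\beta|\leq \beta_0$, one must be strictly inside the localized region $\mathcal{L}$, since the weak-disorder equality $p = \nu\lambda(\beta)r^d$ would contradict the target bound. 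So we may assume $\psi(\beta,\nu) > 0$, and quantitatively $\psi$ is close to $\tfrac{1}{2}\nu\beta^2 r^d$.

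My plan would be to recast the upper bound via the Doob--Meyer decomposition of Section~4.2: $t\,\psi_t(\beta,\nu) = \mathbb{E}[A_t]$ with
\[
A_t \;=\; \nu\!\!\int_{[0,t]\times \mathbb{R}^d}\!\!\!g\bigl(\lambda(\beta)\,P^{\beta,\omega}_{s-}[\chi_{s,x}]\bigr)\,ds\,dx,\qquad g(u) = u - \ln(1+u).
\]
Using $g(u) \geq \tfrac{1}{2}u^2 - C u^3$ on $[0,\lambda(\beta_0)]$ and absorbing the cubic contribution into an error of order $\nu\beta^3 r^d$, which is asymptotically smaller than $(\nu\beta^2 r^d)^{5/6}$, the claim becomes the quantitative localization bound
\[
\liminf_{t\to\infty}\frac{\nu\lambda^2}{t}\!\int_0^t\!ds\!\int \mathbb{E}\bigl[P^{\beta,\omega}_{s-}[\chi_{s,x}]^2\bigr]dx \;\geq\; \nu\beta^2 r^d - C(\nu\beta^2 r^d)^{5/6}.
\]
In words: the polymer occupation densities must concentrate on a region of effective volume close to $r^d$ rather than spreading over the full Brownian scale.

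The heart of the argument is then a two-scale estimate for the integrand. Using the favorite-point construction $\mathsf{Y}^{(s)}$ from Section \ref{ch_overlap} together with the almost-sure moderate deviations of Theorem \ref{th:LD}(ii) (obtained through Cameron--Martin in Chapter \ref{ch:CM}), one shows that under $P_s^{\beta,\omega}$ the endpoint $B_s$ concentrates within distance $L$ of $\mathsf{Y}^{(s)}$ with probability $1-\eta(L,s)$ decaying as $\exp(-cL^2/s)$. A Cauchy--Schwarz step on the ball of radius $L$ around $\mathsf{Y}^{(s)}$ yields $\int P^{\beta,\omega}_s[\chi_{s,x}]^2\,dx \geq c\,r^{2d}(1-\eta)^2/L^d$, and the exponent $5/6$ emerges after optimizing $L$ as an explicit power of $(\nu\beta^2 r^d)$ to balance the volume loss $L^{-d}$ against the Gaussian tail $\eta$.

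The main obstacle is the quantitative sharpness of this optimization. A crude trade-off gives an error $(\nu\beta^2 r^d)^{\theta}$ for some $\theta\in (1/2,1)$, but extracting the sharp $\theta = 5/6$ requires combining the moderate-deviation input with the longitudinal fluctuation bound $\xi^{\parallel}\leq 1/2$ from Corollary \ref{cor:var1}, so that the allowed endpoint window has matched orders. The careful optimization is carried out in \cite[Th.~5.3.1]{CYBMPO2}; in the present survey I would reproduce only the conceptual structure above and cite the reference for the explicit power.
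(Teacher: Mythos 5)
The first thing to note is that the survey itself gives no proof of Lemma \ref{lem:boundfe}: it states the result and explicitly refers to \cite[Th.~5.3.1]{CYBMPO2} for ``the involved, technical proof''. So the question is whether your added sketch could stand on its own, and it cannot. First, the reduction step is miscalibrated. Replacing $g(\lambda u)=\lambda u-\ln(1+\lambda u)$ by $\tfrac12\lambda^2u^2-C\lambda^3u^3$ and ``absorbing the cubic contribution into an error of order $\nu\b^3r^d$'' fails in the regime the lemma must cover: $|\b|\le\b_0$ with $\nu\b^2\to\8$ allows $\b$ fixed and $\nu\to\8$ (exactly the complete-localization regime \eqref{eq:completeloc} where the lemma is applied), and then $\nu|\b|^3r^d\asymp\nu$ is \emph{not} smaller than $(\nu\b^2r^d)^{5/6}\asymp\nu^{5/6}$. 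Relatedly, your displayed target $\liminf_t\frac{\nu\lambda^2}{t}\int_0^t\!\!\int\IP\big[(P^{\b,\om}_{s-}[\chi_{s,x}])^2\big]\,dx\,ds\ \ge\ \nu\b^2r^d-C(\nu\b^2r^d)^{5/6}$ is actually impossible for fixed $\b<0$, since the left side is at most $\nu\lambda^2r^d$ and $\b^2-\lambda^2\asymp|\b|^3$. The Taylor truncation is only legitimate as $\b\to0$; for bounded $\b$ one must keep $g$ itself and calibrate to its maximal value $g(\lambda)=\lambda-\b$, i.e.\ show that $P^{\b,\om}_{s-}[\chi_{s,x}]$ is close to $1$ on a set of volume close to $r^d$, not merely bound a second moment from below.

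Second, the ``heart'' of your argument --- concentration of $B_s$ within distance $L$ of the favorite point $\Yt_s$ with failure probability $\exp(-cL^2/s)$ --- is not supplied by anything in the survey and is the wrong type of estimate. Theorem \ref{th:LD}(ii) is a moderate-deviation principle for $B_{t_n}/t_n^\xi$ around the \emph{origin}, with $\xi>1/2$; it says nothing about the favorite corridor. Moreover a tail of the form $\exp(-cL^2/s)$ is diffusive: it is only effective for $L\gtrsim\sqrt s$, and then your Cauchy--Schwarz step gives $\int P^{\b,\om}_s[\chi_{s,x}]^2\,dx\gtrsim r^{2d}/L^d\lesssim r^{2d}s^{-d/2}$, whose time average vanishes as $t\to\8$, so no optimization over $L$ can yield the required lower bound of order $r^d$ uniformly in $t$. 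What you would actually need --- Gibbs mass $1-\cO((\nu\b^2)^{-1/6})$ in an $\cO(1)$ window around $\Yt$ --- is precisely the complete-localization statement, which in this paper is \emph{deduced from} Lemma \ref{lem:boundfe} through convexity and \eqref{eq:rase}; invoking it here is circular with respect to the survey's logic, and your sketch ultimately defers the quantitative work to the same reference \cite[Th.~5.3.1]{CYBMPO2} that the paper cites. As written, the proposal is not a proof, and its intermediate claims are incorrect as stated in the regime of interest.
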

\noindent We refer to the above paper for the involved, technical proof.

Among all the results, we mention:
\begin{theorem}
Let $d \geq 3$.

(i) the functions $\be_c^\pm(\nu)$ are locally Lipshitz and strictly monotone;

(ii) we have\footnote{For positive functions we write $f(x) \asymp g(x)$ as $x \to x_0$ if the ratio remains bounded  from 0 and from $\8$  as $x \to x_0$.}
 \begin{equation} \label{eq:decay*of*b_c+}
\b^+_c (d, \nu) \asymp \ln (1/\nu)
\; \; \mbox{as $\nu \searrow 0$};
\end{equation}

(ii) we have 
\begin{equation} \label{eq:decay*of*b_c2}
|\b^\pm_c (d,\nu)| \asymp 1/\sqrt{\nu} \quad {\rm as } \quad \nu \nearrow \8.
\end{equation}
\end{theorem}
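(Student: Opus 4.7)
The plan is to sandwich each critical curve $\nu \mapsto \b_c^\pm(\nu)$ between two smooth curves of the form ${\cal C}_a^\alpha : \nu(\b) = a\,|\lambda(\b)|^{-\alpha}$, by exploiting formula \eqref{eq:curved} and the sign chart for $h_\alpha$. For $\b > 0$, $\nu'(\b) < 0$, and the chart yields two key monotonicity statements which pass in the limit $t \to \8$ from $\psi_t$ to $\psi$: along ${\cal C}_a^{2}$, $\psi_t$ is non-increasing in $\b$, because $h_2 \geq 0$ on $[0,\lambda]$; and along ${\cal C}_a^{1}$, $\psi_t$ is non-decreasing in $\b$, because the elementary inequality $\b \geq 1 - e^{-\b}$ gives $\alpha(\b) \geq 1$ for all $\b > 0$ and hence $h_1 \leq 0$ on $[0,\lambda]$. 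Equivalently, $\{\psi>0\}$ is stable rightward along ${\cal C}_a^1$ and leftward along ${\cal C}_a^2$, while $\{\psi=0\}$ is stable rightward along ${\cal C}_a^2$.

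For the lower bound on $\b_c^+$ I would invoke Theorem \ref{th:LTwoRegion}(ii): the $L^2$ region $\{\nu\lambda(\b)^2 r^{d+2} < c(d)\}$ is exactly the zone below ${\cal C}_{c(d)/r^{d+2}}^{2}$, and sits inside the weak disorder region, hence inside ${\cal D}$. This gives
\[
\b_c^+(\nu) \;\geq\; \lambda^{-1}\!\bigl(\sqrt{c(d)/(\nu r^{d+2})}\bigr),
\]
which is $\gtrsim \tfrac{1}{2}\ln(1/\nu)$ as $\nu \downarrow 0$ and $\gtrsim \nu^{-1/2}$ as $\nu \uparrow \8$, covering the lower halves of (ii) and (iii).

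For the matching upper bounds I would use Lemma \ref{lem:boundfe} to produce a seed $(\b_0, \nu_0) \in {\cal L}$: with $\b_0 = 1$ fixed and $\nu_0$ sufficiently large,
\[
\psi(\b_0, \nu_0) \;=\; \nu_0(e - 2)r^d + O\!\bigl((\nu_0 r^d)^{5/6}\bigr) \;>\; 0.
\]
Propagating $\psi > 0$ leftward along ${\cal C}_{a_2}^{2}$ through a perturbed seed $(\b_0 + \varepsilon, \nu_0) \in {\cal L}$ and sending $\varepsilon \downarrow 0$ yields $\b_c^+(\nu) \leq \lambda^{-1}(\sqrt{a_2/\nu}) \asymp \nu^{-1/2}$ for $\nu \geq \nu_0$, where $a_2 = \nu_0\lambda(\b_0)^2$. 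Propagating $\psi > 0$ rightward along ${\cal C}_{a_1}^{1}$ through $(\b_0 + \varepsilon, \nu_0)$ gives $\b_c^+(\nu) \leq \lambda^{-1}(a_1/\nu) \asymp \ln(1/\nu)$ for $\nu \leq \nu_0$, with $a_1 = \nu_0\lambda(\b_0)$. This finishes (ii) and (iii) for $\b_c^+$; the case $\b < 0$ is handled by the analogous sign chart ($\nu'(\b) > 0$, and the ${\cal C}_a^2$-monotonicity still runs since $h_2 \leq 0$ on $(-1,0]$), with (iii) for $\b_c^-$ being meaningful only for $\nu > \nu_c$ where $\b_c^-$ is finite.

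For (i), the same sandwiching applied at a fixed $\nu_0$ combines the upper bounds from a perturbed seed with matching lower bounds obtained by propagating $\psi = 0$ along the curves ${\cal C}_{a_1}^1$ and ${\cal C}_{a_2}^2$ through the critical point $(\b_c^+(\nu_0), \nu_0)$ itself: $\b_c^+$ is then trapped in a two-sided neighbourhood of $\nu_0$ between two smooth curves of explicit finite slope, yielding local Lipschitz continuity. Strict monotonicity follows because for any $\nu > \nu_0$ the ${\cal C}_{a_2}^2$-upper bound evaluates to $\lambda^{-1}(\sqrt{a_2/\nu}) < \b_c^+(\nu_0)$ strictly (and symmetrically for $\nu < \nu_0$). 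The main technical obstacle I foresee is the clean transfer of monotonicity from $\psi_t$ to the limit $\psi$ while preserving the strictness needed in (i), together with the routine but careful selection of the seed in Lemma \ref{lem:boundfe} so that the resulting ${\cal C}_{a_i}^{\alpha_i}$-constants realise the correct leading orders in both asymptotic regimes.
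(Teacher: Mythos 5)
Your strategy is the one the paper itself sketches (monotone curves ${\cal C}_a^\alpha$, the sign chart of $h_\alpha$ in \eqref{eq:curved}, the $L^2$-region for the lower bounds, and Lemma \ref{lem:boundfe} for a seed in ${\cal L}$; the paper defers the calculus to \cite{CYBMPO2}), and for the positive branch $\b_c^+$ your argument is essentially sound: the $L^2$ condition gives $\b_c^+(\nu)\geq \lambda^{-1}\bigl(\sqrt{c(d)/(\nu r^{d+2})}\bigr)$, and propagating $\psi>0$ from a fixed seed leftward along ${\cal C}^2$ and rightward along ${\cal C}^1$ (using $\alpha(\b)\geq 1$ for $\b>0$) gives the matching upper bounds; the sandwich at a fixed $\nu_0$ with the critical point itself as a ``$\psi=0$ seed'' does yield local Lipschitz continuity and strict monotonicity for $\b_c^+$.

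However, the negative branch is not ``analogous'' in the way you claim, and this is a genuine gap. For $\b<0$ one has $\nu'>0$ along ${\cal C}^2$ and $h_2\leq 0$ on $(-1,0]$, so \eqref{eq:curved} gives $\frac{d}{d\b}\psi_t\leq 0$ along ${\cal C}^2$: the set $\{\psi>0\}$ propagates toward \emph{more negative} $\b$, i.e.\ toward \emph{smaller} $\nu$ along the curve (as $\b\to-\8$, $\nu\to a$), which is exactly the opposite of what you need for the upper bound $|\b_c^-(\nu)|\lesssim \nu^{-1/2}$ as $\nu\nearrow\8$. A fixed seed such as $(-1,\nu_0)$ therefore cannot be propagated to large $\nu$ along ${\cal C}^2$, and the other admissible family (fourth line of the sign chart) requires $\alpha\geq\alpha(\b)>2$ along the whole traversed range, which only produces bounds of order $\nu^{-1/\alpha}$ with $\alpha>2$, not the sharp exponent $1/2$. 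The correct mechanism is to let the seed move with $\nu$: apply Lemma \ref{lem:boundfe} directly at the points $(\pm\sqrt{K/\nu},\nu)$ with $K$ a large constant, where $\nu\b^2=K$ so that $\psi(\b,\nu)=\nu r^d\bigl(\lambda(\b)-\b\bigr)+{\cal O}\bigl((\nu\b^2 r^d)^{5/6}\bigr)\geq \tfrac14 K r^d-{\cal O}\bigl((Kr^d)^{5/6}\bigr)>0$; this is precisely the regime $\nu\b^2\to\8$, $|\b|\leq\b_0$ for which the lemma is stated, and it settles the upper bound in \eqref{eq:decay*of*b_c2} for both signs at once (and simplifies your positive-side argument as well). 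Relatedly, for part (i) on the negative branch the second monotone family you need locally is ${\cal C}^\alpha$ with $\alpha\geq\alpha(\b)$ in a neighbourhood of $\b_c^-(\nu_0)$, not ${\cal C}^1$; this is fixable since $\alpha(\b)$ is locally bounded, but it should be said explicitly rather than subsumed under ``analogous''.
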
 
\section{Main steps}

The derivative of $\psi_t$ in $\b$ has been obtained in \eqref{eq:meanenergy}. We explain how to obtain the derivative in $\nu$, and for clarity,
in the sequel of the section we write $\P=\P_\nu$ to make the dependence in $\nu$ explicit.
 \begin{lemma} \label{lem:deriveenu}
We have 
\begin{equation}
\label{eq:dnp}
\frac{\6 }{\6 \nu} \P_\nu[\ln Z_t(\b, \om)]
 =\int_{[0,t]\times \rd}  dsdx
\; \P_\nu \;  \ln [1+\ln \gibbs( \chi_{s,x})]  \;.
\end{equation}
\end{lemma}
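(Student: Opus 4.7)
The strategy is to exploit the infinite divisibility of Poisson point processes, just as in the proof of Proposition~\ref{prop:p}. For $\Delta>0$, write $\om^{\nu+\Delta} \eqlaw \om + \om^\Delta$ with independent $\om \sim \mathrm{PPP}(\nu\,dsdx)$ and $\om^\Delta \sim \mathrm{PPP}(\Delta\,dsdx)$. The multiplicative identity
\[ Z_t(\om + \om^\Delta) = P\bigl[e^{\b\om(V_t)}e^{\b\om^\Delta(V_t)}\bigr] = Z_t(\om)\,\gibbs\bigl[e^{\b\om^\Delta(V_t)}\bigr], \]
in which $\gibbs$ uses only $\om$, yields
\[ \P_{\nu+\Delta}[\ln Z_t] - \P_\nu[\ln Z_t] = \P_\nu\,\P^{\om^\Delta}\!\Bigl[\ln \gibbs\bigl[e^{\b\om^\Delta(V_t)}\bigr]\Bigr], \]
reducing the problem to extracting the first-order term of the inner expectation as $\Delta \to 0$.

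For this I invoke the infinitesimal form of Slivnyak-Mecke, i.e.\ the intensity-derivative formula for Poisson functionals:
\[ \frac{d}{d\Delta}\,\P^{\om^\Delta}[F(\om^\Delta)] = \int_{(0,t]\times\rd}\!\!\P^{\om^\Delta}\!\bigl[F(\om^\Delta+\delta_{s,x}) - F(\om^\Delta)\bigr]\,ds\,dx, \]
valid for $F$ satisfying $\int \P^{\om^\Delta}[|F(\om^\Delta + \delta_{s,x}) - F(\om^\Delta)|]\,dsdx < \infty$. This is derived by conditioning on $\om^\Delta(K_R)$ in boxes $K_R = (0,t]\times B(0,R)$ of finite volume and letting $R \to \infty$ via monotone convergence, using that $e^{\b\om^\Delta|_{K_R}(V_t)}$ is monotone in $R$ with fixed sign of $\b$. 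With $F(\eta) := \ln \gibbs[e^{\b\eta(V_t)}]$, the Bernoulli linearization \eqref{eq:linBer} gives
\[ F(\eta+\delta_{s,x}) - F(\eta) = \ln\!\bigl(1 + \lambda P_t^{\b,\om+\eta}[\chi_{s,x}]\bigr), \]
which at $\eta = \emptyset$ equals $\ln(1+\lambda\gibbs[\chi_{s,x}])$. The required integrability is uniform in $\eta$: from $|\ln(1+\lambda u)| \leq C(\b)|u|$ on the bounded range of $\lambda u$ and $\int_{\rd} P_t^{\b,\om+\eta}[\chi_{s,x}]\,dx = r^d$, we get $\int |F(\eta+\delta_{s,x}) - F(\eta)|\,dsdx \leq C(\b)\,r^d\,t$.

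Combining the above, $\P^{\om^\Delta}[F(\om^\Delta)] = \Delta \int_{(0,t]\times\rd}\ln(1+\lambda\gibbs[\chi_{s,x}])\,dsdx + o(\Delta)$ at fixed $\om$, with the remainder $o(\Delta)/\Delta \to 0$ uniformly in $\om$ by dominated convergence (applied to $\P^{\om^{\Delta'}}[F(\om^{\Delta'} + \delta_{s,x}) - F(\om^{\Delta'})] - \ln(1+\lambda\gibbs[\chi_{s,x}])$, which vanishes pointwise as $\Delta' \to 0$ and is controlled by the same $L^1$ bound). Dividing by $\Delta$, letting $\Delta \to 0^+$, applying $\P_\nu$ and Fubini produces the stated identity. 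The main obstacle is purely technical: rigorously establishing the Mecke-type derivative formula in this infinite-intensity setting and controlling the $o(\Delta)$ remainder uniformly in $\om$; both steps are handled cleanly by box truncation together with the deterministic $L^1$-bound on $\ln(1+\lambda\gibbs[\chi_{s,x}])$, which does not depend on the realization of the environment.
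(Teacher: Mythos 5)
Your argument is sound and reaches the identity by a genuinely different route than the paper. The paper first confines the path to a box $A_k=\{B_s\in[-k,k]^d,\ s\le t\}$, so that $Z_{t,k}$ depends on the environment only through a finite-volume window $K_t$; there it differentiates in $\nu$ the explicit likelihood ratio of the PPP$(\nu)$ against the PPP$(1)$ (Proposition \ref{prop:PoissonGirsanov}), converts the resulting term $\nu^{-1}\P_\nu[\bar\om(K_t)\ln Z_{t,k}]$ into $\int \P_\nu\,\ln(1+\lambda\gibbs[\chi_{s,x}\mid A_k])\,ds\,dx$ via the Mecke formula \eqref{eq:IPP}, integrates back in $\nu$, and removes the truncation $k\to\8$ by dominated convergence (deferred to \cite{CYBMPO2}). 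You keep the path free and perturb the environment instead: superposition $\om+\om^\Delta$ together with a Margulis--Russo/Mecke-type intensity derivative for $F(\eta)=\ln\gibbs[e^{\b\eta(V_t)}]$, whose add-one-point difference is exactly $\ln(1+\lambda P_t^{\b,\om+\eta}[\chi_{s,x}])$. Both proofs run on the same two facts, namely $Z_t(\om+\delta_{s,x})=(1+\lambda\gibbs[\chi_{s,x}])Z_t(\om)$ and $\int_{\rd}P_t^{\b,\cdot}[\chi_{s,x}]\,dx=r^d$; yours trades the paper's path confinement and change of measure for a difference-operator formula on an infinite-volume window, which, incidentally, is available off the shelf in \cite{LastPenrose} rather than needing your hand truncation.

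One step is over-claimed: uniformity in $\om$ of the $o(\Delta)$ remainder ``by dominated convergence with the same $L^1$ bound'' does not follow. The uniform bound $C(\b)r^dt$ controls the size of $\int\P^{\om^{\Delta'}}[F(\om^{\Delta'}+\delta_{s,x})-F(\om^{\Delta'})]\,ds\,dx$, not its convergence as $\Delta'\to 0$: the natural dominating function $C(\b)P_t^{\b,\om+\eta}[\chi_{s,x}]$ depends on $\eta$, and on infinite volume $\om^{\Delta'}$ is never empty, so even the pointwise limit requires a localization estimate showing that distant points of $\om^{\Delta'}$ barely perturb $\gibbs[\chi_{s,x}]$. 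You can sidestep this entirely: integrate your derivative formula over $\Delta'\in[0,\Delta]$ and use that $\om+\om^{\Delta'}$ is a PPP$(\nu+\Delta')$ to obtain $\P_{\nu+\Delta}[\ln Z_t]-\P_\nu[\ln Z_t]=\int_\nu^{\nu+\Delta}d\nu'\int ds\,dx\;\P_{\nu'}\ln(1+\lambda\gibbs[\chi_{s,x}])$, precisely the integrated identity the paper also lands on; differentiating then requires only continuity in $\nu'$ of the inner integral, the same mild step the paper defers to \cite{CYBMPO2}. Finally, you correctly read the statement's $\ln[1+\ln\gibbs(\chi_{s,x})]$ as a typo for $\ln[1+\lambda\gibbs(\chi_{s,x})]$.
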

\begin{proof}:
For $k \geq 1$, let 
$$
Z_{t,k}=P[ e^{\b \om(V_t)}; A_k]\;,
\; \; \; 
\mbox{with} \; \; A_k=\{B_s \in [-k,k]^d, \;\forall s \leq t\},
$$
and $p_{t,k}(\b,\n )=t^{-1} \P_\nu \ln Z_{t,k}$. 
Let $K^r=[-k-r,k+r]^d$ and $K_t=(0,t]\times K^r$.
By Proposition \ref{prop:PoissonGirsanov},
$$
\P_\nu [ \ln Z_{t,k}]= \P_1 \left[ \rho_{t,\nu} \ln Z_{t,k}\right] 
\; \; \;\mbox{with}\; \; \; 
\rho_{t,\nu}=
\exp \left( \om_t(K_t)\ln \nu 
-  (\nu-1)t|K^r|\ri).
$$
Thus, $t p_{t,k}(\b,\nu)$ is differentiable in $\nu$, with derivative 
\begin{eqnarray*} 
\frac{1}{\nu} \P_\nu
\left[  \int_{K_t}
 \overline{\om}(dsdx)   \ln Z_{t,k}\ri]
&\stackrel{\eqref{eq:IPP}}{=}& 
\int_{K_t} dsdx \; \P_\nu    \ln 
\frac{Z_{t,k}(\om+\delta_{s,x})}{Z_{t,k}(\om) }  \\
&=& 
\int_{[0,t]\times \rd} dsdx \; \P_\nu  
\ln \left( 1+\ln \gibbs [\chi_{s,x} | A_k]\right).
\end{eqnarray*}
 Now, we write
\begin{equation}\nn
tp_{t,k}(\b,\nu)-tp_{t,k}(\b,1)
= \int_1^\nu d\nu' \int_{[0,t]\times \rd} dsdx \; \P_{\nu'}   \ln 
\left( 1+\ln \gibbs [\chi_{s,x} | A_k]\right).
\end{equation}
By dominated convergence theorem (see details in \cite{CYBMPO2}, Lemma 7.2.1), we can take the limit $k \to \8$, and obtain the desired statement.
\end{proof} 
We come to the core of the proof. With the derivatives of $\psi_t$  in both variables, from Lemma \ref{lem:deriveenu} and   \eqref{eq:meanenergy}, we obtain
\begin{eqnarray}\nn
t \frac{d}{d\b}  \psi_t(\b,\nu(\b)) & \!=& \! \nu' t \frac{\partial}{\partial \nu}  \psi_t + t \frac{\partial}{\partial \b}  \psi_t \\
& \!=& \!
\nu' \times \P \int_{[0,t]\times \rd} ds dx \left\{ \lambda \gibbs [\chi_{s,x}] +\frac{\nu}{\nu'} e^\b \lambda \frac{ \gibbs [\chi_{s,x}]^2 }{1\! +\!  \lambda  \gibbs [\chi_{s,x}] } 
-\ln \big( 1 \! + \! \lambda  \gibbs [\chi_{s,x}] \big)\right\}. \nn
\end{eqnarray}
Recall that $e^\b=\lambda'$; we recover the simpler formula \eqref{eq:curved} along the curves of equation
$$
\frac{\lambda' \nu}{\lambda \nu'} = - \frac{1}{\alpha},
$$
that is, the curves $h_\alpha$ from \eqref{eq:halpha}.
From then on,  the rest of the proof is a tedious but elementary exercise in calculus, performed in  \cite{CYBMPO2}.
We will not dive any further in the details of the proof, that the reader can find in this reference together with many fine estimates.
We summarize the section by giving a qualitative picture of the phase diagram in Figure \ref{f-diag}.
\begin{figure} \centering
\includegraphics[width=10cm,height=70mm]{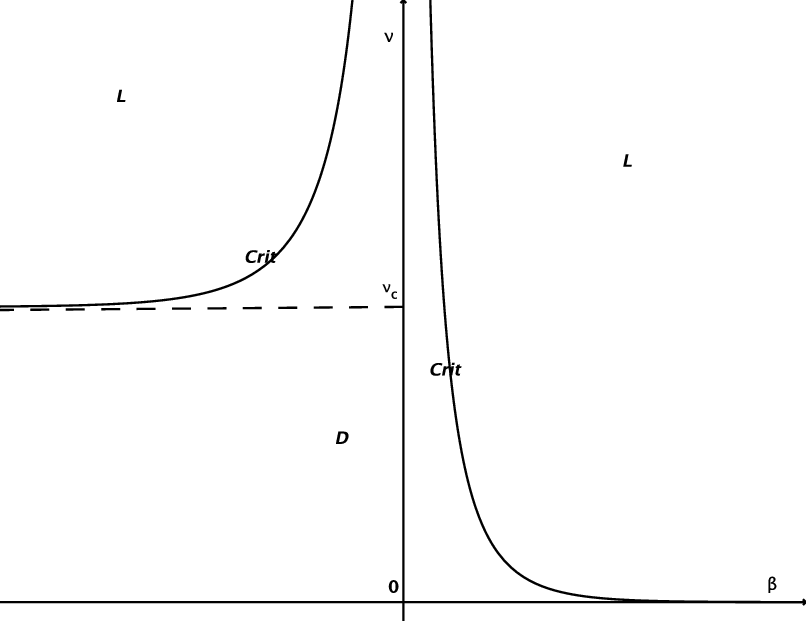}
\caption{Shape of the phase diagram, $d \geq 3$. }
\label{f-diag}
\end{figure}


 \chapter{Complete localization}
 
As we send some parameters to 0 or $\8$, the present model converges to other related polymer models. A first instance is the intermediate disorder regime of section \ref{sec:IR},
where parameters $\beta, \nu, r$ are scaled  with the polymer size, see \eqref{eq:paramRI}.
 
\section{A mean field limit}

Another  instance is the mean field limit: independently of the polymer length, we let 
\begin{center} $\nu \to \8$ and $\beta \to 0$ in such a way that $\nu \beta^2 \to b^2 \in (0,\8)$.
\end{center}
Then, the rewards given by the Poisson medium get denser and weaker in this asymptotics so that they turn into a Gaussian environment, given by 
generalized Gaussian process $g(t,x)$ with mean 0 and covariance
$$
\E[g(t,x)g(s,y)] 
= b^2 \delta (t-s) |U(x) \cap U(y)|,
$$
where $|\;\cdot \; |$ above denotes the Lebesgue measure. In other words, the environment is gaussian, which is correlated in space but not in time -- it is Brownian-like. 
Here, the limit of our model is a Brownian directed polymer in a Gaussian environment, introduced in
\cite{RoTi05}, with partition function
$$
{\frak Z}_t= P \left[ \exp \{ \int_0^t g(s, B(s)) ds\} \right] . 
$$
We do not elaborate this asymptotics, but instead we focus on the case $b=\8$.

\section{The regime of complete localization}
This corresponds to letting, independently of the polymer length, 
\begin{equation} \label{eq:completeloc}
\nu \to \8, \;|\beta|\leq \b_0, \quad {\rm such\ that} \quad \nu \beta^2 \to \8.
\end{equation}
(The parameter $r$ is kept fixed.) Precisely, we first let $t \to \8$ and then take the limit \eqref{eq:completeloc}.
\begin{theorem}
Under the assumption \eqref{eq:completeloc}, 
\begin{eqnarray*}
 1-\cO \lef( (\n \b^2)^{-1/6}\ri) \leq 
\liminf_{t \to \8}  \P\big[  \gibbs  (R_t^*) \big]   \qquad \qquad \qquad \qquad \qquad\\ \qquad\qquad \qquad \qquad \qquad 
 \leq  \limsup_{t \to \8}  \P\big[  \gibbs  (R_t^*) \big]  \leq 1-\cO \lef( (\n \b^2)^{-1/6}\ri).
\end{eqnarray*}
\end{theorem}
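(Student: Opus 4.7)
The plan is to read off the complete-localization rate from the Doob decomposition of $-\ln W_t$ by comparing the actual rate of free-energy deficit with its fully-localized counterpart, and then feed in the sharp free-energy estimate of Lemma~\ref{lem:boundfe}. From \eqref{eq:DoobDecompLogZ}--\eqref{eq:2773}, since the martingale $M_t$ has zero mean,
\[
\psi_t(\b,\nu) \;=\; \tfrac{1}{t}\,\P[A_t] \;=\; \nu\, \P\!\left[\tfrac{1}{t}\int_{[0,t]\times \rd} g\big(\lambda\, \gibbs[\chi_{s,x}]\big)\, ds\, dx\right], \qquad g(u) = u - \ln(1+u).
\]
Writing $u(s,x):=\gibbs[\chi_{s,x}]$, one has $\int u(s,x)\,dx = r^d$ and $0\leq u\leq 1$, while $\alpha(s):=\max_x u(s,x) = \gibbs\bigl(B_s \in U(\Yt_s)\bigr)$ is exactly the integrand defining $\gibbs[R_t^*]$.

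The main deterministic step is the pointwise comparison $\int_{\rd} g(\lambda u(s,x))\, dx \leq r^d\, F(\alpha(s))$ with $F(m):=g(\lambda m)/m$. Indeed, $g$ is convex on $(-1,\infty)$ with $g(0)=0$, so the tangent inequality $v g'(v) \geq g(v)$ yields that $v \mapsto g(\lambda v)/v$ is non-decreasing on $(0,1]$; applied to $u(s,x) \leq \alpha(s)$ this gives $g(\lambda u)\leq u\, F(\alpha)$, and one integrates in $x$ using $\int u(s,\cdot)dx=r^d$. A short series expansion shows $F$ is smooth on $[0,1]$ with $F'(0^+) = \lambda^2/2$, and the closed-form $m^2 F'(m) = \lambda m\, g'(\lambda m) - g(\lambda m)$ combined with a compactness argument yields $\min_{[0,1]}F' \geq c_0(\b_0)\,\lambda^2$ for some $c_0>0$ uniform in $|\b|\leq \b_0$. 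The mean-value theorem then delivers the linear bound $F(m) \leq g(\lambda) - c_0 \lambda^2(1-m)$ on $[0,1]$. Averaging over $s\in[0,t]$ and taking $\P$-expectation,
\[
\psi_t(\b,\nu) \;\leq\; \nu r^d g(\lambda) \;-\; c_0\, \nu r^d \lambda^2\,\P\bigl[1 - \gibbs[R_t^*]\bigr].
\]

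Rearranging, letting $t\to\infty$, and using $g(\lambda) = \lambda - \b$ together with Lemma~\ref{lem:boundfe} in the form $\nu r^d g(\lambda) - \psi(\b,\nu) = \cO\bigl((\nu \b^2 r^d)^{5/6}\bigr)$, I obtain
\[
\limsup_{t\to\infty}\P\bigl[1 - \gibbs[R_t^*]\bigr] \;\leq\; \frac{\cO\bigl((\nu\b^2)^{5/6}\bigr)}{c_0\,\nu r^d \lambda^2} \;=\; \cO\bigl((\nu\b^2)^{-1/6}\bigr),
\]
since $\lambda^2 \asymp \b^2$ uniformly in $|\b|\leq \b_0$. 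This is the lower bound claimed; the matching upper bound on $\limsup \P[\gibbs[R_t^*]]$ is the trivial $\gibbs[R_t^*]\leq 1$. The main obstacle is not in the convex-analysis machinery itself but in the uniform control of $c_0(\b_0)$ and of the implicit constant in Lemma~\ref{lem:boundfe} as $\b \to 0$ with $\nu\b^2 \to \infty$; the $1/6$ exponent in the final rate is a direct transcription of the $5/6$ exponent in the free-energy estimate, amplified by the $\lambda^{-2}$ normalization.
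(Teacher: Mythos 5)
Your convex-analysis machinery (monotonicity of $g(\lambda v)/v$, the uniform bound $F'\ge c_0\lambda^2$, and the final division by $\nu r^d\lambda^2$ after invoking Lemma \ref{lem:boundfe}) is fine, but the proof rests on a misquotation of the Doob decomposition. In \eqref{eq:2773} the compensator is $A_t=\int_{[0,t]\times\rd} g\big(\lambda P_{s-}^{\b,\om}[\chi_{s,x}]\big)\,\nu\, ds\, dx$: the Gibbs weight is that of the polymer with time horizon $s-$ (the running measure), not $\gibbs=P_t^{\b,\om}$. With the horizon-$t$ measure, the identity $\psi_t(\b,\nu)=\nu\,\P\big[t^{-1}\int g(\lambda\,\gibbs[\chi_{s,x}])\,ds\,dx\big]$ that you start from is not true; the only exact horizon-$t$ formulas in the paper are derivative formulas such as \eqref{eq:meanenergy} and \eqref{eq:dnp}, not a closed expression for $\psi_t$ itself. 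If you rerun your argument with the correct integrand $P_{s-}^{\b,\om}[\chi_{s,x}]$, everything up to the mean-value bound goes through, but the quantity you control is $\frac1t\int_0^t\P\big[\max_{x} P_{s-}^{\b,\om}(B_s\in U(x))\big]\,ds$, the time-averaged favorite-endpoint probability of the growing polymer. This is not $\P[\gibbs(R_t^*)]$: the favorite path $\Yt$ in the definition of $R_t^*$ maximizes $\gibbs(B_s\in U(x))$ for the fixed-horizon measure $\gibbs$, which also sees the environment after time $s$, and there is no obvious comparison between the two functionals. So the identification $\frac1t\int_0^t\alpha(s)\,ds=\gibbs[R_t^*]$, which is the crux of your argument, is precisely where it breaks.

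The paper sidesteps this by working only with horizon-$t$ objects: it sets $\hat\phi_t(\b)=t^{-1}(\P\ln Z_t-t\nu\b r^d)$, compares its values at $\b$ and $2\b$ (resp.\ $\b/2$) by convexity, and uses the exact derivative formula \eqref{eq:meanenergy}; this produces $1-\P\,\gibbs^{\otimes 2}[R_t]$, which is then passed to $R_t^*$ through the 2-to-1 inequality of Theorem \ref{th:localization}, and Lemma \ref{lem:boundfe} gives the $(\nu\b^2)^{-1/6}$ rate exactly as in your last step. Your idea of bounding $\int_{\rd}g(\lambda u)\,dx$ by $r^d\,g(\lambda\alpha)/\alpha$ with $\alpha=\max_x u$ is appealing because it produces the favorite point directly rather than via the replica overlap, but as written it proves a statement about a different localization index. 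A secondary point: the theorem's upper bound is not the trivial $\gibbs[R_t^*]\le 1$; in the paper it comes from the reverse convexity inequality $\hat\phi_t(\b)-\hat\phi_t(\b/2)\le(\b/2)\,\partial_\b\hat\phi_t(\b)$ combined with the same free-energy estimate, so dismissing that half also leaves a gap. The cleanest repair of your proof is to replace the Doob-decomposition identity by the derivative/convexity argument and keep your $F$-comparison as the step that converts the resulting overlap bound into a statement about the favorite path.
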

This statement describes the strong localization properties of the polymer path. 
The time-average $\frac 1t \int_0^t  
{\mathbf 1}_{B_s\in U(\Yt(s))}ds$ is the {\em time fraction} the polymer spends together with the favourite path. We know that when $(\partial \psi)/(\partial \beta) \neq 0$
the time fraction is positive. The claim here is that it is almost  the maximal value 1, in the limit  \eqref{eq:completeloc}. 
For a benchmark, we recall that, for the free measure $P$, 
for all smooth path ${\mathsf Y}$ and all $\delta>0$, there exists 
a positive $C$ such that for large $t$,
\bdnl{bench}
P\left( \frac{1}{t} \int_0^t  
{\mathbf 1}_{B_s\in U( {\mathsf Y}(s))}ds  \geq \delta \right)  \leq e^{-Ct}
\edn
(In fact, it is not difficult to see (\ref{bench}) for $Y \equiv 0$ by 
applying Donsker-Varadhan's large deviations \cite{donskervaradhan-wienersaus} for the occupation measure of Brownian motion. 
Then, one can 
use Girsanov transformation to extend (\ref{bench}) to the case of 
smooth path ${\mathsf Y}$.)
\begin{proof} Let $\hat \phi_t(\b)= t^{-1} ( \P \ln Z_t - t \nu \b r^d)$. We assume $\b \geq 0$, the other case being similar.  By convexity of $\hat \phi_t$,
\begin{eqnarray}\nn
\hat \phi_t(2\b) - \hat \phi_t(\b) &\stackrel{\rm convex}{\geq}& \b \frac{\partial \hat \phi_t }{ \partial \b} (\b) \\ \nn
&\stackrel{\eqref{eq:meanenergy}}{=}&  \frac{\b \nu \lambda(\b)}{t} \int_{(0,t]\times \rd}ds dx\; \P \;
 \frac{\gibbs[  \chi_{s,x}]- \gibbs[  \chi_{s,x}]^2}{1+ \lambda(\b) \gibbs[\chi_{s,x}]} \;.
 \end{eqnarray}
Bounding from above the denominator in the integral by $e^{\b_0}$, we get
\begin{equation} \label{eq:rase}
1- \P {\gibbs}^{\otimes 2} [R_t] \leq e^{\b_0} \frac{\hat \phi_t(2\b) - \hat \phi_t(\b)}{\b \lambda \nu r^d}\;.
\end{equation}
Now, using the bound in Lemma \ref{lem:boundfe}, we derive
$$
{\rm R.H.S.} \eqref{eq:rase} = {\cal O}((\nu \b^2)^{-1/6}\;.
$$
Similarly,
\begin{eqnarray}\nn
\hat \phi_t(\b) - \hat \phi_t(\b/2) &\stackrel{\rm convex}{\leq}& (\b/2) \frac{\partial \hat \phi_t }{ \partial \b} (\b) \\ \nn
&{=}&  \frac{\b \nu \lambda(\b)}{2t} \int_{(0,t]\times \rd}ds dx\; \P \;
 \frac{\gibbs[  \chi_{s,x}]- \gibbs[  \chi_{s,x}]^2}{1+ \lambda(\b) \gibbs[\chi_{s,x}]} \;,
 \end{eqnarray}
leading to 
$$
1- \P {\gibbs}^{\otimes 2} [R_t] \geq \frac{\hat \phi_t(\b) - \hat \phi_t(\b/2)}{\b \lambda \nu r^d}  = {\cal O}((\nu \b^2)^{-1/6}\;,
$$
and to the desired result.
\end{proof} 
We can 
extract fine additional  information and geometric properties of the Gibbs measure.
For $\delta \in (0,1/2)$ define the $(\delta,t)$-negligible set as
$$
{\cal N}_{\delta,t}^\h =  \Big\{(s,x) \in [0,t]\times \rd  : \gibbs(\chi_{s,x}) \leq \delta\Big\}, 
$$ 
and the  $(\delta,t)$-predominant set as
$$
{\cal P}_{\delta,t}^\h =  \Big\{(s,x) \in [0,t]\times \rd  : \gibbs(\chi_{s,x}) \geq 1-\delta\Big\}.
$$ 
As suggested by the names, ${\cal N}_{\delta,t}^\h$ is the set of space-time locations the polymer wants to stay
away from, and ${\cal P}_{\delta,t}^\h$ is  the set of  locations the polymer likes to visit. Both sets depend on the environment.
\begin{corollary}\label{th:pathlocfull2}
For all $0<\delta <1/2$, we have, under the assumption \eqref{eq:completeloc},  
\begin{equation}
\label{eq:r2to1-3}
\limsup_{t\to\8}\P \left[ \frac{1}{t}  \Big\vert  ({\cal N}_{\delta,t}^\h \cup  {\cal P}_{\delta,t}^\h)^\complement
 \Big\vert  \right]
=\cO \lef( (\n \b^2)^{-1/6}\ri),
\end{equation}
\begin{equation}
\label{eq:r2to1-4}
\limsup_{t\to\8}\P \gibbs \left[ \frac{1}{t}  \Big\vert  V_t(B) \bigcap {\cal N}_{\delta,t}^\h
\Big\vert 
\right]
=\cO \lef( (\n \b^2)^{-1/6}\ri),
\end{equation}
\begin{equation}
\label{eq:r2to1-5}
\limsup_{t\to\8}\P \gibbs \left[ \frac{1}{t}  \Big\vert  
V_t(B)^\complement   \bigcap 
{\cal P}_{\delta,t}^\h
  \Big\vert  
\right]
=\cO \lef( (\n \b^2)^{-1/6}\ri).
\end{equation}
\end{corollary}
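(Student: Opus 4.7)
The strategy is to reduce all three statements to a single algebraic identity, then plug in the bound on $1-\mathbb{P}[J_t]$ furnished by the theorem just proved. Write
\[
u(s,x) := \gibbs(\chi_{s,x}), \qquad (s,x)\in [0,t]\times\mathbb{R}^d.
\]
Since $\int_{\mathbb{R}^d}\chi_{s,x}\,dx = r^d$, Fubini gives $\int_{\mathbb{R}^d} u(s,x)\,dx = r^d$ for each $s$, hence $\int_{[0,t]\times\mathbb{R}^d} u\,ds dx = tr^d$. On the other hand, by definition of $J_t$ in \eqref{eq:useful}, $\int u^2\,ds dx = tr^d J_t$. Subtracting and taking $\mathbb{P}$-expectation yields the master identity
\[
\mathbb{P}\!\int_{[0,t]\times\mathbb{R}^d} u(1-u)\,ds\,dx \;=\; tr^d\bigl(1-\mathbb{P}[J_t]\bigr),
\]
and from the proof of the preceding theorem (specifically the computation leading to the bound on $1-\mathbb{P}\gibbsDeux[R_t]$ together with Lemma \ref{lem:boundfe}), we have $1-\mathbb{P}[J_t] = \mathcal{O}((\nu\beta^2)^{-1/6})$ under assumption \eqref{eq:completeloc}.

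For \eqref{eq:r2to1-3}, observe that on $(\mathcal{N}_{\delta,t}^\omega \cup \mathcal{P}_{\delta,t}^\omega)^\complement = \{\delta < u < 1-\delta\}$ we have $u(1-u) \geq \delta(1-\delta)$, so
\[
\bigl|(\mathcal{N}_{\delta,t}^\omega \cup \mathcal{P}_{\delta,t}^\omega)^\complement\bigr| \;\leq\; \frac{1}{\delta(1-\delta)}\int u(1-u)\,ds\,dx.
\]
Taking $\mathbb{P}$-expectation and dividing by $t$ gives the claim.

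For \eqref{eq:r2to1-4} and \eqref{eq:r2to1-5}, I would use Fubini to write
\[
\gibbs\Bigl[\tfrac{1}{t}\bigl|V_t(B)\cap \mathcal{N}_{\delta,t}^\omega\bigr|\Bigr] = \tfrac{1}{t}\!\int_{\mathcal{N}_{\delta,t}^\omega}\! u\,ds\,dx, \qquad \gibbs\Bigl[\tfrac{1}{t}\bigl|V_t(B)^\complement \cap \mathcal{P}_{\delta,t}^\omega\bigr|\Bigr] = \tfrac{1}{t}\!\int_{\mathcal{P}_{\delta,t}^\omega}\! (1-u)\,ds\,dx.
\]
On $\mathcal{N}_{\delta,t}^\omega$ we have $u \leq \delta < 1/2$, so $1-u \geq 1-\delta$ and therefore $u \leq (1-\delta)^{-1} u(1-u)$; symmetrically, on $\mathcal{P}_{\delta,t}^\omega$ we have $u \geq 1-\delta$, hence $1-u \leq (1-\delta)^{-1} u(1-u)$. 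In both cases the integrand is dominated pointwise by $(1-\delta)^{-1} u(1-u)$, so integrating over the respective sets and applying the master identity yields upper bounds of the form $\tfrac{r^d(1-\mathbb{P}[J_t])}{1-\delta} = \mathcal{O}((\nu\beta^2)^{-1/6})$.

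There is no serious obstacle here; everything follows mechanically from the single balance $u(1-u)$ being small on average. The only thing to verify with a little care is the normalisation (the $r^d$ factor from $\int \chi_{s,x}\,dx$) and the conventional use of $\gibbs$ rather than $P_{s-}^{\beta,\omega}$ in the definitions of $\mathcal{N}_{\delta,t}^\omega,\mathcal{P}_{\delta,t}^\omega$, so that the $u$ appearing in all three bounds is the same function to which the $J_t$-identity applies.
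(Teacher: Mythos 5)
Your proposal is correct and is essentially the paper's own argument: the paper likewise reduces all three bounds to the pointwise inequality $u(1-u)\geq (1-\delta)u\,\mathbf{1}_{u<\delta}+\delta(1-\delta)\mathbf{1}_{u\in[\delta,1-\delta]}+(1-\delta)(1-u)\mathbf{1}_{u>1-\delta}$ for $u=\gibbs(\chi_{s,x})$, combined with the identity $\frac{1}{tr^d}\int (u-u^2)\,ds\,dx = {\gibbs}^{\otimes 2}(1-R_t)=1-J_t$ and the $\cO((\nu\b^2)^{-1/6})$ bound on $1-\P\,{\gibbs}^{\otimes 2}[R_t]$ from the preceding theorem. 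Your "master identity" plus the three pointwise dominations is the same computation, with the same Fubini steps and normalization by $r^d$.
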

Recall that $\vert \cdot \vert$ denotes the Lebesgue measure on $\R_+ \times \R^d$, and  note that 
$ \vert  {\cal N}_{\delta,t}^\h  \vert =  \vert  V_t(B)^\complement  \vert = \8$.

The limits (\ref{eq:r2to1-3}),  (\ref{eq:r2to1-4}),  (\ref{eq:r2to1-5}), bring 
information on how is the corridor around the favourite path where the measure concentrates for large $\nu \b^2$.
We depict the main features  for large $\nu \b^2$:
\begin{itemize}
\item
 most (in Lebesgue measure) time-space  locations become negligible or predominant, 
\item most  (in Lebesgue and Gibbs measures)  negligible  locations are outside the tube around the polymer path, 
\item
 most  (in Lebesgue and Gibbs measures)  predominant  locations are inside the tube around the polymer path.
\end{itemize}
The trace $ \big\{x \in \R^d: \gibbs(\chi_{s,x}) \geq 1-\delta\big\}$ at time $t$ of the $(\delta,t)$-predominant set is reminiscent of the $\epsilon$-atoms discovered  in \cite{VargasAtoms07}, with $\epsilon=1-\delta$, and discussed in \cite{BatesChatterjee}. These references study 
the time and space discrete setting, and restrict to the end point of the polymer.

\begin{proof}
It suffices to prove, for all $\delta \in (0,1/2]$,
\begin{eqnarray}
\label{eq:2to1-3}
\frac{1}{tr^d}  \Big\vert  \Big\{(s\!,\!x)\! \in \![0,t]\!\times \!\rd: \gibbs(\chi_{s,x})\! \in \![\delta, 1\!-\!\delta]\Big\} \Big\vert  &\!\leq\!&
\frac{1}{\delta (1\!-\!\delta)} \!
{\gibbs}^{\otimes 2} \Big(1\!-\! R_t\Big)\\
 \label{eq:2to1-4}
\gibbs \left[ \frac{1}{tr^d}  \Big\vert  V_t(B) \bigcap \Big\{(s,x) : \gibbs(\chi_{s,x}) \leq \delta\Big\} 
\Big\vert  
\right]  &\!\leq\!&
\frac{1}{1-\delta} \;
 {\gibbs}^{\otimes 2} \Big(1- R_t\Big)\\
\label{eq:2to1-5}
\gibbs \left[ \frac{1}{tr^d}  \Big\vert  
V_t(B)^\complement   \bigcap 
 \Big\{(s,x) : \gibbs(\chi_{s,x}) \geq 1-\delta\Big\} 
  \Big\vert  
\right] &\!\leq\!&
\frac{1}{1-\delta} \;
 {\gibbs}^{\otimes 2} \Big(1- R_t\Big).
\end{eqnarray}
Note that
$$
u(1-u) \geq (1-\delta)u {\mathbf 1}_{u < \delta} + 
\delta (1-\delta) {\mathbf 1}_{u \in[\delta,1-\delta]} +(1-\delta)(1-u) {\mathbf 1}_{u > 1-\delta} .
$$
Setting $A_s=\{x : \gibbs(\chi_{s,x} )\in [\delta, 1-\delta]\}$ and writing
\begin{eqnarray*}
{\gibbs}^{\otimes 2} (1- R_t) &=& \frac{1}{tr^d} \int_0^t \int_{\rd} \left[ {\gibbs}( \chi_{s,x})- {\gibbs}( \chi_{s,x})^2\right] dx ds \\
&\geq &  \frac{1}{tr^d} \int_0^t \int_{A_s} \left[ {\gibbs}( \chi_{s,x})- {\gibbs}( \chi_{s,x})^2\right] dx ds \\
&\geq & 
\delta (1-\delta) 
\frac{1}{tr^d} \int_0^t \left| \{x : {\gibbs}(\chi_{s,x}) \in [\delta, 1-\delta]\} \right|
 ds, 
\end{eqnarray*}
which yields (\ref{eq:2to1-3}). For the next one, we write
\begin{eqnarray*}
{\gibbs}^{\otimes 2} (1- R_t) &=& \frac{1}{tr^d} \int_0^t \int_{\rd} \left[ {\gibbs}( \chi_{s,x})- {\gibbs}( \chi_{s,x})^2\right] dx ds \\
&\geq& (1-\delta) \frac{1}{tr^d} \int_0^t \int_{\rd}  {\gibbs}( \chi_{s,x})  {\mathbf 1}_{
 {\gibbs} \chi_{s,x})  < \delta} ds dx\\
 &=& (1-\delta) {\gibbs} \left[ \frac{1}{tr^d} \int_0^t \int_{\rd}   {\mathbf 1}_{
{\gibbs}( \chi_{s,x})  < \delta, B_s \in U(x)}   \right] ds dx,
\end{eqnarray*}
which is (\ref{eq:2to1-4}). The last claim can be proved similarly.
\end{proof} 
\chapter{The Intermediate Regime ($d=1$)} \label{sec:IR}
In this chapter, we focus on dimension $d=1$, where the polymer is in the strong disorder phase as soon as $\b \neq 0$ is kept fixed (Remark \ref{rk:criticd=12}).
\section{Introduction}
Although it is believed that the model satisfies the following non-standard critical exponents: 
\begin{equation} \label{eq:expected_critical_exponents}
\sup_{0\leq s \leq t} |B_s| \approx t^{2/3} \quad \text{and} \quad \ln Z_t - \IP[\ln Z_t] \approx t^{1/3} \quad \text{as} \  t\to\infty,
\end{equation}
proofs are missing at this moment. It is also expected that the fluctuations of the free energy around its mean are of Tracy-Widom type:
\begin{conjecture} For all non-zero $\b,\nu$ and $r$, there exists some constant $\sigma(\beta,\nu)$ such that, as $t\to \infty$,
\begin{equation}
\frac{\ln Z_t-p(\b,\nu)t}{\sigma(\beta,\nu)t^{1/3}} \cvlaw F_{\mathrm{GOE}}
\end{equation}
where the $F_{\mathrm{GOE}}$ is the Tracy-Widom GOE distribution \textup{\cite{tracy1994level}}.
\end{conjecture}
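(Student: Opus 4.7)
The conjecture is a central instance of KPZ universality; it is currently proved only for a handful of exactly solvable polymer models (log-gamma, O'Connell--Yor semi-discrete, Cole--Hopf solution of KPZ) and remains out of reach for the Brownian--Poisson model at generic $(\b,\nu,r)$. The approach most consistent with the tools developed in this survey would go through the point-to-point partition function \eqref{PtoPdefinition} and recover $\ln Z_t$ from the identity \eqref{eq:intZ}.

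First, I would attempt to prove that the rescaled height function
\[ \mathcal H_t(y) := \frac{\ln Z_t(\om,\b;\, y\, t^{2/3}) - p(\b,\nu)\, t}{t^{1/3}}, \qquad y \in \R, \]
converges as $t\to\infty$, in finite-dimensional distributions on $\R$, to an affine transform of the parabolic Airy$_2$ process $\mathcal A_2(y) - y^2$ of Pr\"ahofer--Spohn. The parabolic drift is forced by the quadratic shape function identified in Chapter \ref{ch:dir} through \eqref{eq:p0=}, and the exponents $1/3,2/3$ are the conjectural KPZ exponents of \eqref{eq:expected_critical_exponents}.

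Granted this scaling limit, the transfer to $F_{\mathrm{GOE}}$ for the point-to-level $\ln Z_t$ is essentially analytic. The change of variable $x = y\, t^{2/3}$ in \eqref{eq:intZ} rewrites
\[ \ln Z_t - p(\b,\nu)\, t = \tfrac{1}{6}\ln t - \tfrac{1}{2}\ln(2\pi) + \ln \int_\R \exp\!\Big\{t^{1/3}\big(\mathcal H_t(y) - \tfrac{y^2}{2}\big)\Big\}\, \rmd y . \]
The Gaussian exponent $-y^2 t^{1/3}/2$ lives on exactly the KPZ scale $t^{1/3}$, a reflection of Brownian super-diffusivity competing with KPZ super-diffusivity. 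Laplace's method on this scale then yields
\[ \frac{\ln Z_t - p(\b,\nu)\, t}{t^{1/3}} \ \cvlaw\ C_1\, \sup_{y \in \R}\big\{ \mathcal A_2(y) - C_2\, y^2 \big\} \]
for non-universal $C_1,C_2>0$ coming from the affine transform of the first step. By Johansson's identity for the supremum of the parabolic Airy$_2$ process, together with the scaling argument reducing $C_2$ to $1$, this statistic has the Tracy--Widom GOE law up to multiplicative renormalization; the constant $\sigma(\b,\nu)$ is read off from $(C_1, C_2)$.

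The entire difficulty sits in the first step, which is the heart of KPZ universality. The variance estimate of Theorem \ref{th:var}, built on the Doob--Meyer decomposition \eqref{eq:DoobDecompLogZ} and the Clark--Ocone representation \eqref{eq:ClarKRepresentationLogZt}, yields only the soft bound $\xi^\parallel \leq 1/2$; the sharp $t^{1/3}$ exponent requires delicate cancellations between the Poisson martingale $M_t$ and the compensator $A_t$ on the super-diffusive scale $t^{2/3}$ that neither Poisson concentration nor the FKG inequality can detect. A realistic line of attack would be either (i) to couple the Brownian--Poisson polymer with the exactly solvable O'Connell--Yor semi-discrete polymer in a suitable limit of $(\b,\nu)$, where $F_{\mathrm{GOE}}$ fluctuations are known, and transfer the limit via a continuity argument based on the Cameron--Martin invariance of Chapter \ref{ch:CM}; or (ii) to extend the intermediate-disorder analysis of Section \ref{sec:IR} into a weak-universality statement, first showing convergence to the Cole--Hopf solution of KPZ and then invoking the now-established long-time convergence of the KPZ equation to the KPZ fixed point. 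Either route rests on open problems comparable in depth to the conjecture itself.
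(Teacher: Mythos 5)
You have not proved the statement, and you are right not to claim otherwise: in the paper this is stated as a conjecture with no proof, and it remains open for the Brownian--Poisson polymer at fixed non-zero $(\b,\nu,r)$. So there is no argument of the paper to compare yours against; the only fair assessment is of your proposed route as a research program, and at that level it is sound and essentially the standard KPZ heuristic. Your reduction of the point-to-level statement to a point-to-point scaling limit is correctly set up: the change of variables $x=y\,t^{2/3}$ in \eqref{eq:intZ} together with the heat kernel does produce the competition term $-y^2 t^{1/3}/2$ on the fluctuation scale, and granting convergence of the rescaled P2P free energy to (an affine transform of) the Airy$_2$ process, Johansson's identity for $\sup_y\{\mathcal A_2(y)-y^2\}$ indeed delivers Tracy--Widom GOE for the point-to-level quantity, consistent with the flat/point-to-line phenomenology in solvable models. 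The quadratic shape function from \eqref{eq:p0=} is exactly what makes the parabolic recentring honest.

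Two caveats on the program itself. First, even granting finite-dimensional convergence of $\mathcal H_t$ to the parabolic Airy$_2$ process, the Laplace step needs more than pointwise convergence: you must control the tails of $\mathcal H_t(y)$ uniformly in $t$ for large $|y|$ (tightness plus a superexponential decay estimate) to justify exchanging $t\to\infty$ with the supremum; this is nontrivial and should be flagged as part of the open first step rather than as ``essentially analytic.'' Second, of your two suggested attacks, route (ii) via the intermediate disorder regime of Section \ref{sec:IR} cannot, as stated, reach the conjecture: the scaling \eqref{eq:paramRI} forces the effective coupling to vanish with $t$ (e.g.\ $\b_t\sim\b^* t^{-1/4}$ when $r_t=\nu_t=1$), so convergence to the KPZ equation and then to the KPZ fixed point yields GOE asymptotics only along that weak-disorder crossover, not at fixed $(\b,\nu,r)$; bridging back to fixed parameters is itself a universality problem of the same depth. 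With these qualifications, your write-up is an accurate account of the status of the conjecture rather than a proof, which matches the paper.
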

These properties are characteristics of the KPZ universality class. They are in sharp contrast to the weak disorder regime, where one knows to a large extent that $B_t \approx t^{1/2}$ (Theorem \ref{prop:diff}), and where the free energy $\ln Z_t$ has order one fluctuations around its mean \eqref{eq:WD}, which are features of the Edward-Wilkinson universality class.

The KPZ universality class is a family of models of random surfaces dynamics that share non-gaussian statistics, non-standard critical exponents and scaling relations (3-2-1 in time, space and fluctuations, as in \eqref{eq:expected_critical_exponents}). Members of this class include some interacting particles systems (asymmetric simple exclusion prosses (ASEP), interacting Brownian motions), paths in random environment (directed polymers, first and last passage percolation), stochastic PDEs (KPZ equation, stochastic Burgers equation, stochastic reaction-diffusion equations). The reader may refer to \cite{corwin2016kardar} for a non-technical review on the KPZ universality class.

The Kardar-Parisi-Zhang (KPZ) equation is the non-linear stochastic partial differential equation:
\begin{equation} \label{KPZ} \frac{\partial \mathcal{H}}{\partial T}(T,X) = \frac{1}{2} \frac{\partial^2 \mathcal{H}}{\partial X^2}(T,X) + \frac{1}{2} \left(\frac{\partial \mathcal{H}}{\partial X}(T,X) \right)^2 + \beta \eta(T,X),
\end{equation}
where $\b\in\mathbb{R}$ and $\eta$ is a random measure on $[0,1]\times\mathbb{R}$ called the space-time \emph{Gaussian white noise}, which verifies that:
\begin{enumerate}[label=(\roman*)]
\item For all measurable sets $A_1,\dots,A_k$ of $[0,1]\times\mathbb{R}$, $\big(\eta(A_1),\dots,\eta(A_k)\big)$ is a centered Gaussian vector.
\item For all measurable sets $A,B$ of $\,[0,1]\times\mathbb{R}$, then $\mathbb P[\eta(A)\eta(B)] = |A\cap B|.$
\end{enumerate}

The KPZ equation models the behavior of a random interface growth and was introduced by Kardar, Parisi and Zhang \cite{kardar1986dynamic} in 1986. It is difficult to make sense of this equation and Bertini-Cancrini \cite{BertiniCancrini95} argued that a possible definition of $\mathcal{H}_\b$ could be given by the so-called \emph{Hopf-Cole transformation}:
\begin{equation} \label{eq:Hopf-Cole}
\mathcal{H}_\b(T,X) = \ln \mathcal{Z}_\b(T,X),\end{equation}
where $\mathcal{Z}_\b$ is the solution of the stochastic heat equation (SHE):
\begin{equation} \label{SHE}
\frac{\partial \mathcal{Z}_\b}{\partial T}(T,X) = \frac{1}{2} \frac{\partial^2 \mathcal{Z}_\b}{\partial X^2}(T,X) + \beta \mathcal{Z}_\b(T,X) \eta(T,X).
\end{equation}

In a breakthrough paper \cite{ACQ11}, Amir, Corwin and Quastel were able to describe the pointwise distribution of $\mathcal{H}_\b(T,X)$ by exploiting the weak universality of the ASEP model. It results from this that the KPZ equation lies in the KPZ universality class.

The \emph{weak KPZ universality conjecture} states that the KPZ equation is a universal object of the KPZ class. As a general idea, the KPZ equation should appear as a scaling limit at critical parameters for models that feature a phase transition between the Edward-Wilkinson class (4-2-1 scaling) and the KPZ class. This was first verified for the model of ASEP \cite{BertiniGiacomin97}, and more recently for the discrete and Brownian directed polymers \cite{AlbertsKhaninQuastelAP,CoscoIntermediate}. The proofs rely on the Hopf-Cole transformation, which enables one to switch between the KPZ equation and the stochastic heat equation. In this chapter, we essentially summarize the arguments of \cite{CoscoIntermediate} to explain why the Brownian polymer in Poisson environment model verifies the weak KPZ universality.


\section{Connections between stochastic heat equation(s) and directed polymers 
}
\subsection{The continuum case}
\label{continuumPolymerSHE} 
A special case of interest for the SHE, where $\mathcal{Z}_\b(T,X)$ can be seen as the point-to-point partition function of a directed polymer, placed at $X=0$ at time $T=0$, is when
\begin{equation} \label{eq:ICofSHE}
\mathcal{Z}_\b(0,X) = \delta_0(X).
\end{equation}
In this case, $\mathcal{Z}_\b(T,X)$ can be expressed through the following shortcut (cf. Section \ref{subsec:constructionofP2P}):
\begin{equation} \label{eq:FeynmanKac0}
 \mathcal{Z}_\b(T,X) = \rho(T,X) \  P_{0,0}^{T,X} \left[ :\mathrm{exp}:\left(\beta \int_0^T \eta(u,B_u)\mathrm{d}u\right) \right],
\end{equation}
where $\rho(t,x)=e^{-x^2/2t}/\sqrt{2\pi t}$.

This equation is similar to the definition of the point-to-point partition function a polymer with Brownian path and white noise environment. Alberts, Khanin and Quastel \cite{AlbertsKhaninQuastelCRP} were in fact able to construct a polymer measure with P2P partition function given by $\mathcal{Z}_\b(T,X)$.
As both the environment and the paths of the polymer are continuous, it was named \emph{the continuum directed random polymer}.

Similarly to the Poisson polymer, the P2P free energy $\mathcal{F}_\b(T,X)$ can be defined as
\begin{equation}
\mathcal{F}_\b(T,X) = \ln \frac{\mathcal{Z}_\b(T,X)}{\rho(T,X)},
\end{equation}
so that the free energy of the polymer and the solution of the KPZ equation follow the relation:
\begin{equation}
\mathcal{F}_\b(T,X) = \mathcal{H}_\b(T,X) + X^2/2T + \ln \sqrt{2 \pi T}.
\end{equation}

\subsection{The Poisson case}
Introduce the renormalized point-to-point partition function:
\begin{equation} \label{eq:renormalizedP2Pdef}
W(t,x;\om,\b,r) = \rho(t,x) P_{0,0}^{t,x}\left[ \exp \{ \b \om(V_t)-\lambda(\b) \nu r^d t\}\right].
\end{equation}
We will often shorten the notation $W(t,x;\om,\b,r) =W(t,x) $ when no confusion can arise. Compared to $Z_t(\om, \b; x)$ of \eqref{PtoPdefinition}, a major difference is that it encorporates the Gaussian kernel as a factor.
In the next theorem, we state that the renormalized P2P partition function verifies a weak formulation of the following \emph{stochastic heat equation with multiplicative Poisson noise}:
\begin{equation}
\partial_t W(t,x) = \frac{1}{2} \Delta W(t,x) + \lambda W(t-,x) \bar{\om}(\rmd t \times U(x)).
\end{equation}
When $\b=0$, it reduces to the usual heat equation.
\begin{theorem}[Weak solution]\label{th:she}
 For all $\varphi\in \mathcal{D}(\mathbb R)$ and $t\geq 0$, we have $\IP$-almost surely
\begin{multline} \label{eq:sheForPoisson}
\int_\mathbb{R} W(t,x)\varphi(x) \rmd x = \varphi(0) + \frac{1}{2}\int_0^t \rmd s \int_\mathbb{R} W(s,x) \Delta \varphi (x) \rmd x \\
+ \lambda \int_{\mathbb{R}} \rmd x \varphi(x) \int_{(0,t]\times \mathbb R} \bar{\om}(\rmd s, \rmd y) W(s-,x) \mathbf{1}_{|y-x|\leq r/2} \,.
\end{multline}
\end{theorem}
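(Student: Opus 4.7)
The plan is to derive \eqref{eq:sheForPoisson} from an It\^o product rule applied on the product probability space $(\cC(\R_+;\R)\times \Omega,\,P\otimes\P)$, followed by a stochastic Fubini exchange that converts the resulting Poisson martingale term into the form stated.

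First I would rewrite the left-hand side as a joint expectation. Disintegrating Wiener measure over $B_t$ via $\rho(t,x)\rmd x = P(B_t\in \rmd x)$ gives
\begin{equation*}
\int_\R W(t,x)\varphi(x)\,\rmd x \;=\; P\bigl[\varphi(B_t)\,\phi_t\bigr], \qquad \phi_t := \exp\bigl(\b\,\om(V_t)-\lambda\nu r\,t\bigr).
\end{equation*}
Conditionally on $B$, the process $\phi_t$ is the Dol\'eans exponential of the compensated Poisson measure on the tube, $\rmd\phi_t=\lambda\,\phi_{t-}\,\bar\om(\rmd t\times U(B_t))$, hence a pure-jump $\cG_t$-local martingale. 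Combined with It\^o's formula $\rmd\varphi(B_t)=\varphi'(B_t)\rmd B_t+\tfrac{1}{2}\varphi''(B_t)\rmd t$ and the vanishing of $[\varphi(B),\phi]$ (since $B$ is continuous while $\phi$ is pure jump), the semimartingale product rule gives
\begin{equation*}
\varphi(B_t)\phi_t = \varphi(0) + \int_0^t \phi_s\varphi'(B_s)\,\rmd B_s + \tfrac{1}{2}\int_0^t \phi_s\varphi''(B_s)\,\rmd s + \lambda\int_0^t \varphi(B_s)\phi_{s-}\,\bar\om(\rmd s\times U(B_s)).
\end{equation*}

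Taking $P$-expectation, the Brownian It\^o integral has zero mean (since $\varphi\in\cD(\R)$ forces $\varphi'$ bounded and $\phi$ is locally $L^2$), so it vanishes. Fubini turns the drift term into $\tfrac{1}{2}\int_0^t\!\int_\R W(s,x)\varphi''(x)\rmd x\,\rmd s$, using the identification $P[\phi_s\mid B_s=x]\rho(s,x)=W(s,x)$. For the Poisson martingale, I would write $\bar\om(\rmd s\times U(B_s))=\int_\R \mathbf{1}_{|y-B_s|\le r/2}\bar\om(\rmd s,\rmd y)$ and apply a stochastic Fubini (see below): after using $P\bigl[\phi_{s-}\mathbf{1}_{|y-B_s|\le r/2}\mid B_s=x\bigr]\rho(s,x) = W(s-,x)\mathbf{1}_{|y-x|\le r/2}$, this produces
\begin{equation*}
\lambda\int_{(0,t]\times\R}\!\bar\om(\rmd s,\rmd y)\int_\R \varphi(x)\,W(s-,x)\,\mathbf{1}_{|y-x|\le r/2}\,\rmd x,
\end{equation*}
and one final classical Fubini swapping $\rmd x$ with $\bar\om(\rmd s,\rmd y)$ yields the claimed third term of \eqref{eq:sheForPoisson}.

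The main technical obstacle is the stochastic Fubini step exchanging $P$ with $\int\bar\om$. It requires the joint predictability in $s$ of the integrand $(\om,B)\mapsto\varphi(B_s)\phi_{s-}\mathbf{1}_{|y-B_s|\le r/2}$ -- this is why $\phi_{s-}$, and not $\phi_s$, must appear -- together with the second-moment bound
\begin{equation*}
\nu\int_{(0,t]\times\R} \P P\bigl[\varphi(B_s)^2\phi_{s-}^2\mathbf{1}_{|y-B_s|\le r/2}\bigr]\,\rmd s\,\rmd y \;\le\; \nu r\,\|\varphi\|_\infty^2\int_0^t \exp\bigl(\lambda(\b)^2\nu r\,s\bigr)\,\rmd s \;<\;\infty,
\end{equation*}
where I used $\P[\phi_s^2]=\exp(\lambda(\b)^2\nu r\,s)$, a direct consequence of \eqref{eq:PoissonExpMoment} and $\lambda(2\b)-2\lambda(\b)=\lambda(\b)^2$. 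Once these measurability and integrability points are settled, the argument concludes as above.
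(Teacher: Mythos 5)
Your proposal is correct and follows essentially the same route as the paper: write the left-hand side as $P[\varphi(B_t)\xi_t]$, represent $\xi$ as the Dol\'eans exponential $\xi_t=1+\lambda\int\xi_{s-}\chi_{s,x}\,\bar\om(\rmd s\,\rmd x)$, apply the semimartingale product rule with vanishing bracket $[\varphi(B),\xi]$, kill the Brownian It\^o term by taking $P$-expectation, and exchange $P$ with the Poisson integral by Fubini. The only (immaterial) difference is the justification of that exchange: you invoke an $L^2$ stochastic Fubini via predictability of $\xi_{s-}$ and the bound $\P[\xi_s^2]=\exp(\lambda(\b)^2\nu r s)$, whereas the paper checks a first-moment condition using the Mecke equation \eqref{eq:IPP}.
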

\begin{proof} Let $\xi_t = \exp(\b \om(V_t(B)) - \lambda(\b)\nu r^d t)$ and observe that 
\begin{equation*}\int_\mathbb{R} W(t,x)\varphi(x)\rmd x = P[\xi_t \varphi(B_t)].
\end{equation*} 
Then, recalling that $\om(V_t(B)) = \int \chi_{s,x} \, \om_t(\rmd s \rmd x)$, we use It\^o's formula \cite[Section II.5]{IkedaWatanabe} for fixed $B$ to get that
\begin{align} \label{eq:finalecoupedefrance}
\nonumber \xi_t &= 1 -\lambda \nu r^d \int_0^t \xi_s \rmd s + \lambda \int_{(0,t]\times \mathbb{R}} \xi_{s-} \chi_{s,x} \, \om(\rmd s \rmd x)\\
& = 1 + \lambda\int_{(0,t]\times \mathbb{R}} \xi_{s-} \chi_{s,x} \, \bar{\om}(\rmd s \rmd x),
\end{align} as almost surely, $\IP$-a.s. $\xi_s = \xi_{s-}$ a.e.

As a difference of two increasing processes, $\xi$ is of finite variation over all bounded time intervals. Also note that one can get an expression to the measure associated to $\xi$ from the last equation. By the integration by part formula \cite[p.52]{jacod2013limit},
\[ \xi_t \varphi(B_t) = \xi_0 \varphi(B_0) + \int_0^t \xi_{s-} \rmd \varphi(B_s) + \int_0^t \varphi(B_s) \rmd \xi_s + [\xi,\varphi(B)]_t,\]
where $[\xi,\varphi(B)]_t=0$ since $\varphi(B)$ is continuous. Applying It\^o 's formula on $\rmd \varphi(B)$ and then taking $P$-expectation (which cancels the martingale term in the It\^o formula), one obtains by \eqref{eq:finalecoupedefrance} that $\IP$-a.s.
\begin{align*}
&\int_\mathbb{R} W(t,x)\varphi(x)\rmd x\\
&  = \varphi(0) + \frac{1}{2}\int_0^t P[\xi_{s-} \Delta \varphi(B_s)] \rmd s + \lambda \int_{(0,t]\times\mathbb R} P[\varphi(B_s) \xi_{s-} \chi_{s,y}]\bar{\om}(\rmd s \rmd y)\\
& = \varphi(0) + \frac{1}{2}\int_0^t \int_\mathbb R \Delta \varphi(x) W(s-,x) \rmd x \rmd s + \lambda \int_{(0,t]\times\mathbb R} \left(\int_\mathbb R \varphi(x) \mathbf{1}_{|y-x|\leq r/2} W(s-,x) \rmd x\right) \bar{\om}(\rmd s \rmd y).
\end{align*}
To conclude the proof, observe that we can apply Fubini's theorem to the last integral since for all $t>0$,
\begin{align*}
\IP \int_{(0,t] \times \mathbb{R}} P[|\varphi(B_s)|\xi_{s-} \chi_{s,y}] \om(\rmd s \rmd y) &= \nu e^\beta \int_{(0,t] \times \mathbb{R}} \IP[\xi_{s-}] P[|\varphi(B_s)| \chi_{s,y}] \rmd s \rmd y\\
& = \nu e^\beta r \int_0^t P[|\varphi(B_s)|] \rmd s < \infty,
\end{align*}
where we have used the Mecke equation, cf. \eqref{eq:IPP} or \cite[4.1]{LastPenrose} in the first equality.
\end{proof}

\section{Chaos expansions}
Let us first introduce some notations.
For any $k\geq 1$, $s_1,\dots,s_k \in \mathbb{R}_+$ and $x_1,\dots,x_k \in \mathbb{R}$, write $\mathbf{s}=(s_1,\dots,s_k)$ and $\mathbf{x}=(x_1,\dots,x_k)$. Let 
\begin{equation}\Delta_k(0,t)=\{\mathbf{s}\in [u,t]^k \,|\, 0<s_1<\dots<s_k\leq t\},
\end{equation}
be the $k$-dimensional simplex and $\Delta_k=\Delta_k(0,1)$.

\subsection{The continuum case}
\label{subsec:constructionofP2P}
We give here the definition of a \emph{mild} solution to the stochastic heat equation, and we will see how this leads to an expression of the solution as a Wiener chaos expansion.
We first mention that it is possible (cf. \cite{janson_1997}) to extend the integral over the space time white noise to any square integrable function:
\begin{proposition}
There exists an isometry $I_1 : L^2\big([0,1]\times\mathbb{R}\big) \mapsto L^2(\Omega,\mathcal{G},\IP)$ verifying:
\begin{enumerate}[label=(\roman*)]
\item For all measurable set $A$ of $[0,1]\times\mathbb{R}$, we have $I_1(A) = \eta(A)$.
\item For all $g\in L^2$, the variable $I_1(g)$ is a centered Gaussian variable of variance $\Vert g \Vert^2_{L^2([0,1]\times \mathbb{R})}$.
\end{enumerate}
\end{proposition}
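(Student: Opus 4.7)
The plan is to construct $I_1$ by the standard three-step recipe: define it on indicators, extend linearly to simple functions while checking the isometry there, and then extend to $L^2$ by density and continuity. Concretely, for $A\in\mathcal{B}([0,1]\times\mathbb{R})$ with finite Lebesgue measure I set $I_1(\mathbf{1}_A)=\eta(A)$, and for a simple function $g=\sum_{i=1}^n a_i \mathbf{1}_{A_i}$, where the $A_i$ can be taken pairwise disjoint after a standard refinement, I put $I_1(g)=\sum_i a_i \eta(A_i)$.

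The key calculation is then the isometry on simple functions. Using property (i) stated in the theorem statement together with the covariance $\IP[\eta(A)\eta(B)]=|A\cap B|$ (item (ii) of the definition of the noise recalled in the text), I would compute
\begin{equation*}
\IP\bigl[I_1(g)^2\bigr]=\sum_{i,j}a_ia_j\,\IP[\eta(A_i)\eta(A_j)]=\sum_{i,j}a_ia_j\,|A_i\cap A_j|=\int_{[0,1]\times\mathbb{R}} g(s,x)^2\,\rmd s\,\rmd x,
\end{equation*}
which is exactly $\|g\|_{L^2}^2$. A parallel computation with a second simple function shows $\IP[I_1(g)I_1(h)]=\langle g,h\rangle_{L^2}$, so the map is well-defined on equivalence classes of simple functions modulo a.e. equality, and linear. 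Since finite linear combinations of jointly Gaussian variables are Gaussian, and each $(\eta(A_1),\ldots,\eta(A_n))$ is centered Gaussian by hypothesis, $I_1(g)$ is a centered Gaussian with variance $\|g\|_{L^2}^2$.

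Next I extend by density: simple functions with sets of finite measure are dense in $L^2([0,1]\times\mathbb{R})$, so for any $g\in L^2$ I pick an approximating sequence $g_n\to g$ in $L^2$. The isometry just proved gives $\IP[(I_1(g_n)-I_1(g_m))^2]=\|g_n-g_m\|_{L^2}^2\to 0$, so $(I_1(g_n))_n$ is Cauchy in $L^2(\Omega,\mathcal G,\IP)$; I define $I_1(g)$ as its limit, and a routine check shows independence of the chosen sequence. The isometry passes to the limit, giving property (ii)'s variance identity, and property (i) is preserved by construction.

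The only genuinely delicate point will be verifying that the limit $I_1(g)$ is again a centered Gaussian variable. This uses the classical fact that an $L^2$-limit of centered Gaussian variables is centered Gaussian: convergence in $L^2$ implies convergence in distribution and convergence of variances, and the characteristic functions $\exp(-\sigma_n^2 t^2/2)$ then converge to $\exp(-\sigma^2 t^2/2)$ by continuity. This wraps up the construction; I expect no serious obstacle beyond this standard closure argument, which is where one must remember that Gaussianity is preserved under $L^2$ limits (not merely under finite linear combinations).
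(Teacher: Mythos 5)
Your construction is correct and is exactly the standard indicator--simple-function--density argument; the paper itself does not prove this proposition but simply cites Janson's book, where this same three-step construction (isometry on simple functions via the covariance $\IP[\eta(A)\eta(B)]=|A\cap B|$, then $L^2$-closure using that Gaussianity survives $L^2$ limits) is carried out. The only cosmetic caveat is that item (i) should be read for sets $A$ of finite Lebesgue measure, since only then is $\mathbf{1}_A\in L^2$, which is implicit in your construction and in the statement.
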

\noindent We call $I_1(g)$ the \emph{Wiener integral} which also writes $I_1(g) = \int_{[0,1]}\int_{\mathbb{R}} g(s,x) \, \eta(\rmd s,\rmd x).$

It is said that $\mathcal Z$ is a \emph{mild} solution to the stochastic heat equation \eqref{SHE} if, for all $0 \leq S <T\leq 1$,
\begin{equation} \label{eq:mildSHE}
\begin{split}
\mathcal{Z}(T,X) = & \int_{\mathbb{R}} \rho(T-S,X-Y)\mathcal{Z}(S,Y)\mathrm{d}Y \\
&+\beta \int_S^T \int_{\mathbb{R}} \rho(T-U,X-Y) \mathcal{Z}(U,Y) \eta(U,Y) \mathrm{d}U \mathrm{d}Y,
\end{split}
\end{equation}
and if for all $T\geq 0$, $\mathcal{Z}(T,X)$ is measurable with respect to the white noise on $[0,T]\times\mathbb{R}$.

\begin{remark}
As a motivation to look at this form of the equation, one can check that if $\mathcal{Z}(T,X)$ satisfies \eqref{eq:mildSHE} with a smooth deterministic function $\eta(U,Y)$, then $\mathcal{Z}(T,X)$ is a solution to the SHE \eqref{SHE} with smooth noise.
\end{remark}

\begin{remark}
Under some integrability condition, it can be shown that there is a unique mild solution - up to indistinguishability - to the SHE with Dirac initial condition \textup{\cite{BertiniCancrini95}}. This solution is continuous in time and space for $(T,X)\in (0,1]\times \mathbb{R}$, and it is continuous in $T=0$ in the space of distributions. Furthermore, $\mathcal{Z}_\b(T,X)$ can be shown to be positive for all $T>0$ \textup{\cite{morenoflores2014,Mueller}}.
\end{remark}

Using the initial condition $\mathcal{Z}(0,X) = \delta_X$, we get by iterating equation \eqref{eq:mildSHE} that
\begin{align*}
&\mathcal{Z}(T,X) = \rho(T,X) + \beta \int_0^T \int \rho(T-U,X-Y) \rho(U,Y) \eta(U,Y) \mathrm{d}U \mathrm{d}Y \\
&+ \beta^2 \iint_{0<R<U\leq T} \iint_{\mathbb{R}^2} \rho(T-U,X-Y) \rho(U-R,Y-Z) \mathcal{Z}(R,Z) \\
& \qquad \qquad \qquad \qquad \qquad \times \eta(U,Y) \eta(R,Z) \mathrm{d}U \mathrm{d}Y  \mathrm{d}R \mathrm{d}Z.
\end{align*}
It is possible to give a proper definition of these iterated integrals, and one can find the details of such a procedure in \cite[Chapter 7]{janson_1997}. We give a few properties of these integrals:
\begin{property}  For all $k>0$, there exists a map $I_k:L^2(\Delta_k \times \mathbb{R}^k)\mapsto L^2(\Omega,\mathcal{G},\IP)$, which has the following properties:
\begin{enumerate}[label = (\roman*)]
\item For all $g\in L^2(\Delta_k \times \mathbb{R}^k)$ and $h\in L^2(\Delta_j \times \mathbb{R}^j)$, the variable $I_k(g)$ is centered and
\begin{equation} \label{eq:Wiener_cov_struct}
\IP \left[{I}_k(g) {I}_j(h)\right] =\ \delta_{k,j} \, <g,h>_{L^2(\Delta_k \times \mathbb{R}^k)}.
\end{equation}
\item The map $I_k$ is linear, in the sense that for all square-integrable $f,g$ and reals $\lambda,\mu$,
\begin{equation*}
\IP \text{-a.s.} \quad I_k(\lambda f + \mu g) = \lambda \, I_k(f) + \mu \,I_k(g).
\end{equation*}

The operator $I_k$ is called the \textbf{multiple Wiener integral}, and for $g\in L^2(\Delta_k \times \mathbb{R}^k)$, we also write
\[I_k(g) = \int_{\Delta_k} \int_{\mathbb{R}^k} g(\mathbf{t},\mathbf{x}) \eta^{\otimes k}(\mathrm{d}\mathbf{t},\mathrm{d}\mathbf{x}).\]
\end{enumerate}
\end{property}
\begin{remark} As a justification of the "iterated integral" property, it can be shown that the map $I_k$ extends to $L^2([0,1]^k\times \mathbb{R}^k)$, where it verifies that for all orthogonal family $(g_1,\dots,g_k)$ of functions in $L^2([0,1] \times \mathbb{R})$:
\begin{equation} \label{eq:IkOrthog} {I}_k\left(\bigotimes_{j=1}^k g_j \right) =  \prod_{j=1}^k I_1(g_j),
\end{equation}
where $\bigotimes$ denotes the tensor product: $(\bigotimes_{j=1}^k g_j)(\mathbf{s},\mathbf{x}) = \prod_{j=1}^k g_j(s_j,x_j)$.
\end{remark}

By repeating the above iteration procedure, one gets that:
\begin{equation} \label{eq:ChaosExpantionSHE}
\mathcal{Z}(T,X) = \rho(T,X) + \sum_{k=1}^\infty \b^k \int_{\Delta_k} \int_{\mathbb{R}^k} \rho^k(\mathbf{S},\mathbf{Y};T,X) \eta^{\otimes k}(\mathrm{d}\mathbf{S},\mathrm{d}\mathbf{Y}),
\end{equation}
where we have used the notation, for $\mathbf{s}\in \Delta_k(s,t)$ and $\mathbf{y}\in\mathbb{R}^d$,
\begin{equation*}
\rho^k(\mathbf{s},\mathbf{y} \, ; t,x)= \rho(s_1,y_1) \left( \prod_{j=1}^{k-1} \rho(s_{j+1}-s_j,y_{j+1}-y_j) \right) \rho(t-s_k,x-y_k).
\end{equation*}

The infinite sum \eqref{eq:ChaosExpantionSHE} is called a \emph{Wiener chaos expansion}. By the covariance structure of the Wiener integrals \eqref{eq:Wiener_cov_struct}, all the integrals in the sum are orthogonal and to prove that $\eqref{eq:ChaosExpantionSHE}$ converges in $L^2$, it suffices to check that (see \cite{AlbertsKhaninQuastelCRP}):
\[\sum_{k=0}^{\infty} \Vert \rho^k(\cdot,\cdot;T,X) \Vert^2_{L^2(\Delta_k\times\mathbb{R}^k)} < \infty.\]

The ratio $\frac{\rho^k(\mathbf{s},\mathbf{y} \, ; t,x)} {\rho(t,x)} $ is the $k$-steps transition function of a Brownian bridge,  starting from $(0,0)$ and ending at $(t,x)$. From this observation, it is possible to introduce an alternative expression of 
the mild solution $\mathcal{Z}(T,X)$ of SHE equation, via a Feynman-Kac formula:
\begin{equation} \label{eq:FeynmanKac}
 \mathcal{Z}(T,X) = \rho(T,X) \  P_{0,0}^{T,X} \left[ :\mathrm{exp}:\left(\beta \int_0^T \eta(u,B_u)\mathrm{d}u\right) \right],
\end{equation}
The Wick exponential $:\mathrm{exp}:$ of a Gaussian random variable $\xi$ is defined by 
\[:\mathrm{exp}(\xi): = \sum_{k=0}^\infty \frac{1}{k!} :\xi^k:
 \]
where the $:\xi^k:$ notation stands for the Wick power of a random variable (cf.\cite{janson_1997}).
The integral $\int_0^T \eta(u,B_u)\mathrm{d}u$, on the other hand, is not well defined, and to understand how to go from \eqref{eq:FeynmanKac} to  \eqref{eq:ChaosExpantionSHE}, one should use the following identification:
\[ P_{0,0}^{T,X} \left[ :\left( \b \int_0^T \eta(u,B_u)\mathrm{d}u \right)^k : \right]  \ = \beta^k k! \int_{\Delta_k} \int_{\mathbb{R}^k} \frac{\rho^k(\mathbf{S},\mathbf{Y}\, ; T,X)}{\rho(T,X)} \eta^{\otimes k}(\mathrm{d}\mathbf{S} \mathrm{d}\mathbf{Y}).\]

From now on, we suppose that $\mathcal{Z}_\b(T,X)$ is defined through equation \eqref{eq:ChaosExpantionSHE}. Integrating over $X$ this equation leads to the definition of the partition function of the continuum polymer:
\begin{equation} \label{eq:defOfP2LZBeta}
\mathcal{Z}_\b = \sum_{k=0}^\infty \b^k I_k( \rho^k),
\end{equation}
where $\rho^k$ is the $k$-th dimensional Brownian transition function, defined for $(\mathbf{s},\mathbf{x}) \in \Delta_k\times\mathbb{R}^k$ by:
\begin{equation}
\begin{split}
\rho^k(\mathbf{s},\mathbf{x}) &= \rho(s_1,x_1) \left( \prod_{j=1}^{k-1} \rho( s_{j+1}-s_j,x_{j+1}-x_j) \right)\\
& =  P\left(B_{s_1} \in \rmd x_1,\dots, B_{s_k} \in \rmd x_k\right),
\end{split}
\end{equation}
with the convention that $\rho^0 = 1$. The motivation for writing the partition function  as in  \eqref{eq:defOfP2LZBeta} is that \eqref{eq:ChaosExpantionSHE}
writes
$\mathcal{Z}_\b (T,X)= \sum_{k=0}^\infty \b^k I_k( \rho^k(\cdot; T,X))$.

\subsection{The Poisson case}
We want to express $W_t$ in a similar way as \eqref{eq:defOfP2LZBeta}, this time with Poisson iterated integrals. We give here the basic definitions of these integrals and one can refer to \cite{LastPenrose} for more details\footnote{Note that for simplicity, we choosed to define here the integrals for functions of the simplex, so that some normalizing $k!$ terms and symmetrisation of some objects should be added to match the definitions in \cite{LastPenrose}.}.
\begin{definition}
For any positive integer $k$, define the $\mathbf k$\textbf{-th factorial measure} $\,\om_t^{(k)}$ to be the point process on $\Delta_k(0,t)\times\mathbb{R}^k$, such that, for any measurable set $\mathbf{A}\subset \Delta_k(0,t)\times\mathbb{R}^k$,
\begin{equation}
\om_t^{(k)}(\mathbf{A}) = \sum_{\substack{
(s_1,x_1),\dots,(s_k,x_k) \in\, \om_t\\
s_1<\dots<s_k}} \mathbf{1}_{((s_1,x_1),\dots,(s_k,x_k)) \in \mathbf{A}}.
\end{equation}
Otherwise stated,
\begin{equation}
\om_t^{(k)} = \sum_{\substack{
(s_1,x_1),\dots,(s_k,x_k) \in\, \om_t\\
s_1<\dots<s_k}} \delta_{((s_1,x_1),\dots,(s_k,x_k))}\;.
\end{equation}
\end{definition}
These factorial measures define naturaly a multiple integral for the point process $\om_t$. Contrary to the Wiener integrals, these integrals are not centered, so what we really want is to define a multiple integral for the compensated process $\bar{\om}_t$. This is done as follows:
\begin{definition}
For $k\geq1$ and $g\in L^1(\Delta_k(0,t)\times \mathbb{R}^k)$, denote
the \textbf{multiple Wiener-It\^o integral} of $g$ as
\begin{equation} \label{eq:def_omk}
\bar{\om}^{(k)}_t(g)
:=\sum_{J\subset [k]} (-1)^{k-|J|} \int_{\Delta_k\times \mathbb{R}^k} 
g( \mathbf{s}, \mathbf{x})\, \om_t^{(|J|)}(\rmd \mathbf{s}_J,\rmd \mathbf{x}_J) \, \nu^{k-|J|} \,\rmd \mathbf{s}_{J^c} \, \rmd \mathbf{x}_{J^c}.
\end{equation}
When $k=0$, define $\bar{\om}^{(0)}_t$ to be the identity on $\mathbb{R}$.
\end{definition}

The two following results can be found in \cite{LastPenrose}:
\begin{proposition}
For $k\geq 1$, the map $\bar{\om_t}^{(k)}$ can be extended to a map \[\begin{array}{ccccc}
\bar{\om}_t^{(k)} & : & L^2(\Delta_k(0,t)\times \mathbb{R}^k) & \to & L^2(\Omega,\mathcal{G},\IP)\\
& & g &\mapsto & \bar{\om}_t^{(k)}(g),
\end{array} \] which coincides with the above definition of $\,\bar{\om}_t^{(k)}$ on the functions of $L^1 \cap L^2(\Delta_k(0,t)\times \mathbb{R}^k)$.
\end{proposition}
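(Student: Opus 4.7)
The goal is a continuity/extension statement: a densely defined linear map on $L^1\cap L^2(\Delta_k(0,t)\times\R^k)$ is to be extended to all of $L^2$. The natural route is to establish an $L^2(\P)$-isometry (or at least a continuous bound) on the dense subspace and invoke the bounded linear transformation theorem. Concretely, the plan is to prove that for every $g\in L^1\cap L^2(\Delta_k(0,t)\times\R^k)$ one has the identity
\begin{equation*}
\P\bigl[\,\bar\om_t^{(k)}(g)^{\,2}\,\bigr]
\;=\;\nu^{k}\!\int_{\Delta_k(0,t)\times\R^k} g(\mathbf s,\mathbf x)^2\,\rmd\mathbf s\,\rmd\mathbf x,
\end{equation*}
and, more generally, orthogonality $\P[\bar\om_t^{(k)}(g)\bar\om_t^{(j)}(h)]=0$ for $j\neq k$ (analogous to \eqref{eq:Wiener_cov_struct}). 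Since $L^1\cap L^2$ is dense in $L^2$ (for instance, truncated simple tensors span a dense subspace on which both sides of the isometry are finite), the extension is then unambiguous and continuous, and it agrees with the $L^1$-definition \eqref{eq:def_omk} on the overlap by construction.

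The heart of the matter is establishing the isometry. I would expand
\[
\bar\om_t^{(k)}(g)^{\,2}=\sum_{J,J'\subset[k]}(-1)^{2k-|J|-|J'|}\!\int g(\mathbf s,\mathbf x)g(\mathbf s',\mathbf x')\,\om_t^{(|J|)}(\rmd\mathbf s_J,\rmd\mathbf x_J)\,\om_t^{(|J'|)}(\rmd\mathbf s'_{J'},\rmd\mathbf x'_{J'})\,\nu^{2k-|J|-|J'|}\,\rmd\mathbf s_{J^c}\rmd\mathbf x_{J^c}\rmd\mathbf s'_{(J')^c}\rmd\mathbf x'_{(J')^c},
\]
take $\P$-expectation, and evaluate each term by iterating the Slivnyak--Mecke formula \eqref{eq:IPP}. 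Mecke produces, for a product of two factorial measures with index sets $J$ and $J'$, a sum over bijections between subsets of $J$ and $J'$ (recording coincidences of points) times Lebesgue integrations for the remaining indices, each extra point coming with a factor of $\nu$. The combinatorial bookkeeping then reduces to summing $(-1)^{|J|+|J'|}$ over pairs $(J,J')$ compatible with a fixed matching pattern of coincidences; a straightforward inclusion--exclusion shows that every pattern in which some index of $[k]$ remains \emph{unmatched} contributes $0$, so that only the $k$ bijective matchings survive, all giving the same contribution $\nu^{k}\!\int g^{\,2}$. The orthogonality $k\neq j$ is a variant of the same computation.

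Once the isometry is in hand, the extension is standard: pick an $L^2$-Cauchy sequence $g_n\in L^1\cap L^2$ converging to $g\in L^2$ (e.g.\ truncations $g_n=g\,{\bf 1}_{|g|\leq n}\,{\bf 1}_{K_n}$ over an exhausting sequence of compacts $K_n$), note that $\bar\om_t^{(k)}(g_n)$ is $L^2(\P)$-Cauchy by linearity and the isometry, and define $\bar\om_t^{(k)}(g)$ as the limit; independence of the defining sequence follows again from the isometry. The only point requiring care is checking that the extended map is still genuinely linear and coincides with \eqref{eq:def_omk} when $g\in L^1\cap L^2$, which is immediate from uniqueness of limits.

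I expect the main obstacle to be the combinatorial identity underlying the isometry: verifying that the $(-1)^{|J|+|J'|}$ signs in \eqref{eq:def_omk} cancel all terms except the ``fully diagonal'' one. This is essentially the reason the compensated integrals $\bar\om_t^{(k)}$ (and not the raw $\om_t^{(k)}$) carry an orthogonal chaos structure; once that algebra is settled, the analytic extension is painless.
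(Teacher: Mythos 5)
The paper itself gives no proof of this proposition: it is quoted from \cite{LastPenrose}, so the benchmark is the standard argument there, and your scheme — prove the covariance/isometry identity \eqref{eq:covariance_structure} on a dense subclass, extend by the bounded-linear-transformation theorem, then identify the extension with the pathwise definition \eqref{eq:def_omk} on $L^1\cap L^2$ — is exactly that scheme. The extension and identification steps are sound as you set them up: your truncations $g_n=g\,{\bf 1}_{|g|\leq n}{\bf 1}_{K_n}$ converge to $g$ simultaneously in $L^1$ and $L^2$, and the crude bound $\P\big[|\bar\om_t^{(k)}(h)|\big]\leq 2^k\nu^k\|h\|_{L^1}$ (take absolute values term by term in \eqref{eq:def_omk} and use the Mecke formula \eqref{eq:IPP}) converts $\|g_n-g\|_{L^1}\to 0$ into $L^1(\P)$-convergence of the pathwise integrals; that, not bare ``uniqueness of limits'', is what identifies the $L^2$-limit with $\bar\om_t^{(k)}(g)$, so spell it out.

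The genuine soft spot is the isometry step, which you claim directly for every $g\in L^1\cap L^2$ by squaring \eqref{eq:def_omk}, taking expectations and iterating Mecke. The partial-matching terms produced by that computation are of the form $\nu^{|A|}\int G(\mathbf z)^2\,\rmd\mathbf z$ with $G(\mathbf z)=\nu^{\,k-|A|}\int g(\mathbf z,\mathbf w)\,\rmd\mathbf w$ for a proper subset $A\subset[k]$ of matched coordinates, i.e.\ mixed $L^2(L^1)$ norms of $g$, and these are \emph{not} controlled by $\|g\|_{L^1}$ and $\|g\|_{L^2}$: taking $g=\sum_n {\bf 1}_{A_n\times B_n}$ with disjoint boxes of measures $|A_n|=n^{-4}$, $|B_n|=n^{3/2}$ gives $g\in L^1\cap L^2$ while $\sum_n |A_n||B_n|^2=\infty$. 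For such $g$ the term-by-term Fubini/Mecke manipulation is not justified, so the advertised sign cancellation cannot be run on all of $L^1\cap L^2$ as written. The standard (and easy) repair, which is how the cited reference proceeds, is to carry out your Mecke computation only for bounded $g$ whose support has finite measure — then every term is finite and your inclusion–exclusion does kill all non-bijective matchings, giving $\P[\bar\om_t^{(k)}(g)^2]=\nu^k\|g\|_{L^2}^2$ and the orthogonality for $j\neq k$ — note that this class is dense in $L^2$, extend, and then recover agreement on $L^1\cap L^2$ by the simultaneous $L^1$/$L^2$ approximation of your last paragraph. With that restriction inserted, your proof is complete and coincides with the one the paper delegates to \cite{LastPenrose}.
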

\begin{property}
\begin{enumerate}[label=(\roman*)]
\item For any $k\geq 1$ and $g\in L^2(\Delta_k(0,t)\times \mathbb{R}^k)$, we have $\IP\big[\bar{\om}_t^{(k)} (g)\big] = 0$.
\item For any $k\geq 1$ and $l\geq 1$, $g\in L^2(\Delta_k(0,t)\times \mathbb{R}^k)$ and $h\in L^2(\Delta_l(0,t)\times \mathbb{R}^l)$, the following covariance structure holds:
\begin{equation} \label{eq:covariance_structure}
\IP\left[\bar{\om}_t^{(k)}(g) \, \bar{\om}_t^{(l)}(h) \right] = \delta_{k,l} \ \nu^k < g ,h >_{L^2(\Delta_k(0,t)\times \mathbb{R}^k)}.
\end{equation}
\item The map $\bar{\om}_t^{(k)}$ is linear, in the sense that for all square-integrable $f,g$ and reals $\lambda,\mu$,
\begin{equation*}
\IP \text{-a.s.} \quad \bar{\om}_t^{(k)}(\lambda f + \mu g) = \lambda \, \bar{\om}_t^{(k)}(f) + \mu \,\bar{\om}_t^{(k)}(g).
\end{equation*}
\end{enumerate}
\end{property} 
\begin{proposition} \textup{\cite{CoscoIntermediate}} \label{prop:Poisson_chaos_decomposition}
The renormalized partition function admits the following Wiener-It\^o chaos expansion:
\begin{equation} \label{eq:wiener_ito_wt_expansion}
W_t = \sum_{k=0}^\infty \, \bar{\om}^{(k)}_t (\Psi^k),
\end{equation} where the sum converges in $L^2$ and where, for all $\mathbf{s}\in \Delta_k$, $\mathbf{x}\in\mathbb{R}^k$ and $k\geq 0$, we have set:
\begin{equation} \label{eq:TkWtExpression}
\Psi^k(\mathbf{s},\mathbf{x}) = \lambda(\b)^k\, P\left[\prod_{i=1}^k \chi_{s_i,x_i}(B) \right],
\end{equation}
with the convention that an empty product equals $1$.
\end{proposition}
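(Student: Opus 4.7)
The strategy is a direct pathwise expansion of the weight $e^{\b \om(V_t(B))-\lambda\nu r t}$, followed by taking the $P$-expectation term by term. Since $\chi_{s,x}(B)\in\{0,1\}$, the linearization \eqref{eq:linBer} gives $e^{\b \chi_{s,x}(B)}=1+\lambda \chi_{s,x}(B)$, so for any fixed environment $\om$ and Brownian path $B$,
\begin{equation*}
e^{\b\om(V_t(B))}=\prod_{(s,x)\in\om_t}\bigl(1+\lambda\chi_{s,x}(B)\bigr)=\sum_{k\geq 0}\lambda^k\int_{\Delta_k(0,t)\times\R^k}\prod_{i=1}^k\chi_{s_i,x_i}(B)\,\om_t^{(k)}(\rmd \mathbf{s}\,\rmd \mathbf{x}),
\end{equation*}
which is the selection formula for this almost-surely finite product, rewritten through the factorial measure $\om_t^{(k)}$.

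To trade $\om_t^{(k)}$ for the compensated $\bar\om_t^{(k)}$, I would iterate the linear SDE satisfied by $E_t(B):=e^{\b\om(V_t(B))-\lambda\nu r t}$. For $B$ fixed, It\^o's formula on Poisson space (already used in the proof of Theorem \ref{th:she}) gives
\begin{equation*}
E_t(B)=1+\lambda\int_{(0,t]\times\R}E_{s-}(B)\,\chi_{s,x}(B)\,\bar\om(\rmd s,\rmd x),
\end{equation*}
and iterating this Picard scheme $n$ times produces
\begin{equation*}
E_t(B)=\sum_{k=0}^{n}\lambda^k\int_{\Delta_k(0,t)\times\R^k}\prod_{i=1}^k\chi_{s_i,x_i}(B)\,\bar\om_t^{(k)}(\rmd \mathbf{s}\,\rmd \mathbf{x})+R_n(B,\om),
\end{equation*}
with a remainder $R_n$ controlled in $\IP\otimes P$-norm by the isometry \eqref{eq:covariance_structure}. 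Equivalently, one may derive this display directly from the first one by Möbius inversion of \eqref{eq:def_omk}, $\int g\,\rmd\om_t^{(k)}=\sum_{J\subset[k]}\int g\,\bar\om_t^{(|J|)}(\rmd\cdot_J)\,\nu^{k-|J|}\rmd\cdot_{J^c}$, the factor $e^{-\lambda\nu rt}$ being absorbed by $\int \lambda\chi_{s,x}(B)\,\nu\,\rmd s\,\rmd x=\lambda\nu rt$.

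Now take $P$-expectations and swap $P$ with each $\bar\om_t^{(k)}$. The swap is legitimate because \eqref{eq:def_omk} represents $\bar\om_t^{(k)}$ as a finite alternating combination of integrals against the positive $\sigma$-finite measures $\om_t^{(|J|)}\otimes(\nu\rmd s\,\rmd x)^{\otimes(k-|J|)}$, against which the bounded integrand $\prod_i\chi_{s_i,x_i}(B)$ satisfies ordinary Fubini term by term. Since $\Psi^k(\mathbf{s},\mathbf{x})=\lambda^k P[\prod_i\chi_{s_i,x_i}]$, this yields $W_t=\sum_{k\geq 0}\bar\om_t^{(k)}(\Psi^k)$.

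The $L^2$ convergence of the series (which also controls $R_n$) follows from the isometry \eqref{eq:covariance_structure} together with $P[\prod_i\chi_{s_i,x_i}]^2\leq P[\prod_i\chi_{s_i,x_i}]$ (as $\chi\in\{0,1\}$) and $\int_{\Delta_k\times\R^k}P[\prod_i\chi_{s_i,x_i}]\,\rmd\mathbf{s}\,\rmd\mathbf{x}=(rt)^k/k!$ (Fubini and evaluation of the simplex volume):
\begin{equation*}
\sum_k\|\bar\om_t^{(k)}(\Psi^k)\|_{L^2(\IP)}^2=\sum_k\nu^k\|\Psi^k\|_{L^2}^2\leq\sum_k\frac{(\nu\lambda^2 rt)^k}{k!}=e^{\nu\lambda^2 rt}<\infty.
\end{equation*}
The principal technical obstacle is the stochastic Fubini step: legitimizing the exchange of the $P$-expectation with the signed measure $\bar\om_t^{(k)}$ requires the explicit alternating representation \eqref{eq:def_omk} combined with the boundedness of $\chi$, and these must be dovetailed with the orthogonality estimate above to control the Picard remainder uniformly in $n$.
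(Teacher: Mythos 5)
Your proposal is correct, and its skeleton coincides with the paper's: a pathwise identity expressing the normalized weight $e^{\b\om(V_t(B))-\lambda\nu r t}$ as a chaos sum for fixed $B$, a Fubini step exchanging the $P$-expectation with the Poisson integrals to produce $\Psi^k$, and the same orthogonality bound $\nu^k\|\Psi^k\|_2^2\leq(\nu\lambda^2 rt)^k/k!$ for the $L^2$ convergence. Where you differ is in how the pathwise identity is obtained. The paper starts from $\sum_k\bar\om_t^{(k)}((\lambda\chi)^{\otimes k})$, expands the alternating-sum definition \eqref{eq:def_omk}, resums to pull out the factor $e^{-t\lambda\nu r}$, and identifies $\sum_j\om_t^{(j)}((\lambda\chi)^{\otimes j})$ with $e^{\b\om(V_t(B))}$ by the telescoping product over the tube points; your "M\"obius inversion" alternative is exactly this algebra read in the opposite direction, so that branch is the same proof. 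Your primary route, Picard iteration of the Poisson SDE from Theorem \ref{th:she} for fixed $B$, is a genuinely different (and standard) mechanism, and it buys you an explicit remainder $R_n$ so that no interchange of $P$ with an infinite sum is ever needed; but it carries two small obligations the paper's route avoids: you must identify the iterated predictable integrals over the simplex with $\bar\om_t^{(k)}$ as defined through factorial measures (true, but not stated in the paper), and the bound on $R_n$ cannot invoke \eqref{eq:covariance_structure} verbatim, since the integrand contains the random factor $E_{s_1-}$; one needs the It\^o isometry for predictable integrands, i.e.\ an iterated/conditioned use of \eqref{eq:MartingaleVarianceFormula}, together with $\IP[E_s^2]=e^{\lambda^2\nu r s}$, to get $\IP[R_n^2]\leq e^{\lambda^2\nu rt}(\nu\lambda^2rt)^{n+1}/(n+1)!\to0$. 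On the other hand, your justification of the per-$k$ exchange of $P$ with $\bar\om_t^{(k)}$ (finite alternating combination of positive measures, bounded integrand) is more explicit than the paper's sketch, which simply assumes Fubini applies to the right-hand side of \eqref{eq:wiener_ito_wt_expansion}.
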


\begin{proof}[Sketch of proof] We follow the proof of Lemma 18.9 in \cite{LastPenrose}. 
By definition we have that $W_t = P\left[e^{\b\om(V_t(B)-t\lambda r\nu}\right]$. Hence, assuming that Fubini's theorem applies to the RHS of \eqref{eq:wiener_ito_wt_expansion}, it is enough to show that $\IP\times P$-almost surely:
\begin{equation} \label{eq:exp_expansion}
e^{\b\om(V_t(B))-t\lambda r\nu} = \sum_{k=0}^\infty \, \bar{\om}^{(k)}_t \left((\lambda\chi)^{\otimes k}\right),
\end{equation}
where, for all $\mathbf{s}\in \Delta_k(0,t)$, $\mathbf{x}\in\mathbb{R}^k$, we have defined $(\lambda\chi)^{\otimes k}(\mathbf{s},\mathbf{x}) = \prod_{j=1}^k \lambda(\b)\chi_{s_j,x_j}(B)$.
Then, observe that:
\begin{align*}
&\sum_{k=0}^\infty \, \bar{\om}^{(k)}_t \left((\lambda\chi)^{\otimes k}\right)\\
& = \sum_{k=0}^\infty \sum_{J\subset [k]} (-1)^{k-|J|} \int_{\Delta_k(0,t)\times \mathbb{R}^k} \prod_{i=1}^k \lambda\chi_{s_i,x_i}\, \om_t^{(|J|)}(\rmd \mathbf{s}_J,\rmd \mathbf{x}_J) \, \nu^{k-|J|} \,\rmd \mathbf{s}_{J^c} \, \rmd \mathbf{x}_{J^c}\\
& = \sum_{k=0}^\infty \sum_{j=0}^k \binom{k}{j}(-1)^{k-j} \frac{1}{k!} \int_{[0,t]^k\times \mathbb{R}^k} \prod_{i=1}^k \lambda\chi_{s_i,x_i}\, \om_t^{(j)}(\rmd \mathbf{s}_{[j]},\rmd \mathbf{x}_{[j]}) \, \nu^{k-j} \,\rmd \mathbf{s}_{[j]^c} \, \rmd \mathbf{x}_{[j]^c}\\
& = \sum_{j=0}^\infty \frac{1}{j!}\, j!\, \om^{(j)} \left((\lambda\chi)^{\otimes j}\right) \sum_{k=j}^\infty \frac{1}{(k-j)!} (-t\lambda \nu r)^{k-j}\\
& = e^{-t\lambda r\nu} \sum_{j=0}^\infty \om^{(j)} \left((\lambda\chi)^{\otimes j}\right).
\end{align*}
Then, if we let $(s_1,x_1),\dots,(s_N,x_N)$ with $s_1<\dots<s_N$ be the points of $\om$ that lie in the tube $V_t(B)$, we get by definition of the $\om_t^{(j)}$'s that
\begin{equation*}
\sum_{j=0}^\infty \om_t^{(j)} \left((\lambda\chi)^{\otimes j}\right) = \sum_{J\subset [N]} \prod_{i\in J} \left(e^{\b \chi_{s_i,x_i}}-1 \right) = \prod_{i=1}^N e^{\b \chi_{s_i,x_i}} = e^{\b\om(V_t(B))},
\end{equation*}
where the last equality comes from a telescopic sum (and the convention that an empty product is $1$). This implies \eqref{eq:exp_expansion}. 

To prove convergence in $L^2$ of the sum in the RHS of \eqref{eq:wiener_ito_wt_expansion}, notice that the terms are pairwise orthogonal and verify:
\begin{align*}
&\IP\left[ \bar{\om}^{(k)}_t \left(T_k \, W_t\right)^2\right] = \lambda^{2k}\int_{\Delta_k(0,t)\times \mathbb{R}^k} P\left[\prod_{i=1}^k \chi_{s_i,x_i}(B) \right]^2\rmd \mathbf{s} \rmd \mathbf{x} \\
&\leq \frac{\lambda^{2k}}{k!} \int_{[0,t]^k\times \mathbb{R}^k} P\left[\prod_{i=1}^k \chi_{s_i,x_i}(B) \right]\rmd \mathbf{s} \rmd \mathbf{x}\\
&= \frac{(\lambda^{2}t\nu r)^k}{k!},
\end{align*}
whose sum converges.
\end{proof}

\section{The intermediate regime} \label{sec:intermediate_regime}
We now consider parameters $\beta_t\in\mathbb{R}$, $\nu_t > 0$ and $r_t > 0$ that depend on time $t$, and we fix a parameter $\b^*\in\mathbb{R}^*$. We assume that they verify the following asymptotic relations, as $t\to\infty$:
\begin{equation} \label{eq:paramRI}
\begin{gathered}
\text{(a)} \  \nu_t r_t^2 \lambda(\b_t)^2 \sim (\b^*)^{2}t^{-1/2}, \quad \text{(b)} \  \nu_t r_t^3 \lambda(\b_t)^3 \to 0,\\
\text{(c)} \  r_t/\sqrt{t} \to 0.
\end{gathered}
\end{equation}

Suppose for example that $r_t = \nu_t = 1$. Then, the scaling conditions are equivalent to $\b_t = \b^* t^{-1/4}$, so that we can see the scaling as a limit from strong disorder ($\b>0$) to weak disorder ($\b=0$). For general parameters, one can observe that in dimension $d\geq 3$, Theorem \ref{th:LTwoRegion} implies that there exists a positive constant $c(d)$, such that the polymer lies in the $L^2$ region as soon as
$\nu r^{d+2} \lambda(\b)^2 < c(d)$.  Since conditions (a) and (c) imply that $\nu_t r_t^3 \lambda(\b_t)^2 \to 0$, the scaling for $d=1$ should again be interpreted as a crossover between strong and weak disorder.

\begin{remark}
The asymptotics are in contrast to the regime of  complete localization \eqref{eq:completeloc}, where, for fixed $r$, one let $\nu \beta^2 \to \infty$.
\end{remark}

The following theorem states that under the above scaling, the P2L and P2P partition functions of the Poisson polymer converge to the one of the continuum polymer:
\begin{theorem} \label{th:cv_Wt_interReg} Suppose conditions (a), (b) and (c) hold. Then, as $t\to\infty$:
\begin{equation} \label{eq:P2PcvlawTh}
W_t(\om^{\nu_t},\b_t,r_t) \cvlaw \mathcal{Z}_{\beta^*},
\end{equation}
where $\om^{\nu_t}$ is the Poisson point process with intensity measure $\nu_t \rmd s \rmd x$.
Moreover, for all $S,Y,T,X \in [0,1]$, we have
\begin{equation}
\sqrt{t} W\left(tS,\sqrt{t} \hspace{0.3mm}Y;t\hspace{0.3mm}T,\sqrt{t}\hspace{0.3mm}X;\om^{\nu_t},\b_t,r_t\right) \cvlaw \mathcal{Z}_{\beta^*}\left(S,Y;T,X\right),
\end{equation}
where the renormalized P2P partition function from $(S,Y)$ to $(T,X)$ is defined by
\begin{equation}
W(s,y;t,x;\om,\b,r) = W(t-s,x-y;\om,\b,r)\circ \theta_{s,y},
\end{equation}
and similarly for $\mathcal{Z}_{\beta^*}\left(S,Y;T,X\right)$.
\end{theorem}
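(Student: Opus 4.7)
The plan is to match the two partition functions chaos-by-chaos, comparing the Wiener--It\^o expansion
\(W_t = \sum_{k \geq 0} \bar\omega_t^{(k)}(\Psi^k)\) from Proposition \ref{prop:Poisson_chaos_decomposition} against the Wiener expansion \(\mathcal{Z}_{\beta^*} = \sum_{k \geq 0} (\beta^*)^k I_k(\rho^k)\) from \eqref{eq:defOfP2LZBeta}. The natural target is convergence of each Poisson chaos \(\bar\omega_t^{(k)}(\Psi^k)\) to the Wiener chaos \((\beta^*)^k I_k(\rho^k)\), together with uniform-in-\(k\) tail bounds, so that the two sums can be truncated and compared.

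The first step is a kernel replacement. For fixed \(k\), the factor
\(P[\prod_{i=1}^k \chi_{s_i,x_i}]\) appearing in \(\Psi^k\) equals
\(\int_{\prod_i U(x_i)} \rho^k(\mathbf s,\mathbf y)\rmd\mathbf y\). Smoothness of \(\rho^k\) together with the scaling \((s,x)=(tS,\sqrt t X)\), Brownian rescaling \(\rho^k(t\mathbf S,\sqrt t\mathbf X)=t^{-k/2}\rho^k(\mathbf S,\mathbf X)\), and condition (c) give
\(\Psi^k \simeq \lambda(\beta_t)^k r_t^k\,\rho^k\) in the \(L^2(\Delta_k(0,t)\times\R^k)\) norm weighted by \(\nu_t^k\). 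One checks that the resulting variance
\(\nu_t^k\lambda(\beta_t)^{2k}r_t^{2k}\,\|\rho^k\|^2_{L^2(\Delta_k(0,t)\times\R^k)}
= (\nu_t r_t^2\lambda(\beta_t)^2 t^{1/2})^k\,\|\rho^k\|^2_{L^2(\Delta_k\times\R^k)}\)
converges to \((\beta^*)^{2k}\|\rho^k\|^2_{L^2(\Delta_k\times\R^k)}\) thanks to (a), matching \(\mathrm{Var}((\beta^*)^k I_k(\rho^k))\).

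The second step is the genuine distributional statement: matching variances must be promoted to a central limit theorem for the multiple Poisson integrals \(\bar\omega_t^{(k)}(\rho^k)\), rescaled as above, toward the Wiener integrals \(I_k(\rho^k)\). This is the main technical obstacle. I would invoke a fourth-moment / diagram-formula criterion for Poisson chaos (in the spirit of Peccati--Sol\'e--Taqqu--Utzet): the contracted diagrams that distinguish the Poisson from the Wiener chaos are weighted by extra factors of \(\nu_t r_t\lambda(\beta_t)^3\), up to combinatorial constants, and condition (b) \(\nu_t r_t^3\lambda(\beta_t)^3 \to 0\) (after scaling) is precisely what forces these ``anomalous'' diagonal contributions to vanish. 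Equivalently, one can check that the Poisson cumulants of order \(\geq 3\) vanish in the limit, leaving only the Gaussian structure. Joint convergence of the first \(K\) chaoses then comes for free from the isometric nature of \(I_k\) and the contraction formulas.

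The third step is tail control. Using \(\chi_{s,x}\le 1\) and a Dirichlet-type computation, one gets \(\|\rho^k\|^2_{L^2(\Delta_k\times\R)} = 2^{-k}/\Gamma(1+k/2)\) in \(d=1\), so the variances
\(\mathrm{Var}(\bar\omega_t^{(k)}(\Psi^k))\) are bounded by \(C^k/\Gamma(1+k/2)\) uniformly in large \(t\). Hence truncation at level \(K\) yields a uniformly small remainder in \(L^2\), and combining with the chaos-by-chaos convergence of Step~2 and the replacement of Step~1 delivers \(W_t \cvlaw \mathcal{Z}_{\beta^*}\) after letting \(K\to\infty\). The point-to-point statement follows from the same scheme applied to the kernels
\(\rho(t,x)\Psi^k_{(0,0)\to(t,x)}\) of the P2P partition function; the extra factor \(\sqrt t\,\rho(tT,\sqrt t X)=\rho(T,X)\) absorbs the normalization, and joint convergence in \((S,Y,T,X)\) is obtained via Cram\'er--Wold.
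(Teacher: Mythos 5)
Your overall architecture coincides with the paper's: expand both partition functions in chaos, replace the Poisson kernels $\Psi^k$ by rescaled heat kernels in $L^2$ using (a) and (c) (the paper's Claim 1), let (b) kill the non-Gaussian contributions, and control the series by a summable bound (the paper bounds the $k$-th term by $(\lambda^2 t\nu r)^k/k!$ and compares the \emph{full} sums in $L^2$ rather than truncating, which is only a cosmetic difference from your Step 3). The genuine gap is in your Step 2, the convergence of the fixed-order Poisson chaoses to the Wiener chaoses $I_k(\rho^k)$. A fourth-moment criterion \`a la Peccati--Sol\'e--Taqqu--Utzet is a criterion for \emph{Gaussian} limits of Poisson multiple integrals; for $k\geq 2$ the target $I_k(\rho^k)$ is not Gaussian and its cumulants of order $\geq 3$ do not vanish, so the sentence ``the Poisson cumulants of order $\geq 3$ vanish in the limit, leaving only the Gaussian structure'' cannot be taken literally for fixed $k\geq 2$ (it is correct only for the underlying noise, i.e.\ the first chaos). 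If what you intend is the moment/diagram method --- all joint moments converge because diagrams with cells of size $\geq 3$ carry the small factor $\nu_t r_t^3\lambda(\b_t)^3$ --- you then face an identification problem: multiple Wiener integrals of order $\geq 3$ have stretched-exponential tails and need not be determined by their moments (and the full limit $\mathcal Z_{\b^*}$ certainly is not, its $k$-th moment growing like $e^{ck^3}$), so convergence of moments alone does not yield convergence in law without a separate determinacy argument. Finally, ``joint convergence of the first $K$ chaoses then comes for free from the isometric nature of $I_k$'' dismisses exactly the step where the real work lies.

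For comparison, the paper (following \cite{CoscoIntermediate}) handles this step by computing the characteristic function of the rescaled first chaos $\gamma_t\bar\om_t(\tilde\rho_t)$ exactly via the Poisson exponential formula \eqref{eq:PoissonExpMoment}; the Taylor remainder is of order $\nu_t t^{3/2}\gamma_t^3\asymp\gamma_t\propto \nu_t r_t^3\lambda(\b_t)^3\to 0$ by (b), so the noise converges to white noise, and the convergence is then lifted to all higher chaoses by extending this computation (approximation by simple, tensorized kernels and continuous mapping), not by a fourth-moment argument. If you replace your Step 2 by such a ``noise convergence plus lifting to multiple integrals of $L^2$-convergent kernels'' argument, the rest of your plan --- Steps 1 and 3, and the P2P statement absorbed by the normalization $\sqrt t\,\rho(tT,\sqrt t X)=\rho(T,X)$ --- goes through and essentially reproduces the paper's proof.
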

\begin{remark} \label{rk:sqrtt}
The $\sqrt{t}$ term appears here as a renormalization in the scaling of the heat kernel: $\sqrt{t}\rho\left(tT,\sqrt{t}X\right) = \rho(T,X)$.
\end{remark}
\begin{proof}[Sketch of proof]
We focus on showing \eqref{eq:P2PcvlawTh}, as the result for the P2P partition function follows from the same technique and remark \ref{rk:sqrtt}. Let $\gamma_t$ be proportional to the vanishing parameter appearing in scaling relation (b):
\begin{equation}
\gamma_t := (\b^*)^{-3} \nu_t r_t^3 \lambda(\b_t)^3 \to 0.
\end{equation}
and we now specify the radius for the indicator $\chi_{s,x}^{\delta}(B) = \mathbf{1}_{|B_s-y|\leq \delta/2}$.
Introduce the following time-depending functions of $\Delta_k(0,t)\times\mathbb{R}^k$:
\begin{equation}
\phi_t^k(\mathbf{s},\mathbf{x}) = \gamma_t^{-k} \, \lambda(\b_t) ^k P\left[\prod_{i=1}^k \chi_{s_i,x_i}^{r_t/\sqrt{t}} (B)\right].
\end{equation} 
Note that for all $(s,x)$, the diffusive scaling property of the Brownian motion implies that
\[\chi_{s/t,x/\sqrt{t}}^{r_t/\sqrt{t}} = \mathbf{1}_{|B_{s/t} - x/\sqrt{t}| \leq r_t/2\sqrt{t}} \eqlaw \chi^{r_t}_{s,x}.\]
Therefore, using notation $\widetilde{\phi}_t^k = \phi_t^k(\cdot/t,\cdot/\sqrt{t})$, we see that after simple rescaling, equation \eqref{eq:TkWtExpression} becomes
\begin{equation} \label{eq:phitktilde} \gamma_t^k \, \widetilde{\phi}_t^k(\mathbf{s},\mathbf{x}) = \lambda(\b)^k\, P\left[\prod_{i=1}^k \chi_{s_i,x_i}(B) \right].
\end{equation}
Hence, Proposition \eqref{prop:Poisson_chaos_decomposition} and equation \eqref{eq:phitktilde} lead to the following expression of $W_t$:\footnote{Note that from now on, we will always assume that $\om\eqlaw\om^{\nu_t}$, although we will drop the superscript notation.}
\begin{equation} \label{eq:WtExpansionTwo}
W_t = \sum_{k=0}^\infty \gamma_t^k \, \bar{\om}^{(k)}_t \left(\widetilde{\phi}_t^k\right).
\end{equation}

Now, we also define the rescaled functions $\tilde{\rho}^{\,k}_t = \rho^{k}(\cdot/t,\cdot/\sqrt{t})$ and we make two claims:
\begin{itemize}
\item \textbf{Claim 1:} For all $k\geq 0$ and as $t\to\infty$, $\phi^k_t \cvLdeux (\b^*)^k\rho^k$, 
\item \textbf{Claim 2:} As $t\to\infty$, $\sum_{k=0}^\infty (\beta^*)^k \gamma_t^k\,\bar{\om}_t^{(k)}(\tilde{\rho}^{\,k}_t) \cvlaw \sum_{k=0}^\infty (\b^*)^k I_k( \rho^k) = \mathcal{Z}_{\beta^*}$.
\end{itemize}
Claim 1 follows from the scaling relations and the fact that $\varepsilon^{-k} P[\prod_{i=1}^k \chi_{s_i,x_i}^\varepsilon(B)] \to \rho^k(\mathbf{s},\mathbf{x})$ as $\varepsilon\to 0$. For claim 2, we only present the argument for the convergence in law of the $k=1$ term of the sum. The complete argument relies on this case to extend the convergence to all $k\geq 1$ terms (see \cite{CoscoIntermediate}).
As $\tilde{\om}^{(1)}=\tilde{\om}$ and $\rho^1 = \rho$, we can apply the complex exponential formula (see equation \eqref{eq:PoissonExpMoment}) to compute the characteristic function of $\gamma_t \,\bar{\om}_t(\tilde{\rho}_t)$. For $u\in \mathbb{R}$, we obtain:
\begin{align*}&\IP\left[e^{iu\gamma_t\bar{\om}_t(\tilde{\rho}_t)}\right]\\
&=\exp\left(\int_{[0,t]} \int_\mathbb{R}  \big(e^{iu\gamma_t \rho(s/t,x/\sqrt{t})}-1 -iu\gamma_t \, \rho(s/t,x/\sqrt{t})\, \big) \nu_t \, \mathrm{d}s \mathrm{d}x \right)\\
&=\exp\left(\int_{[0,1]} \int_\mathbb{R} \nu_t t^{3/2} \big(e^{iu\gamma_t \rho(s,x)}-1 -iu\gamma_t \, \rho(s,x)\big)\mathrm{d}s \mathrm{d}x \right).
\end{align*}
By Taylor-Lagrange formula, we get that
\begin{equation*}
\forall (s,x)\in [0,1]\times\mathbb{R}, \  \nu_t t^{3/2}\left|e^{iu\gamma_t \rho(s,x)}-1 -iu\gamma_t \, \rho(s,x)\right|\leq \nu_t t^{3/2} \gamma_t^2 \frac{u^2}{2} \rho(s,x)^2,
\end{equation*}
This gives $L^1$ domination since since $\rho\in L^2([0,1]\times\mathbb{R})$ and since relations (a) and (b) imply that $ \nu_t \gamma_t^2 \sim t^{-3/2}$. Moreover, as $\gamma_t \to 0$, the integrand of the above integral converges pointwise to the function $(s,x) \mapsto -\frac{u^2}{2}\rho^2(s,x)$. Therefore, by dominated convergence, we obtain that as $t\to\infty$,
\[\IP\left[e^{iu\gamma_t\tilde{\om}_t(\tilde{\rho}_t)}\right] \to \exp\left(-\frac{u^2}{2} \Vert \rho\Vert^2_2\right).\] This is the Fourier transform of a centered Gaussian random variable of variance $\Vert \rho\Vert^2_2$, which has the same law as $I_1(\rho)$, so that indeed $\bar{\om}_t^{(1)}(\tilde{\rho}^{\,1}_t) \cvlaw I_1(\rho^1)$.

Now, if $X_n$ and $Y_n$ are random variables such that $Y_n \cvlaw Y$ and $\Vert Y_n-X_n\Vert_2 \longrightarrow 0$, then $X_n \cvlaw Y$.
Therefore, to prove that $W_t \cvlaw \mathcal{Z}_{\beta^*}$, it is enough by claim 2 to show that
\begin{equation}\left\Vert \sum_{k=0}^\infty \gamma_t^k \, \bar{\om}_t^{(k)}(\tilde{\phi}_t^k) - \sum_{k=0}^\infty (\b^*)^k \gamma_t^k\,\bar{\om}_t^{(k)}(\tilde{\rho}^{\,k}_t) \right\Vert_2^2 \underset{{t} \to {\infty}}{\longrightarrow} 0.
\end{equation}
By Pythagoras' identity and linearity of $\tilde{\om}_t^{(k)}$, we obtain that the above norm writes:
\[\sum_{k=0}^\infty \gamma_t^{2k} \, \Vert\bar{\om}_t^{(k)}\big(\phi_t^k(\cdot/t,\cdot/\sqrt{t})- (\b^*)^k\rho^{k}(\cdot/t,\cdot/\sqrt{t})\big)\Vert_2^2.\]
For all $g\in L^2(\Delta_k\times \mathbb{R}^k)$, we get from a substitution of variables that
\begin{equation*}\Vert\bar{\om}_t^{(k)}\big(g(\cdot/t,\cdot/\sqrt{t})\Vert_2^2 = \nu_t^k\Vert g(\cdot/t,\cdot/\sqrt{t})\Vert^2_{L^2(\Delta_k(0,t)\times \mathbb{R}^k)}  =  \nu_t^k t^{3k/2} \Vert g\Vert^2_{L^2(\Delta_k\times \mathbb{R}^k)},
\end{equation*}
so that the above sum is given by
\[\sum_{k=0}^\infty \gamma_t^{2k} \nu_t^k t^{3k/2} \, \Vert \phi_t^k-(\b^*)^k\rho^{k}\Vert_{L^2(\Delta_k\times \mathbb{R}^k)}^2.\]
Conditions (a) and (b) imply that $\gamma_t^2 \nu_t^k t^{3/2} \sim 1$, so the proof can be concluded by claim 1 and by showing that $\Vert \phi_t^k\Vert_2^2$ is dominated by the summable sequence $C^{2k} \Vert\rho^k\Vert^2_2$, where $C=C(\b^*)$ is some positive constant, so that the dominated convergence theorem applies.
\end{proof}

\section{Convergence in terms of processes of the P2P partition function}
\label{sec:proofCVProcesses}
Let $\mathcal{D}'(\mathcal R)$ denote the space of distributions on $\mathbb{R}$, and $D\big([0,1],\mathcal{D}'(\mathcal R)\big)$ the space of c\`adl\`ag function with values in the space of distributions, equipped with the topology defined in \cite{mitoma1983tightness}.
We also define the rescaling of the renormalized P2P partition function \eqref{eq:renormalizedP2Pdef}:
\begin{equation}
\mathcal{Y}_t\left(T,X\right) = \rho(T,X) W\left(tT,\sqrt{t}X;\om^{\nu_t},\b_t,r_t\right).
\end{equation}
The two variables function $\mathcal{Y}_t$ can be seen as an element of $D\big([0,1],\mathcal{D}'(\mathcal R)\big)$, through the mapping $\mathcal{Y}_t : T \mapsto \big(\varphi \mapsto \int \mathcal{Y}_t(T,X)\varphi(X)\rmd X\big)$. The next theorem states that the rescaled partition function $\mathcal{Y}_t$ converges, in terms of processes, to the solution of the stochastic heat equation:
\begin{theorem} \label{th:cvlaw} \textup{\cite{CoscoIntermediate}}
Suppose that $(\b_t)_{t\geq 0}$ is bounded by above. As $t\to\infty$, the following convergence of processes holds:
\begin{equation}
\mathcal{Y}_t \cvlaw \big(T\mapsto \mathcal{Z}_{\b^*}(T,\cdot)\big),
\end{equation}
where the convergence in distribution holds in $D\big([0,1],\mathcal{D}'(\mathcal R)\big)$.
\end{theorem}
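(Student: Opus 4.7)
\medskip

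\noindent\textbf{Proof proposal.} The natural route is to combine the finite-dimensional convergence already obtained in Theorem \ref{th:cv_Wt_interReg} with a tightness argument in $D([0,1],\mathcal D'(\mathbb R))$. For the latter, the standard tool is Mitoma's criterion \cite{mitoma1983tightness}: tightness of $(\mathcal Y_t)$ in $D([0,1],\mathcal D'(\mathbb R))$ is equivalent to tightness in $D([0,1],\mathbb R)$ of every smeared process
\[
\mathcal Y_t^{\varphi}(T) \;:=\; \int_{\mathbb R} \mathcal Y_t(T,X)\varphi(X)\,\rmd X, \qquad \varphi\in\mathcal D(\mathbb R).
\]
So the plan is: (i) verify fdd convergence of $\mathcal Y_t^{\varphi_1},\dots,\mathcal Y_t^{\varphi_n}$ to the corresponding limits $\langle\mathcal Z_{\b^*}(T,\cdot),\varphi_i\rangle$; (ii) check tightness of each scalar c\`adl\`ag process $\mathcal Y_t^{\varphi}$; (iii) identify the limit via the weak formulation of SHE and conclude.

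For step (i), fix test functions and use Theorem \ref{th:cv_Wt_interReg} together with the chaos expansion of Proposition \ref{prop:Poisson_chaos_decomposition}. Integrating $\mathcal Y_t^{\varphi}$ against $\varphi$ amounts to integrating each kernel $\widetilde{\phi}_t^k$ against $\rho(T,\cdot)\varphi$; the $L^2$-convergence of kernels in Claim 1 of the proof of Theorem \ref{th:cv_Wt_interReg} together with the argument of Claim 2 there yields fdd convergence of $(\mathcal Y_t^{\varphi_i}(T_j))_{i,j}$ jointly for finitely many times and test functions.

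For step (ii), the key input is the weak-SHE identity of Theorem \ref{th:she}, which after the diffusive rescaling $(s,x)\mapsto(s/t,x/\sqrt t)$ reads, for a rescaled test function $\varphi_t$,
\[
\mathcal Y_t^{\varphi}(T) \;=\; \varphi(0) \;+\; \tfrac{1}{2}\!\int_0^T\!\!\int_{\mathbb R}\mathcal Y_t(S,X)\Delta\varphi(X)\,\rmd X\rmd S \;+\; M_t^{\varphi}(T),
\]
where $M_t^\varphi$ is a purely discontinuous martingale driven by the compensated Poisson measure. The drift part is a Lipschitz functional of $\mathcal Y_t$, so its tightness reduces to uniform $L^1$ bounds, which follow from $\mathbb P[W_t]=1$. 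For the martingale part, the predictable bracket (see \eqref{eq:bracket}) is, up to the scaling constants of \eqref{eq:paramRI},
\[
\langle M_t^{\varphi}\rangle_T \;=\; \lambda(\b_t)^2 \nu_t \!\int_0^{tT}\!\!\int \Bigl(\int \varphi_t(x)\mathbf 1_{|y-x|\le r_t/2}W(s-,x)\rmd x\Bigr)^{\!2}\rmd s\rmd y,
\]
and this is $O(1)$ by exactly the same computation (a) and (b) of \eqref{eq:paramRI} that made the chaos expansion converge. Combining $L^2$-boundedness of the bracket with Doob's inequality gives moment bounds on the increments $\mathcal Y_t^{\varphi}(T')-\mathcal Y_t^{\varphi}(T)$ of order $|T'-T|$, while the maximal jump is $O(\lambda(\b_t))=o(1)$. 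Aldous' criterion for c\`adl\`ag tightness then applies.

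Step (iii): any subsequential limit is concentrated on continuous trajectories (jumps vanish in probability) and, by passing to the limit in the weak formulation above using Skorokhod representation and the $L^2$-convergence of the kernels proved in Theorem \ref{th:cv_Wt_interReg}, it solves the mild SHE \eqref{eq:mildSHE} with Dirac initial datum; uniqueness of such solutions \cite{BertiniCancrini95} identifies the limit as $\mathcal Z_{\b^*}$. The main obstacle is step (ii), specifically obtaining the increment bound on the martingale term uniformly in $t$: one must control the predictable bracket after a time change and show that the small perturbations coming from the mollification by $\mathbf 1_{|y-x|\le r_t/2}$ (which replaces a pointwise evaluation of $\mathcal Y_t$) are negligible; this is precisely where the assumption that $\b_t$ is bounded above is used, in order to turn $\lambda(\b_t)^2 \nu_t r_t$ into the $t^{-1/2}$ scaling of \eqref{eq:paramRI}(a) and keep the jump size $O(\lambda(\b_t))$ vanishing.
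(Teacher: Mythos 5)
Your outline coincides with the route the paper takes: Section \ref{sec:proofCVProcesses} proves the theorem exactly by Mitoma's criterion (Proposition \ref{th:Mitoma}) applied to the smeared processes $T\mapsto \mathcal Y_t(T,\varphi)$, combined with the identification statement of Proposition \ref{prop:mitomaIdentification}, the finite-dimensional limits being supplied by a joint (multi-time, multi-test-function, Cram\'er--Wold) version of the chaos-expansion argument of Theorem \ref{th:cv_Wt_interReg}; the quantitative estimates are delegated to \cite{CoscoIntermediate}. Two caveats on where you deviate. First, your step (iii) is both heavier and redundant: once you have tightness and the joint fdd convergence of your step (i), Proposition \ref{prop:mitomaIdentification} identifies the limit immediately, whereas the route through the weak formulation would additionally require identifying the limiting martingale measure as a space-time white noise and the limit as an adapted mild solution (a Bertini--Giacomin type martingale-problem argument, cf. \cite{BertiniGiacomin97}), which you only gesture at; as written, if you dropped step (i) and relied on (iii) alone, the identification would be incomplete.

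Second, in the tightness step the assertion that the maximal jump is $O(\lambda(\b_t))=o(1)$ is wrong as stated: the scaling relations \eqref{eq:paramRI} are compatible with $\b_t\equiv\b$ fixed (take for instance $r_t=1$, $\nu_t\sim c\,t^{-1/2}$), so under the stated hypothesis $\lambda(\b_t)$ is only bounded, not vanishing. The jumps of $T\mapsto\mathcal Y_t(T,\varphi)$ do vanish, but for a different reason: each jump equals $\lambda(\b_t)$ times an integral of the rescaled partition function against $\varphi$ over a spatial window of macroscopic width of order $r_t/\sqrt t$, which tends to $0$ by condition (c) of \eqref{eq:paramRI}, and controlling the supremum of these local averages over all Poisson points requires a moment estimate on $W$ that you should make explicit. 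This is precisely the place where the hypothesis that $\b_t$ is bounded above is used (to keep $\lambda(\b_t)$ bounded), so the correction matters. With these two repairs your argument is a plausible reconstruction of the proof in \cite{CoscoIntermediate}.
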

For any function $F\in D\left([0,1],\mathcal{D}'(\mathbb{R})\right)$ and $\varphi\in \mathcal{D}(\mathbb{R})$, set
\begin{equation}
F(T,\varphi) := \int F(T,X)\varphi(X)\rmd X.
\end{equation}
In order to show tightness of $\mathcal{Y}_t$, the tool used in \cite{CoscoIntermediate} is Mitoma's criterion \cite{mitoma1983tightness,Walsh}:
\begin{proposition} \label{th:Mitoma}
Let $(F_t)_{t\geq 0}$ be a family of processes in $D\left([0,1],\mathcal{D}'(\mathbb{R})\right)$. If, for all $\varphi \in \mathcal{D}(\mathbb{R})$, the family $T\to F_t(T,\varphi),t\geq 0$ is tight in the real cadl\`ag functions space $D([0,1],\mathbb{R})$, then $(F_t)_{t\geq 0}$ is tight in $D\left([0,1],\mathcal{D}'(\mathbb{R})\right)$.
\end{proposition}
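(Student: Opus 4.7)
The plan is to reduce tightness in the distribution-valued path space to the given scalar tightness by exploiting the nuclear (LF) structure of $\mathcal{D}(\mathbb{R})$. I would begin with the general Jakubowski-type characterization of tightness in $D([0,1],E)$ for a Polish $E$: it suffices to establish (i) a compact containment condition, i.e.\ for every $\eps>0$ a compact $K_\eps\subset E$ such that $\P(F_t(T)\in K_\eps\text{ for all }T\in[0,1])\geq 1-\eps$ uniformly in $t$, together with (ii) a uniform control of the Skorokhod modulus of continuity. With $E=\mathcal{D}'(\mathbb{R})$ endowed with its usual dual topology, both (i) and (ii) must be extracted from the scalar tightness of the processes $T\mapsto F_t(T,\varphi)$.

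For (i), I would realise the topology of $\mathcal{D}(\mathbb{R})$ through an increasing family of Hilbertian seminorms $\|\cdot\|_k$, with associated Hilbert spaces $H_k$, chosen so that each inclusion $H_{k+1}\hookrightarrow H_k$ is Hilbert-Schmidt, which is the essence of nuclearity. Fixing a countable orthonormal basis $(\varphi_n)_{n\geq 1}$ of some $H_k$, the assumed tightness of each $T\mapsto F_t(T,\varphi_n)$ yields the marginal bound $\sup_T|F_t(T,\varphi_n)|=O_{\P}(1)$ for every $n$. A Banach-Steinhaus type argument, combined with the Hilbert-Schmidt decay of the embedding constants, then upgrades these countably many pointwise estimates into an almost-sure bound $\sup_T\sum_n c_n^2\,F_t(T,\varphi_n)^2<\infty$ on an event of probability at least $1-\eps$. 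This places the whole path $F_t(\cdot)$ inside a ball of the dual $H_{-k-1}$, and because the embedding $H_{-k-1}\hookrightarrow H_{-k-2}$ is again Hilbert-Schmidt, such a ball is relatively compact in $H_{-k-2}\hookrightarrow\mathcal{D}'(\mathbb{R})$, delivering compact containment.

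For (ii), the Skorokhod modulus of $F_t$ viewed in $D([0,1],\mathcal{D}'(\mathbb{R}))$ is governed by suprema of $|F_t(T,\varphi)-F_t(S,\varphi)|$ ranging over a bounded set of test functions. Using the same Hilbertian basis $(\varphi_n)$ and the Hilbert-Schmidt weights $(c_n)$, the tightness of each scalar process (which controls its modulus on a probable event) can be summed against the weights $c_n^2$ to produce a uniform bound on the full dual modulus. Combining (i) and (ii) through the standard criterion concludes tightness in $D([0,1],\mathcal{D}'(\mathbb{R}))$.

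The main obstacle is the Banach-Steinhaus step in (i): promoting countably many marginal tightness statements into a genuine compact containment in the dual space. This is precisely where nuclearity is indispensable, since it converts a merely weak-$*$ type boundedness into relative compactness in a slightly weaker Sobolev-type dual; without it, the conclusion would fail, which is the reason a Mitoma-type criterion is tied to test-function spaces like $\mathcal{D}(\mathbb{R})$ or $\mathcal{S}(\mathbb{R}^d)$ rather than to arbitrary Fréchet spaces.
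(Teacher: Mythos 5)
The paper offers no proof of this proposition: it is quoted from Mitoma \cite{mitoma1983tightness} (see also \cite{Walsh}), so your attempt can only be compared with Mitoma's original argument, whose general architecture (compact containment in a Hilbertian dual extracted from nuclearity, plus control of a modulus) you do echo. The central step, however, is asserted rather than proved, and as stated it does not work. From the hypothesis you get, for each fixed basis element $\varphi_n$, a constant $M_{n,\eps}$ with $\sup_t \IP\big(\sup_{T}|F_t(T,\varphi_n)|>M_{n,\eps}\big)\leq \eps 2^{-n}$, but nothing in the hypothesis controls the growth of $M_{n,\eps}$ in $n$; consequently $\sum_n c_n^2 M_{n,\eps}^2$ may diverge for any fixed Hilbert--Schmidt weights $c_n$, and intersecting the good events does not place the paths in a fixed ball of a dual Hilbert space. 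Note also that "almost-sure finiteness of the weighted sum on an event of probability $1-\eps$" is strictly weaker than compact containment, which requires one fixed compact set $K_\eps$ working uniformly in $t$. The real content of Mitoma's lemma is an equicontinuity statement: the sublinear functionals $\varphi\mapsto \sup_t \ESP\big[1\wedge \sup_T|F_t(T,\varphi)|\big]$ are small on an entire neighborhood of $0$ in the (barrelled) test-function space; this is where a barrel/Baire-category (Banach--Steinhaus-type) argument is genuinely used, and only after that does nuclearity -- choosing a Hilbert--Schmidt embedding dominated by the seminorm defining that neighborhood -- give a uniform-in-$t$ bound in probability on $\sup_T\|F_t(T)\|_{-q}$. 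Applying "Banach--Steinhaus" to countably many basis vectors, as you do, skips exactly this step.

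Two further points need repair. First, the topology of $\mathcal{D}(\mathbb{R})$ is not generated by an increasing sequence of Hilbertian seminorms with Hilbert--Schmidt inclusions: $\mathcal{D}(\mathbb{R})$ is a non-metrizable inductive limit of the nuclear Fr\'echet spaces $\mathcal{D}_K$, and your description fits $\mathcal{S}(\mathbb{R})$ or each $\mathcal{D}_K$ only; Mitoma treats $\mathcal{D}'$ by working in the spaces $\mathcal{D}_K$ and patching, and the barrelledness of the LF-space is what saves the uniform-boundedness argument there. Second, the modulus step has a gap of its own: the Skorokhod modulus in $D([0,1],\mathcal{D}'(\mathbb{R}))$ involves a single time change common to all test functions, so summing the scalar moduli over a basis with weights -- each controlled on its own event and with its own near-optimal time change -- does not bound the $\mathcal{D}'$-valued modulus. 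The standard resolution is to prove tightness in $D([0,1],H_{-q})$ for a single index $q$ (equivalently, to exhibit compact subsets of $D([0,1],\mathcal{D}'(\mathbb{R}))$ of that form), rather than to argue coordinate-wise.
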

\noindent Then, to prove uniqueness of the limit, one can rely on the following proposition:
\begin{proposition}[\textup{\cite{mitoma1983tightness}}] \label{prop:mitomaIdentification}
Let $(F_t)_{t\geq 0}$ be a tight family of processes in the space $D\left([0,1],\mathcal{D}'(\mathbb{R})\right)$.  If there exists a process $F\in D\left([0,1],\mathcal{D}'(\mathbb{R})\right)$ such that, for all $n\geq 1$, $T_1,\dots,T_n\in [0,1]$ and $\varphi_1,\dots,\varphi_n \in \mathcal{D}(\mathbb R)$, we have as $t\to\infty$:
\[\left(F_t(T_1,\varphi_1),\dots,F_t(T_n,\varphi_n)\right) \cvlaw \left(F_t(T_1,\varphi_1),\dots,F_t(T_n,\varphi_n)\right),
\]
then $F_t \cvlaw F$.
\end{proposition}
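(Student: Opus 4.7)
The plan is to use the standard subsequence argument for convergence in distribution on a Polish space, combined with the fact that finite-dimensional marginals at continuity points characterize the law on the Skorokhod space $D\big([0,1],\mathcal{D}'(\mathbb{R})\big)$ endowed with Mitoma's topology. I would proceed in three steps.

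First, by the tightness hypothesis the family $(F_t)_{t\geq 0}$ is relatively compact, so from any sequence $t_k\to\infty$ one can extract a further subsequence $(t_{k_\ell})$ along which $F_{t_{k_\ell}}$ converges in distribution to some limit process $\tilde F \in D\big([0,1],\mathcal{D}'(\mathbb{R})\big)$. It then suffices to prove that every subsequential limit $\tilde F$ has the same law as $F$, since this forces convergence of the whole family to $F$.

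Second, I would identify the laws via finite-dimensional projections. For each $\varphi\in\mathcal{D}(\mathbb{R})$, the sets of fixed times of discontinuity of the real-valued c\`adl\`ag processes $T\mapsto\langle \tilde F(T),\varphi\rangle$ and $T\mapsto\langle F(T),\varphi\rangle$ are at most countable; call their union $D_\varphi$. Given $T_1,\dots,T_n\in[0,1]$ and $\varphi_1,\dots,\varphi_n\in\mathcal{D}(\mathbb{R})$ with $T_i\notin D_{\varphi_i}$ for every $i$, the projection $G\mapsto\bigl(\langle G(T_i),\varphi_i\rangle\bigr)_{i\leq n}$ from $D\big([0,1],\mathcal{D}'(\mathbb{R})\big)$ to $\mathbb{R}^n$ is continuous at $\tilde F$ almost surely (this is the standard fact for evaluation at continuity points in Skorokhod spaces, combined with the continuity of $\eta\mapsto\langle\eta,\varphi\rangle$ on $\mathcal{D}'(\mathbb{R})$). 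The continuous mapping theorem then yields
\[
\bigl(\langle F_{t_{k_\ell}}(T_i),\varphi_i\rangle\bigr)_{i=1}^n \cvlaw \bigl(\langle \tilde F(T_i),\varphi_i\rangle\bigr)_{i=1}^n\,,
\]
and by the hypothesis the same vector also converges in distribution to $\bigl(\langle F(T_i),\varphi_i\rangle\bigr)_{i=1}^n$. Thus $\tilde F$ and $F$ have identical finite-dimensional distributions at all such ``good'' time points.

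Third, I would extend this finite-dimensional equality to an equality of laws on the whole Skorokhod space. The complement of $\bigcup_i D_{\varphi_i}$ is dense in $[0,1]$, so by right continuity of paths in $D\big([0,1],\mathcal{D}'(\mathbb{R})\big)$ one passes from good times to arbitrary times $T_i$ by right limits, preserving the joint distribution. Since $\mathcal{D}(\mathbb{R})$ is separable, picking a countable dense set of test functions gives a countable family of cylindrical sets that generates the Borel $\sigma$-algebra of $D\big([0,1],\mathcal{D}'(\mathbb{R})\big)$ under Mitoma's topology, so equality of all such marginals forces $\tilde F\eqlaw F$. The main obstacle I anticipate is precisely this last step: verifying that the Borel structure on $D\big([0,1],\mathcal{D}'(\mathbb{R})\big)$ is generated by cylindrical sets indexed by a separable family of test functions, which rests on the nuclearity of $\mathcal{D}(\mathbb{R})$ and is in fact the substantive content of \cite{mitoma1983tightness}. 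Once this is granted, subsequence extraction together with uniqueness of the limit completes the proof.
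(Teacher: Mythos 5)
The paper itself does not prove this proposition: it is quoted verbatim from Mitoma \cite{mitoma1983tightness}, so there is no internal argument to compare yours against, and your attempt has to stand on its own. As a skeleton it is the right one (tightness, subsequential limits, identification through finite-dimensional marginals at times that are almost surely continuity points, then uniqueness of the limit), and the continuity-point/continuous-mapping step in your second paragraph is carried out correctly.

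The genuine gap is that the two load-bearing steps are exactly the ones you do not supply. First, $D\left([0,1],\mathcal{D}'(\mathbb{R})\right)$ with Mitoma's topology is not a Polish space --- the strong dual $\mathcal{D}'(\mathbb{R})$ is not even metrizable --- so your opening appeal to ``the standard subsequence argument for convergence in distribution on a Polish space'' and the extraction of weakly convergent subsequences from tightness cannot be invoked off the shelf; one needs the nuclear-space structure (regularization of a tight family into $D\left([0,1],\Phi_p'\right)$ for duals of Hilbertian subspaces, on which compact sets are metrizable) before any Prokhorov-type argument applies. Second, the fact that cylindrical sets indexed by countably many test functions and times generate the Borel $\sigma$-algebra, so that equality of finite-dimensional laws forces equality of laws on the path space, is precisely the point you flag and then defer to \cite{mitoma1983tightness} --- but that, together with the reduction just described, \emph{is} the substance of the proposition; granting it and then concluding is close to assuming what is to be proved. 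So your proposal is an accurate reduction of the statement to the structural facts about $\mathcal{D}'(\mathbb{R})$ (nuclearity, metrizability of bounded/compact sets, coincidence of cylindrical and Borel $\sigma$-algebras there), but it is not yet a proof: to complete it you would need to carry out that reduction explicitly, which is what Mitoma's paper does.
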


{\small
\bibliographystyle{plain}
\bibliography{polymeres-bib.bib}
}

\end{document}